\newcommand{\br}{\genfrac{[}{]}{0pt}{}}
\renewcommand{\a}{\alpha}
\renewcommand{\b}{\beta}
\newcommand{\e}{\epsilon}
\renewcommand{\l}{\lambda}
\renewcommand{\O}{\Omega}
\newcommand{\la}{\langle}
\newcommand{\ra}{\rangle}
\newcommand{\C}{\mathcal{C}}
\renewcommand{\L}{\Lambda}
\newcommand{\leqs}{\leqslant}
\newcommand{\geqs}{\geqslant}
\newcommand{\normeq}{\trianglelefteqslant}
\newcommand{\vs}{\vspace{3mm}}
\newcommand{\imod}[1]{\allowbreak\mkern4mu({\operator@font mod}\,\,#1)}
\theoremstyle{plain}
\newtheorem{theorem}{Theorem} 
\newtheorem{corol}[theorem]{Corollary}
\newtheorem{thm}{Theorem}[section] 
\newtheorem{lem}[thm]{Lemma}
\newtheorem{prop}[thm]{Proposition}
\newtheorem*{theorem*}{Theorem} 
\newtheorem*{conj*}{Conjecture}
\theoremstyle{definition}
\newtheorem{rem}[thm]{Remark}
\newtheorem{defn}[thm]{Definition}
\newtheorem{remk}{Remark}
\begin{document}

\title[Fixed point ratios for finite primitive groups]{Fixed point ratios for finite primitive groups \\ and applications}

\author{Timothy C. Burness}
\address{T.C. Burness, School of Mathematics, University of Bristol, Bristol BS8 1UG, UK}
\email{t.burness@bristol.ac.uk}

\author{Robert M. Guralnick}
\address{R.M. Guralnick, Department of Mathematics, University of Southern California, Los Angeles, CA 90089-2532, USA}
\email{guralnic@usc.edu}

\date{\today} 

\begin{abstract}
Let $G$ be a finite primitive permutation group on a set $\O$ and recall that the fixed point ratio of an element $x \in G$, denoted ${\rm fpr}(x)$, is the proportion of points in $\O$ fixed by $x$. Fixed point ratios in this setting have been studied for many decades, finding a wide range of applications. In this paper, we are interested in comparing ${\rm fpr}(x)$ with the order of $x$. Our main theorem classifies the triples $(G,\O,x)$ as above with the property that $x$ has prime order $r$ and ${\rm fpr}(x) > 1/(r+1)$. There are several applications. Firstly, we extend earlier work of Guralnick and Magaard by determining the primitive permutation groups of degree $m$ with minimal degree at most $2m/3$. Secondly, our main result plays a key role in recent work of the authors (together with Moret\'{o} and Navarro) on the commuting probability of $p$-elements in finite groups. Finally, we use our main theorem to investigate the minimal index of a primitive permutation group, which allows us to answer a question of Bhargava.
\end{abstract}

\maketitle

\setcounter{tocdepth}{1}
\tableofcontents

\section{Introduction}\label{s:intro}

Let $G \leqs {\rm Sym}(\O)$ be a finite transitive permutation group with point stabilizer $H$. For $x \in G$, we write
\[
{\rm fpr}(x) = \frac{|C_{\O}(x)|}{|\O|} = \frac{|x^G \cap H|}{|x^G|}
\]
for the \emph{fixed point ratio} of $x$, where $C_{\O}(x) = \{\a \in \O \,:\, \a^x = \a\}$ is the set of fixed points of $x$ and $x^G$ denotes the conjugacy class of $x$ in $G$. Sometimes we will write ${\rm fpr}(x,\O)$ if we wish to highlight the permutation domain $\O$.

Fixed point ratios have been extensively studied for many decades, finding a wide range of applications. In one direction, we can view ${\rm fpr}(x)$ as the probability that a random element in $\O$ is fixed by $x$ and this explains why fixed point ratios often arise naturally in a probabilistic setting. For example, upper bounds on fixed point ratios are a key ingredient in a powerful probabilistic approach for bounding the base size of a  finite permutation group. This method was originally introduced by Liebeck and Shalev in \cite{LSh} and it has played a major role in recent proofs of influential base size conjectures of Cameron, Kantor and Pyber. Fixed point ratios have also turned out to be very useful for studying the generation and random generation properties of finite groups. For example, bounds on fixed point ratios are applied extensively in \cite{BGH}, which provides the final step in the proof of a conjecture of Breuer, Guralnick and Kantor \cite{BGK} on $\frac{3}{2}$-generated finite groups. In a different direction, fixed point ratios have also been used to study the structure of monodromy groups of coverings of the Riemann sphere, playing a prominent role in the proof of the Guralnick-Thompson genus conjecture \cite{FM0}. We refer the reader to the survey article \cite{Bur18} for a more detailed discussion of these applications.

In this paper, we study fixed point ratios in the setting where $G \leqs {\rm Sym}(\O)$ is a primitive permutation group. Recall that a transitive group $G$ is primitive if $\O$ has no nontrivial $G$-invariant partition (equivalently, the point stabilizer $H$ is a maximal subgroup of $G$). The structure and action of a primitive group is described by the  Aschbacher-O'Nan-Scott theorem, which divides the finite primitive groups into several families. The almost simple primitive groups form one of these families and there is an extensive literature on the corresponding fixed point ratios, stretching back several decades. 

First recall that $G$ is \emph{almost simple} if there exists a nonabelian finite simple group $G_0$ (the \emph{socle} of $G$) such that $G_0 \normeq G \leqs {\rm Aut}(G_0)$. The possibilities for $G_0$ are determined by the classification of finite simple groups; $G_0$ is either an alternating group, a sporadic group or a group of Lie type (classical or exceptional). When studying fixed point ratios in this setting, it is natural to partition the primitive almost simple classical groups into two collections (the \emph{subspace} and \emph{non-subspace} actions), according to the action of $H \cap G_0$ on the natural module $V$ for $G_0$. Roughly speaking, the action of $G$ on $\O$ is a subspace action if $H \cap G_0$ acts reducibly on $V$, which allows us to identify $\O$ with a set of subspaces (or pairs of subspaces) of $V$, otherwise the action is non-subspace. We refer the reader to Definition \ref{d:sub} for the formal definition of a subspace action that we will work with in this paper. 

Before stating our main results, let us briefly highlight some of the earlier work on fixed point ratios for almost simple primitive groups of Lie type. So let $G \leqs {\rm Sym}(\O)$ be such a group, where $G_0$ is a group of Lie type over the finite field $\mathbb{F}_q$ of order $q$. One of the main results in this setting is due to Liebeck and Saxl \cite{LS91}, which states that 
\[
{\rm fpr}(x) \leqs \frac{4}{3q}
\]
for all nontrivial elements $x \in G$, with a small list of known exceptions, mainly involving groups with socle $G_0 = {\rm L}_{2}(q)$. This bound is essentially best possible. For example, if $G = {\rm L}_n(q)$, $x$ is a transvection and $\O$ is the set of $1$-dimensional subspaces of $V$, then it is easy to show that ${\rm fpr}(x) = (q^{n-1}-1)/(q^n-1)$, which is roughly $1/q$. However, stronger bounds can be obtained by imposing some additional conditions on $G$. For instance, an in-depth analysis of fixed point ratios for primitive actions of exceptional groups of Lie type is presented in \cite{LLS}. For classical groups, we refer the reader to \cite{FM, GK} for a more detailed treatment of fixed point ratios for subspace actions. 

For non-subspace actions of classical groups, a key theorem is due to Liebeck and Shalev \cite{LSh}, which states that there exists a universal constant $\e>0$ (independent of $G$) such that 
\[
{\rm fpr}(x) < |x^G|^{-\e}
\]
for all $x \in G$ of prime order. An effective version of this result is established in the series of papers \cite{Bur1,Bur2,Bur3,Bur4}, which shows that a constant $\e \sim 1/2$ is essentially best possible (see Theorem \ref{t:fpr}). The proofs of these results rely heavily on Aschbacher's celebrated subgroup structure theorem \cite{asch} for finite classical groups, which divides the possibilities for the point stabilizer into several subgroup collections.

With a view towards new applications, in this paper we seek an upper bound on ${\rm fpr}(x)$ that is given in terms of the order of $x$. Moreover, we want a bound that applies to all finite primitive groups. With this aim in mind, we present the following result, which is the main theorem of this paper. (Note that Table \ref{tab:class} is presented in Section \ref{ss:res}.)

\begin{theorem}\label{t:main}
Let $G \leqs {\rm Sym}(\O)$ be a finite primitive permutation group with point stabilizer $H$ and let $x \in G$ be an element of prime order $r$. Then either
\begin{equation}\label{e:bound}
{\rm fpr}(x) \leqs \frac{1}{r+1}
\end{equation}
or one of the following holds (up to permutation isomorphism):
\begin{itemize}\addtolength{\itemsep}{0.2\baselineskip}
\item[{\rm (i)}] $G$ is almost simple and one of the following holds: 

\vspace{1mm}

\begin{itemize}\addtolength{\itemsep}{0.2\baselineskip}
\item[{\rm (a)}] $G = S_n$ or $A_n$ acting on $\ell$-element subsets of $\{1, \ldots, n\}$ with $1 \leqs \ell < n/2$.
\item[{\rm (b)}] $G = S_n$, $H = S_{n/2} \wr S_2$, $x$ is a transposition and 
\[
{\rm fpr}(x) = \frac{1}{3} + \frac{n-4}{6(n-1)}.
\]
\item[{\rm (c)}] $G = {\rm M}_{22}{:}2$, $H = {\rm L}_{3}(4).2_2$, $x \in \emph{\texttt{2B}}$ and ${\rm fpr}(x) = 4/11$. 
\item[{\rm (d)}] $G$ is classical in a subspace action and $(G,H,x,{\rm fpr}(x))$ is listed in Table \ref{tab:class}.
\end{itemize}

\item[{\rm (ii)}] $G=V{:}H$ is an affine group with socle $V=(C_p)^d$ and point stabilizer 
$H \leqs {\rm GL}_{d}(p)$, $r=p$, $x$ is conjugate to a transvection in $H$ and ${\rm fpr}(x) = 1/r$.
\item[{\rm (iii)}] $G \leqs L \wr S_k$ is a product type primitive group with its product action on $\O = \Gamma^k$, where $k \geqs 2$ and $L \leqs {\rm Sym}(\Gamma)$ is one of the almost simple primitive groups in part (i).
\end{itemize}
\end{theorem}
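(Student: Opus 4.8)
The plan is to run through the families of the Aschbacher--O'Nan--Scott theorem, isolating the exceptions in the affine, almost simple and product type cases while showing that the bound \eqref{e:bound} holds with no exceptions for the diagonal and twisted wreath types (here the point stabilizer meets the socle in a relatively small subgroup, and a direct estimate on $|x^G \cap H|/|x^G|$ suffices). The two genuinely substantial families are the affine case, which admits a clean self-contained argument, and the almost simple case, which carries essentially all of the exceptions and draws on the existing fixed point ratio literature.

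For the affine groups (case (ii)) I would write $x=(v,h)$ acting on $V=(C_p)^d$ by $u \mapsto u^h+v$ and note that the fixed point set of $x$ is either empty or a coset of $C_V(h)$; hence ${\rm fpr}(x) \leqs |C_V(h)|/|V| = p^{-c}$, where $c$ is the codimension of $C_V(h)$. If $r=p$ then $h$ is unipotent and $c \geqs 1$, with equality exactly when $h$ is a transvection, giving ${\rm fpr}(x)=1/r$; otherwise $c \geqs 2$ and $p^{-2} \leqs 1/(r+1)$. If $r \neq p$ then $h$ is semisimple and its nontrivial eigenvalues fall into Galois orbits of size $d_0 = {\rm ord}_r(p)$, so $c \geqs d_0$ and ${\rm fpr}(x) \leqs p^{-d_0} \leqs 1/(r+1)$, since $r \mid p^{d_0}-1$ forces $p^{d_0} \geqs r+1$. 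This pins down (ii) exactly.

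The core of the argument is the almost simple case (i), which I would subdivide by the socle $G_0$. For $G_0=A_n$ I would compute fixed point ratios directly in the relevant primitive actions: the action on $\ell$-subsets yields family (a), and the partition action with $H=S_{n/2}\wr S_2$ yields the transposition value in (b), while all other actions are dispatched by standard combinatorial estimates. The sporadic groups are settled by a finite computation with conjugacy class data, producing the single exception (c). For groups of Lie type I would split into subspace and non-subspace actions via Definition \ref{d:sub}. The non-subspace actions are controlled by the effective Liebeck--Shalev bound of Theorem \ref{t:fpr}: since ${\rm fpr}(x) < |x^G|^{-\e}$ with $\e$ close to $1/2$, it suffices to verify $|x^G| \geqs (r+1)^2$, which holds for all elements of prime order apart from a short list of small groups that can be checked individually. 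The subspace actions form the delicate part, where I would apply the explicit fixed point ratio formulas for each Aschbacher subspace family to determine precisely the triples with ${\rm fpr}(x) > 1/(r+1)$, assembling them into Table \ref{tab:class} and family (d).

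Finally, the product type case (iii) reduces to the almost simple case through multiplicativity on the base group: for $x$ in $L^k$ with components $x_1,\dots,x_k$ one has ${\rm fpr}(x,\Gamma^k) = \prod_i {\rm fpr}(x_i,\Gamma)$, so two or more nontrivial components force the product well below $1/(r+1)$, whereas a single nontrivial component inherits exactly the exceptional values of (i); elements with nontrivial image in the top group $S_k$ are shown to have small fixed point ratio by a separate counting argument. The main obstacle throughout is the classical subspace analysis: one must carry out exact fixed point ratio calculations across all of Aschbacher's geometric subspace families and all classes of prime-order elements, and confirming that Table \ref{tab:class} is complete is where the bulk of the casework lies.
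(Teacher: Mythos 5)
Your overall architecture matches the paper's (O'Nan--Scott reduction, a clean affine argument, the bulk of the work in the almost simple case, multiplicativity for product type), and your affine analysis is essentially the paper's Proposition \ref{p:affine}. But there are two concrete gaps. First, in the product type case your claim that ``two or more nontrivial components force the product well below $1/(r+1)$'' is false: take $L = S_n$ in its natural action, $r=2$ and $x = (y_1,y_2,1,\ldots,1)$ with each $y_i$ a transposition; then ${\rm fpr}(x) = (1-2/n)^2 > 1/3$ for $n \geqs 8$. This is exactly why Theorem \ref{t:main}(iii) places no restriction on the components when $L$ is $S_n$ or $A_n$ acting on $\ell$-subsets, and why the paper first proves Corollary \ref{c:main} (that outside the alternating-on-subsets family every exceptional fixed point ratio is at most $(r+1)^{-1/2}$) so that a product of two nontrivial factors is forced below $(r+1)^{-1}$ only after that family has been carved out. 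Your reduction to ``a single nontrivial component'' therefore needs this intermediate square-root bound, which you have not established and which does not follow from the statement of the theorem alone.

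Second, your treatment of non-subspace actions of classical groups does not work as stated. Theorem \ref{t:fpr} gives ${\rm fpr}(x) < |x^G|^{-1/2+1/n+\iota}$, so the condition you need is $|x^G| \geqs (r+1)^{1/(1/2-1/n-\iota)}$, which is substantially stronger than $(r+1)^2$ and depends on $n$ and $\iota$; worse, for $n=2$ the exponent is nonnegative and the bound is vacuous, so the groups with socle ${\rm L}_2(q)$ (and, in practice, ${\rm L}_3^{\e}(q)$, ${\rm L}_4^{\e}(q)$, ${\rm PSp}_4(q)$) require separate bespoke arguments comparing $|x^G \cap H|$ with $|x^G|$ for each class of subgroups $H$ -- this is where the paper's Lemmas \ref{l:psl2}--\ref{l:psp4} do real work and where further exceptions (permutation isomorphic to subspace actions) arise. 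Relatedly, Theorem \ref{t:fpr} applies only to classical groups, so the exceptional groups of Lie type are not covered by your plan at all; they need the Lawther--Liebeck--Seitz bounds as in the paper's Proposition \ref{p:ex}. A smaller point: in the diagonal case the required estimate is not merely that the stabilizer is small -- for $k=2$ and $x=(\a,\a)$ one needs $|C_{{\rm Inn}(T)}(\a)|/|T| \leqs (r+1)^{-1}$, which rests on Burnside's theorem that no simple group has a nontrivial conjugacy class of prime-power length, plus a bound on the number of elements inverted by an automorphism.
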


\begin{remk}\label{r:main}
Some remarks on the statement of Theorem \ref{t:main} are in order.
\begin{itemize}\addtolength{\itemsep}{0.2\baselineskip}
\item[{\rm (a)}] In part (i)(a), it is plain to see that there are many exceptions to the bound in \eqref{e:bound}. For example, if $G = S_n$ and $\ell=1$, then ${\rm fpr}(x) = 1-2/n$ when $x$ is a transposition. More generally, it is straightforward to show that ${\rm fpr}(x)$ is maximal when $x$ is an $r$-cycle (or a double transposition if $r=2$ and $G = A_n$) and it is easy to compute ${\rm fpr}(x)$ in this case (see Proposition \ref{p:alt2} and Remark \ref{r:alt2}).
\item[{\rm (b)}] In part (i)(c), we use the standard Atlas \cite{ATLAS} notation. As noted above, Table \ref{tab:class} in (i)(d) is presented in Section \ref{ss:res} and we refer the reader to Remark \ref{r:tab} for information on the notation adopted in this table. It is worth noting that most of the special cases in Table \ref{tab:class} correspond to the action of $G$ on a set of $1$-dimensional subspaces (or hyperplanes) of the natural module $V$ and the relevant elements $x \in G$ with ${\rm fpr}(x) > (r+1)^{-1}$ typically have an eigenspace on $V$ of codimension $1$.
\item[{\rm (c)}] Let $x = (x_1, \ldots, x_k)\pi \in G$ be an element of prime order $r$, where $G \leqs L \wr S_k$ is a product type group as in part (iii). Let $J$ be a point stabilizer in the action of $L$ on $\Gamma$. In Section \ref{s:product} we will show that ${\rm fpr}(x) > (r+1)^{-1}$ only if $\pi=1$, in which case 
${\rm fpr}(x) = \prod_i {\rm fpr}(x_i,\Gamma)$. Moreover, Proposition \ref{p:pt2} states that  either $L$ is permutation isomorphic to $S_n$ or $A_n$ acting on $\ell$-element subsets of $\{1, \ldots, n\}$, or $x$ is conjugate to  $(x_1, 1, \ldots, 1)$ and $(L,J,x_1)$ is one of the special cases arising in part (b), (c) or (d) of Theorem \ref{t:main}(i).
\item[{\rm (d)}] The special cases arising in Theorem \ref{t:main} are described up to permutation isomorphism in order to avoid unnecessary repetition. For instance, if $G = A_8$ and $H = {\rm AGL}_{3}(2)$, then either ${\rm fpr}(x) \leqs (r+1)^{-1}$, or $x$ is an involution with cycle-shape $(2^4)$ and ${\rm fpr}(x) = 7/15$. But here $G$ is permutation isomorphic to ${\rm L}_{4}(2)$ acting on the set of $1$-dimensional subspaces of the natural module (with $x$ corresponding to a transvection), so this case is included in part (i)(d). Similarly, consider the case where $G = {\rm Sp}_{4}(2)$ and $H = {\rm O}_{4}^{\e}(2)$ is a subspace subgroup. If $\e=+$ then $G$ is permutation isomorphic to $S_6$ acting on the set of partitions of $\{1, \ldots, 6\}$ into two subsets of size $3$ (as in part (i)(b) of Theorem \ref{t:main}), and it is permutation isomorphic to $S_6$ in its natural action on $\{1, \ldots, 6\}$ when $\e=-$ (and therefore included in part (i)(a)). 
\end{itemize}
\end{remk}

The following result is an immediate corollary of Theorem \ref{t:main}.

\begin{corol}\label{c:main}
Let $G$ be a finite primitive permutation group and let $x \in G$ be an element of prime order $r$. Then either 
\[
{\rm fpr}(x) \leqs \frac{1}{\sqrt{r+1}}, 
\]
or $G$ is a subgroup of $S_n \wr S_k$ containing $(A_n)^k$ with $k \geqs 1$, where the action of $S_n$ is on $\ell$-element subsets of $\{1, \ldots, n\}$ and the wreath product has the product action of degree $\binom{n}{\ell}^k$.
\end{corol}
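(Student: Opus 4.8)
The plan is to deduce the corollary directly from Theorem \ref{t:main} by a case analysis of its exceptions, using the elementary inequality $1/(r+1) \leqs 1/\sqrt{r+1}$, which is valid for every $r \geqs 0$. First I would dispose of the generic case: whenever \eqref{e:bound} holds we immediately obtain ${\rm fpr}(x) \leqs 1/(r+1) \leqs 1/\sqrt{r+1}$, so the conclusion is clear. It then remains to examine the exceptional families in parts (i)--(iii) of Theorem \ref{t:main} and to verify that, apart from the $\ell$-subset actions (which constitute the excluded family in the corollary), each satisfies the weaker bound.

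Turning to part (i), the case (i)(a) is precisely the $k=1$ instance of the excluded family, so nothing needs checking there. In part (i)(b) we have $r = 2$ and ${\rm fpr}(x) = 1/3 + (n-4)/(6(n-1)) < 1/2 < 1/\sqrt{3}$, while in part (i)(c) we have $r=2$ and ${\rm fpr}(x) = 4/11 < 1/\sqrt{3}$; both satisfy the bound. For part (i)(d) one reads each entry of Table \ref{tab:class} and confirms ${\rm fpr}(x) \leqs 1/\sqrt{r+1}$ directly. This is where the main work lies, and I expect it to be the chief obstacle: these rows are exactly the subspace actions with ${\rm fpr}(x) > 1/(r+1)$, so the required margin is not automatic and each row must be inspected. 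In practice the relevant fixed point ratios are of size roughly $1/q$ with $r = p$ and $q$ a power of $p$, whence ${\rm fpr}(x) \approx 1/q \leqs 1/p < 1/\sqrt{p+1} = 1/\sqrt{r+1}$ with room to spare, and only a handful of small cases (small $q$ or small dimension) should need individual attention.

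Finally I would handle parts (ii) and (iii). In part (ii) we have ${\rm fpr}(x) = 1/r$, and since $r^2 - r - 1 \geqs 0$ for every prime $r$ this gives $1/r \leqs 1/\sqrt{r+1}$ as required. For part (iii), Remark \ref{r:main}(c) reduces us to $\pi = 1$ with ${\rm fpr}(x) = \prod_i {\rm fpr}(x_i,\Gamma)$, and Proposition \ref{p:pt2} then leaves two possibilities. If $L$ is permutation isomorphic to $S_n$ or $A_n$ acting on $\ell$-subsets, then $G \leqs L \wr S_k \leqs S_n \wr S_k$ has socle $(A_n)^k$ and so lies in the excluded family for general $k$. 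Otherwise $x$ is conjugate to $(x_1,1,\ldots,1)$ with $(L,J,x_1)$ arising in part (b), (c) or (d) of Theorem \ref{t:main}(i); here the trivial factors contribute ${\rm fpr}(1,\Gamma) = 1$, so ${\rm fpr}(x) = {\rm fpr}(x_1,\Gamma)$ equals a single-factor value already shown to be at most $1/\sqrt{r+1}$. This exhausts all cases and completes the argument.
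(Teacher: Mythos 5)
Your argument is correct and follows essentially the same route as the paper: reduce via Theorem \ref{t:main}, dispose of the affine case with $r+1\leqs r^2$, inspect parts (b)--(d) of Theorem \ref{t:main}(i) directly for the almost simple case, and invoke Proposition \ref{p:pt2} to reduce the product type case to a single nontrivial coordinate. One caveat: the Table \ref{tab:class} inspection is tighter than your ``room to spare'' heuristic suggests --- most entries have $q \in \{2,3\}$ and $r \in \{2,3\}$ with ${\rm fpr}(x)$ near $1/2$, e.g. $5/9$ for ${\rm U}_4(2).2$ and $4/7$ for ${\rm Sp}_6(2)$ against $1/\sqrt{3}\approx 0.577$ --- but every entry does satisfy the bound, so the check you propose goes through.
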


\begin{remk}
Consider the special case $G \leqs S_n \wr S_k$ arising in the statement of Corollary \ref{c:main}. Let $\Gamma$ be the set of $\ell$-element subsets of $\{1, \ldots, n\}$ and note that we may assume $1 \leqs \ell < n/2$. By combining Proposition \ref{p:alt2} with the proof of Proposition \ref{p:pt2}, we deduce that ${\rm fpr}(x) \leqs (r+1)^{-1}$ if $r > n-\ell$, so we may assume $r \leqs n-\ell$. Then ${\rm fpr}(x)$ is maximal when $x$ is conjugate to an element in $(S_n)^k$ of the form $(y, 1, \ldots, 1)$ with $y \in S_n$ an $r$-cycle, in which case ${\rm fpr}(x) = {\rm fpr}(y,\Gamma)$. An expression for ${\rm fpr}(y,\Gamma)$ is given in part (ii) of Proposition \ref{p:alt2} and we deduce that ${\rm fpr}(x) \leqs 1 - r/n$ (see Remark \ref{r:alt2}).
\end{remk}

We also obtain the following result on almost simple primitive groups.

\begin{corol}\label{c:main0}
Let $G \leqs {\rm Sym}(\O)$ be a finite almost simple primitive permutation group with point stabilizer $H$ and let $x \in G$ be an element of prime order $r$. Then either
\[
{\rm fpr}(x) \leqs \frac{1}{r}
\]
or one of the following holds (up to permutation isomorphism):
\begin{itemize}\addtolength{\itemsep}{0.2\baselineskip}
\item[{\rm (i)}] $G = S_n$ or $A_n$ acting on $\ell$-element subsets of $\{1, \ldots, n\}$ with $1 \leqs \ell < n/2$.
\item[{\rm (ii)}] $G$ is a classical group in a subspace action and $(G,H,x,{\rm fpr}(x))$ is listed in Table \ref{tab:subb2}.
\end{itemize}
\end{corol}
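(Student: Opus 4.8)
The plan is to deduce this directly from Theorem~\ref{t:main}, exploiting the trivial inequality $(r+1)^{-1} < r^{-1}$: the generic bound \eqref{e:bound} already gives ${\rm fpr}(x) \leqs 1/r$, so nothing further is needed whenever \eqref{e:bound} holds. Since $G$ is almost simple, parts (ii) and (iii) of Theorem~\ref{t:main} (the affine and product type cases) cannot arise, and I may therefore assume that one of the four subcases (a)--(d) of Theorem~\ref{t:main}(i) occurs. It then remains to revisit each of these and determine precisely when the sharper inequality ${\rm fpr}(x) > 1/r$ can actually hold; the surviving cases are exactly those recorded in the corollary.

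First, subcase (i)(a), in which $G = S_n$ or $A_n$ acts on $\ell$-element subsets with $1 \leqs \ell < n/2$, is retained verbatim as part (i) of the corollary, since this family does contain elements with ${\rm fpr}(x) > 1/r$ (for instance a transposition when $\ell = 1$), so no further analysis is required there. Next I would dispose of subcases (i)(b) and (i)(c). In both, $x$ is an involution, so $r = 2$ and the relevant threshold is $1/r = 1/2$. In (i)(b) we have ${\rm fpr}(x) = \tfrac13 + \tfrac{n-4}{6(n-1)}$, and a one-line computation shows this is strictly less than $\tfrac12$ because $(n-4)/(n-1) < 1$; in (i)(c) we simply note that $4/11 < 1/2$. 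Hence neither case yields a genuine exception to ${\rm fpr}(x) \leqs 1/r$, and both are discarded.

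The remaining work, and the main (if routine) obstacle, is subcase (i)(d): the classical groups in a subspace action listed in Table~\ref{tab:class}. Here I would proceed through that table line by line, comparing the stated value of ${\rm fpr}(x)$ against $1/r$ for the relevant prime $r$ in each entry. The triples $(G,H,x)$ for which ${\rm fpr}(x) > 1/r$ are retained and constitute precisely the entries of Table~\ref{tab:subb2}, while all others satisfy ${\rm fpr}(x) \leqs 1/r$ and are dropped. Because Theorem~\ref{t:main} already furnishes the exact value of ${\rm fpr}(x)$ in every case, this reduces to a finite sequence of elementary inequality checks rather than any new structural argument, and extracting Table~\ref{tab:subb2} from Table~\ref{tab:class} completes the proof.
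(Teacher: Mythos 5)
Your proposal is correct and follows the same route the paper (implicitly) takes: the corollary is deduced from Theorem \ref{t:main} by noting that $(r+1)^{-1} < r^{-1}$, discarding cases (i)(b) and (i)(c) by direct inequality checks against $1/2$, and filtering Table \ref{tab:class} entry by entry against the threshold $1/r$ to extract Table \ref{tab:subb2}. The arithmetic you outline checks out (in particular, the only surviving entries are the ${\rm L}_2(q)$ Singer-type case with $r=q-1$, the ${\rm U}_4(2).2$ graph automorphism, the transvection cases with $\e=-$ for ${\rm Sp}_n(2)$ and ${\rm O}_n^{-}(2)$ on $P_1$, with $\e=+$ for ${\rm O}_n^{+}(2)$ on $N_1$, and the order-$3$ element for ${\rm Sp}_6(2)$), so nothing further is needed.
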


\renewcommand{\arraystretch}{1.2}
{\small \begin{table}
\[
\begin{array}{lcccll} \hline
G_0 & H & x & r & {\rm fpr}(x) & \mbox{Conditions} \\ \hline
{\rm L}_{2}(q) & P_1 & (\omega, \omega^{-1}) & q-1 & \frac{1}{q-1} +\frac{q-3}{q^2-1} & q \geqs 8 \\ 
{\rm U}_{4}(2) & P_2 & \tau & 2 & \frac{5}{9} & G = {\rm U}_4(2).2  \\
{\rm Sp}_{n}(2) & {\rm O}_{n}^{-}(2) & (J_2,J_1^{n-2}) & 2 & \frac{1}{2}+\frac{1}{2(2^{n/2}-1)} & n \geqs 6 \\
& & (\Lambda, I_{n-2}) & 3 & \frac{5}{14} & n=6 \\
\O_{n}^{-}(2) & P_1 & (J_2,J_1^{n-2}) & 2 & \frac{1}{2}+\frac{1}{2(2^{n/2}+1)} & G = {\rm O}_{n}^{-}(2) \\
\O_n^{+}(2) & N_1 & (J_2,J_1^{n-2}) & 2 & \frac{1}{2}+\frac{1}{2(2^{n/2}-1)} & G = {\rm O}_{n}^{+}(2) \\ \hline
\end{array}
\]
\caption{The subspace actions in part (ii) of Corollary \ref{c:main0}}
\label{tab:subb2}
\end{table}}
\renewcommand{\arraystretch}{1}

\begin{remk}
Let us briefly comment on the notation used in Table \ref{tab:subb2}. Firstly, $G_0$ denotes the socle of $G$. In the first row, $r = q-1 \geqs 7$ is a Mersenne prime, $H = P_1$ is a Borel subgroup and $x$ is any element of order $r$. In the second row, $H=P_2$ is the stabilizer of a $2$-dimensional totally singular subspace of the natural module and $x$ is an involutory graph automorphism with $C_{G_0}(x) = {\rm Sp}_4(2)$. Next, in the third row $x$ is a transvection (and similarly in rows 5 and 6), while $x$ is an element of order $3$ with an $(n-2)$-dimensional $1$-eigenspace on the natural module in the fourth row. In the final row, $H$ is the stabilizer of a nonsingular $1$-space.
\end{remk}

We now turn to some of the applications of Theorem \ref{t:main}. One of our motivations for seeking a bound as in Theorem \ref{t:main} stems from a widely applied theorem of Guralnick and Magaard \cite{GM} on the minimal degree of a finite primitive permutation group $G \leqs {\rm Sym}(\O)$. Recall that the \emph{minimal degree} of $G$, denoted $\mu(G)$, is the minimal number of points moved by a nonidentity element of $G$. This is a classical invariant in permutation group theory, which has been investigated by many authors for more than a century (for example, see Babai \cite{Babai}, Bochert \cite{Bochert}, Jordan \cite{Jordan} and
Manning \cite{Manning}). The main theorem of \cite{GM} determines the primitive groups $G$ of degree $m$ with $\mu(G) \leqs m/2$, extending an earlier result of Liebeck and Saxl \cite{LS91}, which describes the groups with $\mu(G) \leqs m/3$. In order to do this, Guralnick and Magaard determine the finite primitive groups $G$ with the property that
\[
{\rm fpr}(x) > \frac{1}{2}
\]
for some nonidentity element $x \in G$. 

We can use Theorem \ref{t:main} to determine all the primitive groups that contain a nonidentity element $x$ with ${\rm fpr}(x) > 1/3$, which allows us to establish the following result on the minimal degree of a finite primitive permutation group. This can be viewed as a natural extension of the earlier work of Liebeck and Saxl \cite{LS91} and Guralnick and Magaard \cite{GM}.

\begin{theorem}\label{t:mindeg}
Let $G \leqs {\rm Sym}(\O)$ be a finite primitive permutation group of degree $m$ with point stabilizer $H$ and minimal degree $\mu(G)$. Then either $\mu(G) \geqs 2m/3$, or one of the following holds (up to permutation isomorphism):
\begin{itemize}\addtolength{\itemsep}{0.2\baselineskip}
\item[{\rm (i)}] $G = S_n$ or $A_n$ acting on $\ell$-element subsets of $\{1, \ldots, n\}$ with $1 \leqs \ell < n/2$.
\item[{\rm (ii)}] $G = S_n$, $H = S_{n/2} \wr S_2$ and 
\[
\mu(G) = \frac{1}{4}\left(1+\frac{1}{n-1}\right)\frac{n!}{(n/2)!^2}.
\]
\item[{\rm (iii)}] $G = {\rm M}_{22}{:}2$, $H = {\rm L}_{3}(4).2_2$, $m=22$ and $\mu(G)=14$.
\item[{\rm (iv)}] $G$ is an almost simple classical group in a subspace action and $(G,H,m,\mu(G))$ is listed in Table \ref{tab:md}, where $G_0$ is the socle of $G$.
\item[{\rm (v)}] $G = V{:}H$ is an affine group with socle $V=(C_2)^d$, $H \leqs {\rm GL}_d(2)$ contains a transvection and $\mu(G) = 2^{d-1} = m/2$.
\item[{\rm (vi)}] $G \leqs L \wr S_k$ is a product type primitive group with its product action on $\O = \Gamma^k$, where $k \geqs 2$ and $L \leqs {\rm Sym}(\Gamma)$ is one of the almost simple primitive groups in parts (i)-(iv).
\end{itemize}
\end{theorem}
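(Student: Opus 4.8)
The plan is to recast the statement about minimal degree as a statement about fixed point ratios and then apply Theorem~\ref{t:main}. Recall that a nonidentity element $x \in G$ moves exactly $|\O|(1 - {\rm fpr}(x))$ points, so writing $m = |\O|$ and $M = \max\{{\rm fpr}(x) : 1 \neq x \in G\}$ we have $\mu(G) = m(1-M)$, and hence $\mu(G) < 2m/3$ if and only if $M > 1/3$. First I would observe that $M$ is attained by an element of prime order: if ${\rm fpr}(x) = M$ and $y = x^j$ is a power of $x$ of prime order $r$, then $C_{\O}(x) \subseteq C_{\O}(y)$ forces ${\rm fpr}(y) \geq {\rm fpr}(x) = M$, so by maximality we may assume the maximizing element $x$ itself has prime order $r$.

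Next, since $(r+1)^{-1} \leq 1/3$ for every prime $r$, a group $G$ with $\mu(G) < 2m/3$ possesses an element $x$ of prime order with ${\rm fpr}(x) = M > 1/3 \geq (r+1)^{-1}$, so the bound \eqref{e:bound} fails and $(G,\O,x)$ must appear in one of the cases (i)--(iii) of Theorem~\ref{t:main}. This immediately confines $G$ to the families listed in Theorem~\ref{t:mindeg}, with Theorem~\ref{t:main}(i)(a)--(d), (ii), (iii) matching parts (i)--(iv), (v), (vi) respectively. The remaining task is, for each such $G$, to identify the prime-order element of largest fixed point ratio and to read off $\mu(G) = m(1-M)$. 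The key point making this finite is that any element with fixed point ratio exceeding $1/3 \geq (r+1)^{-1}$ is already recorded in Theorem~\ref{t:main}, so the value $M$ (whenever it exceeds $1/3$) is attained by one of the finitely many candidates listed there; every element not on the list automatically satisfies ${\rm fpr}(x) \leq 1/3$ and so cannot compete.

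I would then work through the cases. For the $\ell$-subset actions in part (i) there is no closed formula to establish; I would simply use Proposition~\ref{p:alt2} and Remark~\ref{r:main}(a) to note that the maximal fixed point ratio is realised by an $r$-cycle (or a double transposition when $r=2$ and $G = A_n$), confirming these as genuine exceptions in the relevant ranges. For part (ii) I would substitute the value ${\rm fpr}(x) = \tfrac13 + \tfrac{n-4}{6(n-1)}$ from Theorem~\ref{t:main}(i)(b), which gives $1-M = \tfrac{n}{2(n-1)}$, and multiply by the degree $m = \tfrac12\binom{n}{n/2}$ to recover the stated formula $\mu(G) = \tfrac14\big(1+\tfrac{1}{n-1}\big)\tfrac{n!}{(n/2)!^2}$. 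Part (iii) is a direct computation: $m = [{\rm M}_{22}{:}2 : {\rm L}_3(4).2_2] = 22$ and $\mu = 22(1 - 4/11) = 14$. In the affine case the relevant element is a transvection with ${\rm fpr}(x) = 1/r$, which exceeds $1/3$ only when $r = 2$; this forces $V = (C_2)^d$, gives $M = 1/2$ and $\mu = m/2 = 2^{d-1}$, as in part (v). Finally, for product type groups I would invoke Remark~\ref{r:main}(c) and Proposition~\ref{p:pt2}: any element exceeding $1/3$ has trivial $S_k$-component, and since ${\rm fpr}$ is then the product $\prod_i {\rm fpr}(x_i,\Gamma)$ of factors at most $1$, the maximum is attained at an element $(x_1,1,\ldots,1)$ with ${\rm fpr} = {\rm fpr}(x_1,\Gamma)$. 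Hence $M$ equals the maximal fixed point ratio of the base group $L$ on $\Gamma$, reducing part (vi) to parts (i)--(iv).

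The main obstacle lies in the classical subspace actions of part (iv), where Table~\ref{tab:md} must be assembled from Table~\ref{tab:class}. For each row I would compute the permutation degree $m = [G:H]$ from the relevant subspace data and set $\mu = m(1 - {\rm fpr}(x))$; the delicate points are (a) discarding those rows of Table~\ref{tab:class} with ${\rm fpr}(x) \leq 1/3$, which do not yield $\mu < 2m/3$, (b) for a group occurring in several rows, comparing the candidate fixed point ratios to pin down $M$, and (c) correctly handling the small overlaps and permutation isomorphisms flagged in Remark~\ref{r:main}(d) so that no case is double-counted or omitted. This bookkeeping, rather than any conceptual difficulty, is where the real care is needed, since the conceptual content of the theorem is entirely supplied by Theorem~\ref{t:main} together with the elementary identity $\mu(G) = m(1-M)$.
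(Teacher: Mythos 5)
Your proposal is correct and follows exactly the route the paper intends: the identity $\mu(G)=m(1-M)$ with $M=\max_{x\ne 1}{\rm fpr}(x)$, the reduction to prime order via $C_{\O}(x)\subseteq C_{\O}(x^j)$, the observation that $(r+1)^{-1}\leqs 1/3$ so any element with ${\rm fpr}(x)>1/3$ must appear in Theorem \ref{t:main}, and then the case-by-case extraction of $M$ (your spot-checks, e.g.\ $1-M=\tfrac{n}{2(n-1)}$ in case (ii), are right, and you correctly flag the needed comparisons between competing rows of Table \ref{tab:class}). The paper gives no separate proof beyond this sketch, so there is nothing further to compare.
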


\renewcommand{\arraystretch}{1.2}
{\small \begin{table}
\[
\begin{array}{lllll} \hline
G_0 & H & m & \mu(G) & \mbox{Conditions} \\ \hline
{\rm L}_{n}(2) & P_1 & 2^n-1 & 2^{n-1} & n \geqs 3 \\ 
{\rm L}_{n}(3) & P_1 & \frac{1}{2}(3^n-1) & 3^{n-1}-1 & \mbox{$n \geqs 3$, $r \in G$} \\  
{\rm U}_4(q) & P_2 & (q^3+1)(q+1) & q^2(q^2-1) & \mbox{$q \in \{2,3\}$, $\tau \in G$} \\
{\rm Sp}_n(2) & P_1 & 2^n-1 & 2^{n-1} & n \geqs 6 \\
 & {\rm O}_n^{\e}(2) & 2^{n/2-1}(2^{n/2}+\e) & 2^{n/2-1}(2^{n/2-1}+\e) & n \geqs 6 \\ 
\O_n(3) & P_1 & \frac{1}{2}(3^{n-1}-1) & 3^{(n-3)/2}(3^{(n-1)/2}-1) & r^{+} \in G \\
& N_1^{-} & \frac{1}{2}3^{(n-1)/2}(3^{(n-1)/2}-1) & 3^{n-2}-2.3^{(n-3)/2}-1 & r^{-} \in G \\
{\rm P\O}_{n}^{\e}(q) & P_1 & (2^{n/2}-\e)(2^{n/2-1}+\e) & 2^{n/2-1}(2^{n/2-1}+\e) & G = {\rm O}_{n}^{\e}(2) \\
& & \frac{1}{2}(3^{n/2}+1)(3^{n/2-1}-1) & 3^{n/2-1}(3^{n/2-1}-1) & \mbox{$(q,\e) = (3,-)$, $r \in G$} \\
 & N_1 & 2^{n/2-1}(2^{n/2}-\e) & 2^{n/2-1}(2^{n/2-1}-\e) & G = {\rm O}_{n}^{\e}(2) \\ 
  & & \frac{1}{2}3^{n/2-1}(3^{n/2}-1) & 3^{n/2-1}(3^{n/2-1}-1) & \mbox{$(q,\e)=(3,+)$, $r_{\boxtimes} \in G$} \\
 & & \frac{1}{2}3^{n/2-1}(3^{n/2}+1) & 3^{n-2}-1 & \mbox{$(q,\e)=(3,-)$, $r_{\square} \in G$} \\ \hline
\end{array}
\]
\caption{The subspace actions in part (iv) of Theorem \ref{t:mindeg}}
\label{tab:md}
\end{table}}
\renewcommand{\arraystretch}{1}

\begin{remk}
In Table \ref{tab:md}, we adopt the standard $P_m$ notation for maximal parabolic subgroups (in which case, we can identify $\O$ with the set of totally singular $m$-dimensional subspaces of the natural module for $G_0$). In the second row, $r \in {\rm PGL}_n(3)$ is the image of a reflection $(-I_{n-1},I_1)$ and we note that $r \in G_0$ if and only if $n$ is odd. Similarly, in the third row, $\tau$ is an involutory graph automorphism with $C_{G_0}(\tau) = {\rm PSp}_4(q)$. For $G_0 = \O_n(3)$, we write $N_1^{-}$ for the stabilizer of a nondegenerate $1$-space $U$ of the natural module such that $U^{\perp}$ is a minus-type orthogonal space. We also write $r^{\e}$ with $\e = \pm$ for a reflection $(-I_{n-1},I_1)$ with an $\e$-type $(-1)$-eigenspace (we note that $r^{\e} \in G$ if and only if $G = {\rm SO}_n(3)$ or $n \equiv \e \imod{4}$). Similarly, if $G_0 = {\rm P\O}_n^{\e}(3)$ with $n$ even, then $r \in {\rm PGO}_{n}^{\e}(3)$ is the image of any reflection of the form $(-I_{n-1},I_1)$, while we write $r_{\delta}$ with $\delta \in \{\square, \boxtimes\}$ if we need to specify the discriminant of the $1$-dimensional $1$-eigenspace of $r$ (which is either a square or nonsquare). In addition, $N_1$ denotes the stabilizer of a nonsingular $1$-space (respectively, a nondegenerate $1$-space with square discriminant) when $q=2$ (respectively, $q=3$).
\end{remk}

In order to describe our next application, let $G$ be a finite group and recall that the \emph{commuting probability} of $G$ is the probability that two random elements of $G$ commute. In \cite{BGMN}, Moret\'{o}, Navarro and the authors introduce a natural analogue of this widely studied notion, which is defined in terms of a prime $r$. Let ${\rm Pr}_r(G)$ be the probability that two random $r$-elements in $G$ commute. Then \cite[Theorem A]{BGMN} is the following.

\begin{theorem}\label{t:bgmn}
Let $G$ be a finite group and let $r$ be a prime. Then  
\[
{\rm Pr}_r(G) > \frac{r^2+r-1}{r^3}
\]
if and only if $G$ has a normal and abelian Sylow $r$-subgroup.
\end{theorem}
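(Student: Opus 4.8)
The forward implication is immediate. If $P \trianglelefteq G$ is a normal abelian Sylow $r$-subgroup, then $P$ is the unique Sylow $r$-subgroup of $G$, so every $r$-element of $G$ lies in $P$; since $P$ is abelian, any two $r$-elements commute and hence ${\rm Pr}_r(G) = 1 > (r^2+r-1)/r^3$. So the entire content of the theorem is the reverse implication, which I would prove in contrapositive form: assuming $G$ has no normal abelian Sylow $r$-subgroup, the goal is
\[
{\rm Pr}_r(G) \leqs \frac{r^2+r-1}{r^3}.
\]
It is worth recording first the extremal configurations that make the bound sharp and guide the analysis: an extraspecial group of order $r^3$ (whose Sylow $r$-subgroup is normal but non-abelian) has exactly $r^2+r-1$ conjugacy classes, so ${\rm Pr}_r = (r^2+r-1)/r^3$; and for $r=2$ the group $S_3$, with non-normal Sylow $2$-subgroup, gives the same value $5/8$. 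A correct argument must therefore reach the constant $(r^2+r-1)/r^3$ exactly, not merely an asymptotic $\sim 1/r$.

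The bridge to Theorem \ref{t:main} is an elementary identity. Writing $X_r$ for the set of $r$-elements of $G$, we have
\[
{\rm Pr}_r(G) = \frac{1}{|X_r|^2}\sum_{x \in X_r} |X_r \cap C_G(x)|,
\]
and for a fixed $r$-element $y$ the proportion of a conjugacy class $x^G$ that commutes with $y$ is precisely the fixed point ratio ${\rm fpr}(y)$ of $y$ in the conjugation action of $G$ on $x^G \cong G/C_G(x)$, since the $y$-fixed points in $x^G$ are exactly $x^G \cap C_G(y)$. Thus bounding ${\rm Pr}_r(G)$ reduces to controlling, for $r$-elements $y$, the quantity $|X_r \cap C_G(y)|/|X_r|$ as a weighted average of fixed point ratios of $y$ over these conjugation actions, and the key leverage is that once $G$ is reduced to an affine, almost simple, or product-type primitive situation, Theorem \ref{t:main} forces ${\rm fpr}(y) \leqs 1/(r+1)$ outside an explicit list of exceptions.

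I would then run the reverse implication on a minimal counterexample $G$, after standard reductions allowing $O_{r'}(G) = 1$, and split according to whether the layer $E(G)$ is trivial. If $E(G) = 1$ then $F^*(G) = O_r(G)$ and $G$ is $r$-constrained; the natural action to consider is that of $G$ on the Frattini quotient of $O_r(G)$, which is of affine type, and the relevant exceptional behaviour is exactly the transvection case of Theorem \ref{t:main}(ii), with the extraspecial group as the extremal obstruction, handled by $r$-local and class-counting estimates. If $E(G) \neq 1$ then $G$ involves non-abelian simple groups; writing $F^*(G)$ in terms of its components and passing to the almost simple or product-type primitive action afforded by them, I would feed in parts (i) and (iii) of Theorem \ref{t:main} to show that $r$-elements have fixed point ratio at most $1/(r+1)$ outside the listed cases, and hence that ${\rm Pr}_r(G)$ falls well below the threshold.

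The main obstacle is the interface between the global probability ${\rm Pr}_r(G)$ and the per-element fixed point ratios, together with the disposal of the exceptional cases of Theorem \ref{t:main}. Summing the fixed-point-ratio bound over conjugacy classes of $r$-elements with the correct weights, while keeping careful track of the dominant contribution of the identity (worth $1/|X_r|$) and of small classes, is where the precise constant is won or lost. In particular the exceptional families — the subset actions of $S_n$ and $A_n$, the transvection and affine cases, the classical subspace actions of the tables, and, via the multiplicativity ${\rm fpr}(x) = \prod_i {\rm fpr}(x_i,\Gamma)$ of Remark \ref{r:main}(c), the product-type groups — must each be shown either to force a normal abelian Sylow $r$-subgroup or to keep ${\rm Pr}_r(G)$ at or below $(r^2+r-1)/r^3$; this case analysis, rather than the fixed point input itself, is the delicate part.
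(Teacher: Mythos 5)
A preliminary point of comparison: this statement is Theorem A of \cite{BGMN}. The present paper only records it, together with a one-line description of the strategy (bound $|C_G(x)_r|/|G_r|$ for every nontrivial $r$-element $x$ by embedding $C_G(x)$ in a maximal subgroup and invoking Theorem \ref{t:main}), and explicitly defers the proof to \cite{BGMN}; there is no in-paper proof to measure you against. What can be assessed is whether your outline would close into a proof. Your forward implication is correct, your identification of the extremal configurations (an extraspecial group of order $r^3$, and $S_3$ for $r=2$) is correct and rightly flags that the constant must be attained exactly, and your identity expressing ${\rm Pr}_r(G)$ as a weighted average over conjugacy classes of fixed point ratios of $r$-elements is correct. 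The overall strategy is consistent with the one the paper attributes to \cite{BGMN}.

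But the reverse implication, which you correctly say is the entire content, remains an outline with at least three genuine gaps. First, Theorem \ref{t:main} applies to \emph{primitive} groups, while the actions you actually write down --- conjugation on $x^G \cong G/C_G(x)$, or the action on the Frattini quotient of $O_r(G)$ --- are transitive (respectively linear) but not primitive in general. One must embed $C_G(x)$ in a maximal subgroup $M$ (using that ${\rm fpr}(y,G/H) \leqslant {\rm fpr}(y,G/M)$ whenever $H \leqslant M$), pass to the primitive quotient modulo the kernel $K$ of the action on $G/M$, and then handle the $r$-elements lying in $K$, for which the fixed point ratio is $1$ and the bound is vacuous; none of this appears in your sketch, and it is exactly where such arguments break. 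Second, the case $F^*(G) = O_r(G)$, where the extremal extraspecial configuration lives and where the constant $(r^2+r-1)/r^3$ is decided, is dispatched with the phrase ``handled by $r$-local and class-counting estimates''; this is precisely the part that cannot be delegated to Theorem \ref{t:main} at all, since the relevant commuting takes place inside $O_r(G)$ and is invisible to any faithful primitive action of $G$, so it requires a genuine self-contained argument. Third, the exceptional cases of Theorem \ref{t:main} --- the subset actions of $S_n$ and $A_n$, the affine transvection case, the subspace actions of Table \ref{tab:class}, and the product-type groups --- are acknowledged but not treated, and for small $r$ these are exactly the cases that threaten the bound. As it stands the proposal is a plausible road map, not a proof; for the complete argument you should consult \cite{BGMN}.
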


The proof of Theorem \ref{t:bgmn} relies on bounding the ratio $|C_G(x)_r|/|G_r|$ for every nontrivial $r$-element $x \in G$, where $K_r$ denotes the set of $r$-elements in the subgroup $K$ of $G$. By embedding $C_G(x)$ in a maximal subgroup of $G$, we can bring bounds on fixed point ratios for primitive groups into play and Theorem \ref{t:main} turns out to be a key ingredient in the proof. We refer the reader to \cite{BGMN} for further details.

Our final application concerns the minimal index of a permutation group. Let $G \leqs {\rm Sym}(\O)$ be a primitive permutation group of degree $m$. For $x \in G$ we define   
\[
{\rm ind}(x) = m - {\rm orb}(x) = m\left(1 - \frac{1}{|x|}\sum_{y \in \la x \ra}{\rm fpr}(y)\right)
\]
to be the \emph{index} of $x$, where ${\rm orb}(x)$ is the number of orbits of $x$ on $\O$. Note that ${\rm ind}(x)$ is also the minimal number $t$ such that $x$ is a product of $t$ transpositions in the symmetric group $S_m$. Let us also observe that if $x$ has order $r$, then ${\rm ind}(x) \leqs m(1-1/r)$.

This quantity arises naturally in various number theoretic estimates, including the Riemann-Hurwitz formula for the genus of a branched covering of a smooth projective curve. Similarly, 
\[
{\rm Ind}(G) = \min\{ {\rm ind}(x) \,:\, 1 \ne x \in G\},
\]
which we call the \emph{minimal index} of $G$, also appears in various number theoretic settings (see the work of Malle \cite{Malle1, Malle2}, for example). In particular, it plays a crucial role in a recent beautiful paper of Bhargava \cite{Bha}, where he estimates the number of number fields of given discriminant with a given Galois group. 

In response to a question from Manjul Bhargava, we can use Theorem \ref{t:main} to investigate ${\rm Ind}(G)$ for an arbitrary primitive permutation group $G$. Our main results are Theorems \ref{t:mind} and \ref{t:mind2} below, which will be proved in  Section \ref{s:appl} as an application of Theorem \ref{t:main}.

\begin{theorem}\label{t:mind}
Let $G \leqs {\rm Sym}(\O)$ be a primitive permutation group of degree $m$ with point stabilizer $H$ and assume $|G|$ is even. Let $x \in G$ be an element with ${\rm Ind}(G) = {\rm ind}(x)$. Then the following hold:
\begin{itemize}\addtolength{\itemsep}{0.2\baselineskip}
\item[{\rm (i)}] $|x| \in \{2,3\}$ and there exists an involution $x \in G$ with ${\rm ind}(x) = {\rm Ind}(G)$.
\item[{\rm (ii)}] If $|x|=3$ then one of the following holds (up to permutation isomorphism):

\vspace{1mm}

\begin{itemize}\addtolength{\itemsep}{0.2\baselineskip}
\item[{\rm (a)}] ${\rm Ind}(G) = m/2$ and $|H|$ is odd.
\item[{\rm (b)}] ${\rm Ind}(G) = 4m/9$, $G = V{:}H$ is an affine group with socle $(C_3)^d$, $x \in H \leqs {\rm GL}_d(3)$ is a transvection and $H$ does not contain an involution of the form $(-I_1,I_{d-1})$.
\item[{\rm (c)}] ${\rm Ind}(G) = 4m/9$, $G = L \wr P$ with its product action on $\O = \Gamma^k$, where $k \geqs 1$, $P \leqs S_k$ is transitive, $L = {\rm L}_2(8){:}3$ in its standard action of degree $9$ and $x \in L^k$ is conjugate to $(x_1, 1, \ldots, 1)$ with $x_1$ a field automorphism of ${\rm L}_2(8)$ of order $3$. 
\item[{\rm (d)}] ${\rm Ind}(G) = 2m/n$, $(A_n)^k \normeq G \leqs S_n \wr S_k$, $n \geqs 5$, $k \geqs 1$, $S_n$ has its natural action on $\{1, \ldots, n\}$, the wreath product has the product action of degree $n^k$, $x$ is conjugate to $(x_1, 1, \ldots, 1) \in (A_n)^k$ with $x_1$ a $3$-cycle and $G \cap (S_n)^k$ does not contain elements of the form $(y_1, 1, \ldots, 1)$, $(y_1,y_2,1, \ldots, 1)$ or $(y_1,y_2,y_3, 1, \ldots, 1)$ ($n=5$ only), up to conjugacy, where each $y_i$ is a transposition.
\end{itemize}
\end{itemize}
\end{theorem}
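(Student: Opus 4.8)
The plan is to work throughout with the exact identity for an element of prime order. If $x$ has prime order $r$, then every nontrivial power of $x$ has the same fixed point set, so $\sum_{y \in \langle x \rangle}{\rm fpr}(y) = 1 + (r-1){\rm fpr}(x)$ and hence
\[
{\rm ind}(x) = \frac{r-1}{r}\,m\,(1 - {\rm fpr}(x)).
\]
This reduces all index comparisons to comparisons of fixed point ratios, for which Theorem \ref{t:main} is tailor-made. The first reduction is that a minimizer has prime order: if $|x|$ is composite, choose a prime $p$ dividing $|x|$; since $|x|$ is the least common multiple of the cycle lengths of $x$ on $\O$, some cycle has length divisible by $p$, and that cycle splits into $p$ shorter cycles under $x^p$. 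Thus ${\rm orb}(x^p) > {\rm orb}(x)$, so ${\rm ind}(x^p) < {\rm ind}(x)$, contradicting minimality. (The same monotonicity ${\rm ind}(x^k) \leqs {\rm ind}(x)$, coming from the fact that orbits of $x^k$ refine those of $x$, will be used repeatedly.) Finally, since $|G|$ is even there is an involution $t \in G$, and ${\rm ind}(t) = \frac12 m(1 - {\rm fpr}(t)) \leqs m/2$.

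Next I would rule out $r \geqs 5$ for the (prime-order) minimizer. If ${\rm fpr}(x) \leqs 1/(r+1)$ then the identity gives ${\rm ind}(x) \geqs \frac{r-1}{r}\cdot\frac{r}{r+1}m = \frac{r-1}{r+1}m \geqs \frac23 m > \frac12 m \geqs {\rm ind}(t)$, contradicting minimality. So $x$ must lie in one of the exceptional families of Theorem \ref{t:main}, and I would treat those with $r \geqs 5$ in turn. For $G = S_n$ or $A_n$ on $\ell$-subsets (case (i)(a)), Remark \ref{r:main}(a) and Proposition \ref{p:alt2} show ${\rm fpr}$ is maximized by an $r$-cycle and, crucially, that $\frac{r-1}{r}(1 - {\rm fpr}(r\text{-cycle}))$ is increasing in $r$; hence an element of order at least $5$ has strictly larger index than the best involution (a transposition in $S_n$, a double transposition in $A_n$), and so is not a minimizer. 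In the affine case (ii), ${\rm fpr}(x) = 1/r$ gives ${\rm ind}(x) = \frac{(r-1)^2}{r^2}m \geqs \frac{16}{25}m > m/2$. The classical subspace rows with $r \geqs 5$ (essentially the ${\rm L}_2(q)$ Borel case, $r = q-1$) are dispatched by direct computation, which gives ${\rm ind}(x)$ close to $m$. Finally, for product type groups (iii), Remark \ref{r:main}(c) shows ${\rm fpr}(x) > 1/(r+1)$ forces $\pi = 1$ and ${\rm fpr}(x) = \prod_i {\rm fpr}(x_i,\Gamma)$; then Proposition \ref{p:pt2} reduces to the subset actions or to an element supported on one coordinate, and the comparison descends to the factor $L$, where the previous analysis applies. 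This establishes $|x| \in \{2,3\}$.

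It then remains to compare order $2$ with order $3$ and to prove the sharper claims. Writing $t$ for a best involution and $x$ for a best element of order $3$, I would show ${\rm ind}(t) \leqs {\rm ind}(x)$ always, which simultaneously yields the existence of an involution attaining ${\rm Ind}(G)$. Indeed, if ${\rm fpr}(x) \leqs 1/4$ then ${\rm ind}(x) \geqs \frac23\cdot\frac34 m = \frac12 m \geqs {\rm ind}(t)$, with equality forcing ${\rm fpr}(x) = 1/4$ and ${\rm fpr}(t) = 0$ for every involution, i.e. $|H|$ odd: this is case (ii)(a) with ${\rm Ind}(G) = m/2$. If instead ${\rm fpr}(x) > 1/4$, then $x$ lies in an exceptional family for $r = 3$, and each is matched against the best available involution: the affine transvection (${\rm fpr} = 1/3$) gives ${\rm ind}(x) = 4m/9$ and ties precisely with an involution having a $2$-dimensional $(-1)$-eigenspace, provided no reflection $(-I_1,I_{d-1})$ is present (case (ii)(b)); the field automorphism of ${\rm L}_2(8)$ on $9$ points gives ${\rm ind}(x) = 4m/9$, matched by a unipotent involution, and this propagates through the product action to give (ii)(c); the natural action of $A_n$ gives a $3$-cycle of index $2m/n$ tied by a double transposition, which yields (ii)(d) after excluding the small-support transposition-type elements that would otherwise beat it; and each $r = 3$ row of Table \ref{tab:class} is checked to carry a transvection involution of strictly smaller index, so produces no new case. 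Determining the precise equality conditions in these computations pins down (a)--(d).

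The main obstacle is the exceptional-family bookkeeping rather than any single hard idea. Three points require genuine care: verifying the monotonicity of $\frac{r-1}{r}(1 - {\rm fpr})$ in the cycle length for the subset actions and isolating exactly the configurations (only the natural action, with the stated classes of transposition-type elements absent) in which a $3$-cycle ties an involution; handling the product type groups, where one must combine Proposition \ref{p:pt2} with the multiplicativity of ${\rm fpr}$ across coordinates and separately dispose of the $\pi \neq 1$ elements via the generic bound; and, for part (i), guaranteeing that whenever a $3$-element attains ${\rm Ind}(G)$ there is genuinely an involution of equal index --- for instance, showing that an irreducible affine group containing a transvection but no reflection must contain an involution with a $2$-dimensional $(-1)$-eigenspace, which I would extract from the structure of groups generated by transvections.
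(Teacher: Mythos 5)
Your proposal is correct and follows essentially the same route as the paper's proof: reduce to prime order, use the existence of an involution (so ${\rm Ind}(G) \leqs m/2$) together with the fixed point ratio bounds of Theorem \ref{t:main} to eliminate $r \geqs 5$, and then settle the $r=2$ versus $r=3$ comparison case by case through the exceptional families, invoking McLaughlin's classification of irreducible groups generated by transvections for the affine case and the $(1-2/n)^t$ computation for the wreath products. Your explicit identity ${\rm ind}(x) = \frac{r-1}{r}m(1-{\rm fpr}(x))$ is the same bookkeeping the paper performs implicitly, and the three points you flag as needing care are precisely the ones the paper's argument addresses.
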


\begin{theorem}\label{t:mind2}
Let $G \leqs {\rm Sym}(\O)$ be a primitive permutation group of degree $m$ with point stabilizer $H$ and assume $|G|$ is even. Then either
\[
\frac{m}{4} \leqs {\rm Ind}(G) \leqs \frac{m}{2},
\]
or one of the following holds (up to permutation isomorphism):
\begin{itemize}\addtolength{\itemsep}{0.2\baselineskip}
\item[{\rm (i)}] $G = L \wr P$ with its product action on $\O = \Gamma^k$, where $k \geqs 1$, $P \leqs S_k$ is transitive, $L \leqs {\rm Sym}(\Gamma)$ is an almost simple primitive classical group in a subspace action with point stabilizer $J$ and 
\[
\frac{3m}{14} \leqs {\rm Ind}(G) = |\Gamma|^{k-1}{\rm Ind}(L,\Gamma)<\frac{m}{4},
\]
where $(L,J,|\Gamma|,{\rm Ind}(L,\Gamma))$ is one of the cases in Table \ref{tab:md2}. 
\item[{\rm (ii)}] $G$ is a subgroup of $S_n \wr S_k$ containing $(A_n)^k$ with $k \geqs 1$, where the action of $S_n$ is on $\ell$-element subsets of $\{1, \ldots, n\}$ and the wreath product has the product action of degree $\binom{n}{\ell}^k$.
\end{itemize}
\end{theorem}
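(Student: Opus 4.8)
The plan is to reduce everything to a statement about fixed point ratios of involutions, via the elementary observation that for an element $x$ of prime order $r$ every nontrivial power of $x$ fixes exactly the same points, so that $\sum_{y \in \langle x \rangle}{\rm fpr}(y) = 1 + (r-1){\rm fpr}(x)$ and hence
\[
{\rm ind}(x) = \frac{(r-1)m}{r}\left(1 - {\rm fpr}(x)\right).
\]
Since $|G|$ is even, $G$ contains an involution $t$, and this formula with $r=2$ gives ${\rm ind}(t) = \frac{m}{2}(1-{\rm fpr}(t)) \leqs m/2$, yielding the upper bound ${\rm Ind}(G) \leqs m/2$ at once. For the lower bound I would invoke Theorem \ref{t:mind}(i), which guarantees that ${\rm Ind}(G)$ is attained by \emph{some} involution. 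Combining this with the $r=2$ formula shows that
\[
{\rm Ind}(G) < \frac{m}{4} \quad\Longleftrightarrow\quad G \mbox{ has an involution } t \mbox{ with } {\rm fpr}(t) > \tfrac{1}{2}.
\]
Thus the entire problem becomes the classification of primitive groups admitting an involution of fixed point ratio exceeding $1/2$, together with a computation of ${\rm Ind}(G)$ in each surviving case; in particular, no separate treatment of elements of order $3$ is needed.

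Next I would feed the condition ${\rm fpr}(t) > 1/2 > 1/3 = 1/(r+1)$ into Theorem \ref{t:main} with $x=t$ and $r=2$, and work through its list of exceptional configurations. Several are eliminated immediately because the stated ratio never exceeds $1/2$: in case (i)(b) one has ${\rm fpr}(t) = \frac13 + \frac{n-4}{6(n-1)} < \frac12$; in case (i)(c) one has ${\rm fpr}(t) = 4/11$; and in the affine case (ii) a transvection with $p=2$ gives ${\rm fpr}(t) = 1/2$, which is not strictly larger. This leaves the subset actions of case (i)(a), the classical subspace actions of case (i)(d), and the product type groups of case (iii). The subset actions and their product analogues are placed directly into part (ii) of the theorem; here one appeals to Proposition \ref{p:alt2} for the relevant fixed point ratios but makes no attempt to pin down ${\rm Ind}(G)$ beyond the general statement.

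For the product type case (iii) I would use Remark \ref{r:main}(c) and Proposition \ref{p:pt2} to reduce to a single nontrivial component. Writing $t = (t_1, \ldots, t_k)\pi$, the bound ${\rm fpr}(t) > 1/3$ forces $\pi = 1$ and ${\rm fpr}(t) = \prod_i {\rm fpr}(t_i, \Gamma)$; since every nontrivial involutory factor in a non-subset classical subspace action has ${\rm fpr}(t_i, \Gamma) \leqs 4/7$, a product of two or more such factors already falls below $1/2$. Hence $t$ is conjugate to $(t_1, 1, \ldots, 1)$, and either $L$ is a subset action (giving part (ii)) or $(L,J,t_1)$ is one of the cases of Theorem \ref{t:main}(i)(d). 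In the latter situation ${\rm ind}(t) = |\Gamma|^{k-1}{\rm ind}(t_1,\Gamma)$, and because Theorem \ref{t:mind}(i) allows me to take the minimizing element to be an involution of exactly this shape, ${\rm Ind}(G) = |\Gamma|^{k-1}{\rm Ind}(L,\Gamma)$, which is the formula recorded in part (i).

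The remaining and most laborious step is the analysis of the classical subspace actions, where I would run through Table \ref{tab:class} entry by entry, retaining exactly those rows containing an involution with ${\rm fpr}(t) > 1/2$ and converting each such value into an index via ${\rm ind}(t) = \frac{m}{2}(1-{\rm fpr}(t))$. This produces Table \ref{tab:md2} and, crucially, the uniform lower bound: the largest involutory fixed point ratio that occurs is $4/7$, attained by ${\rm Sp}_6(2)$ acting on the cosets of ${\rm O}_6^{-}(2)$, giving ${\rm ind}(t) = \frac{m}{2}\cdot\frac37 = \frac{3m}{14}$, while the infinite families (for instance ${\rm Sp}_n(2)$ on ${\rm O}_n^{-}(2)$, where ${\rm fpr}(t) = \frac12 + \frac{1}{2(2^{n/2}-1)}$) have ${\rm fpr}(t)$ decreasing to $1/2$ and hence ${\rm ind}(t)$ increasing to $m/4$ from below. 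I expect this bookkeeping over Table \ref{tab:class} to be the main obstacle: it requires the precise fixed point ratios and element orders for every subspace configuration, care to confirm that no involution in any such group has fixed point ratio exceeding $4/7$ (so that the bound $3m/14$ is genuinely uniform), and attention to the small-degree coincidences flagged in Remark \ref{r:main}(d), where a single group can legitimately appear under more than one heading.
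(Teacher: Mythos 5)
Your proposal follows essentially the same route as the paper: reduce to an involution attaining ${\rm Ind}(G)$ via Theorem \ref{t:mind}, use ${\rm ind}(x)=\frac{m}{2}(1-{\rm fpr}(x))$ to translate ${\rm Ind}(G)<m/4$ into the existence of an involution with ${\rm fpr}(x)>\frac12$, feed this into Theorem \ref{t:main}, and then harvest Table \ref{tab:md2} from the $r=2$ rows of Table \ref{tab:class} (the paper packages this last bookkeeping as Proposition \ref{p:mind2}(iv), but that proposition is itself proved by exactly the case-check you describe, including the identification of $4/7$ for ${\rm Sp}_6(2)$ on ${\rm O}_6^-(2)$ as the extremal ratio). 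Your numerical eliminations of cases (i)(b), (i)(c) and the affine case are all correct.

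The one point you do not address is the assertion in part (i) that $G$ is the \emph{full} wreath product $L\wr P$, rather than merely a subgroup of it containing $T^k$ where $T={\rm soc}(L)$. This needs an argument: in three of the four rows of Table \ref{tab:md2} one has $|L:T|=2$ and every involution $y\in L$ with ${\rm ind}(y,\Gamma)={\rm Ind}(L,\Gamma)$ lies in $L\setminus T$ (a graph automorphism for $T={\rm U}_4(2)$, a $b_1$-involution for $T=\O_n^{\pm}(2)$), so the presence in $G$ of an element conjugate to $(x_1,1,\ldots,1)$ with $x_1\in L\setminus T$, together with $T^k\leqs G$ and the transitivity of the top group, forces $G\supseteq L^k$ and hence $G=L\wr P$; in the remaining row $L={\rm Sp}_n(2)$ is simple and the conclusion is immediate. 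This also underpins the equality ${\rm Ind}(G)=|\Gamma|^{k-1}{\rm Ind}(L,\Gamma)$, since one must know that the minimizing involution of $L$ on $\Gamma$ genuinely occurs in $G$ in the first coordinate. It is a short supplementary argument, but it is part of the statement and should be supplied.
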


\renewcommand{\arraystretch}{1.2}
{\small \begin{table}
\[
\begin{array}{lllll} \hline
L & J & |\Gamma| & {\rm Ind}(L,\Gamma) & {\rm Conditions} \\ \hline
{\rm U}_4(2).2 & P_2 & 27 & 6 \\
{\rm Sp}_n(2) & {\rm O}_{n}^{-}(2) & 2^{n/2-1}(2^{n/2}-1) & 2^{n/2-2}(2^{n/2-1}-1) & n \geqs 6 \\
{\rm O}_{n}^{-}(2) & P_1 & (2^{n/2-1}-1)(2^{n/2}+1) & 2^{n/2-2}(2^{n/2-1}-1) & n \geqs 8 \\
{\rm O}_{n}^{+}(2) & N_1 & 2^{n/2-1}(2^{n/2}-1) & 2^{n/2-2}(2^{n/2-1}-1) & n \geqs 8 \\ \hline
\end{array}
\]
\caption{The cases arising in part (i) of Theorem \ref{t:mind2}}
\label{tab:md2}
\end{table}}
\renewcommand{\arraystretch}{1}

If $G$ is a primitive group of odd order, then $G$ is solvable by the Feit-Thompson theorem and thus $G$ is an affine group of degree $m=p^d$ for some odd prime $p$. In Theorem \ref{t:odd} we will show that  
\[
\min\left\{ m\left(1-\frac{3}{2r+1}\right), m\left(1-\frac{1}{p}\right)^2 \right\} \leqs {\rm Ind}(G) \leqs m\left(1-\frac{1}{r}\right),
\]
where $r$ is the smallest prime divisor of $|G|$.

\vs

A framework for our proof of Theorem \ref{t:main} is provided by the Aschbacher-O'Nan-Scott theorem, which divides the finite primitive permutation groups into several families (see Table \ref{tab:prim} for a rough description). We proceed by considering each family in turn. As one might expect, most of the work involves the almost simple groups, with a long and delicate analysis required for the subspace actions of classical groups (this is carried out in Section \ref{s:class2}). Our proof for almost simple groups relies heavily on some of the earlier results on fixed point ratios referred to above (in particular, the main theorem of \cite{LS91}, combined with \cite{Bur1, Bur2, Bur3, Bur4, GK} for classical groups and \cite{LLS} for exceptional groups of Lie type). 

\renewcommand{\arraystretch}{1.2}
{\small \begin{table}
\[
\begin{array}{ll} \hline
\mbox{Type} & \mbox{Description} \\ \hline
\mbox{I} & \mbox{Affine: $G = V{:}H \leqs {\rm AGL}(V)$, $H \leqs{\rm GL}(V)$ irreducible} \\
\mbox{II} & \mbox{Almost simple: $T \leqs G \leqs {\rm Aut}(T)$} \\
\mbox{III(a)(i)} & \mbox{Diagonal type: $T^k \leqs G \leqs T^k.({\rm Out}(T) \times P)$, $P \leqs S_k$ primitive} \\
\mbox{III(a)(ii)} & \mbox{Diagonal type: $T^2 \leqs G \leqs T^2.{\rm Out}(T)$} \\
\mbox{III(b)(i)} & \mbox{Product type: $G \leqs L \wr P$, $L$ primitive of type II, $P \leqs S_k$ transitive} \\
\mbox{III(b)(ii)} & \mbox{Product type: $G \leqs L \wr P$, $L$ primitive of type III(a), $P \leqs S_k$ transitive} \\
\mbox{III(c)} & \mbox{Twisted wreath product} \\ \hline
\end{array}
\]
\caption{The finite primitive permutation groups}
\label{tab:prim}
\end{table}}
\renewcommand{\arraystretch}{1}

\section{Affine groups, diagonal groups and twisted wreath products}\label{s:adtw}

In this section we prove Theorem \ref{t:main} when $G$ is either a primitive group of affine type, diagonal type or a twisted wreath product.

\subsection{Affine groups}\label{ss:affine}

\begin{prop}\label{p:affine}
Let $G \leqs {\rm Sym}(\O)$ be a finite primitive permutation group of affine type with socle $(C_p)^d$ and point stabilizer $H \leqs {\rm GL}_d(p)$, where $p$ is a prime. If $x \in G$ has  prime order $r$, then either 
\begin{itemize}\addtolength{\itemsep}{0.2\baselineskip}
\item[{\rm (i)}] ${\rm fpr}(x) \leqs (r+1)^{-1}$; or
\item[{\rm (ii)}] $r=p$, $x$ is conjugate to a transvection in $H$ and ${\rm fpr}(x) = r^{-1}$.  
\end{itemize}
\end{prop}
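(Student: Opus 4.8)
The plan is to work directly with the affine action of $G$ on $\O = V$, where the point stabilizer $H$ fixes the zero vector. Write $x$ as the affine map $w \mapsto wh + v$, where $h = \bar{x} \in H$ is the image of $x$ under the projection $G \to G/V \cong H$ and $v \in V$ is the translation part. The fixed points of $x$ are exactly the solutions $w$ of $w(I-h) = v$, so $C_{\O}(x)$ is either empty or a coset of $C_V(h) = \ker(I-h)$; in either case
\[
{\rm fpr}(x) \leqs \frac{|C_V(h)|}{|V|} = p^{-f}, \quad f := \dim \mathrm{Im}(I-h) = d - \dim C_V(h),
\]
with equality precisely when $v \in \mathrm{Im}(I-h)$ (otherwise ${\rm fpr}(x) = 0$). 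If $h = 1$ then $x$ is a nontrivial translation, so $r = p$ and ${\rm fpr}(x) = 0$, giving (i). Thus I may assume $h \neq 1$, whence $\bar{x}$ has order $r$ and $f \geqs 1$.

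Next I would treat the defining-characteristic case $r = p$, where $h$ is a nontrivial unipotent element. If $f \geqs 2$ then ${\rm fpr}(x) \leqs p^{-2} \leqs (p+1)^{-1} = (r+1)^{-1}$, giving (i). If $f = 1$ then $h - I$ has rank one, say $h = I + u \otimes \phi$ with $0 \ne u \in V$ and $\phi$ a functional; since $h$ has order $p$ we must have $\phi(u) = 0$ (otherwise $h$ has the eigenvalue $1+\phi(u) \in \mathbb{F}_p^{\times}\setminus\{1\}$ and order dividing $p-1$), so $h$ is a transvection. Here ${\rm fpr}(x) \in \{0, 1/p\}$; the value $0$ lands in (i), while ${\rm fpr}(x) = 1/p = 1/r$ occurs exactly when $v \in \mathrm{Im}(I-h)$. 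Conjugating $x$ by a translation moves the translation part within its coset $v + \mathrm{Im}(I-h)$, so when $v \in \mathrm{Im}(I-h)$ I can reduce the translation part to zero; then $x$ is $G$-conjugate to the transvection $h \in H$, which is exactly the exceptional case (ii).

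The remaining case is $r \neq p$, where $h$ is semisimple of order $r$ coprime to $p$. I would first show that $v \in \mathrm{Im}(I-h)$ automatically, so that ${\rm fpr}(x) = p^{-f}$: since $h$ is semisimple we have $V = C_V(h) \oplus \mathrm{Im}(I-h)$, and the operator $N = I + h + \cdots + h^{r-1}$ acts as multiplication by $r$ on $C_V(h)$ and as $0$ on $\mathrm{Im}(I-h)$; the relation $x^r = 1$ forces $vN = 0$, and as $r$ is invertible modulo $p$ this kills the $C_V(h)$-component of $v$. To finish I bound $f$ from below by Galois theory: the nontrivial eigenvalues of $h$ are primitive $r$-th roots of unity lying in $\mathbb{F}_{p^e}$, where $e$ is the multiplicative order of $p$ modulo $r$, and they occur in Galois orbits of size $e$ because $h$ is realized over $\mathbb{F}_p$. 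Hence $f$ is a positive multiple of $e$, so $f \geqs e$. Finally $p^e > 1$ with $p^e \equiv 1 \imod{r}$ forces $p^e \geqs r+1$, and therefore
\[
{\rm fpr}(x) = p^{-f} \leqs p^{-e} \leqs \frac{1}{r+1},
\]
which is (i).

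The underlying linear algebra is routine once the fixed-point count is recorded; the step demanding the most care is the estimate for $r \neq p$, where one must combine the order constraint on $x$ (to locate the translation part in $\mathrm{Im}(I-h)$) with the Galois-orbit structure of the eigenvalues of $h$ to extract the clean inequalities $f \geqs e$ and $p^e \geqs r+1$.
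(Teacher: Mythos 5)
Your argument is correct and follows essentially the same route as the paper: reduce to the case where $x$ has fixed points (so is conjugate into $H$ up to a translation), observe that ${\rm fpr}(x) = p^{-f}$ with $f = d - \dim C_V(\bar{x})$, and split on $r = p$ versus $r \ne p$, where your Galois-orbit bound $f \geqs e$ and $p^e \geqs r+1$ is just a spelled-out version of the paper's observation that $r$ divides $|{\rm GL}_{d-e}(p)|$ and hence $r \leqs p^{d-e}-1$. The extra care you take with the translation part (showing $v \in \mathrm{Im}(I-h)$ when $r \ne p$, and conjugating it away in the transvection case) is all sound and makes explicit what the paper compresses into "by replacing $x$ by a suitable conjugate, we may assume $x \in H$".
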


\begin{proof}
Write $G = V{:}H$, where $V = (\mathbb{F}_p)^d$ and $H \leqs {\rm GL}(V)$ is irreducible. By replacing $x$ by a suitable conjugate, we may assume that $x \in H$ (otherwise ${\rm fpr}(x) = 0$). Set $e = \dim C_V(x)$ and note that ${\rm fpr}(x) = p^{e-d}$.

If $r=p$ then either $e \leqs d-2$ and ${\rm fpr}(x) \leqs p^{-2} < (r+1)^{-1}$, or $e = d-1$, $x$ is a transvection and ${\rm fpr}(x) = r^{-1}$. Now assume $r \ne p$. Here $r$ divides $|{\rm GL}_{d-e}(p)|$, so $r \leqs p^{d-e}-1$ and thus ${\rm fpr}(x) \leqs (r+1)^{-1}$ as required.
\end{proof}

\subsection{Diagonal groups}\label{ss:diag} 

Next we turn to the primitive groups of diagonal type. We will need the following lemma on finite simple groups.

\begin{lem}\label{l:simple}
Let $T$ be a nonabelian finite simple group. Then the following hold:
\begin{itemize}\addtolength{\itemsep}{0.2\baselineskip}
\item[{\rm (i)}] $|{\rm Out}(T)|^3 < |T|$.
\item[{\rm (ii)}] $|\{ t \in T \,:\, t^{\a} = t^{-1}\}| \leqs 4|T|/15$ for all $\a \in {\rm Aut}(T)$.
\item[{\rm (iii)}] $|T| \geqs (|\a|+1)|C_{{\rm Inn}(T)}(\a)|$ for all $\a \in {\rm Aut}(T)$ of prime order.
\end{itemize}
\end{lem}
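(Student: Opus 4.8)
The plan is to prove the three parts of Lemma~\ref{l:simple} by reducing each to the classification of finite simple groups and handling the families (alternating, sporadic, Lie type) separately, relying on standard bounds for $|T|$, $|\Out(T)|$, and centralizer orders. Throughout, the main tool is that for $T$ of Lie type over $\mathbb{F}_q$ of rank $\ell$, one has $|\Out(T)|$ bounded by a small polynomial in $\ell$ times $\log q$ (coming from diagonal, field and graph automorphisms), whereas $|T|$ grows like a power of $q$ with exponent $\dim T$; this gives enormous slack in (i) and considerable slack in the other parts. I would organize the write-up so that the generic Lie type estimate is done once and reused.

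For part~(i), I would verify $|\Out(T)|^3 < |T|$ directly. For alternating groups $|\Out(A_n)| \leqs 4$ while $|A_n|$ is huge, so this is immediate for $n \geqs 5$; the sporadic groups all have $|\Out(T)| \leqs 2$ and are checked from the Atlas \cite{ATLAS}. For Lie type, I would use the explicit formula $|\Out(T)| = d \cdot f \cdot g$ (diagonal, field, graph parts) and compare $d^3 f^3 g^3$ against $|T|$; since $d \leqs \ell+1$, $f = \log_p q$ and $g \leqs 6$, the cube is polynomially bounded in $\ell$ and logarithmically in $q$, which is dominated by $|T| \geqs q^{\dim T - \text{const}}$. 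The only real care is the smallest cases (small rank and small $q$), which I would dispose of by a short finite check.

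For part~(ii), the quantity $|\{t \in T : t^\alpha = t^{-1}\}|$ is the number of elements inverted by the automorphism $\alpha$, i.e.\ elements $t$ with $t \cdot t^\alpha = 1$. I would bound this by relating it to the number of involutions and strongly real classes, or more efficiently by noting that the map $t \mapsto t^\alpha$ is a bijection and using a counting/character-theoretic argument: the set of inverted elements is a union of cosets structure controlled by $C_T(\alpha)$ and the real classes. The constant $4/15$ strongly suggests the extremal case is a specific small group (the fraction $4/15$ points to $T = A_5$, where $|T| = 60$ and $16$ elements are inverted by a suitable $\alpha$, giving exactly $4/15$). So the strategy is to show $A_5$ is extremal: prove a clean general bound (e.g.\ that the inverted set has size at most some fraction strictly below $1/2$ for all but finitely many $T$) and then check the small groups individually to confirm none exceeds $4/15$.

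For part~(iii), I want $|T| \geqs (|\alpha|+1)\,|C_{\Inn(T)}(\alpha)|$ for $\alpha \in \Aut(T)$ of prime order. Writing $|\alpha| = s$ prime and identifying $C_{\Inn(T)}(\alpha)$ with $C_T(\alpha)$, this is equivalent to $|T| \geqs (s+1)|C_T(\alpha)|$, which by the orbit-counting identity $|T|/|C_T(\alpha)| = |\alpha^{T}\text{-orbit}|$ amounts to saying the $\alpha$-conjugacy class (or fixed-point data) is large. I would split into the case $\alpha$ inner and $\alpha$ outer. For $\alpha$ inner of prime order $s$, $|C_T(\alpha)| = |C_T(t)|$ for the corresponding $t \in T$ of order $s$, and the bound $|t^T| \geqs s+1$ follows from lower bounds on conjugacy class sizes of prime-order elements in simple groups; the hard case is $s$ small (especially $s = 2$, $3$) where I must verify that no prime-order element has centralizer index less than $s+1$. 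For $\alpha$ outer, I use that $|C_T(\alpha)|$ is comparatively small (for field and graph automorphisms the fixed-point subgroup is a simple group over a subfield or of smaller type, whose order is a genuine power smaller than $|T|$), so the inequality is generous.

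The main obstacle across all three parts is the uniform treatment of the small-rank, small-$q$ groups of Lie type together with the handful of sporadic and small alternating groups, where the asymptotic estimates lose their slack and the extremal configurations live. In particular, pinning down the exact extremal constant $4/15$ in part~(ii) and the tight small-centralizer cases in part~(iii) will require explicit case analysis (likely invoking the Atlas \cite{ATLAS} and known centralizer orders), and I expect this finite checking, rather than the generic asymptotics, to be where the genuine work lies.
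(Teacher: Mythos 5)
Your overall CFSG-driven strategy is reasonable in outline (and for part (i) it is essentially what the cited source does; the paper simply quotes \cite[Lemma 4.8]{faw1}), but there are two genuine gaps. First, in part (ii) your proposed general bound --- ``the inverted set has size at most some fraction strictly below $1/2$ for all but finitely many $T$'' --- is too weak to conclude: you need the fraction to be at most $4/15 \approx 0.267$ for \emph{every} nonabelian simple group, so a generic bound of, say, $0.4$ leaves infinitely many groups unaccounted for. Getting the constant $4/15$ (with $A_5$ extremal) is a substantial theorem in its own right, essentially the statement that a group in which an automorphism inverts more than $4/15$ of the elements is solvable; the paper does not reprove it but cites \cite[Theorem 3.1]{Potter}. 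Your sketch contains no mechanism that would actually produce this constant beyond ``check the small groups'', and the hard part is precisely the reduction to finitely many groups with the right threshold.

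Second, in part (iii) both halves of your case division are underpowered. For $\alpha$ inner of prime order $r$, the assertion that $|t^T| \geqs r+1$ ``follows from lower bounds on conjugacy class sizes'' begs the question for large $r$: the clean argument (used in the paper) is Burnside's theorem that a nonabelian simple group has no nontrivial conjugacy class of prime-power length, so the class cannot have size exactly $r$, and if it had size at most $r-1$ then $T$ would embed in $S_{r-1}$, contradicting the existence of an element of order $r$. For $\alpha$ outer, you consider only field and graph automorphisms, but diagonal automorphisms of prime order $r \geqs 5$ do occur (e.g.\ $T = {\rm L}_5(11)$ with $r=5$, where $r$ divides $(n,q-\e)$), and their centralizers are large subgroups such as images of ${\rm GL}_{n-1}^{\e}(q)$, so this case needs its own explicit estimate; your claim that the fixed-point subgroup is ``a simple group over a subfield or of smaller type'' does not apply to it. You are also missing the paper's cheap reduction for the outer case: since $C_{{\rm Inn}(T)}(\alpha)$ is a proper subgroup of the simple group ${\rm Inn}(T)$, its index is at least $5$, which disposes of all outer automorphisms of order $2$ and $3$ (in particular all graph and graph-field automorphisms) at once and leaves only the diagonal and field cases with $r \geqs 5$.
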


\begin{proof}
Part (i) is \cite[Lemma 4.8]{faw1} and part (ii) follows from \cite[Theorem 3.1]{Potter}. 

Now consider part (iii) and let $\a \in {\rm Aut}(T)$ be an automorphism of prime order $r$. First assume $\a \in {\rm Inn}(T)$, in which case it suffices to show that $|\a^{{\rm Inn}(T)}| \geqs r+1$. If $|\a^{{\rm Inn}(T)}| \leqs r$ then $|\a^{{\rm Inn}(T)}| \leqs r-1$ since no simple group has a nontrivial conjugacy class of 
prime-power length (this is a classical result of Burnside), whence ${\rm Inn}(T)$ is isomorphic to a subgroup of $S_{r-1}$ and this  contradicts the fact that ${\rm Inn}(T)$ contains an element of order $r$. Now assume $\a \in {\rm Aut}(T) \setminus {\rm Inn}(T)$. Since $T$ is simple, the index of $C_{{\rm Inn}(T)}(\a)$ in ${\rm Inn}(T)$ is at least $5$ and so we may assume $|\a| = r \geqs 5$. This implies that $T$ is a group of Lie type over $\mathbb{F}_q$ and we write ${\rm Inndiag}(T)$ for the subgroup of ${\rm Aut}(T)$ generated by the inner and diagonal automorphisms of $T$. Then either
\begin{itemize}\addtolength{\itemsep}{0.2\baselineskip}
\item[{\rm (a)}] $T = {\rm L}_{n}^{\e}(q)$, $\a \in {\rm Inndiag}(T) \setminus {\rm Inn}(T)$ and $r$ divides $(n,q-\e)$; or
\item[{\rm (b)}] $q=q_0^r$ for some prime power $q_0$ and $\a$ is a field automorphism. 
\end{itemize}

If (a) holds, then 
\[
\frac{|T|}{|C_{{\rm Inn}(T)}(\a)|} \geqs \frac{|{\rm GL}_{n}^{\e}(q)|}{|{\rm GL}_{n-1}^{\e}(q)||{\rm GL}_{1}^{\e}(q)|} = \frac{q^{n-1}(q^n-\e)}{q-\e} \geqs r+1
\]
as required. Similarly, if (b) holds then $|C_{{\rm Inn}(T)}(\a)| \leqs |{\rm Inndiag}(S)|$, where $S$ is a group of the same type as $T$, but defined over the subfield $\mathbb{F}_{q_0}$. Once again, it is easy to verify the desired bound.
\end{proof}

\begin{prop}\label{p:diagonal}
Let $G \leqs {\rm Sym}(\O)$ be a finite primitive permutation group of diagonal type and let $x \in G$ be an element of prime order $r$. Then ${\rm fpr}(x) \leqs (r+1)^{-1}$.
\end{prop}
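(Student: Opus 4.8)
The plan is to parametrise $\Omega$ explicitly and reduce the computation of ${\rm fpr}(x)$ to counting solutions of twisted conjugacy equations in $T$, which I then bound using Lemma \ref{l:simple}. Write the socle as $N = T^k = T_1 \times \cdots \times T_k$ with $T$ nonabelian simple and $k \geqs 2$, and identify $\Omega$ with the set of left cosets $D\backslash T^k$, where $D = \{(t, \ldots, t) : t \in T\}$ is the diagonal subgroup, so that $|\Omega| = |T|^{k-1}$. Here $T^k$ acts by right multiplication, $\Aut(T)$ acts diagonally on the coordinates, and $S_k$ permutes them, so a general element may be written $x = (s_1, \ldots, s_k)\alpha\sigma$ with $(s_i) \in T^k$, $\alpha \in \Aut(T)$ and $\sigma \in S_k$ the image of $x$ in the top group. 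Since $G \leqs T^k.(\Out(T) \times P)$ with $P \leqs S_k$ and $x$ has prime order $r$, the permutation $\sigma$ has order $1$ or $r$. A short calculation shows that a coset $D(t_1, \ldots, t_k)$ is fixed by $x$ precisely when there exists $a \in T$ with $t_{\sigma^{-1}(i)}^\alpha s_i = a t_i$ for all $i$; for such a tuple $a$ is uniquely determined, so writing $N_x$ for the number of pairs $((t_i),a)$ satisfying these relations we obtain ${\rm fpr}(x) = N_x/|T|^k$. These relations factor over the cycles of $\sigma$, and within a cycle all coordinates are determined by the first one together with a single wrap-around constraint.

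First suppose $\sigma = 1$. If $x \in N$ is inner, say $x = (t_1, \ldots, t_k)$, then the fixing condition forces the $t_i$ to be conjugate to a common element $d$ of order $r$, and a direct count gives ${\rm fpr}(x) = |d^T|^{-(k-1)}$. Applying Lemma \ref{l:simple}(iii) to the inner automorphism of order $r$ determined by $d$ yields $|d^T| = |T|/|C_T(d)| \geqs r+1$, whence ${\rm fpr}(x) \leqs (r+1)^{-(k-1)} \leqs (r+1)^{-1}$. If instead $x$ induces a nontrivial outer automorphism $\alpha$ of order $r$, then the fixed-point set is, if nonempty, a coset of $C_T(\beta) := \{v \in T : v^\beta = v\}$ for some $\beta$ in the coset $\alpha\,{\rm Inn}(T)$, so ${\rm fpr}(x) \leqs (|C_T(\beta)|/|T|)^{k-1}$. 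Passing to a suitable power $\beta'$ of $\beta$ of prime order $r$ (which exists because the order of $\beta$ is divisible by $r$) gives $C_T(\beta) \leqs C_T(\beta')$, and Lemma \ref{l:simple}(iii) applied to $\beta'$ again forces $|T|/|C_T(\beta')| \geqs r+1$; Lemma \ref{l:simple}(i) is available to keep the contribution of the outer automorphisms under control. This settles the case $\sigma = 1$.

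Next suppose $\sigma \neq 1$, so that $\sigma$ is a product of $c \geqs 1$ cycles of length $r$ together with $k - cr$ fixed points. Bounding each cyclic wrap-around constraint trivially (at most $|T|$ solutions for the free coordinate, and a factor $|T|$ for the choice of $a$) gives ${\rm fpr}(x) \leqs |T|^{1 - c(r-1)}$. When $r \geqs 3$, or when $r = 2$ and $c \geqs 2$, the exponent is at most $-1$, so ${\rm fpr}(x) \leqs |T|^{-1} \leqs (r+1)^{-1}$, using $|T| \geqs 60$. The only remaining possibility is $r = 2$ with $\sigma$ a single transposition. Here I would use $x^2 = 1$ to show that the fixing equation reduces to $z^\alpha = z^{-1}$ up to a fixed translation, so that $|{\rm Fix}(x)|$ is at most the size of an inverting set $\{t \in T : t^\alpha = t^{-1}\}$; Lemma \ref{l:simple}(ii) then gives ${\rm fpr}(x) \leqs 4/15 < 1/3 = (r+1)^{-1}$, and any fixed coordinates only impose further constraints that decrease ${\rm fpr}(x)$.

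The main obstacle is the case $\sigma \neq 1$: the trivial cycle-by-cycle estimate is too weak precisely for $r = 2$ with a single transposition, where the target $(r+1)^{-1} = 1/3$ is close to being attained, and it is here that the sharp inversion estimate of Lemma \ref{l:simple}(ii), with its specific constant $4/15$, is essential. A secondary difficulty is the bookkeeping in the outer case of $\sigma = 1$: one must pass from the twisting automorphism $\beta$, whose order in $\Aut(T)$ need not be prime, to a genuine prime-order automorphism before Lemma \ref{l:simple}(iii) applies, and one must account correctly for the diagonal element $a$, which couples the contributions of distinct cycles of $\sigma$.
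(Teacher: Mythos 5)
Your argument is correct, and while it follows the same basic strategy as the paper --- split according to the permutation part $\sigma$, use Lemma \ref{l:simple}(iii) when $\sigma=1$, Lemma \ref{l:simple}(ii) for the transposition case, and crude counting otherwise --- the execution is genuinely different. The paper works with conjugacy classes, quoting Fawcett's centralizer and class-size computations to get $|C_{\O}(x)|$ or an upper bound for ${\rm fpr}(x) = |x^G \cap H|/|x^G|$; this introduces a factor $|{\rm Out}(T)|$ in the cases $R_1$ (fixed-point-free $\pi$ with $r<k$ or $r=k\geq 3$) and $R_3$ ($\pi \neq 1$ with fixed points), which then has to be absorbed using Lemma \ref{l:simple}(i). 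Your direct count of solution tuples in the coset model $D\backslash T^k$ avoids that factor entirely: the bound ${\rm fpr}(x) \leq |T|^{1-c(r-1)}$ handles every $\sigma \neq 1$ except a single transposition in one stroke, so Lemma \ref{l:simple}(i) is not needed (your passing remark that it "keeps the outer contribution under control" in the $\sigma=1$ case is in fact superfluous --- the centralizer bound alone suffices there). You also extend the inverting-set computation to a transposition with fixed points for arbitrary $k$, whereas the paper only needs it for $k=2$ (the case $k\geq 3$ being absorbed into $R_3$); the substitution $t_2 = t_1 z$ does reduce the wrap-around constraint to $z^{\alpha'} = z^{-1}$ up to translation, so Lemma \ref{l:simple}(ii) applies as you claim.

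Two small points should be tightened. First, in the $\sigma \neq 1$ case the step ``${\rm fpr}(x) \leq |T|^{-1} \leq (r+1)^{-1}$'' requires $r+1 \leq |T|$, which can fail (e.g.\ $T = A_5$ and $\sigma$ an $r$-cycle with $r \geq 61$); but your actual bound $|T|^{1-c(r-1)} \leq 60^{-(r-2)}$ is far smaller than $(r+1)^{-1}$ for all $r \geq 3$, so only the phrasing needs adjusting. Second, in the outer $\sigma=1$ case the prime-order power $\beta'$ of $\beta$ may well be inner (this happens when $r^2$ divides $|\beta|$); this is harmless because Lemma \ref{l:simple}(iii) is stated for arbitrary prime-order automorphisms, but it is worth saying explicitly since your sentence suggests $\beta'$ inherits outerness from $\beta$.
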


\begin{proof}
Let $T^k$ be the socle of $G$, where $T$ is a nonabelian simple group and $k \geqs 2$. If $\a \in {\rm Aut}(T)$ then let $\bar{\a}$ denote the coset $\a {\rm Inn}(T)$ in ${\rm Out}(T)$. Adopting Fawcett's notation for diagonal groups presented in \cite{faw1}, we may assume that $G = A(k,T){:}S_k \leqs {\rm Aut}(T) \wr S_k$ and $H = D(k,T)$, where 
\begin{align*}
A(k,T) & = \{(\a_1, \ldots,\a_k) \in {\rm Aut}(T)^k \,:\, \mbox{$\bar{\a}_1 = \bar{\a}_i$ for all $i$} \} \\
D(k,T) & = \{ (\a, \ldots, \a)\pi \,:\, \a \in {\rm Aut}(T),\, \pi \in S_k\}.
\end{align*}
Let $R(G)$ be a set of representatives of the $G$-classes of elements of prime order in $H$. Following \cite[Section 4]{faw1}, we partition $R(G)$ into three collections (here we write $[k]$ for $\{1, \ldots, k\}$):
\begin{align*}
R_1(G) & = \{ (\a, \ldots, \a)\pi \in R(G) \,:\, \mbox{$\pi$ is fixed-point-free on $[k]$} \} \\
R_2(G) & = \{ (\a, \ldots, \a)\pi \in R(G) \,:\, \pi = 1 \} \\
R_3(G) & = \{ (\a, \ldots, \a)\pi \in R(G) \,:\, \mbox{$\pi \ne 1$ and $i\pi = i$ for some $i \in [k]$} \}. 
\end{align*}

First assume $x = (\a, \ldots, \a)\pi \in R_1(G)$ has order $r$. Then $r$ divides $k$ and \cite[Lemma 4.6]{faw1} gives $|C_G(x)| \leqs |C_{S_k}(\pi)||{\rm Out}(T)||T|^{k/r}$. Since $|x^G \cap H| \leqs |{\rm Aut}(T)||\pi^{S_k}|$, it follows that 
\[
{\rm fpr}(x) \leqs \frac{|{\rm Out}(T)|}{|T|^{k-k/r-1}}
\]
and it suffices to show that 
\begin{equation}\label{e:diag}
(r+1)|{\rm Out}(T)| \leqs |T|^{k- k/r-1}.
\end{equation}

Suppose $r<k$, so $k \geqs 2r$. By setting $k=2r$ and $T = A_5$, it is easy to verify the bound in \eqref{e:diag}. Similarly, the desired result follows if $r=k \geqs 3$. 

Finally, suppose $r=k=2$. Here we claim that $|C_{\O}(x)| = |\{ t \in T \,:\, t^{\a} = t^{-1}\}|$.
To see this, we identify $\O$ with the set of cosets $\{D(1,t)\,:\, t \in T\}$ of $D = \{(s,s) \,:\, s \in T\}$ in $T^2$. Then
\[
D(1,t)^{x} = D(1,t^{\a})^{\pi} = D(t^{\a},1)
\]
and thus $D(1,t)^x = D(1,t)$ if and only if $(s,st) = (t^{\a},1)$ for some $s \in T$. The latter equality holds if and only if $s=t^{\a}$ and $t^{\a} = t^{-1}$, which justifies the claim. By applying Lemma \ref{l:simple}(ii) we deduce that $|C_{\O}(x)| \leqs 4|T|/15$ and thus 
\[
{\rm fpr}(x) = \frac{|C_{\O}(x)|}{|T|} \leqs \frac{4}{15} < \frac{1}{3} = \frac{1}{r+1}
\]
as required. 

Next let us assume $x = (\a, \ldots, \a) \in R_2(G)$, so $|\a| = r$. By applying \cite[Lemmas 4.5, 4.6]{faw1} we see that $|C_{\O}(x)| = |C_{{\rm Inn}(T)}(\a)|^{k-1}$ 
and thus Lemma \ref{l:simple}(iii) yields 
\[
{\rm fpr}(x) = \left(\frac{|C_{{\rm Inn}(T)}(\a)|}{|T|}\right)^{k-1} \leqs \left(\frac{1}{r+1}\right)^{k-1}.
\]
The result follows.

Finally let us assume $x = (\a, \ldots, \a)\pi \in R_3(G)$, so $k \geqs 3$ and $\pi$ is a nontrivial permutation of order $r$ with $f$ fixed points on $[k]$, where $1 \leqs f \leqs k-r$. Note that $|\a|=r$. Using \cite[Lemmas 4.5, 4.6]{faw1} we deduce that
\[
{\rm fpr}(x) = \frac{|{\rm Out}(T)||C_{{\rm Inn}(T)}(\a)|^{f-1}}{|T|^{k-1-(k-f)/r}} = \frac{|{\rm Out}(T)|}{|T|^{(k-f)(1-1/r)}} \cdot \left(\frac{|C_{{\rm Inn}(T)}(\a)|}{|T|}\right)^{f-1} \leqs \frac{|{\rm Out}(T)|}{|T|^{r-1}}
\]
and the result follows by applying the bound $|{\rm Out}(T)|<|T|^{1/3}$ in Lemma \ref{l:simple}(i).
\end{proof}

\subsection{Twisted wreath groups}\label{ss:tw}

\begin{prop}\label{p:tw}
Let $G \leqs {\rm Sym}(\O)$ be a finite primitive permutation group of twisted wreath type and let $x \in G$ be an element of prime order $r$. Then ${\rm fpr}(x) \leqs (r+1)^{-1}$.
\end{prop}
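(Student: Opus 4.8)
The plan is to exploit the rigid structure of a twisted wreath product to show that the fixed point ratio is extremely small — in fact far smaller than the target bound $(r+1)^{-1}$. Recall that a primitive group $G$ of twisted wreath type has socle $N = T^k$ where $T$ is a nonabelian simple group and $k \geqs 2$, and crucially the point stabilizer $H$ satisfies $H \cap N = 1$, so that $N$ acts regularly on $\O$ and $|\O| = |N| = |T|^k$. This regularity is the key feature I would use. Writing $G = N{:}H$ with $H$ acting on $N = T^k$ by permuting and twisting the factors, I would first reduce to the case $x \in H$ (since only elements lying in a point stabilizer can have fixed points; if $x^G \cap H = \emptyset$ then ${\rm fpr}(x)=0$). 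Thus I may assume $x \in H$ has prime order $r$.

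The central computation is to bound $|C_\O(x)|$. Since $N$ is regular, I identify $\O$ with $N$ itself, and the fixed points of $x \in H$ acting on $\O \cong N$ correspond to the fixed points of $x$ acting (by conjugation, via the twisted action) on $N = T^k$. Concretely I would show $C_\O(x) = C_N(x)$ under this identification, so that
\[
{\rm fpr}(x) = \frac{|C_N(x)|}{|N|} = \frac{|C_{T^k}(x)|}{|T|^k}.
\]
Now $x \in H \leqs {\rm Aut}(T) \wr S_k$ acts on $T^k$ as a combination of a permutation $\pi \in S_k$ of the factors and coordinate-wise automorphisms. If $\pi$ moves a factor, the corresponding coordinates of a centralized element are determined by a diagonal-type constraint across an orbit of $\pi$, collapsing several copies of $T$ into one; if $\pi$ fixes a factor, the contribution is at most $|C_{{\rm Inn}(T)}(\a)|$ for the relevant automorphism $\a$, which is a proper fraction of $|T|$. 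In either case each $\pi$-orbit of length $\ell$ contributes a centralizer factor of size at most $|T|^{\lceil \ell/\ell \rceil} = |T|$ spread over $\ell$ coordinates, so $|C_{T^k}(x)| \leqs |T|^{k - k(1-1/r)} \cdot |T|^{\,?}$ — the point being that either $\pi$ is nontrivial (forcing a saving of a factor $|T|^{k(1-1/r)}$) or $\pi = 1$ and then $x$ acts as a nontrivial automorphism in each of the $k \geqs 2$ coordinates (forcing a saving $(|C_{{\rm Inn}(T)}(\a)|/|T|)^k \leqs (r+1)^{-k}$ via Lemma \ref{l:simple}(iii)). Either way the resulting bound on ${\rm fpr}(x)$ is at most $(r+1)^{-1}$ with enormous room to spare.

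I expect the main obstacle to be the bookkeeping around the twisted action rather than any genuine difficulty: one must verify carefully that the twisting (as opposed to an ordinary wreath action) does not enlarge the centralizer, and in particular that the homomorphism defining the twisted wreath product does not create extra fixed points. The cleanest route is to observe that $|C_\O(x)| \leqs |C_{{\rm Aut}(T) \wr S_k}(x) \cap \text{(relevant coset data)}|$ and to reuse the centralizer estimates already assembled in the proof of Proposition \ref{p:diagonal} — the quantities $|C_{{\rm Inn}(T)}(\a)|$, the Burnside no-prime-power-class argument, and the bound $|{\rm Out}(T)| < |T|^{1/3}$ from Lemma \ref{l:simple} — since the combinatorics of a permutation $\pi$ with $f$ fixed points acting on $k$ factors is formally identical to the diagonal case. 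Indeed, because $N$ here is regular (so $|\O| = |T|^k$ rather than $|T|^{k-1}$ as in the diagonal case), the fixed point ratio is if anything \emph{smaller}, so the diagonal-type bounds apply a fortiori and the inequality ${\rm fpr}(x) \leqs (r+1)^{-1}$ follows comfortably.
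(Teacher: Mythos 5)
Your proposal is correct and follows essentially the same route as the paper: both arguments exploit the regularity of the socle $T^k$ on $\O$ and bound $|C_{\O}(x)|$ via the cycle structure of $x$ on the $k$ simple factors (at most $|T|$ fixed configurations per $r$-cycle), the paper simply outsourcing this count to Lemmas 5.3 and 5.4 of \cite{faw2} together with the inequality $\ell \leqs k/r$. The only point to tighten is your claim that $\pi=1$ forces a nontrivial automorphism in \emph{every} coordinate: what you can actually justify (from $C_G(T^k)=1$) is that at least one coordinate automorphism is nontrivial of order $r$, but that already yields ${\rm fpr}(x) \leqs (r+1)^{-1}$ by Lemma \ref{l:simple}(iii), so the argument goes through.
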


\begin{proof}
Write $G = T^k{:}H$, where $T$ is a nonabelian finite simple group and the point stabilizer $H$ is a transitive subgroup of $S_k$. Let $x \in H$ be an element of prime order $r$. Then by applying \cite[Lemmas 5.3, 5.4]{faw2} we deduce that ${\rm fpr}(x) \leqs |T|^{\ell-k}$, where $\ell$ is the number of $r$-cycles in the cycle-shape of $x$ (with respect to the action on $\{1, \ldots, k\}$). The result now follows since $\ell \leqs k/r$. 
\end{proof}

In order to complete the proof of Theorem \ref{t:main}, we may assume $G$ is either an almost simple group or a product type group. The latter groups will be handled in Section \ref{s:product} and we will see that the desired result is easily obtained by combining Proposition \ref{p:diagonal} with our main result for almost simple groups. So the almost simple groups will be our main focus for the remainder of the paper and we divide the analysis into three cases:
\begin{itemize}\addtolength{\itemsep}{0.2\baselineskip}
\item[{\rm (a)}] Non-classical groups (Section \ref{s:as});
\item[{\rm (b)}] Classical groups in non-subspace actions (Section \ref{s:class1}); and
\item[{\rm (c)}] Classical groups in subspace actions (Section \ref{s:class2}).
\end{itemize}

\section{Almost simple groups with non-classical socle}\label{s:as}

In this section we assume $G \leqs {\rm Sym}(\O)$ is a finite primitive almost simple group with socle $G_0$. We postpone the analysis of classical groups to Sections \ref{s:class1} (non-subspace actions) and \ref{s:class2} (subspace actions), so here we take $G_0$ to be a sporadic, alternating or exceptional group of Lie type. In part (ii) of the following result, we adopt the standard labeling of conjugacy classes from the Atlas \cite{ATLAS}.

\begin{prop}\label{p:spor}
Let $G \leqs {\rm Sym}(\O)$ be a finite almost simple primitive permutation group with point stabilizer $H$ and socle $G_0$, a sporadic simple group. Let $x \in G$ be an element of prime order $r$. Then either
\begin{itemize}\addtolength{\itemsep}{0.2\baselineskip}
\item[{\rm (i)}] ${\rm fpr}(x) \leqs (r+1)^{-1}$; or
\item[{\rm (ii)}] $G = {\rm M}_{22}{:}2$, $H = {\rm L}_{3}(4){:}2_2$, $x$ is an involution in the class ${\rm 2B}$ and ${\rm fpr}(x) = 4/11$.
\end{itemize}
\end{prop}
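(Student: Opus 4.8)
The statement concerns finitely many groups: the sporadic simple groups $G_0$ together with their almost simple overgroups $G$ with $G_0 \normeq G \leqs \Aut(G_0)$. Since $|\Out(G_0)| \leqs 2$ for every sporadic $G_0$, there are only one or two choices of $G$ for each socle, and for each such $G$ the maximal subgroups $H$ are completely known from the \textsc{Atlas} \cite{ATLAS} and its supporting literature. The plan is therefore to reduce the claim to a finite computation, organized so that all but a handful of the triples $(G,H,x)$ are dispatched by a single crude bound, leaving a short explicit list to check by hand or by machine.

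\emph{Step 1: the generic bound.} Fix $G$, a maximal subgroup $H$, and an element $x \in G$ of prime order $r$. Using
\[
{\rm fpr}(x) = \frac{|x^G \cap H|}{|x^G|} \leqs \frac{|H| \cdot (\text{number of $G$-classes meeting } H \text{ in } x^G)}{|x^G|},
\]
I would first record the trivial estimate ${\rm fpr}(x) \leqs |x^G \cap H|/|x^G|$ and note that whenever $|x^G|$ is large relative to $|H|$ the bound $\eqref{e:bound}$ holds comfortably. Concretely, if $i_r(H)$ denotes the number of elements of order $r$ in $H$, then ${\rm fpr}(x) \leqs i_r(H)/|x^G|$, and since $|x^G| = |G|/|C_G(x)|$ this is controlled by the centralizer orders listed in \cite{ATLAS}. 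The aim of this step is to show that for all but finitely many pairs $(G,H)$, and all but finitely many classes $x^G$ within the surviving pairs, the quantity $i_r(H)/|x^G|$ already lies below $(r+1)^{-1}$. This cleanly removes the overwhelming majority of cases and in particular handles every prime $r \geqs 5$, since large $r$ forces $|x^G|$ to be large while $i_r(H)$ stays modest.

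\emph{Step 2: the surviving small cases.} After Step 1 only finitely many triples $(G,H,x)$ remain, concentrated at $r \in \{2,3\}$ and at the largest maximal subgroups $H$ (those of smallest index, so smallest $|\O|$). For these I would compute ${\rm fpr}(x)$ exactly, using the class fusion of $H$-classes into $G$-classes. The cleanest way to obtain $|x^G \cap H|$ is via the permutation character $1_H^G$ evaluated on $x$, since ${\rm fpr}(x) = (1_H^G)(x)/[G:H]$; the values of $1_H^G$ on the relevant classes are available in the \textsc{Atlas} or computable directly from the character table of $G$ and the known fusion map $H \to G$. This reduces each surviving case to reading off or computing a single rational number and comparing it with $(r+1)^{-1}$. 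The single genuine exception in part (ii), namely $G = {\rm M}_{22}{:}2$ with $H = {\rm L}_3(4){:}2_2$ and $x$ in class \texttt{2B} giving ${\rm fpr}(x) = 4/11$, emerges here as the unique triple where the exact value exceeds the bound; one checks $4/11 > 1/3$ while confirming that every other involution class, and every class of order $3$, satisfies $\eqref{e:bound}$.

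\emph{The main obstacle.} The difficulty is not conceptual but bookkeeping: ensuring the case division in Step 1 is genuinely exhaustive, so that no triple slips through with an unexpectedly large fixed point ratio. The delicate cases are the very large sporadic groups (the Monster, Baby Monster, and their neighbours), where the list of maximal subgroups is long and, in a few instances, was only recently completed; here I would lean on the crude bound of Step 1 being strong enough that the exact maximal subgroup classification is not actually needed to rule them out, using instead lower bounds on the smallest index $[G:H]$ together with upper bounds on $i_r(G)$. The complementary risk lies in the small sporadic groups ($\textrm{M}_{11}, \textrm{M}_{12}, \textrm{M}_{22}, \textrm{M}_{23}, \textrm{M}_{24}, \textrm{J}_2, \ldots$), where $|\O|$ can be small enough that several classes approach the bound; these I would verify by direct computation of the permutation characters, which is entirely routine in \textsc{Gap} or \textsc{Magma} and confirms that ${\rm M}_{22}{:}2$ is the only survivor.
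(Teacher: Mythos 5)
Your proposal is correct and follows essentially the same route as the paper: a crude bound of the form ${\rm fpr}(x)\leqs |H|/a_r$ with $a_r=\min\{|x^G| : |x|=r\}$ to eliminate most pairs $(G,H)$ (in particular reducing the Monster to a handful of large maximal subgroups without needing the complete classification of its maximal subgroups), followed by exact computation of $|x^G\cap H|$ via stored or candidate class fusions in the \textsf{GAP} Character Table Library for the survivors. The paper's proof is precisely this two-step computation, with the single exception ${\rm M}_{22}{:}2$, $H={\rm L}_3(4){:}2_2$, $x\in\texttt{2B}$ emerging from the exact check.
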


\begin{proof}
For $G \ne \mathbb{B}, \mathbb{M}$ we can verify the bound using \textsf{GAP} \cite{GAP}. Indeed, in each case the character tables of $G$ and $H$ are available in the \textsf{GAP} Character Table Library \cite{GAPCTL} (we use the \texttt{Maxes} function to access the character table of $H$), together with the  fusion map from $H$-classes to $G$-classes. This allows us to compute ${\rm fpr}(x)$ precisely for all $x \in G$ and it is routine to check the desired result.  

A very similar approach also applies when $G = \mathbb{B}$ is the Baby Monster. As before, the character tables of $G$ and $H$ are available in \cite{GAPCTL} and we can also access the stored fusion map if $H \ne (2^2 \times F_4(2)).2$. This quickly reduces the problem to the case $H = (2^2 \times F_4(2)).2$. Here we use the function \texttt{PossibleClassFusions} to determine a set of candidate fusion maps (there are $64$ such maps in total) and for each possibility one checks that ${\rm fpr}(x) \leqs (r+1)^{-1}$ for all $x \in G$ of prime order $r$.

To complete the proof, we may assume $G = \mathbb{M}$ is the Monster group. As discussed in \cite{Wil}, $G$ has $44$ known conjugacy classes of maximal subgroups and any additional maximal subgroup is almost simple with socle one of ${\rm L}_{2}(8)$, ${\rm L}_{2}(13)$, ${\rm L}_{2}(16)$ or ${\rm U}_{3}(4)$. In addition, we note that $r \leqs 71$ and by inspecting the character table of $G$ we can compute 
\[
a_r = \min\{ |x^G| \,:\, x \in G,\, |x|=r\},
\]
which yields the trivial bound ${\rm fpr}(x) \leqs |H|/a_r$. In this way, we immediately deduce that ${\rm fpr}(x) \leqs (r+1)^{-1}$ if $|H|<10^{20}$ and so by inspecting the list of known maximal subgroups of $G$, we have reduced the problem to the cases where $H$ is one of the following:
\[
2.\mathbb{B}, \, 2^{1+24}.{\rm Co}_1, \, 3.{\rm Fi}_{24}, \, 2^2.{}^2E_6(2).S_3, \, 2^{10+16}.\O_{10}^{+}(2), \,  2^{2+11+22}.({\rm M}_{24} \times S_3).
\]
In the first four cases, we can use the function \texttt{NamesOfFusionSources} to access the character table of $H$ in \textsf{GAP} and as above we can check the bound ${\rm fpr}(x) \leqs (r+1)^{-1}$ by working with the stored fusion map from $H$-classes to $G$-classes. 

Finally, suppose $H$ is one of the $2$-local subgroups $2^{10+16}.\O_{10}^{+}(2)$ or 
$2^{2+11+22}.({\rm M}_{24} \times S_3)$. If $r \geqs 3$ then 
\[
\frac{a_r}{r+1} \geqs 53644422509007885434880000000 > |H|
\]
and the result follows. Now assume $r=2$ and note that $G$ has two conjugacy classes of involutions, labeled \texttt{2A} and \texttt{2B}, where
\[
|\texttt{2A}| = 97239461142009186000,\;\; |\texttt{2B}| = 5791748068511982636944259375.
\]
If $x \in \texttt{2B}$ then $|x^G|>3|H|$ and the result follows. On the other hand, if $x \in \texttt{2A}$ then $|x^G \cap H|$ is given in \cite[Proposition 3.9]{BOW}; this allows us to compute ${\rm fpr}(x)$ precisely and it is easy to check that ${\rm fpr}(x) \leqs 1/3$ as required.
\end{proof}

Next we consider the groups with socle an alternating group. We refer the reader to Proposition \ref{p:alt2} for further information on the groups arising in part (i) of the following result. In Table \ref{tab:a6}, we write $(r^h,1^{6-rh})$ to denote any element in $S_6$ that is a product of $h$ disjoint $r$-cycles. We also write ``prim" if the given subgroup $H$ acts primitively on $\{1, \ldots, 6\}$.

\begin{prop}\label{p:alt}
Let $G \leqs {\rm Sym}(\O)$ be a finite almost simple primitive permutation group with point stabilizer $H$ and socle $G_0 = A_n$. Let $x \in G$ be an element of prime order $r$. Then either ${\rm fpr}(x) \leqs (r+1)^{-1}$, or one of the following holds (up to permutation isomorphism):
\begin{itemize}\addtolength{\itemsep}{0.2\baselineskip}
\item[{\rm (i)}] $G = S_n$ or $A_n$ acting on $\ell$-element subsets of $\{1, \ldots, n\}$ with $1 \leqs \ell < n/2$;
\item[{\rm (ii)}] $G = S_n$, $H = S_{n/2} \wr S_2$, $x$ is a transposition and 
\[
{\rm fpr}(x) = \frac{1}{3} + \frac{n-4}{6(n-1)};
\]
\item[{\rm (iii)}] $G = A_8$, $H = {\rm AGL}_{3}(2)$, $x$ is an involution with cycle shape $(2^4)$ and ${\rm fpr}(x) = 7/15$;
\item[{\rm (iv)}] $n=6$ and $(G,H,x,r,{\rm fpr}(x))$ is one of the cases in Table \ref{tab:a6}.
\end{itemize}
\end{prop}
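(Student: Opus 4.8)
The natural framework is the O'Nan--Scott theorem for the symmetric and alternating groups, which (modulo the extra outer automorphisms present when $n=6$) reduces the problem to three possibilities for the point stabilizer $H$: it is \emph{intransitive}, so that $\O$ is a set of $\ell$-element subsets of $\{1,\dots,n\}$; \emph{imprimitive}, so that $H = (S_a \wr S_b) \cap G$ with $ab = n$ and $a,b \geqs 2$ and $\O$ is a set of partitions of $\{1,\dots,n\}$ into $b$ blocks of size $a$; or $H$ is a \emph{primitive} maximal subgroup of $A_n$ or $S_n$, of affine, diagonal, product or almost simple type. The intransitive case is precisely part (i), and the associated fixed point ratios are recorded in Proposition \ref{p:alt2}; since these really do exceed $(r+1)^{-1}$ in general (see Remark \ref{r:main}(a)) they are listed as genuine exceptions, and I may assume $H$ is transitive henceforth.

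For the imprimitive case the guiding principle is that ${\rm fpr}(x)$ is largest when the support of $x$ is smallest, so the critical element is an involution, and the extreme case is a transposition (forcing $G = S_n$). A short direct count of the partitions fixed by a transposition $(ij)$ --- which, since $a \geqs 2$, are exactly those in which $i$ and $j$ lie in a common block --- gives the clean formula ${\rm fpr}(x) = (a-1)/(n-1)$. This exceeds $\tfrac13$ only when $b = 2$, in which case $a = n/2$ and ${\rm fpr}(x) = (n-2)/(2(n-1)) = \tfrac13 + (n-4)/(6(n-1))$, recovering case (ii) for $n \geqs 6$. Elements of odd prime order, double transpositions (relevant when $G = A_n$), and all cases with $b \geqs 3$ move more points and are disposed of by the same counting, comfortably satisfying \eqref{e:bound}.

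The primitive case is where the substance lies. Here I would use the classical fact that a primitive subgroup of $S_n$ other than $A_n$ and $S_n$ moves many points: by Jordan's theorem \cite{Jordan} it contains no transposition and no $3$-cycle, and by the minimal degree bounds of Bochert \cite{Bochert} and Babai \cite{Babai} its minimal degree grows with $n$. Hence if $x \in G$ has prime order $r$ and $x^G \cap H \ne \emptyset$, then $x$ has cycle type $(r^a,1^{n-ar})$ with $ar$ large, forcing $|x^G|$ to grow faster than any power of $n$. For every primitive $H$ whose order is at most quasipolynomial in $n$ --- which, by the standard order bounds, means every $H$ outside the subset- and product-action families --- the crude estimate ${\rm fpr}(x) \leqs |H|/|x^G|$ then yields \eqref{e:bound} as soon as $n$ exceeds an explicit bound, leaving only a bounded range of degrees to examine by machine. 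These small-degree checks in \textsf{GAP} \cite{GAP}, reading off fixed point ratios from the relevant character tables and class fusions, are where the remaining exceptions surface: $n = 8$ with $H = {\rm AGL}_3(2)$ (the transvection of case (iii), visible through $A_8 \cong {\rm L}_4(2)$) and the exceptional group $A_6$ together with its full outer automorphism group, producing Table \ref{tab:a6} in part (iv).

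The main obstacle is therefore the subset- and product-action primitive maximal subgroups, whose order can far exceed quasipolynomial while their minimal degree is correspondingly smaller, so that the crude bound $|H|/|x^G|$ need not close. For these I would instead bound the number of elements of $H$ of each admissible cycle type directly, exploiting the wreath-product structure of $H$ to control $|x^G \cap H|$ --- in effect recursing back to the subset-action estimates underlying Proposition \ref{p:alt2}. Pinning down a clean explicit threshold on $n$ beyond which all the asymptotic estimates simultaneously close, so that the residual finite verification is genuinely finite and feasible, is the delicate part, along with the careful bookkeeping around $n = 6$ where the exceptional isomorphisms and the size of ${\rm Out}(A_6)$ generate the sporadic cases.
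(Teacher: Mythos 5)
Your proposal follows the same architecture as the paper's proof: the trichotomy intransitive/imprimitive/primitive for the action of $H$ on $\{1,\dots,n\}$, a direct count of fixed partitions isolating the $S_{n/2}\wr S_2$ transposition exception, and, for primitive $H$, order and minimal-degree bounds closing the crude estimate $|H|/|x^G|$ outside the subset/product-action families, with machine checks in a bounded range of degrees producing the $A_8$ and $n=6$ exceptions. The only substantive looseness is in the imprimitive case, where an element of prime order $r\leqs b$ (the number of blocks) can fix a partition by permuting blocks rather than fixing each block setwise, so your reduction to minimal-support elements via ``the same counting'' needs the more careful estimates that the paper delegates to \cite[Lemmas 4.5 and 4.6]{BH}; this is a repairable detail rather than a flaw in the approach.
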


\begin{proof}
Let $x \in G$ be an element of prime order $r$. Note that $r \leqs n$ and recall that we may as well assume $x \in H$. For $n=6$ we can use {\sc Magma} \cite{magma} to check that ${\rm fpr}(x) \leqs (r+1)^{-1}$ unless (i) or (ii) holds, or $(G,H,x)$ is one of the cases recorded in Table \ref{tab:a6}. For the remainder, we may assume that $G = S_n$ or $A_n$, with $n \ne 6$. We partition the analysis into two cases according to the action of $H$ on $[n] = \{1, \ldots, n\}$, noting that (i) holds if $H$ is intransitive.

\begin{table}
\[
\begin{array}{lllll} \hline
G & H & x & r & {\rm fpr}(x) \\ \hline
A_6 & A_5 \mbox{ (prim)} & (3^2) & 3 & 1/2 \\
S_6 & S_5 \mbox{ (prim)} & (2^3) & 2 & 2/3 \\
& & (3^2) & 3 & 1/2 \\
& S_2 \wr S_3 & (2,1^4) & 2 & 7/15 \\
A_6.2^2 & (S_3 \wr S_2).2 & (2,1^4) & 2 & 2/5 \\ \hline
\end{array}
\]
\caption{Some special cases with $G_0 = A_6$}
\label{tab:a6}
\end{table}

\vs

\noindent \emph{Case 1. $H$ is primitive.}

\vs

Suppose $H$ acts primitively on $[n]$. For $n \leqs 12$, a straightforward {\sc Magma} computation shows that ${\rm fpr}(x) \leqs (r+1)^{-1}$ unless $G = A_8$, $H = {\rm AGL}_{3}(2)$ and $x$ has cycle-shape $(2^4)$, in which case $r=2$ and ${\rm fpr}(x) = 7/15$. For the remainder, we will assume $n \geqs 13$. Our aim is to establish the bound ${\rm fpr}(x) \leqs (r+1)^{-1}$.

Let $\mu(H) = \min\{ |{\rm supp}(x)| \,:\, 1 \ne x \in H\}$ be the \emph{minimal degree} of $H$, which is defined to be the minimal number of points moved by a nontrivial element of $H$. Suppose $\mu(H) \geqs n/2$ and observe that this forces
\begin{equation}\label{e:xG}
|x^G| \geqs \frac{n!}{2^{n/4}\lceil n/4 \rceil!\lceil n/2 \rceil!}
\end{equation}
for all $x \in H$ of prime order (minimal if $n \equiv 0 \imod{4}$ and $x$ is an involution with cycle-shape $(2^{n/4},1^{n/2})$). If $n \geqs 25$ then \cite[Corollary 1.2]{Maroti} gives $|H|<2^n$ and thus
\[
{\rm fpr}(x) < \frac{2^{5n/4}\lceil n/4 \rceil!\lceil n/2 \rceil!}{n!}.
\]
One can check that this upper bound is at most $1/(n+1)$ for all $n \geqs 25$, which establishes the desired bound (recall that $r \leqs n$). For $13 \leqs n \leqs 24$ we work with the bound ${\rm fpr}(x) \leqs a/b_r$, where $a$ is the maximal order of a core-free primitive subgroup of $G$ (set $a=0$ if no such subgroup exists) and 
\[
b_r = \min\left\{ \frac{n!}{r^hh!(n-hr)!s} \,:\, \lceil n/2r \rceil \leqs h \leqs \lfloor n/r \rfloor \right\}
\]
is the minimal size of a conjugacy class in $G$ containing elements of order $r$ with at most $n/2$ fixed points on $[n]$ (here $s=2$ if $G = A_n$ and $r=n$, otherwise $s=1$). Using {\sc Magma}, it is easy to calculate $a$ and $b_r$ for every prime $r \leqs n$ and one checks that $a/b_r \leqs (r+1)^{-1}$ in all cases.

To complete the analysis of Case 1, we may assume $n \geqs 13$ and $\mu(H)<n/2$. Here  \cite[Theorem 1]{GM} describes the possibilities for $H$ (note that in our setting, $H$ is a maximal subgroup of $G$, which simplifies the list of cases we need to consider):
\begin{itemize}\addtolength{\itemsep}{0.2\baselineskip}
\item[{\rm (a)}] $H$ is an almost simple classical group with socle $H_0 = \O_{m}^{\e}(2)$, where $m \geqs 6$ and $H$ is acting on a set of hyperplanes of the natural module for $H_0$;
\item[{\rm (b)}] $H = S_{m}$ or $A_{m}$ acting on the set of $\ell$-element subsets of $[m]$, where $m \geqs 5$, $2 \leqs \ell < m/2$ and $n = \binom{m}{\ell}$;
\item[{\rm (c)}] $H = (S_{\ell} \wr S_k) \cap G$ where $\ell \geqs 5$, $k \geqs 2$ and $H$ is acting with its product action on $n = \ell^k$ points.
\end{itemize}

First consider case (a). As explained in \cite{GM}, either 
\begin{itemize}\addtolength{\itemsep}{0.2\baselineskip}
\item[{\rm (a$'$)}] $H_0 = \O_{2\ell+1}(2)$ or $\O_{2\ell}^{+}(2)$, $n = 2^{\ell-1}(2^{\ell}-1)$ and $\mu(H) = n/2 - 2^{\ell-2}$; or 
\item[{\rm (a$''$)}] $H_0 = \O_{2\ell}^{-}(2)$, $n = (2^{\ell}+1)(2^{\ell-1}-1)$ and $\mu(H) = n/2 - (2^{\ell-1}-1)/2$. 
\end{itemize}
In both cases we have $n \geqs 25$ (so $|H|<2^n$ as before) and we can proceed as above, working with a slightly modified version of the lower bound on $|x^G|$ in \eqref{e:xG} to account for the fact that $\mu(H)<n/2$. We omit the details. 

Next let us assume we are in case (b) above. Here $n = \binom{m}{\ell}$ and $\mu(H) \geqs 2\binom{m-2}{\ell-1}$ (see \cite[p.130]{GM}, where it is noted that a transposition has the largest number of fixed points). Suppose $\ell=2$. Here $n = m(m-1)/2$ and $\mu(H) \geqs  2m-4$, which implies that 
\[
|x^G| \geqs \frac{n!}{2^{m-2}(m-2)!(n-2m+4)!}
\]
for all $x \in H$ of prime order (minimal if $x$ is an involution with $n-2m+4$ fixed points). Since $|H| \leqs m!$, it follows that
\[
{\rm fpr}(x) \leqs \frac{2^{m-2}(m-2)!(n-2m+4)!m!}{n!}
\]
and it is straightforward to check that this upper bound is less than $(m+1)^{-1}$ for all $m \geqs 5$. The result now follows since $r \leqs m$. The reader can check that a very similar argument applies if $\ell>2$.

Finally, let us turn to case (c). Here $n = \ell^k$ and $\mu(H) = 2\ell^{k-1}$ (see \cite[p.130]{GM}). Suppose $k=2$, so $n = \ell^2$, $|H| \leqs 2(\ell!)^2$ and $r \leqs \ell$. Given the lower bound on $\mu(H)$, it follows that 
\[
|x^G| \geqs \frac{n!}{2^\ell\ell!(n-2\ell)!}
\]
and thus
\[
{\rm fpr}(x) \leqs \frac{2^{\ell+1}(\ell!)^3(n-2\ell)!}{n!},
\]
which is easy to check is at most $(\ell+1)^{-1}$ for all $\ell \geqs 5$. This gives the desired result for $k=2$ and the cases with $k>2$ can be handled in the same way. 

\vs

\noindent \emph{Case 2. $H$ is transitive and imprimitive.}

\vs

To complete the proof of the proposition, we may assume $H$ acts transitively and imprimitively on $[n]$. The groups with $n \leqs 13$ can be checked using {\sc Magma}, noting that the only exceptions (excluding $n=6$) are the cases where $G = S_n$, $H = S_{n/2} \wr S_2$ and $x$ is a transposition. Here $|x^G \cap H| = 2\binom{n/2}{2}$, $|x^G| = \binom{n}{2}$ and thus
\[
{\rm fpr}(x) = \frac{2\binom{n/2}{2}}{\binom{n}{2}} = \frac{1}{3} + \frac{n-4}{6(n-1)}.
\]
In particular, this gives an infinite family of exceptions to the bound ${\rm fpr}(x) \leqs (r+1)^{-1}$ and this special case is recorded in part (i)(b) of Theorem \ref{t:main}.

Now assume $n \geqs 14$. Fix a divisor $\ell$ of $n$ with $1 < \ell < n$ and identify $\O$ with $\Pi_{\ell}$, the set of partitions of $[n]$ into $\ell$ parts of equal size. Note that $H = (S_{n/\ell} \wr S_{\ell}) \cap G$. If $3 \leqs r \leqs \ell$ then \cite[Lemma 4.5]{BH} implies that ${\rm fpr}(x) < \ell^{-2}$ and the result follows. Similarly, if $r=2$ then the bound in \cite[Lemma 4.6]{BH} is sufficient unless $x$ is a transposition and $\ell=2$, which is a genuine exception, as noted above.

So to complete the analysis of this case, we may assume $r > \ell$. Note that $3 \leqs r \leqs n/\ell$. Let us also note that $x$ fixes a partition in $\Pi_{\ell}$ if and only if it fixes each part of the partition setwise. It follows that ${\rm fpr}(x) \leqs {\rm fpr}(y)$, where $y \in G$ is an $r$-cycle, and so it suffices to show that ${\rm fpr}(y) \leqs (r+1)^{-1}$. Clearly, we have $|y^G \cap H| = (r-1)!\binom{n/\ell}{r}\ell$ and $|y^G| = (r-1)!\binom{n}{r}$, whence
\[
{\rm fpr}(y) = \frac{(n/\ell)!(n-r)!\ell}{n!(n/\ell-r)!} \leqs (r+1)^{-1}
\]
if and only if $f(r) \geqs 1$, where
\[
f(r):= \frac{n!(n/\ell-r)!}{(n/\ell)!(n-r)!\ell(r+1)}.
\]
It is routine to check that $f$ is increasing as a function of $r$ and the result follows since  
\[
f(3) = \frac{(n-1)(n-2)}{4(n/\ell-1)(n/\ell-2)} \geqs \frac{(n-1)(n-2)}{4(n/2-1)(n/2-2)} > 1.
\]

In conclusion, if $n \ne 6$ and $H$ is transitive and imprimitive, then ${\rm fpr}(x) \leqs (r+1)^{-1}$ unless $G = S_n$, $H = S_{n/2} \wr S_2$ and $x$ is a transposition. This completes the proof of the proposition.
\end{proof}

\begin{rem}\label{r:alt}
Let us consider the special cases arising in parts (iii) and (iv) of Proposition \ref{p:alt}. First observe that the example in (iii) does not appear in the statement of Theorem \ref{t:main} since $G$ is permutation isomorphic to the classical group ${\rm L}_{4}(2)$ acting on the set of $1$-dimensional subspaces of its natural module. Similarly, if $G = S_6$ or $A_6$ and $H = S_5$ or $A_5$ is primitive, then there is a permutation isomorphism to the natural action of $G$ on $\{1, \ldots, 6\}$, which is included in part (i). If $G = S_6$ and $H = S_2 \wr S_3$ then $G$ is permutation isomorphic to ${\rm Sp}_{4}(2)$ acting on the set of $1$-dimensional subspaces of the natural module. And if $G = A_6.2^2$ and $H = (S_3 \wr S_2).2$, then $G$ is permutation isomorphic to the natural action of ${\rm P\Gamma L}_{2}(9)$ on $1$-spaces.
\end{rem}

The following result provides more information on the groups appearing in part (i) of Proposition \ref{p:alt}.

\begin{prop}\label{p:alt2}
Let $G \leqs {\rm Sym}(\O)$ be a finite almost simple primitive permutation group with socle $G_0 = A_n$, where $\O$ is the set of $\ell$-element subsets of $\{1,\ldots, n\}$ with $1 \leqs \ell < n/2$. Assume $G = S_n$ or $A_n$ and let $x \in G$ be an element of prime order $r$. 
\begin{itemize}\addtolength{\itemsep}{0.2\baselineskip}
\item[{\rm (i)}] We have ${\rm fpr}(x) \leqs {\rm fpr}(y)$, where $y$ is an $r$-cycle.
\item[{\rm (ii)}] If $r>n-\ell$ then ${\rm fpr}(y) = 0$, otherwise
\[
{\rm fpr}(y) = \prod_{i=0}^{r-1} \left(1 - \frac{\ell}{n-i}\right) + \a \prod_{i=0}^{r-1} \left(1 - \frac{n-\ell}{n-i}\right),
\]
where $\a=1$ if $r \leqs \ell$, otherwise $\a=0$.
\end{itemize}
\end{prop}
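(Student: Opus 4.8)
The plan is to reduce everything to a single combinatorial count and then prove a monotonicity statement. Since the action of $G$ on $\O$ depends only on the cycle type of $x$ on $[n]=\{1,\ldots,n\}$, and an element of prime order $r$ has cycle type $(r^m,1^f)$ with $m\geqs 1$ and $f=n-mr$, I would first observe that $x$ fixes an $\ell$-subset $S$ if and only if $S$ is a union of $\langle x\rangle$-orbits. Building such an $S$ amounts to choosing $j$ of the $m$ nontrivial cycles (contributing $jr$ points) together with $\ell-jr$ of the $f$ fixed points, so
\[
{\rm fpr}(x) = \frac{1}{\binom{n}{\ell}}\sum_{j\geqs 0}\binom{m}{j}\binom{f}{\ell-jr} =: \frac{N(m,f)}{\binom{n}{\ell}}.
\]
In particular the parameter $\a$ of the statement is exactly the indicator of whether the "full cycle" term can occur, that is, whether $r\leqs\ell$. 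Note also that $N(m,f)$ depends only on the cycle type, so the comparison value ${\rm fpr}(y)$ is well defined regardless of whether the $r$-cycle $y$ actually lies in $G$ (relevant when $G=A_n$ and $r=2$).

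For part (i) I would show that $N(m,f)$ is non-increasing in $m$ among permutations of fixed degree $n$; since the $r$-cycle $y$ is the case $m=1$, this yields ${\rm fpr}(x)\leqs{\rm fpr}(y)$. Concretely, comparing $x$ (with $m$ cycles) to a permutation with $m-1$ cycles on the same $n$ points (so $f$ is replaced by $f+r$), I would split $\binom{m}{j}=\binom{m-1}{j}+\binom{m-1}{j-1}$ and reindex to obtain
\[
N(m,f)=\sum_{j\geqs 0}\binom{m-1}{j}\left(\binom{f}{\ell-jr}+\binom{f}{\ell-(j+1)r}\right).
\]
The key step is then the termwise bound $\binom{f}{a}+\binom{f}{a-r}\leqs\binom{f+r}{a}$, which follows by keeping only the $i=0$ and $i=r$ summands in the Vandermonde expansion $\binom{f+r}{a}=\sum_{i=0}^{r}\binom{r}{i}\binom{f}{a-i}$. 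Summing over $j$ gives $N(m,f)\leqs N(m-1,f+r)$, and iterating down to $m=1$ proves (i).

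For part (ii) I would simply evaluate $N(1,n-r)=\binom{n-r}{\ell}+\a\binom{n-r}{\ell-r}$ and convert to products. Cancelling factorials gives
\[
\frac{\binom{n-r}{\ell}}{\binom{n}{\ell}}=\prod_{i=0}^{r-1}\frac{n-\ell-i}{n-i}=\prod_{i=0}^{r-1}\left(1-\frac{\ell}{n-i}\right),\qquad
\frac{\binom{n-r}{\ell-r}}{\binom{n}{\ell}}=\prod_{i=0}^{r-1}\frac{\ell-i}{n-i}=\prod_{i=0}^{r-1}\left(1-\frac{n-\ell}{n-i}\right),
\]
which is the displayed formula. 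The only point needing care is the boundary case $r>n-\ell$: here $\binom{n-r}{\ell}=0$, and because $\ell<n/2$ the chain $r>n-\ell>\ell$ forces $\a=0$, so ${\rm fpr}(y)=0$ as claimed (indeed $r\leqs\ell$ would give $r\leqs\ell<n-\ell$, contradicting $r>n-\ell$). I expect the termwise Vandermonde inequality in part (i) to be the only genuinely substantive step; the remaining work is routine bookkeeping with binomial coefficients.
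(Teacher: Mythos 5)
Your proposal is correct, and part (ii) is essentially the paper's computation (the paper phrases it as $|y^G\cap H|/|y^G|$ with $H=S_\ell\times S_{n-\ell}$ and counts $r$-cycles in $S_\ell$ and $S_{n-\ell}$, which amounts to exactly your identity $N(1,n-r)=\binom{n-r}{\ell}+\a\binom{n-r}{\ell-r}$, followed by the same cancellation of factorials; your handling of the boundary case $r>n-\ell$ and of the convention when $y\notin G$ is also fine). Where you genuinely diverge is part (i). The paper disposes of it with a one-line containment: if $x$ has cycle type $(r^m,1^{n-mr})$ and $c$ is any one of its $r$-cycles, then an $\ell$-set fixed by $x$ must contain ${\rm supp}(c)$ or avoid it entirely, so it is already fixed by $c$; hence $C_{\O}(x)\subseteq C_{\O}(c)$ and ${\rm fpr}(x)\leqs{\rm fpr}(c)={\rm fpr}(y)$. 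You instead compute $|C_{\O}(x)|=N(m,f)=\sum_j\binom{m}{j}\binom{f}{\ell-jr}$ explicitly and prove the monotonicity $N(m,f)\leqs N(m-1,f+r)$ via Pascal's rule and the truncated Vandermonde bound $\binom{f}{a}+\binom{f}{a-r}\leqs\binom{f+r}{a}$. This is more work for the same conclusion, but it is sound (the reindexing and the termwise inequality both check out) and it buys you an exact closed formula for ${\rm fpr}(x)$ for every cycle type, not just the extremal one — which is more than the proposition needs but is the kind of information exploited in Remark \ref{r:alt2}. No gaps.
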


\begin{proof}
Let $\Lambda \in \O$ be an $\ell$-set and observe that $x$ fixes $\Lambda$ if and only if the support of each $r$-cycle comprising $x$ is contained in $\Lambda$ or $\{1, \ldots, n\} \setminus \Lambda$. In particular, the number of $\ell$-sets fixed by $x$ is at most the number fixed by a single $r$-cycle, which gives the bound in (i). 

For the expression in (ii), we may as well assume $G = S_n$, in which case $H = S_{\ell} \times S_{n-\ell}$. Clearly, if $r>n-\ell$ then $y^G \cap H$ is empty and thus ${\rm fpr}(y) = 0$. Now assume $r \leqs n-\ell$. Then
$|y^G \cap H| = a_{\ell}+a_{n-\ell}$ and $|y^G| = a_n$, where $a_{m}$ is the number of $r$-cycles in $S_{m}$. The result now follows since 
$a_{m} = m!/(m-r)!r$ if $r \leqs m$, otherwise $a_{m} = 0$.
\end{proof}

\begin{rem}\label{r:alt2}
Consider the expression for ${\rm fpr}(y)$ in part (ii) of Proposition \ref{p:alt2} and assume $r \leqs n-\ell$. If we fix $n$ and $r$, then it is straightforward to check that ${\rm fpr}(y)$ is decreasing as a function of $\ell$, which implies that ${\rm fpr}(y) \leqs 1- r/n$, with equality if and only if $\ell=1$. Similarly, for $r=2$ we deduce that ${\rm fpr}(y) \geqs 1/2-1/2n$, with equality if and only if $n$ is odd and $\ell=(n-1)/2$. In particular, if $r=2$ then ${\rm fpr}(y) > 1/3$ for all $n$ and $\ell$. Notice that if $r$ and $\ell$ are fixed, then ${\rm fpr}(y)$ tends to $1$ as $n$ tends to infinity.
\end{rem}

For the remainder of this section, we will assume $G_0$ is a simple exceptional group of Lie type (recall that the classical groups will be handled in the next two sections). In Proposition \ref{p:ex} below we assume $G_0 \ne G_2(2)', {}^2G_2(3)'$ since these groups are isomorphic to ${\rm U}_{3}(3)$ and ${\rm L}_{2}(8)$, respectively. 

\begin{rem}
For the record, in the two excluded cases we get ${\rm fpr}(x) \leqs (r+1)^{-1}$ unless $G_0 = {}^2G_2(3)'$, $H \cap G_0 = 2^3{:}7$ and either $r=7$ (${\rm fpr}(x) = 2/9$) or $x \in G \setminus G_0$ has order $3$ (${\rm fpr}(x) = 1/3$). Note that the special cases arising here correspond (up to permutation isomorphism) to the natural actions of ${\rm L}_{2}(8)$ or ${\rm P\Gamma L}_{2}(8)$ on the set of $1$-dimensional subspaces of the natural module for ${\rm L}_{2}(8)$. In particular, they are included in part (i)(d) of Theorem \ref{t:main}.
\end{rem}

\begin{prop}\label{p:ex}
Let $G \leqs {\rm Sym}(\O)$ be a finite almost simple primitive permutation group with point stabilizer $H$ and socle $G_0$, an exceptional group of Lie type over $\mathbb{F}_q$. Then ${\rm fpr}(x) \leqs (r+1)^{-1}$ for every element $x \in G$ of prime order $r$.
\end{prop}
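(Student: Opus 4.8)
The plan is to prove the bound ${\rm fpr}(x) \leqs (r+1)^{-1}$ for all primitive actions of exceptional groups of Lie type by combining the effective fixed point ratio estimates of Lawther, Liebeck and Shalew \cite{LLS} with a lower bound on the order of any element $x \in G$ of prime order $r$. Recall that for $x$ of prime order, ${\rm fpr}(x) = |x^G \cap H|/|x^G|$, and the key input from \cite{LLS} is an upper bound of the shape ${\rm fpr}(x) < |x^G|^{-\alpha + o(1)}$ (with $\alpha$ close to $1/2$) for non-identity $x$ in a primitive action. Since the class sizes $|x^G|$ in exceptional groups grow rapidly with $q$ and the rank, while $r$ is bounded above by roughly $q$ times a constant (as $r \mid |G|$ forces $r \leqs $ the largest prime dividing $|G_0|$, which is comparable to $q^h$ for the Coxeter number $h$), the inequality $|x^G|^{-\alpha} \leqs (r+1)^{-1}$ should hold once $|x^G|$ is large relative to $r$.

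The first step is to reduce to a finite check by exploiting the bound from \cite{LLS}: for each of the finitely many Lie types ($G_2$, ${}^3D_4$, $F_4$, ${}^2F_4$, $E_6$, ${}^2E_6$, $E_7$, $E_8$, and the Suzuki/Ree families), I would use the explicit values of the constant $\alpha$ and the estimates $|x^G| \geqs c \cdot q^{d}$ for the minimal class size of a prime-order element (where $d$ is essentially the dimension of the smallest nontrivial class, well documented in \cite{LLS}). Combining this with the crude but correct bound $r \leqs q^{h}$ (or more precisely $r$ bounded by the largest prime dividing $|G|$, controllable via the order formula), one obtains ${\rm fpr}(x) < |x^G|^{-\alpha} \leqs (r+1)^{-1}$ for all $q$ exceeding some explicit threshold $q_0$ depending only on the type. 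The second step handles the residual cases $q \leqs q_0$: here $G_0$ lies in a finite list, and for each such $G_0$ every maximal subgroup $H$ and every prime-order class $x$ can be inspected directly, either via the data in \cite{LLS} or using the character-table approach with \textsf{GAP} \cite{GAP, GAPCTL} exactly as in the proof of Proposition \ref{p:spor}, to confirm the bound.

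A subtlety is that the $o(1)$ in the Liebeck--Shalev exponent means $\alpha$ cannot simply be taken to be $1/2$ uniformly; instead I would use the precise effective statements in \cite{LLS}, which give bounds of the form ${\rm fpr}(x) \leqs q^{-a|x^G| \text{-type}}$ or explicit rational-function bounds per class type, so the comparison with $1/(r+1)$ is made type-by-type rather than through a single clean exponent. The advantage of working with exceptional groups is that the relevant dimensions are large: even for the smallest type $G_2(q)$ the minimal class size is on the order of $q^6$, whereas $r \leqs q^6 - 1$ at worst, and the gap is comfortable once one accounts for $\alpha$ being genuinely bounded away from $0$.

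The main obstacle I anticipate is not the generic case but controlling the prime $r$ when $x$ is a semisimple or unipotent element lying in a small class, particularly field, graph or graph-field automorphisms in $G \setminus G_0$, where $x$ need not have a large class in $G$. For such outer automorphisms the centralizer $C_{G_0}(x)$ can be an exceptional or classical group over a subfield, so $|x^G|$ may be comparatively small, and one must verify that $r$ (typically equal to the order of the field automorphism, which divides $\log_p q$) is correspondingly small enough that $(r+1)^{-1}$ still dominates. I would treat these outer classes separately, using the description of $C_{G_0}(x)$ for automorphisms of prime order (as catalogued in \cite{LLS} and the standard references) to bound $|x^G| = |G_0|/|C_{G_0}(x)|$ from below and $r$ from above simultaneously, which should close the argument in every case.
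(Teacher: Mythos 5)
Your skeleton --- the explicit bounds of \cite{LLS}, an upper bound on $r$ from the order formula for $|G_0|$, a type-by-type comparison, and a direct computational check for small $q$ --- is the same strategy the paper follows. However, your assessment of where the difficulty lies is inverted, and this leaves a concrete gap. The outer classes you single out as the main obstacle are in fact immediate: graph and graph-field automorphisms have $r \leqs 3$, a field automorphism of order $r$ forces $q = q_0^r \geqs 2^r$, and unipotent elements have $r = p \leqs q$, so in all of these cases the bound ${\rm fpr}(x) \leqs 4/(3q)$ of Theorem \ref{t:ls91}, or the Theorem 1 bound of \cite{LLS} (which is at most $(q+1)^{-1}$), already gives $(r+1)^{-1}$. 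The only case that survives this first pass is an inner semisimple element whose order $r$ is a large primitive prime divisor, and there $r$ can be as large as $\Phi_{30}(q) \sim q^8$ for $E_8$, $q^6+q^3+1$ for $E_6$, $q^4+1$ for $F_4$, $q^4-q^2+1$ for ${}^3D_4$ and $q^2+q+1$ for $G_2$; it is these exact cyclotomic bounds, not a crude $r \leqs q^h$, that make the comparison work.

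The gap shows up already in your own $G_2$ sanity check: with ``$r \leqs q^6-1$'' one would need ${\rm fpr}(x) \leqs q^{-6}$, whereas \cite[Theorem 1]{LLS} only gives $(q^2-q+1)^{-1} \sim q^{-2}$ and a square-root-of-class-size bound only $\sim q^{-3}$, so the argument as written does not close. Even with the correct bound $r \leqs q^2+q+1$, the estimate $(q^2-q+1)^{-1}$ from \cite[Theorem 2]{LLS} falls just short of $(r+1)^{-1}$ when $r = q^2 \pm q+1$, and the same near-miss occurs for $E_6$ with $H$ of type $P_1$ and $r = q^6+q^3+1$, for $F_4$ with $H = P_1$ and $r \geqs q^4-q^2+1$, and for ${}^3D_4$ with $r = q^4-q^2+1$. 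Closing these cases requires extra ingredients your plan does not mention: direct estimates of the form $|x^G \cap H| \leqs |H_0|$ measured against $|x^G| \geqs |{\rm Inndiag}(G_0)|/r$ for the specific parabolic and maximal rank subgroups involved, and the structural observation that an element generating a maximal torus cannot lie in a maximal subgroup of type (III), so those subgroups can be discarded outright. With those additions your outline becomes essentially the paper's proof; without them it does not go through.
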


\begin{proof}
Write $q=p^f$ with $p$ a prime and let $x \in G$ be an element of prime order $r$. Recall that the possibilities for $x$ are as follows, where ${\rm Inndiag}(G_0)$ denotes the subgroup of ${\rm Aut}(G_0)$ generated by the inner and diagonal automorphisms of $G_0$:
\begin{itemize}\addtolength{\itemsep}{0.2\baselineskip}
\item[{\rm (a)}] $x \in {\rm Inndiag}(G_0)$ is either semisimple ($r \ne p$) or unipotent ($r=p$);
\item[{\rm (b)}] $x$ is a graph automorphism ($r \leqs 3$ only);
\item[{\rm (c)}] $x$ is a field automorphism ($q=q_0^r$ only);
\item[{\rm (d)}] $x$ is a graph-field automorphism ($r \leqs 3$ and $q=q_0^r$ only).
\end{itemize}

In \cite{LLS}, Lawther, Liebeck and Seitz conduct an extensive study of  fixed point ratios for exceptional groups of Lie type. In particular, \cite[Theorem 1]{LLS} gives an explicit upper bound on ${\rm fpr}(x)$, which is presented as a function of $q$ and is valid for all nontrivial $x \in G$. It is easy to check that this bound  immediately reduces the problem to case (a) above with $r \ne p$ and $r \geqs 5$. For example, if $G_0 = G_2(q)$ and $q \ne 2,4$ then  \cite[Theorem 1]{LLS} gives ${\rm fpr}(x) \leqs (q^2-q+1)^{-1}$. This is less than $(q+1)^{-1}$, which in turn is at most $(r+1)^{-1}$ in cases (b), (c) and (d), as well as case (a) with $r=p$. 

For the remainder, let $x \in G_0$ be a semisimple element of prime order $r \geqs 5$ and note that $x$ is contained in a maximal torus of $G_0$. Here we appeal to the more refined bounds presented in \cite[Theorem 2]{LLS}, which make a distinction between three different possibilities for the maximal subgroup $H$:
\begin{itemize}\addtolength{\itemsep}{0.2\baselineskip}
\item[{\rm (I)}] $H$ is a maximal parabolic subgroup; 
\item[{\rm (II)}] $H$ is a non-parabolic maximal rank subgroup in the sense of \cite{LSS};
\item[{\rm (III)}] The remaining subgroups.
\end{itemize}

We proceed by considering the various possibilities for $G_0$ in turn. We first determine an upper bound on $r$ and we then apply the bound on ${\rm fpr}(x)$ in \cite[Theorem 2]{LLS}, considering cases (I), (II) and (III) separately if needed. This approach is effective, with the exception of a handful of cases where we need to produce a slightly sharper fixed point ratio estimate. For some low rank groups defined over small fields, we can also use {\sc Magma} \cite{magma} to verify the result. Set $H_0 = H \cap G_0$.

\vs

\noindent \emph{Case 1. $G_0 = E_8(q)$ or $E_7(q)$.}

\vs

First assume $G_0 = E_8(q)$. By expressing $|G_0|_{p'}$ as a product of cyclotomic polynomials, we note that $r \leqs \Phi_{30}(q) = q^8+q^7-q^5-q^4-q^3+q+1$ and the result follows since \cite[Theorem 1]{LLS} gives ${\rm fpr}(x) \leqs q^{-8}(q^4-1)^{-1}$. Similarly, if $G_0 = E_7(q)$ then $r \leqs \Phi_{7}(q) = (q^7-1)/(q-1)$ and \cite[Theorem 2]{LLS} implies that ${\rm fpr}(x) \leqs 2q^{-7}(q^4-1)^{-1}$, which is sufficient.

\vs

\noindent \emph{Case 2. $G_0 = E_6^{\e}(q)$.}

\vs

Next assume $G_0 = E_6^{\e}(q)$. Once again, by considering the order of $|G_0|$ we deduce that $r \leqs q^6 +\e q^3 +1$. We proceed by considering cases (I)-(III) in turn, working with the appropriate upper bound on ${\rm fpr}(x)$ in \cite[Theorem 2]{LLS}. 

First suppose $H$ is a parabolic subgroup, which is labeled in the usual way. If $\e=-$, or if $\e=+$ and $H$ is not of type $P_1$, $P_6$ or $P_{1,6}$, then 
\[
{\rm fpr}(x) \leqs \frac{1}{q^3(q^3-2)(q^2-1)}
\]
and the result follows. Now assume $\e=+$ and $H$ is of type $P_1$, $P_6$ or $P_{1,6}$. Here 
\[
{\rm fpr}(x) \leqs \frac{1}{q(q^3-1)(q^2-1)},
\]
which is sufficient unless $r+1 > q(q^3-1)(q^2-1)$. Since $r$ divides $|G_0|$, it is easy to check that the latter inequality holds if and only if $r = q^6+q^3+1$, in which case $x$ is regular and we have 
\[
|x^G \cap H| \leqs |H_0| < q^{62},\;\; |x^G| \geqs \frac{|{\rm Inndiag}(G_0)|}{q^6+q^3+1} >\frac{1}{2}q^{72}.
\]
This gives ${\rm fpr}(x) < 2q^{-10} < (r+1)^{-1}$ as required. 

Next assume $H$ is a maximal rank subgroup as in (II). If $q \geqs 3$ then \cite[Theorem 2]{LLS} gives ${\rm fpr}(x) \leqs 2q^{-12}$ and the result follows. On the other hand, if $q=2$ then ${\rm fpr}(x) \leqs 2^{-5}$ and the problem is reduced to the case where $\e=+$ and $r=73$ (note that the character table of $G_0$ is available in \cite{GAPCTL}). Here the maximal subgroups of $G$ are determined (up to conjugacy) in \cite{KW} and by inspection we deduce that $H_0 = {\rm L}_{3}(8){:}3$ is the only possibility with ${\rm fpr}(x) > 0$. In this case, $|x^G| = |G_0|/73$ and the trivial bound
$|x^G \cap H| \leqs |H_0|$ is sufficient. Finally, if $H$ is of type (III) then \cite[Theorem 2]{LLS} gives ${\rm fpr}(x) \leqs q^{-6}(q^6-q^3+1)^{-1}$ and the result follows.

\vs

\noindent \emph{Case 3. $G_0 = F_4(q)$ or $G_2(q)$.}

\vs

First assume $G_0 = F_4(q)$. Here $r \leqs q^4+1$ and we proceed as above, working with the upper bounds on ${\rm fpr}(x)$ in \cite[Theorem 2]{LLS}. If $H$ is a parabolic subgroup, then ${\rm fpr}(x) \leqs q^{-2}(q^3-2)^{-1}$ for $H \ne P_1$, which is sufficient. On the other hand, for $H = P_1$ we have ${\rm fpr}(x) \leqs (q^4-q^2+1)^{-1}$ and so we may assume $r \geqs q^4-q^2+1$. By considering the prime divisors of $|G_0|$, we deduce that $r = q^4-q^2+1$ or $q^4+1$ are the only options. Then
\[
|x^G \cap H| \leqs |H_0| < q^{37},\;\; |x^G| \geqs \frac{|G_0|}{q^4+1}> \frac{1}{2}q^{48}
\]
and thus ${\rm fpr}(x) < 2q^{-11}  < (r+1)^{-1}$. For cases (II) and (III), it is easy to check that the bounds in \cite[Theorem 2]{LLS} are sufficient.

Next suppose $G_0 = G_2(q)'$ with $q \geqs 2$. The groups with $q \leqs 5$ can be handled using {\sc Magma}, working with the functions \texttt{AutomorphismGroupSimpleGroup} and \texttt{MaximalSubgroups} to construct $G$ and $H$, and the \texttt{ConjugacyClasses} function to compute $|x^G \cap H|$ and $|x^G|$, which yields ${\rm fpr}(x)$. Now assume $q \geqs 7$ and note that $r \leqs q^2+q+1$. 

To begin with, let us assume $H$ is a maximal parabolic subgroup of $G$. If $H = P_{1}$ (or $P_{1,2}$) then \cite[Theorem 2]{LLS} gives ${\rm fpr}(x) \leqs 2(q^3+1)^{-1}$ and the result follows. Otherwise, if $H = P_2$ then ${\rm fpr}(x) \leqs (q^2-q+1)^{-1}$ and so we may assume $r>q^2-q$, which forces $r = q^2 \pm q +1$. Then
\[
|x^G \cap H| \leqs |H_0| = q^5(q-1)|{\rm SL}_{2}(q)|,\;\;  |x^G| \geqs \frac{|G_2(q)|}{q^2+q+1},
\]
which gives 
\[
{\rm fpr}(x) \leqs \frac{q-1}{q(q+1)(q^2-q+1)} \leqs \frac{1}{r+1}
\]
as required.

Next suppose $H$ is a maximal rank subgroup of type (II). Here \cite[Theorem 2]{LLS} gives ${\rm fpr}(x) \leqs (q^2-q+1)^{-1}$ and so as above we reduce to the case where $r = q^2 \pm q +1$. By inspecting the maximal subgroups of $G$ (for example, see \cite[Tables 8.30, 8.41, 8.42]{BHR}), we deduce that $|H_0| \leqs 2|{\rm SU}_{3}(q)|$ and by arguing as above we obtain ${\rm fpr}(x) \leqs 2q^{-3}(q-1)^{-1}$, which is sufficient. Similarly, if $H$ is of type (III) then by \cite[Theorem 2]{LLS} we have ${\rm fpr}(x) \leqs q^{-2}$ and so we may assume $r = q^2+q+1$. Since $x$ generates a maximal torus of $G_0$, it follows that every maximal subgroup of $G$ containing $x$ is of type (I) or (II), so this situation does not arise.

\vs

\noindent \emph{Case 4. $G_0 = {}^3D_4(q)$.}

\vs

Now assume $G_0 = {}^3D_4(q)$. The case $q=2$ can be checked using {\sc Magma}, so we will assume $q \geqs 3$. By expressing $|G_0|_{p'}$ as a product of cyclotomic polynomials, we deduce that $r \leqs q^4-q^2+1$. If $H$ is a maximal parabolic subgroup, then \cite[Theorem 2]{LLS} gives ${\rm fpr}(x) \leqs q^{-2}(q^3-2)^{-1}$ and this bound is sufficient. Now suppose $H$ is of type (II). If $q=3$ then ${\rm fpr}(x) \leqs 3^{-4}$ and the result follows. Now assume $q \geqs 4$. Here ${\rm fpr}(x) \leqs (q^4-q^2+1)^{-1}$ and so we may assume $r = q^4-q^2+1$. By inspecting \cite[Table 8.51]{BHR} we deduce that $H = N_G(\la x \ra)$ is the only option, whence
\[
|x^G \cap H| \leqs |H_0| = 4(q^4-q^2+1),\;\; |x^G| \geqs \frac{|{}^3D_4(q)|}{q^4-q^2+1}
\]
and the desired result follows. Finally, suppose $H$ is of type (III). Here ${\rm fpr}(x) \leqs (q^4-q^2+1)^{-1}$ and so as above we may assume $r=q^4-q^2+1$. But then $x$ generates a maximal torus, so it is not contained in a subgroup of type (III). This completes the argument for the groups with socle $G_0 = {}^3D_4(q)$.

\vs

\noindent \emph{Case 5. $G_0 = {}^2F_4(q)'$, ${}^2G_2(q)$ or ${}^2B_2(q)$.}

\vs

First assume $G_0 = {}^2F_4(q)'$, so $q = 2^{2m+1}$ with $m \geqs 0$. The case $q=2$ can be handled using {\sc Magma}, so we may assume $q \geqs 8$. By considering $|G_0|$, we observe that
\[
r \leqs q^2+\sqrt{2q^3}+q+\sqrt{2q}+1.
\]
It is now straightforward to check that the bounds on ${\rm fpr}(x)$ presented in \cite[Theorem 2]{LLS} are sufficient in every case.

Next suppose $G_0 = {}^2G_2(q)$, where $q = 3^{2m+1}$ and $m \geqs 1$ (recall that we exclude the case $G_0 = {}^2G_2(3)'$ since this group is isomorphic to ${\rm L}_{2}(8)$). Here $r \leqs q+\sqrt{3q}+1$ and once again the bounds in \cite[Theorem 2]{LLS} are good enough. 

Finally, suppose $G_0 = {}^2B_2(q)$ with $q=2^{2m+1}$ and $m \geqs 1$. The cases $q \in \{8,32\}$ can be verified using {\sc Magma}, so let us assume $q \geqs 2^7$. If $H$ is a Borel subgroup (that is, a subgroup of type (I)) then \cite[Theorem 2]{LLS} gives ${\rm fpr}(x) \leqs (q^2+1)^{-1}$ and the result follows since $r \leqs q+\sqrt{2q}+1$. Similarly, if $H$ is of type (II) or (III) then ${\rm fpr}(x) \leqs (q^{2/3}+1)/(q^2+1)$ and once again the result follows.
\end{proof}

\section{Almost simple classical groups: Non-subspace actions}\label{s:class1}

In this section we establish Theorem \ref{t:main} in the case where $G$ is an almost simple classical group in a so-called \emph{non-subspace} action (see Definition \ref{d:sub} below). So let $G \leqs {\rm Sym}(\O)$ be a finite primitive almost simple group with socle $G_0$, which is a classical group over $\mathbb{F}_q$ with natural module $V$. Write $q = p^f$ with $p$ a prime and set $n = \dim V$. 

Let $H$ be a point stabilizer in $G$ and note that $H$ is a maximal subgroup and $G = HG_0$ since $H$ is core-free. The main theorem on the subgroup structure of finite almost simple classical groups is due to Aschbacher \cite{asch}. Nine collections of subgroups of $G$ are defined, typically labelled $\C_1, \ldots, \C_8$ and $\mathcal{S}$, and Aschbacher proves that every maximal subgroup of $G$ is contained in one of these collections (some adjustments are needed when $G_0 = {\rm PSp}_{4}(q)$ (with $q$ even) or ${\rm P\O}_{8}^{+}(q)$, noting that a complete result in the latter case was established in later work by Kleidman \cite{K}). Here the so-called \emph{geometric} subgroups that comprise the $\C_i$ collections are defined in terms of the geometry of the underlying vector space $V$. For example, they include the stabilizers of subspaces and appropriate direct sum and tensor product decompositions of $V$ (see \cite[Table 1.2.A]{KL} for a brief description of each geometric collection). The \emph{non-geometric} subgroups in $\mathcal{S}$ are almost simple and irreducibly embedded in $G$. 

In \cite{KL}, Kleidman and Liebeck present a great deal of information on the maximal subgroups of classical groups, including a complete description of the structure and conjugacy of the maximal geometric subgroups when $n \geqs 13$. This is complemented by work of Bray, Holt and Roney-Dougal \cite{BHR}, which gives complete information on the low-dimensional groups with $n \leqs 12$. It is important to note that we adopt the precise definition of the $\C_i$ collection given in \cite{KL}, which differs slightly from Aschbacher's original description in \cite{asch}.

There is also an extensive literature on conjugacy classes in classical groups; in this regard, \cite[Chapter 3]{BG} will be a convenient reference for our purposes.

\begin{defn}\label{d:sub}
Let $G \leqs {\rm Sym}(\O)$ be a finite primitive almost simple classical group over $\mathbb{F}_q$ with socle $G_0$ and point stabilizer $H$. We say that the action of $G$ on $\O$ is a \emph{subspace action} if one of the following holds:
\begin{itemize}\addtolength{\itemsep}{0.2\baselineskip}
\item[{\rm (i)}] $H$ is in the collection $\C_1$; or
\item[{\rm (ii)}] $G_0 = {\rm Sp}_{n}(q)$, $q$ is even and $H \cap G_0 = {\rm O}_{n}^{\pm}(q)$.
\end{itemize}
In this situation, we will sometimes refer to the subgroup $H$ as a \emph{subspace subgroup} of $G$. Non-subspace actions and subgroups are defined accordingly.
\end{defn}

Note that the subspace actions appearing in part (ii) correspond to certain subgroups in the $\C_8$ collection. If we view $G_0$ as an orthogonal group ${\rm O}_{n+1}(q)$ with natural module $W$, then we may identify $\O$ with a set of nondegenerate hyperplanes in $W$, which explains why it is reasonable to view the action of $G$ on $\O$ in this situation as a subspace action. 

In this section, our aim is to prove Theorem \ref{t:main} when $G$ is a classical group in a non-subspace action, postponing the analysis of subspace actions to Section \ref{s:class2}. Our main result is the following.

\begin{thm}\label{t:nonsub}
Let $G \leqs {\rm Sym}(\O)$ be an almost simple finite primitive permutation group with socle $G_0$ and point stabilizer $H$. Let $x \in G$ be an element of prime order $r$ and assume $G_0$ is a classical group and $H$ is a non-subspace subgroup. Then either
\[
{\rm fpr}(x) \leqs \frac{1}{r+1}
\]
or $G$ is permutation isomorphic to one of the groups recorded in part (i) of Theorem \ref{t:main}.
\end{thm}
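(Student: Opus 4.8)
The plan is to prove Theorem~\ref{t:nonsub} by exploiting the powerful effective bounds on fixed point ratios for non-subspace actions of classical groups that are already available in the literature, namely the results of \cite{Bur1,Bur2,Bur3,Bur4} (summarised as Theorem~\ref{t:fpr} in the introduction). Since $H$ is a non-subspace subgroup, Aschbacher's theorem places $H$ in one of the collections $\C_2,\ldots,\C_8$ or in $\mathcal{S}$, and in each such case the cited work gives a bound of the shape ${\rm fpr}(x) < |x^G|^{-\e}$ with $\e$ essentially $1/2$. The key observation is that for an element $x$ of prime order $r$, the class size $|x^G|$ is large: I would first establish a clean lower bound of the form $|x^G| \geqs (r+1)^{2}$ (or more precisely $|x^G| \geqs (r+1)^{1/\e}$), which, combined with the fixed point ratio bound, immediately yields ${\rm fpr}(x) < (r+1)^{-1}$. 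The heart of the matter is therefore to show that elements of prime order in a classical group cannot simultaneously lie in a small conjugacy class and have large order.

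First I would set up the notation: write $G_0$ as a classical group over $\mathbb{F}_q$ with natural module $V$ of dimension $n$, and record (citing \cite[Chapter 3]{BG}) the standard lower bounds on centraliser orders, equivalently upper bounds on class sizes, for semisimple, unipotent, and the various outer automorphism types. The crucial arithmetic input is that if $x$ has prime order $r$, then $r$ divides $|G_0|$, and the largest prime divisors of $|G_0|$ are controlled by the degrees of the relevant cyclotomic polynomials $\Phi_d(q)$; in particular $r \leqs \Phi_e(q)$ for the appropriate $e$, which is at most roughly $q^n$. I would then compare this against the guaranteed growth of $|x^G|$. For a nontrivial element of prime order, the centraliser in a classical group of dimension $n$ has index at least on the order of $q^{n-1}$ (a transvection or reflection being the extremal case), and this alone dwarfs $(r+1)^{2}$ whenever $r$ is genuinely large, so the two quantities push in opposite directions exactly as needed.

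The main obstacle, and where the bulk of the genuine work will lie, is the boundary region: small $n$, small $q$, and elements $x$ whose order $r$ is close to the maximal permissible value, for instance elements generating a large cyclic (Singer-type) torus where $r = \Phi_n(q)$ can be comparable to the square root of $|x^G|$. In these borderline cases the asymptotic bound $\e \sim 1/2$ is too blunt, and I would fall back on the more refined, collection-by-collection estimates in \cite{Bur1,Bur2,Bur3,Bur4}, together with the trivial bound ${\rm fpr}(x) \leqs |H|/|x^G|$ using explicit upper bounds on $|H|$ drawn from \cite{KL} (for $n \geqs 13$) and \cite{BHR} (for $n \leqs 12$). For the finitely many genuinely small configurations that survive this analysis I expect to need direct computation in {\sc Magma}, exactly as in the proofs of Propositions~\ref{p:spor}, \ref{p:alt} and \ref{p:ex}.

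Finally, I would verify that no configuration survives as a genuine exception to the bound \eqref{e:bound} other than those already permutation isomorphic to a case in part~(i) of Theorem~\ref{t:main}; the Remark~\ref{r:alt} phenomenon—where a superficially non-subspace action (such as $A_8$ on the cosets of ${\rm AGL}_3(2)$) is in fact permutation isomorphic to a subspace action of another classical group—means I must take care to identify such coincidences and route them to part~(i) rather than list them as new exceptions. I anticipate that, apart from these permutation isomorphisms, the bound ${\rm fpr}(x) \leqs (r+1)^{-1}$ holds universally for non-subspace actions, so the conclusion of the theorem is that the only escapees are precisely the degenerate identifications already accounted for.
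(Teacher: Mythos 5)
Your proposal follows essentially the same route as the paper's proof: the main engine is Theorem \ref{t:fpr} combined with lower bounds on $|x^G|$ from \cite[Chapter 3]{Bur2} and the observation that $r$ is bounded by the relevant cyclotomic values $\Phi_e(q)$ dividing $|G_0|$, with the low-dimensional groups ${\rm L}_2(q)$, ${\rm L}_{3}^{\e}(q)$, ${\rm L}_4^{\e}(q)$, ${\rm PSp}_4(q)$ (where the exponent in Theorem \ref{t:fpr} degenerates) handled by explicit case analysis of the maximal subgroups from \cite{BHR} together with the trivial bound $|x^G\cap H|\leqs |H|$ and {\sc Magma} for small fields, and the surviving exceptions identified as permutation isomorphic to subspace actions. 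The only cosmetic difference is that the paper additionally uses the Liebeck--Saxl bound ${\rm fpr}(x)\leqs 4/3q$ to dispatch field and graph-field automorphisms and to reduce at once to semisimple and unipotent elements of large order, but this is subsumed by your general strategy.
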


\begin{rem}
There are only a handful of exceptions to the main bound in Theorem \ref{t:nonsub}. For example, if $G = {\rm L}_2(7)$ and $H = S_4$ is a $\C_6$-subgroup of type $2^{1+2}_{-}.{\rm O}_{2}^{-}(2)$, then ${\rm fpr}(x) = 3/7$ if $x$ is an involution. But here $G$ is permutation isomorphic to ${\rm L}_3(2)$ acting on $1$-dimensional subspaces of the natural module, so this case is included in part (i)(d) of Theorem \ref{t:main}. Similarly, if $G = \O_8^{+}(2)$ and $H = {\rm Sp}_6(2)$ is an irreducibly embedded subgroup in $\mathcal{S}$, then ${\rm fpr}(x) = 3/10$ when $x = (\L^4)$ is an element of order $3$ with a trivial $1$-eigenspace. Here the action is permutation isomorphic to the action on nonsingular $1$-spaces, so once again this corresponds to a case in (i)(d) of Theorem \ref{t:main}. We also refer the reader to the statement of Lemma \ref{l:psl4} for further examples with $G_0 = {\rm L}_4^{\e}(q)$.
\end{rem}

Our main reason for making the distinction between subspace and non-subspace actions is encapsulated in the following result, which only applies in the non-subspace setting. This is \cite[Theorem 1]{Bur1}, which is proved in the sequence of papers \cite{Bur2, Bur3, Bur4}. It will be our main tool in this section.

\begin{thm}\label{t:fpr}
Let $G \leqs {\rm Sym}(\O)$ be a finite almost simple classical primitive permutation group with point stabilizer $H$ and socle $G_0$. Assume $H$ is a non-subspace subgroup. Then
\[
{\rm fpr}(x) < |x^G|^{-\frac{1}{2}+\frac{1}{n}+\iota}
\]
for all $x \in G$ of prime order, where $n$ is the dimension of the natural module for $G_0$ and either $\iota=0$ or $\iota$ is listed in \cite[Table 1]{Bur1}.
\end{thm}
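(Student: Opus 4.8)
The plan is to prove the equivalent inequality $|x^G \cap H| < |x^G|^{1/2 + 1/n + \iota}$, since ${\rm fpr}(x) = |x^G \cap H|/|x^G|$. Taking $\log_q$, the target becomes a comparison between $\log_q|x^G \cap H|$ and $\tfrac12\log_q|x^G|$, up to the lower-order corrections $\tfrac1n$ and $\iota$. The whole argument is organized by Aschbacher's theorem: since $H$ is a non-subspace subgroup, it lies in one of the collections $\mathcal{C}_2, \ldots, \mathcal{C}_8$ or $\mathcal{S}$ (the $\mathcal{C}_1$ case and the $\mathcal{C}_8$ orthogonal-in-symplectic case being precisely the excluded subspace actions), and I would treat each collection in turn, distributing the work across the four papers \cite{Bur1,Bur2,Bur3,Bur4}.

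First I would assemble the two families of estimates on which everything rests. For the lower bound on $|x^G| = |G|/|C_G(x)|$, I would record, for each type of prime-order element $x$ (unipotent when $r = p$, semisimple when $r \ne p$, and the field/graph/graph-field automorphisms when $x \notin {\rm Inndiag}(G_0)$), a class-size bound of the shape $|x^G| > \tfrac12\, q^{c(x)}$, where the exponent $c(x)$ grows essentially like $n\cdot\nu(x)$ and $\nu(x)$ denotes the codimension of the largest eigenspace of $x$ on $V \otimes \overline{\mathbb{F}}_q$. These follow from the standard class-size formulae for classical groups (as in \cite[Chapter 3]{BG}); the point is that elements far from the identity have very large classes.

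Next comes the genuinely hard part: the upper bound on $|x^G \cap H|$, handled separately for each Aschbacher class. The general device is to write $|x^G \cap H| = \sum_i |y_i^H|$, the sum running over representatives $y_i$ of the $H$-classes fusing into $x^G$, and to bound both the number of such classes and each $|y_i^H| = |H|/|C_H(y_i)|$. For the geometric families ($\mathcal{C}_2$ imprimitive, $\mathcal{C}_3$ field-extension, $\mathcal{C}_4,\mathcal{C}_7$ tensor, $\mathcal{C}_5$ subfield, $\mathcal{C}_6$ normalizers of extraspecial groups, $\mathcal{C}_8$ classical) the structure of $H$ forces strong constraints on the Jordan form or eigenvalue data of an element of $x^G$ lying in $H$, and hence a lower bound on $|C_H(y_i)|$ in terms of $\nu(x)$; combined with $|H|$ this yields $|x^G \cap H|$ as an explicit function of $q$, $n$ and $\nu(x)$. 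One then checks against the Step-2 lower bound on $|x^G|$ that the ratio falls below $|x^G|^{-1/2+1/n}$ for all but finitely many configurations. The collection $\mathcal{S}$ is treated using Liebeck's order bound for almost simple irreducible subgroups together with a lower bound on $\nu(x)$ forced by irreducibility, which again makes $|x^G \cap H| \leq |H|$ small against $|x^G|^{1/2}$.

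The main obstacle is exactly this per-class control of $|x^G \cap H|$: it demands a uniform handle on the fusion of $H$-classes into $G$-classes and on centralizer orders, which is delicate for unipotent elements (where Jordan-block data interacts subtly with tensor and field-extension structure) and for the low-dimensional groups and small fields. These borderline cases are precisely what force the correction term $\iota$ and populate \cite[Table 1]{Bur1}. I would isolate them by tracking when the clean estimate fails only by a bounded factor, then verify the sharp constant directly in each such case, including a finite amount of explicit computation for the smallest groups. Assembling all of this, one collection at a time, completes the proof.
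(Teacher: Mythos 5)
The first thing to note is that the paper does not prove this statement at all: Theorem \ref{t:fpr} is imported verbatim as Theorem 1 of \cite{Bur1}, whose proof occupies the series \cite{Bur1,Bur2,Bur3,Bur4}, so there is no internal argument to compare against. Measured against the proof in those papers, your outline captures the architecture faithfully: the case division by Aschbacher collections $\C_2,\ldots,\C_8,\mathcal{S}$ (with $\C_1$ and the orthogonal subgroups of symplectic groups in even characteristic excluded as subspace actions), class-size lower bounds of the shape $|x^G| > \frac{1}{2}q^{c(x)}$ with $c(x)$ growing roughly like $n\nu(x)$, upper bounds on $|x^G \cap H|$ obtained by controlling fusion of $H$-classes and centralizer orders collection by collection, the treatment of $\mathcal{S}$ via order bounds for irreducible almost simple subgroups combined with lower bounds on $\nu(x)$ forced by irreducibility, and the emergence of the correction $\iota$ from borderline configurations.

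That said, what you have written is a plan, not a proof: every quantitative step is deferred (``one then checks\ldots'', ``I would record\ldots''), and the deferred steps are precisely the hard content. Uniform control of how $H$-classes fuse into $G$-classes, centralizer estimates for unipotent elements inside tensor, field-extension and subfield subgroups, the low-dimensional and small-field analysis, and above all the determination of exactly which cases genuinely violate the clean bound and what value of $\iota$ repairs them --- this is what the four cited papers spend hundreds of pages verifying, and none of it is carried out here. In particular, the finite list in \cite[Table 1]{Bur1} is an \emph{output} of that case analysis; your sketch treats its existence as something that can be assumed and patched in at the end, which cannot be justified without actually running the estimates. So the proposal should be assessed as a correct identification of the method of \cite{Bur1,Bur2,Bur3,Bur4}, with the proof itself missing.
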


For non-subspace actions of classical groups, the bound in Theorem \ref{t:fpr} can be viewed as a significant strengthening of the following more general result, which is due to Liebeck and Saxl (see \cite[Theorem 1]{LS91}). It is worth noting that almost all of the special cases appearing in \cite[Table 1]{LS91} involve groups with socle ${\rm L}_{2}(q)$.

\begin{thm}\label{t:ls91}
Let $G \leqs {\rm Sym}(\O)$ be a finite almost simple primitive permutation group with point stabilizer $H$ and socle $G_0$, which is a simple group of Lie type over $\mathbb{F}_q$. Then either 
\[
{\rm fpr}(x) \leqs \frac{4}{3q}
\]
for all nontrivial $x \in G$, or $(G,H,x)$ is one of the cases appearing in \cite[Table 1]{LS91}. 
\end{thm}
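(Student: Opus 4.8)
The plan is to estimate the two quantities in the identity ${\rm fpr}(x) = |x^G \cap H|/|x^G|$ separately, organising the analysis according to the Aschbacher-type of the maximal subgroup $H$. First I would invoke Aschbacher's subgroup structure theorem \cite{asch} (and its refinements in \cite{KL, BHR}) to reduce to a finite list of possibilities for $H$ up to the usual geometric families $\C_1, \ldots, \C_8$ and the class $\mathcal{S}$ of almost simple irreducibly embedded subgroups; the exceptional groups are treated via the analogous theorem on their maximal subgroups. The key dichotomy is between the \emph{subspace} actions (where $H \cap G_0$ acts reducibly and $\O$ is identified with subspaces of the natural module $V$) and everything else, since the behaviour of ${\rm fpr}(x)$ is qualitatively different in the two regimes.

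For the non-subspace and exceptional cases I would run a uniform order estimate. Bounding $|x^G \cap H| \leqs \sum_i |x_i^H|$, where the $x_i$ are the $H$-classes fusing into $x^G$, and bounding each $|x_i^H| \leqs |H : C_H(x_i)|$, reduces matters to two ingredients: an upper bound on $|H|$ and a lower bound on $|x^G| = |G : C_G(x)|$. The crucial point is that a non-subspace maximal subgroup is comparatively small -- typically $|H|$ is bounded by a polynomial in $q$ of degree well below $\dim G$, and for subgroups in $\mathcal{S}$ one uses the known bounds on orders of irreducible almost simple subgroups. On the other hand, for $x$ of prime order that is not too close to the centre, the minimal size of a noncentral conjugacy class in a group of Lie type over $\mathbb{F}_q$ grows like $q^{N}$ with $N$ large, via standard centraliser estimates for semisimple and unipotent elements. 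Comparing the two yields ${\rm fpr}(x) \leqs |H|/|x^G|$ well below $4/(3q)$ in all but finitely many small-rank, small-field configurations.

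The subspace actions form the genuine heart of the matter, because there the trivial bound $|H|/|x^G|$ fails: $H$ is large (a parabolic or reducible-subspace stabiliser) and elements with a large fixed space, such as transvections and reflections, fix a positive proportion of the subspaces in $\O$. Here I would abandon order estimates and compute ${\rm fpr}(x)$ geometrically, identifying $\O$ with the subspaces of $V$ of a fixed type and counting those stabilised by $x$ in terms of the dimension of the fixed space (or eigenspaces) of $x$ on $V$. The transvection computation ${\rm fpr}(x) = (q^{n-1}-1)/(q^n-1) \approx 1/q$ quoted in the introduction shows both that the constant $4/3$ is essentially forced and where the near-extremal examples live; the bound $4/(3q)$ then follows by showing that any element with ${\rm fpr}(x) > 4/(3q)$ must have an exceptionally large fixed space, which pins down both $x$ and the action.

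I expect the main obstacle to be the low-rank, small-field endgame together with the assembly of the exact exception list in \cite[Table 1]{LS91}. The uniform estimates degrade precisely when $\dim V$ or $q$ is small, and this is exactly the regime producing the genuine exceptions -- overwhelmingly groups with socle ${\rm L}_2(q)$, where $\O$ can be a set of points of the projective line and a single element fixes a large fraction of them. Controlling these requires precise centraliser orders, a careful analysis of the fusion of $H$-classes into $G$-classes (including the contribution of outer automorphisms), and direct computation for the finitely many sporadic small cases; keeping this bookkeeping exhaustive, so that no exception is overlooked, is the delicate part of the argument.
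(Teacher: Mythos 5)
The first thing to say is that the paper does not prove Theorem \ref{t:ls91} at all: it is quoted directly from Liebeck and Saxl \cite[Theorem 1]{LS91} and used as a black box, so there is no internal argument to measure your proposal against. What you have written is a strategy outline for reproving the Liebeck--Saxl theorem itself, and at that level it is broadly faithful to how the original proof (and the later refinements in \cite{Bur1,Bur2,Bur3,Bur4}) proceeds: Aschbacher's theorem to organise the maximal subgroups, geometric counting of fixed subspaces for subspace actions, order estimates elsewhere, and a delicate small-rank, small-field endgame producing the exceptions in \cite[Table 1]{LS91}.

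There is, however, one concrete gap in the non-subspace half of your plan. You propose to reduce to ``an upper bound on $|H|$ and a lower bound on $|x^G|$'', i.e.\ to the trivial estimate ${\rm fpr}(x) \leqs |H|/|x^G|$. This fails badly for many non-subspace geometric subgroups when $x$ has a large eigenspace on the natural module. For example, take $G_0 = {\rm L}_n(q)$, $H$ a $\C_8$-subgroup of type ${\rm Sp}_n(q)$ and $x$ a transvection: here $|x^G| \sim q^{2n-2}$ while $|H| \sim q^{n(n+1)/2}$, so $|H|/|x^G|$ tends to infinity with $n$, yet the true fixed point ratio is of order $1/q$. The same difficulty arises for subfield subgroups in $\C_5$, field-extension subgroups in $\C_3$, imprimitive subgroups in $\C_2$, and other large geometric subgroups. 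What is actually required --- and what constitutes most of the work in \cite{LS91}, and likewise in Section \ref{s:class1} of this paper for the sharper bound --- is a class-by-class comparison: write $|x^G \cap H| = \sum_i |x_i^H|$ over the $H$-classes fusing into $x^G$, bound each $|x_i^H|$ via centralizers in $H$, and compare with $|x^G|$ computed via centralizers in $G$; the point is that $C_H(x_i)$ is proportionately large in $H$ precisely when $C_G(x)$ is large in $G$. Your sketch mentions this fusion decomposition but then discards it in favour of the crude bound $|x^G \cap H| \leqs |H|$, and that is exactly the step that does not survive contact with the large geometric subgroups.
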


Finally, we need one more definition before we are ready to begin the proof of Theorem \ref{t:nonsub}. Recall that $V$ is the natural module for $G_0$, which is an $n$-dimensional vector space over $\mathbb{F}_{q^u}$, where $u=2$ if $G_0 = {\rm U}_{n}(q)$ and $u=1$ in all other cases. 

\begin{defn}\label{d:nu}
Let $K$ be the algebraic closure of $\mathbb{F}_{q^u}$. For any element $x \in G \cap {\rm PGL}(V)$, write $x = \hat{x}Z$ with $\hat{x} \in {\rm GL}_{n}(q^u)$ and $Z = Z({\rm GL}_{n}(q^u))$, and let $\nu(x)$ be the codimension of the largest eigenspace of $\hat{x}$ as an element of ${\rm GL}_{n}(K)$. Note that this is independent of the choice of $\hat{x}$. 
\end{defn}

Let $x \in G$ be an element of prime order $r$. If $x \in G \cap {\rm PGL}(V)$, then $x$ is either unipotent (if $r=p$) or semisimple (if $r \ne p$), and we refer the reader to \cite[Section 3]{Bur2} for bounds on $|x^G|$ in terms of $n$, $q$ and $\nu(x)$. In addition, we will use the notation from \cite[Chapter 3]{BG} to describe representatives of the conjugacy classes of unipotent and semisimple elements of prime order. In particular, if $G_0$ is a symplectic or orthogonal group in even characteristic then we adopt the notation from \cite{AS} for the conjugacy classes of unipotent involutions. 

Now suppose $x \not\in {\rm PGL}(V)$. Here $x$ is either a field, graph or graph-field automorphism of $G_0$ and once again we refer the reader to \cite[Section 3]{Bur2} for bounds on $|x^G|$. Let us also observe that if $x$ is a field or graph-field automorphism, then $q=q_0^r$ and unless we are in one of the handful of special cases recorded in \cite[Table 1]{LS91}, we deduce that
\begin{equation}\label{e:ls91}
{\rm fpr}(x) \leqs \frac{4}{3q} \leqs \frac{4}{3\cdot 2^r} \leqs \frac{1}{r+1}
\end{equation}
via Theorem \ref{t:ls91}.

We are now ready to begin the proof of Theorem \ref{t:nonsub}. First we handle the linear groups with socle $G_0 = {\rm L}_{n}^{\e}(q)$ and $n \geqs 5$. For small values of $n$, the bound in Theorem \ref{t:fpr} is less effective and we will consider the groups with $n \in \{2,3,4\}$ in Lemmas \ref{l:psl2}, \ref{l:psl3} and \ref{l:psl4} below.

\begin{lem}\label{l:linuni}
Suppose $G_0 = {\rm L}_{n}^{\e}(q)$ with $n \geqs 5$. If $H$ is a non-subspace subgroup, then ${\rm fpr}(x) \leqs (r+1)^{-1}$ for all $x \in G$ of prime order $r$.
\end{lem}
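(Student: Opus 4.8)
The main tool is Theorem~\ref{t:fpr}, which applies precisely because $H$ is a non-subspace subgroup. The plan is to show that the exponent $\tfrac12 - \tfrac1n - \iota$ appearing there is large enough, relative to the order $r$, to force ${\rm fpr}(x) < |x^G|^{-1/2+1/n+\iota} \leqs (r+1)^{-1}$. First I would dispose of the outer automorphisms. If $x$ is a field or graph-field automorphism then $q = q_0^r$, so $q \geqs 2^r$ and \eqref{e:ls91} gives the bound immediately via Theorem~\ref{t:ls91}, the only possible obstructions being the handful of cases in \cite[Table 1]{LS91}, which for $n \geqs 5$ are checked directly. Graph automorphisms have $r \leqs 3$, so it suffices to show $|x^G|^{1/2-1/n} \geqs 4$, which is immediate from the crude lower bounds on $|x^G|$ since $n \geqs 5$. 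Thus I may assume $x \in {\rm PGL}(V)$, i.e. $x$ is unipotent (if $r = p$) or semisimple (if $r \ne p$).

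For such $x$ the key is to play the lower bound on $|x^G|$ from \cite[Section 3]{Bur2} off against an upper bound on $r$, both expressed through the invariant $\nu(x)$ of Definition~\ref{d:nu}. Concretely, these give a bound of the shape
\[
|x^G| \geqs \tfrac12 q^{\nu(2n-\nu-1)}, \qquad \nu := \nu(x),
\]
(with $q$ replaced appropriately in the unitary case), while the eigenvalue structure of $x$ forces $r = p \leqs q$ when $x$ is unipotent and $r < q^{\nu+1}$ when $x$ is semisimple: a primitive $r$-th root of unity generates $\mathbb{F}_{q^e}$ with $e = {\rm ord}_r(q)$, and its Frobenius orbit contributes to the nontrivial eigenspaces, whence $e \leqs \nu+1$ and $r \mid q^e-1$. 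Substituting into Theorem~\ref{t:fpr} and taking logarithms to base $q$, the required bound ${\rm fpr}(x) \leqs (r+1)^{-1}$ reduces, up to the bounded factors $\tfrac12$ and the discrepancy $\log_q(r+1)-\nu$, to
\[
\nu(2n-\nu-1)\left(\tfrac12 - \tfrac1n\right) \geqs \nu+1.
\]

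It remains to verify this numerical inequality for all integers $n \geqs 5$ and $1 \leqs \nu \leqs n-1$. As a function of $\nu$, the left-hand side minus the right-hand side is concave (its leading coefficient is $-(\tfrac12-\tfrac1n)<0$), so its minimum on $[1,n-1]$ is attained at an endpoint; at both $\nu=1$ and $\nu=n-1$ the inequality is equivalent to $(n-1)(n-2) \geqs 2n$, which holds precisely when $n \geqs 5$. Hence the estimate is valid throughout, with genuine slack for intermediate $\nu$, and the bounded constants are absorbed by this slack except for the smallest values of $q$, which are checked by hand. The main obstacle is the correction term $\iota$: the cases with $\iota \ne 0$ listed in \cite[Table 1]{Bur1} shrink the exponent and must be treated separately, either by confirming that $\iota$ is small enough to be absorbed by the slack above or, in the finitely many remaining small-$(q,n)$ configurations, by a direct computation. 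The same care is needed in applying the $r$-versus-$\nu$ bound in the unitary case, where the natural module is defined over $\mathbb{F}_{q^2}$.
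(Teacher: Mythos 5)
Your overall strategy --- combining Theorem \ref{t:fpr} with the class-size lower bounds of \cite[Section 3]{Bur2} and a bound on $r$ coming from the eigenvalue structure of $x$ --- is exactly the paper's, and your treatment of the outer automorphisms and of the relation $e \leqs \nu(x)+1$ (hence $r < q^{\nu+1}$) is sound. The gap is in the class-size bound you feed into the machine. The inequality $|x^G| \geqs \tfrac12 q^{\nu(2n-\nu-1)}$ is false: for $x = (\omega I_s, I_{n-s})$ with $r \mid q-1$ and $2 \leqs s \leqs n/2$ (so $\nu(x) = s$), the centralizer is ${\rm GL}_{n-s}(q) \times {\rm GL}_{s}(q)$ and $|x^G| \approx q^{2s(n-s)}$, which is smaller than $q^{s(2n-s-1)}$ as soon as $s \geqs 2$; the same discrepancy occurs for the unipotent classes $(J_2^s,J_1^{n-2s})$. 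The bound actually available from \cite[Corollary 3.38]{Bur2} is $|x^G| > \tfrac12\bigl(\tfrac{q}{q+1}\bigr)q^{2\nu(n-\nu)}$, and if you substitute this into your scheme the target inequality becomes $2\nu(n-\nu)\bigl(\tfrac12-\tfrac1n\bigr) \geqs \nu+1$, which fails at the endpoint $\nu = n-1$ for every $n$ (it reduces to $(n-1)(n-2) \geqs n^2$). So the concavity-plus-endpoints verification collapses as written.

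The conclusion is still true for those elements, but for a reason a single formula in $\nu$ cannot capture: when $\nu$ is large because the order $i$ of $q$ modulo $r$ is large, the centralizer is a small extension-field torus and $|x^G|$ is far larger than $q^{2\nu(n-\nu)}$; whereas when $\nu$ is large but $i \leqs 2$ one has $r \leqs q+1$, and the crude bound $|x^G| > \tfrac12\bigl(\tfrac{q}{q+1}\bigr)q^{2n-2}$ already yields ${\rm fpr}(x) \leqs (q+2)^{-1}$. This is precisely why the paper's proof splits the semisimple case according to $i$ (using $\nu(x) \geqs 3$ and $|x^G| > \tfrac12\bigl(\tfrac{q}{q+1}\bigr)q^{6n-18}$ when $i \geqs 3$, with separate care for $n \in \{5,\dots,8\}$ and for the $\iota = 1/6$ subgroup of type ${\rm Sp}_6(q)$ when $n=6$) rather than attempting one inequality uniform in $\nu$. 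To repair your argument you would need class-size lower bounds depending on more than $\nu$ alone, at which point you are essentially reconstructing the paper's case analysis; the same repair is needed, with the appropriate $i$-versus-$\nu$ dictionary, in the unitary case you flag at the end.
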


\begin{proof}
Let $x \in G$ be an element of prime order $r$ and note that $\iota \leqs 1/n$ in Theorem \ref{t:fpr} (see \cite[Table 1]{Bur1}), whence
\begin{equation}\label{e:psl}
{\rm fpr}(x) < |x^G|^{-\frac{1}{2}+\frac{2}{n}}.
\end{equation}
First assume $x \not\in {\rm PGL}_{n}^{\e}(q)$, so  either $r=2$ or $q = q_0^r$. By \cite[Corollary 3.49]{Bur2} we have
\[
|x^G|>\frac{1}{2}\left(\frac{q}{q+1}\right)q^{\frac{1}{2}(n^2-n-4)}
\]
and by combining this bound with \eqref{e:psl} we obtain ${\rm fpr}(x) \leqs (q+1)^{-1}$ and the result follows.

For the remainder, we may assume $x \in {\rm PGL}_{n}^{\e}(q)$. First observe that 
\[
|x^G|>\frac{1}{2}\left(\frac{q}{q+1}\right)q^{2n-2}
\]
by \cite[Corollary 3.38]{Bur2} and by combining this bound with \eqref{e:psl} we deduce that ${\rm fpr}(x) \leqs (q+2)^{-1}$ unless $(n,q) = (6,2)$. The  groups with $G_0 = {\rm L}_{6}^{\e}(2)$ can be handled using {\sc Magma}, so it remains to consider semisimple elements of odd prime order. Let $i \geqs 1$ be minimal such that $r$ divides $q^i-1$. Note that if $i \leqs 2$ then $r+1 \leqs q+2$ and the result follows as above, so we may assume $i \geqs 3$. In particular, this forces $\nu(x) \geqs 3$ and thus \cite[Corollary 3.38]{Bur2} implies that
\[
|x^G|>\frac{1}{2}\left(\frac{q}{q+1}\right)q^{6n-18}.
\]
By considering $|G_0|$ we see that $r \leqs (q^n-1)/(q-1)$ and one can check that this lower bound on $|x^G|$ (combined with \eqref{e:psl}) is sufficient if $n \geqs 9$. In fact, the same bound is also effective if $n=8$ and $q \geqs 4$. It is easy to check that if $n=8$ and $q=3$ then $r \leqs 1093$, whereas $r \leqs 127$ if $q=2$; the previous argument now goes through. Similarly, if $n=7$ then $\iota = 0$ in Theorem \ref{t:fpr} and once again the result follows as above.

To complete the proof, we may assume $n \in \{5,6\}$ (with $i \geqs 3$ as above). If $n=5$ then $\iota = 0$, $|x^G|>\frac{1}{2}q^{18}$ (minimal if $\e=+$ and $i=3$) and we conclude by applying the bound in Theorem \ref{t:fpr}. Now assume $n=6$ and recall that we may assume $q \geqs 3$ since we have already handled the case $q=2$ using {\sc Magma}. If $\iota = 0$ then the previous argument applies, so let us assume $\iota >0$, in which case $\iota = 1/6$ and $H$ is of type ${\rm Sp}_{6}(q)$ (see \cite[Table 1]{Bur1}). Here we have $i \in \{3,4,6\}$ and it is easy to check that $|x^G|>\frac{1}{2}q^{24}$ and $r \leqs q^2+q+1$. By applying the bound in \eqref{e:psl} we deduce that ${\rm fpr}(x) \leqs (r+1)^{-1}$ and the argument is complete.
\end{proof}

In the next lemma we assume $G_0 = {\rm PSp}_{n}(q)$ with $n \geqs 6$, noting that the case $n=4$ is handled separately in Lemma \ref{l:psp4} below.

\begin{lem}\label{l:symp}
Suppose $G_0 = {\rm PSp}_{n}(q)$ with $n \geqs 6$. If $H$ is a non-subspace subgroup, then ${\rm fpr}(x) \leqs (r+1)^{-1}$ for all $x \in G$ of prime order $r$.
\end{lem}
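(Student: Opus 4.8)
The plan is to follow exactly the strategy used for the linear groups in Lemma \ref{l:linuni}, namely to combine the fixed point ratio bound of Theorem \ref{t:fpr} with the lower bounds on conjugacy class sizes $|x^G|$ recorded in \cite[Section 3]{Bur2}, treating separately the cases according to whether $x$ lies in the inner-diagonal group and, if so, whether it is unipotent or semisimple. Since $G_0 = {\rm PSp}_n(q)$ with $n \geqs 6$, I first read off from \cite[Table 1]{Bur1} the value of the correction term $\iota$ in Theorem \ref{t:fpr}; for symplectic groups of rank at least $3$ this is typically $\iota = 0$ (with at most a small number of exceptional $H$ where $\iota$ takes a small positive value), so that the working bound becomes ${\rm fpr}(x) < |x^G|^{-\frac12 + 1/n}$, or a slightly weaker version when $\iota > 0$.

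The main body of the argument splits into three cases. First, if $x \notin {\rm PGL}(V)$ then $x$ is a field, graph-field or graph automorphism; for field and graph-field automorphisms one invokes \eqref{e:ls91} directly via Theorem \ref{t:ls91}, while for the remaining automorphisms (and here graph automorphisms occur only in special small cases) one uses the lower bound on $|x^G|$ from \cite[Corollary 3.49]{Bur2}, which grows like a fixed positive power of $q^{n^2/2}$, to push ${\rm fpr}(x)$ below $(q+1)^{-1} \leqs (r+1)^{-1}$. Second, for $x$ unipotent ($r = p$) one applies the lower bound on $|x^G|$ from \cite[Section 3]{Bur2} in terms of $\nu(x) \geqs 1$; since a symplectic transvection (the minimal unipotent class) already has $\nu(x) = 1$ with $|x^G|$ of order roughly $q^{n-1}$, combining with \eqref{e:bound}-style arithmetic gives ${\rm fpr}(x) \leqs (q+1)^{-1}$, and for $r = p = 2$ one keeps track of the Aschbacher--Seitz class labels as flagged in the preamble. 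Third, and this is where the bookkeeping is heaviest, for $x$ semisimple ($r \neq p$) I let $i$ be minimal with $r \mid q^i - 1$ (or, for elements whose eigenvalues are genuinely defined only over an extension, the analogue governing $\nu(x)$): when $i$ is small, $r$ itself is small, of size at most roughly $q^2$, so that $r + 1 \leqs q^2 + q + 1$ and the crude class-size bound suffices; when $i$ is large, $\nu(x)$ is correspondingly large and the lower bound on $|x^G|$ from \cite[Corollary 3.38]{Bur2} is strong enough to win, using $r \leqs (q^{n/2}+1)$ or the appropriate torus bound from $|G_0|$.

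The main obstacle I anticipate is the boundary region of small $n$ and small $q$ — most likely $n = 6$ with $q \in \{2,3\}$ and perhaps $n = 8$ with $q = 2$ — where the exponent $-\frac12 + 1/n + \iota$ is closest to $-\frac12$, the term $\iota$ may be nonzero for certain subfield or tensor-type stabilizers $H$, and the semisimple elements of order $r$ with intermediate $i$ are not decisively controlled by either the ``small $r$'' or the ``large $\nu(x)$'' branch. As in Lemma \ref{l:linuni}, I expect to dispatch these finitely many residual $(n,q)$ pairs by direct computation in {\sc Magma} \cite{magma}, constructing $G$ and the relevant non-subspace maximal subgroups $H$ and computing $|x^G \cap H|$ and $|x^G|$ to obtain ${\rm fpr}(x)$ exactly, thereby verifying ${\rm fpr}(x) \leqs (r+1)^{-1}$ in every remaining case and completing the proof.
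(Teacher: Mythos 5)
Your proposal is correct and follows essentially the same route as the paper: apply Theorem \ref{t:fpr} with $\iota \leqs 1/n$, combine it with the lower bounds on $|x^G|$ from \cite[Section 3]{Bur2} (transvections giving the minimal class for $r=p$, and the reduction to semisimple $x$ with $r > q+1$, hence $i \geqs 3$ and $\nu(x) \geqs 4$, in the remaining case), and finish the boundary cases $n \in \{6,8\}$ with small $q$ by sharper estimates or {\sc Magma}. The only cosmetic differences are in which precise results of \cite{Bur2} are cited and your slight underestimate of the transvection class size, neither of which affects the argument.
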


\begin{proof}
Suppose $x \in G$ has prime order $r$ and note that $\iota \leqs 1/n$, so \eqref{e:psl} holds once again. Now $|x^G| \geqs (q^n-1)/2$ (minimal if $G = G_0$, $q$ is odd and $x$ is a transvection) and one can check that \eqref{e:psl} yields ${\rm fpr}(x) \leqs (q+2)^{-1}$ unless $n=6$ or $(n,q) = (8,2)$. The latter case, together with $(n,q) = (6,2)$, can be handled using {\sc Magma}. For $n=6$ with $q \geqs 3$ we have $\iota = 0$ and the upper bound in Theorem \ref{t:fpr} is sufficient. For the remainder, we may assume $x$ is semisimple and $r \geqs 5$. Let $i \geqs 1$ be minimal such that $r$ divides $q^i-1$. By arguing as above, we may assume that $r>q+1$, so $i \geqs 3$ and thus $\nu(x) \geqs 4$. 

Notice that $r \leqs q^{n/2}+1$ and \cite[Proposition 3.36]{Bur2} gives $|x^G|>\frac{1}{2}q^{4n-16}$. One can now check that the bound in \eqref{e:psl} is sufficient for $n \geqs 10$, so the problem is reduced to the groups where $n \in \{6,8\}$ and $q \geqs 3$. Suppose $n=6$. Here $\iota = 0$, $i \in \{3,4,6\}$ and $|x^G| > \frac{1}{2}q^{16}$ (minimal if $i=4$). In addition, $r \leqs q^2+q+1$ and the result follows via the bound in Theorem \ref{t:fpr}. Similarly, if $n=8$ then $r \leqs q^4+1$, $|x^G|>\frac{1}{2}q^{24}$ (once again, minimal if $i=4$) and the bound in \eqref{e:psl} is sufficient.
\end{proof}

\begin{lem}\label{l:orth}
Suppose $G_0 = {\rm P\O}_{n}^{\e}(q)$ with $n \geqs 7$. If $H$ is a non-subspace subgroup, then ${\rm fpr}(x) \leqs (r+1)^{-1}$ for all $x \in G$ of prime order $r$.
\end{lem}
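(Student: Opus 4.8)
The plan is to follow the strategy already used in the proofs of Lemmas \ref{l:linuni} and \ref{l:symp}, with Theorem \ref{t:fpr} as the principal tool. Let $x \in G$ have prime order $r$. First I would dispose of the automorphisms lying outside ${\rm PGL}(V)$: if $x$ is a field or graph-field automorphism then $q = q_0^r$ and, outside the short list in \cite[Table 1]{LS91}, the Liebeck--Saxl bound \eqref{e:ls91} already gives ${\rm fpr}(x) \leqs 4/(3q) \leqs (r+1)^{-1}$, while an involutory graph automorphism is controlled by the class-size estimates of \cite[Section 3]{Bur2}. For the remainder of the argument I read off from \cite[Table 1]{Bur1} that $\iota \leqs 1/n$ for orthogonal groups in non-subspace actions, so that the inequality \eqref{e:psl}, namely ${\rm fpr}(x) < |x^G|^{-\frac{1}{2}+\frac{2}{n}}$, is available throughout (the finitely many entries of the table with $\iota > 1/n$ being absorbed into the small-dimensional cases below).

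Next I would treat the inner-diagonal elements by substituting a uniform lower bound on $|x^G|$ into \eqref{e:psl}. The smallest classes of elements of prime order arise either from reflections $(-I_1,I_{n-1})$ when $q$ is odd (so $\nu(x)=1$), or from transvection-type unipotent involutions of Jordan form $(J_2,J_1^{n-2})$ when $q$ is even (recall $n$ is then forced to be even); in both cases \cite[Section 3]{Bur2} gives $|x^G|$ of order roughly $q^{n-2}$. Feeding this into \eqref{e:psl} yields ${\rm fpr}(x) \leqs (q+2)^{-1}$, and hence the claim via $r \leqs q+1$, for all but the smallest $(n,q)$. The residual cases --- the low dimensions $n \in \{7,8,9,10\}$ together with the small fields $q \in \{2,3\}$ --- I would settle directly with {\sc Magma} \cite{magma}, exactly as in the symplectic lemma.

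It then remains to handle semisimple elements of odd order $r \geqs 5$ with $r > q+1$. Letting $i \geqs 1$ be minimal such that $r$ divides $q^i-1$ (or $q^i+1$, as appropriate for a torus of the relevant orthogonal type), the condition $r > q+1$ forces $i \geqs 3$ and hence a large value of $\nu(x)$; the orthogonal analogue of \cite[Proposition 3.36]{Bur2} then supplies a much stronger lower bound of the shape $|x^G| > \frac{1}{2}q^{c\,\nu(x)}$. Combining this with the bound $r \leqs q^{n/2}+1$, which comes from the order of a maximal (Coxeter) torus, makes \eqref{e:psl} decisive for all large $n$, and one finishes by inspecting the finitely many borderline low-dimensional cases one at a time.

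The main obstacle will be this semisimple analysis. Unlike the symplectic situation, orthogonal groups carry several distinct families of maximal tori according to the sign $\e$ and the parity of $n$, and the eigenvalues of a semisimple element of order $r$ may lie in $\mathbb{F}_{q^i}$ for an $i$ attached to either $q^i-1$ or $q^i+1$. Keeping careful track of how $r$, $i$ and $\nu(x)$ interact across these cases --- and verifying that the resulting class-size lower bounds genuinely beat $r \leqs q^{n/2}+1$ in the tightest low dimensions $n \in \{7,8,9,10\}$ --- is where the real work lies, whereas the field- and graph-automorphism cases and the small-field computations are routine.
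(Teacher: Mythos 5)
Your overall skeleton---dispose of field and graph-field automorphisms via \eqref{e:ls91}, feed a lower bound on $|x^G|$ for reflections/transvections into Theorem \ref{t:fpr} to get ${\rm fpr}(x)\leqs (q+2)^{-1}$, and then exploit $i\geqs 3 \Rightarrow \nu(x)\geqs 4$ for the remaining semisimple elements---is exactly the route the paper takes, and the semisimple analysis you flag as the main obstacle does go through as you describe, with $|x^G|>\frac{1}{2}q^{4n-12}$ against $r\leqs q^{n/2}+1$. The gap is in your treatment of $\iota$. The claim that $\iota\leqs 1/n$ for orthogonal groups in non-subspace actions is false: the paper reads off $\iota\leqs 1/(n-2)$ in the generic even-dimensional case, and, more seriously, $\iota=0.219$ when $(n,\e)=(8,+)$ and $H\in\mathcal{S}$ is the irreducibly embedded subgroup with socle ${\rm Sp}_6(q)$ ($p=2$) or $\O_7(q)$ ($p$ odd). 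This is an infinite family, one subgroup for every $q$, so it cannot be ``absorbed into the small-dimensional cases below'' and settled by {\sc Magma} for $q\in\{2,3\}$; with $\iota=0.219$ the exponent in \eqref{e:psl} is too weak to be useful and a dedicated argument is required. The paper supplies one: Theorem \ref{t:ls91} for $r=2$, the explicit class-size bound $|x^G|\geqs (q^2+1)^2(q^6-1)$ for odd-order elements (attained by $(J_2^2,J_1^4)$), and then the $\nu(x)\geqs 4$ estimate for $r\geqs 5$.

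A second, unavoidable problem sits inside that same family at $q=2$: for $G=\O_8^{+}(2)$, $H={\rm Sp}_6(2)$ and $x$ of order $3$ with $C_V(x)=0$ one has ${\rm fpr}(x)=3/10>1/4$, so the inequality you are trying to prove is simply false there and no refinement of the estimates will recover it. The paper resolves this by observing that the action is permutation isomorphic to the action on nonsingular $1$-spaces, i.e.\ to a subspace action already listed in Table \ref{tab:class}, which is how the case is reconciled with the statement of Theorem \ref{t:nonsub}. Your proposal contains no mechanism for recognising or discharging this exception, so as written the argument cannot close.
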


\begin{proof}
This is very similar to the proof of the previous lemma. First assume $n$ is odd (so $q$ is also odd) and write $n=2m+1$. The case $(n,q) = (7,3)$ can be handled using {\sc Magma}, so we may assume $(n,q) \ne (7,3)$. By inspecting \cite[Table 1]{Bur1} we see that $\iota = 0$ if $n \geqs 9$, otherwise $\iota \leqs 0.108$. Let us also note that 
\[
|x^G| \geqs \frac{|{\rm SO}_{n}(q)|}{2|{\rm SO}_{n-1}^{-}(q)|} = \frac{1}{2}q^m(q^m-1)
\]
with equality if $x \in {\rm SO}_{n}(q)$ is an involution with a minus-type eigenspace on $V$ of dimension $n-1$. By applying Theorem \ref{t:fpr} we deduce that ${\rm fpr}(x) \leqs (q+2)^{-1}$. 

To complete the argument for $n$ odd, we may assume $x$ is semisimple, $r \geqs 5$ and $i \geqs 3$, where $i$ is the smallest positive integer such that $r$ divides $q^i-1$. In particular, we have $\nu(x) \geqs 4$ and we quickly deduce that 
\[
|x^G| \geqs \frac{|{\rm SO}_{n}(q)|}{|{\rm SO}_{n-4}(q)||{\rm GU}_{1}(q^2)|}>\frac{1}{2}q^{4n-12}.
\]
In addition, we note that $r \leqs \frac{1}{2}(q^{m}+1)$ and it is routine to check that the desired bound now follows via Theorem \ref{t:fpr}.

For the remainder, we may assume $n = 2m \geqs 8$ is even. The groups with $(n,q) = (8,2)$ or $(8,3)$ can be handled using {\sc Magma}. Let us highlight the special case $G =  \O_{8}^{+}(2)$ with $H = {\rm Sp}_6(2)$ acting irreducibly on $V$: if $x \in G_0$ has order $3$ and $C_V(x) = 0$, then ${\rm fpr}(x) = 3/10>1/4$. Here the action of $G$ on $\O$ is permutation isomorphic to the action of $G$ on the set of $1$-dimensional nonsingular subspaces of the natural module, so this special case is included in part (i)(d) of Theorem \ref{t:main}. For the remainder we may assume $(n,q) \ne (8,2), (8,3)$. 

We will postpone the analysis of the special case where $(n,\e) = (8,+)$ and $H$ is an irreducible subgroup with socle  ${\rm Sp}_6(q)$ (if $p=2$) or $\O_7(q)$ (if $p$ is odd) to the end of the proof. By excluding this special case, we observe that $\iota \leqs 1/(n-2)$ in Theorem \ref{t:fpr} and it is easy to check that 
\[
|x^G| \geqs \frac{|{\rm O}_{n}^{\e}(q)|}{2d|{\rm Sp}_{n-2}(q)|} = \frac{1}{d}q^{m-1}(q^m-\e)
\]
with $d = (2,q-1)$. By combining this lower bound with Theorem \ref{t:fpr}, setting $\iota = 1/(n-2)$, we deduce that ${\rm fpr}(x) \leqs (q+2)^{-1}$. Therefore, to complete the analysis we may assume $x$ is semisimple, $r \geqs 5$ and $i \geqs 3$, so $\nu(x) \geqs 4$ and we have
\begin{equation}\label{e:bd2}
|x^G| \geqs \frac{|{\rm SO}_{n}^{+}(q)|}{|{\rm SO}_{n-4}^{-}(q)||{\rm GU}_{1}(q^2)|}>\frac{1}{2}q^{4n-12}.
\end{equation}
Now $r \leqs q^m+1$ and by applying Theorem \ref{t:fpr} we deduce that ${\rm fpr}(x) \leqs (r+1)^{-1}$ as required.

Finally, to complete the proof we may assume $(n,\e) = (8,+)$, $q \geqs 4$  and $H$ is irreducible with socle ${\rm Sp}_6(q)$ (if $p=2$) or $\O_7(q)$ (if $p$ is odd). Here \cite[Table 1]{Bur1} gives $\iota = 0.219$. If $r=2$ then the bound in Theorem \ref{t:ls91} is sufficient, so we may assume $r$ is odd and thus
\[
|x^G| \geqs \frac{|{\rm SO}_{8}^{+}(q)|}{q^9|{\rm SO}_{4}^{+}(q)||{\rm Sp}_{2}(q)|} = (q^2+1)^2(q^6-1)
\]
(minimal if $x$ is unipotent with Jordan form $(J_2^2,J_1^4)$). Then as above, by applying the lower bound in Theorem \ref{t:fpr} with $\iota = 0.219$, we deduce that ${\rm fpr}(x) \leqs (q+2)^{-1}$. Finally, we may assume $x$ is semisimple, $r \geqs 5$ and $i \geqs 3$, so $i \in \{3,4,6\}$ and $r \leqs q^2+q+1$. We can now proceed as before, using the lower bound on $|x^G|$ in \eqref{e:bd2}.  
\end{proof}

In order to complete the proof of Theorem \ref{t:main} for classical groups in non-subspace actions, we may assume $G_0$ is one of the following groups:
\[
{\rm L}_{2}(q), \; {\rm L}_{3}^{\e}(q), \; {\rm L}_{4}^{\e}(q),\; {\rm PSp}_{4}(q).
\]

First assume $G_0 = {\rm L}_{2}(q)$. We refer the reader to \cite[Tables 8.1, 8.2]{BHR} for a convenient list of the maximal subgroups of $G$ up to conjugacy. We will assume $q \geqs 7$ and $q \ne 9$, noting that $G_0$ is isomorphic to an alternating group when $q = 4,5$ or $9$. Note that in the special case arising in the following lemma, $G$ is permutation isomorphic to ${\rm L}_{3}(2)$ acting on the set of $1$-dimensional subspaces of its natural module. In particular, this case is included in part (i)(d) of Theorem \ref{t:main}.

\begin{lem}\label{l:psl2}
Suppose $G_0 = {\rm L}_{2}(q)$ and $H$ is a non-subspace subgroup, where $q \geqs 7$ and $q \ne 9$. If $x \in G$ has prime order $r$, then either ${\rm fpr}(x) \leqs (r+1)^{-1}$, or $G = {\rm L}_{2}(7)$, $H = S_4$, $x$ is an involution and ${\rm fpr}(x) = 3/7$.
\end{lem}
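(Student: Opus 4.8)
The plan is to play the uniform Liebeck--Saxl estimate of Theorem~\ref{t:ls91} against a direct treatment of the finitely many ``large'' primes, using the explicit classification of the maximal subgroups of $G$ recorded in \cite[Tables 8.1, 8.2]{BHR}. First I would note that every non-subspace maximal subgroup $H$ is, up to extension by field and diagonal automorphisms, one of: the normalizer of a split torus (dihedral, of order roughly $2(q-1)$), the normalizer of a nonsplit torus (dihedral, of order roughly $2(q+1)$), a subfield subgroup ${\rm L}_2(q_0)$ with $q = q_0^k$ for some prime $k$, or one of the bounded subgroups $A_4$, $S_4$, $A_5$. Correspondingly, an element $x \in G$ of prime order $r$ is unipotent ($r=p$), semisimple ($r \mid q\pm 1$), or lies outside ${\rm PGL}_2(V)$, in which case it is a field automorphism of order $r \mid f$ and $q = q_0^r$.

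For elements outside ${\rm PGL}_2(V)$, the condition $q = q_0^r$ gives $q \geq 2^r$, so \eqref{e:ls91} yields $\mathrm{fpr}(x) \le 4/(3q) \le 1/(r+1)$ unless $(G,H,x)$ appears in \cite[Table 1]{LS91}. For $x \in {\rm PGL}_2(V)$, Theorem~\ref{t:ls91} gives $\mathrm{fpr}(x) \le 4/(3q)$, and $4/(3q) \le 1/(r+1)$ holds precisely when $r \le 3q/4 - 1$. Hence, outside the exceptional list of \cite{LS91}, the only primes requiring additional work are those with $r > 3q/4 - 1$, namely primes close to $q$.

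The core of the argument is thus the analysis of these large primes, and I expect this to be the main technical point (though a routine one for ${\rm L}_2(q)$). A unipotent element of order $r = p$ can only have $r > 3q/4 - 1$ when $q = p$ is prime; in that case no non-subspace subgroup has order divisible by $p$ --- the torus normalizers are coprime to $p$, there is no subfield subgroup, and $A_4, S_4, A_5$ would force $p \le 5 < 7$ --- so $\mathrm{fpr}(x) = 0$ and the case is vacuous. For a semisimple element of large prime order $r$, the only non-subspace subgroup whose order is divisible by $r$ is the normalizer of the unique maximal torus containing $x$; there $x^G \cap H$ is confined to the $r$-elements of that (cyclic) torus, a set of bounded size (at worst the Weyl-pair $\{x, x^{-1}\}$ together with a contribution of order $O(f)$ from field automorphisms). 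Since $|x^G|$ is of order $q^2$, the class-size bounds then give $\mathrm{fpr}(x) = O(f/q^2)$, which is well below $1/(q+2) \le 1/(r+1)$, as required.

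It remains to dispose of the entries of \cite[Table 1]{LS91} with socle ${\rm L}_2(q)$, $q \geq 7$, $q \neq 9$ and $H$ a non-subspace subgroup. This is a short explicit list, and I would check each case directly (or with {\sc Magma}): most either satisfy $\mathrm{fpr}(x) \le 1/(r+1)$ or are permutation isomorphic to a subspace action already recorded in Theorem~\ref{t:main}(i)(d). The single genuine survivor is $G = {\rm L}_2(7)$, $H = S_4$ with $x$ an involution: the $9$ involutions of $S_4$ all fuse into the unique class of $21$ involutions of ${\rm L}_2(7)$, so $\mathrm{fpr}(x) = 9/21 = 3/7 > 1/3$, which is the stated exception.
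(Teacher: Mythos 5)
Your argument is correct and arrives at the same unique exception, but it is organized quite differently from the paper's proof. The paper does not appeal to Theorem~\ref{t:ls91} in this lemma at all: it works through the non-subspace maximal subgroups listed in \cite[Tables 8.1, 8.2]{BHR} one family at a time (subfield subgroups, the two torus normalizers, and $A_4$, $S_4$, $A_5$, $S_5$), bounding $|x^G \cap H|$ and $|x^G|$ directly for each type of element; the ${\rm L}_{2}(7)$ exception surfaces in the $S_4$ computation. You instead make Theorem~\ref{t:ls91} the engine, reducing to the primes $r > 3q/4-1$ plus the exceptional table of \cite{LS91}. Your treatment of the large primes is sound: the observation that $q=p$ forces ${\rm fpr}(x)=0$ on unipotent classes, and that a semisimple element of large prime order meets only its torus normalizer, in at most two points of its $G_0$-class (at most $O(f)$ after fusion by field automorphisms), against $|x^G| \geqs q(q-1)$, is exactly the estimate the paper makes in its Case 2. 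The one place your plan understates the residual work is the final step: the entries of \cite[Table 1]{LS91} with socle ${\rm L}_{2}(q)$ and non-subspace stabilizer are not a finite list but include infinite families in $q$ (involutions in the dihedral torus normalizers, and involutions and involutory field automorphisms in subfield subgroups with $q=q_0^2$), so they cannot all be dispatched by a {\sc Magma} check; each still requires a short uniform estimate, such as bounding $|x^G \cap H|$ by the number of involutions in $D_{2(q-\e)}$, which is at most $q+2$, against $|x^G| \geqs q(q-1)/2$. Once those estimates are written down, the two proofs perform essentially the same arithmetic, and for a rank one group with so few subgroup types your reduction saves little over the paper's direct case analysis.
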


\begin{proof}
We need to consider the various possibilities for $H$ arising in \cite[Tables 8.1, 8.2]{BHR}. We begin by handling the subfield subgroups.

\vs

\noindent \emph{Case 1. $H$ is a subfield subgroup.}

\vs

Suppose $H$ is a subfield subgroup of type ${\rm GL}_{2}(q_0)$, where $q=q_0^k$ with $k$ a prime and $q_0 \geqs 3$ (see \cite[Table 8.1]{BHR}). Let $x \in G$ be an element of prime order $r$ and note that $H \cap {\rm PGL}(V) \leqs {\rm PGL}_{2}(q_0)$. Let $i_m(X)$ be the number of elements of order $m$ in the finite group $X$.

If $r=p$ and $x$ is unipotent, then $|x^G \cap H| \leqs q_0^2-1$ and $|x^G| \geqs (q^2-1)/2$, whence
\[
{\rm fpr}(x) \leqs \frac{2(q_0^2-1)}{q^{2}-1} \leqs \frac{2}{q_0^2+1} \leqs \frac{1}{q_0+1}
\]
and the result follows. Similarly, if $x$ is a semisimple involution, then $|x^G \cap H| \leqs i_2({\rm PGL}_{2}(q_0)) = q_0^2$, $|x^G| \geqs q(q-1)/2$ and we deduce that ${\rm fpr}(x) \leqs 1/3$ as required.

Next assume $x$ is semisimple and $r$ is odd. As usual, we may assume $r$ divides $|H_0|$, which implies that $r$ divides $q_0^2-1$. If $r$ divides $q_0-1$ then 
\[
{\rm fpr}(x) = \frac{|x^{G_0}\cap H_0|}{|x^{G_0}|} = \frac{q_0(q_0+1)}{q(q+1)} \leqs \frac{q_0+1}{q_0(q_0^2+1)} \leqs \frac{1}{q_0}
\]
and the result follows since $r \leqs q_0-1$. Similarly, if $r$ divides $q_0+1$ then 
\[
{\rm fpr}(x) \leqs \frac{q_0-1}{q_0(q_0^2+1)} \leqs \frac{1}{q_0+2}
\]
and once again this bound is sufficient.

Finally, suppose $q=q_1^r$ and $x$ is a field automorphism. First assume $k$ is odd. If $r=2$ then 
\[
|x^G \cap H| \leqs \frac{|{\rm PGL}_{2}(q_0)|}{|{\rm PGL}_{2}(q_0^{1/2})|} = q_0^{1/2}(q_0+1),\;\; |x^G| \geqs \frac{1}{2}q^{1/2}(q+1)
\]
and we quickly deduce that ${\rm fpr}(x) \leqs 1/3$. Similarly, if $r \geqs 3$ then 
\[
|x^G \cap H| \leqs |{\rm PGL}_{2}(q_0)|<q,\;\; |x^G|>\frac{1}{2}q^{3\left(1-\frac{1}{r}\right)},
\]
which implies that 
\[
{\rm fpr}(x) \leqs 2q_1^{3-2r} \leqs 2^{4-2r} \leqs (r+1)^{-1}.
\]
Now assume $k=2$. If $r=2$ then 
\[
|x^G \cap H| \leqs i_2({\rm PGL}_{2}(q_0))+1 \leqs q+1,\;\; |x^G| \geqs \frac{1}{2}q^{1/2}(q+1)
\]
and we obtain ${\rm fpr}(x) \leqs 2q_0^{-1}$. This is at most $1/3$ if $q_0 \geqs 7$ and the remaining cases $q_0 \in \{4,5\}$ can be checked directly (recall that $q \ne 4,9$). Finally, if $r \geqs 3$ then $q_1 = q_2^2$ and we have
\[
|x^G \cap H| \leqs |{\rm PGL}_{2}(q_0)|  < q^{3/2},\;\; |x^G| = \frac{|{\rm PGL}_{2}(q)|}{|{\rm PGL}_{2}(q^{1/r})|} > (q+1)^{2}.
\]
Therefore, ${\rm fpr}(x) <q_0^{-1} \leqs 2^{-r}$ and the result follows.

\vs

\noindent \emph{Case 2. $H$ is of type ${\rm GL}_{1}(q) \wr S_2$ or ${\rm GL}_{1}(q^2)$.}

\vs

Here $H$ is the normalizer of a maximal torus of $G_0$ and we have $H \cap {\rm PGL}(V) \leqs D_{2(q-\e)}$, where $\e=1$ if $H$ is of type ${\rm GL}_{1}(q) \wr S_2$, otherwise $\e=-1$. Let $x \in H$ be an element of prime order $r$.

First assume $x$ is semisimple or unipotent. If $r=2$ then $|x^G \cap H| \leqs i_2(D_{2(q-\e)}) \leqs q+2$ and $|x^G| \geqs q(q-1)/2$. These bounds yield ${\rm fpr}(x) \leqs 1/3$ for $q \geqs 11$ and the cases $q \in \{7,8\}$ can be checked directly. On the other hand, if $r$ is odd then $r$ divides $q-\e$ and the result follows since $|x^{G_0} \cap H| = 2$ and $|x^{G}| \geqs q(q-1)$.

Finally, suppose $q = q_0^r$ and $x$ is a field automorphism. If $r \geqs 3$ then $|x^G \cap H| \leqs 2(q+1)$ and $|x^G| > (q+1)^2$, whence ${\rm fpr}(x) \leqs 2(q+1)^{-1} < 2^{1-r}$ and the result follows. For $r=2$ we have ${\rm fpr}(x) = 0$ if $\e=-1$, whereas $|x^G \cap H| \leqs 2q^{1/2}$ and $|x^G| \geqs q^{1/2}(q+1)/2$ if $\e=1$. From the latter bounds we obtain ${\rm fpr}(x) \leqs 1/3$ since $q \geqs 16$.

\vs

\noindent \emph{Case 3. The remaining possibilities for $H$.} 

\vs

First assume $H$ is of type $2^{1+2}_{-}.{\rm O}_{2}^{-}(2)$, so $H = A_4$ or $S_4$, $q=p \geqs 7$ and we may assume $r \in \{2,3\}$. Here $|x^G \cap H| \leqs i_r(H) \leqs 9$ and $|x^G| \geqs q(q-1)/2$; these bounds are sufficient unless $r=2$ and $q=7$. In the latter case,  $G = {\rm L}_{2}(7)$, $H = S_4$, $|x^G \cap H| = 9$ and $|x^G| = 21$, which gives ${\rm fpr}(x) = 3/7 > 1/3$. This is the special case recorded in the statement of the lemma.

Finally, suppose $H = S_5$ or $A_5$, $q \in \{p,p^2\}$ and $p \equiv \pm 1, \pm 3 \imod{10}$, so $r \in \{2,3,5\}$. First assume $r=2$, so $|x^G| \geqs q^{1/2}(q+1)/2$ (minimal if $x$ is an involutory field automorphism) and we note that $i_2(H) \leqs 25$. The subsequent bound on ${\rm fpr}(x)$ is less than $1/3$ if $q \geqs 29$ and the cases with $q<29$ can be checked very easily with the aid of {\sc Magma}. Similarly, if $r \in \{3,5\}$ then the bounds $|x^G \cap H| \leqs 24$ and $|x^G| \geqs q(q-1)$ are sufficient for $q \geqs 13$ and once again we can use {\sc Magma} when $q<13$.
\end{proof}

\begin{lem}\label{l:psl3}
Suppose $G_0 = {\rm L}_{3}^{\e}(q)$ and $H$ is a non-subspace subgroup. Then ${\rm fpr}(x) \leqs (r+1)^{-1}$ for all $x \in G$ of prime order $r$.
\end{lem}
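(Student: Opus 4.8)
The plan is to run through the non-subspace maximal subgroups $H$ of $G$ in turn, which for $G_0 = {\rm L}_{3}^{\e}(q)$ are listed in \cite[Tables 8.3--8.6]{BHR}. Excluding the subspace ($\C_1$) subgroups, these comprise the imprimitive torus normalizers in $\C_2$ (type ${\rm GL}_1(q) \wr S_3$ or ${\rm GU}_1(q) \wr S_3$), the field-extension (Singer) subgroups in $\C_3$ (type ${\rm GL}_1(q^3).3$ or ${\rm GU}_1(q^3).3$), the subfield subgroups in $\C_5$, the classical subgroups in $\C_8$ (most notably ${\rm SO}_3(q) \cong {\rm PGL}_2(q)$, together with the unitary subgroup when $\e = +$), the extraspecial normalizer $3^{1+2}.{\rm Sp}_2(3)$ in $\C_6$, and the almost simple irreducible subgroups in $\mathcal{S}$ (such as ${\rm L}_2(7)$, $A_6$ and their covers). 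Note that $\C_4$ and $\C_7$ are vacuous since $n=3$ is prime. A finite list of small cases --- including the isomorphisms ${\rm L}_3(2) \cong {\rm L}_2(7)$ and ${\rm U}_3(3) \cong G_2(2)'$, and all $q$ below an explicit bound arising from the estimates below --- is checked directly with {\sc Magma} \cite{magma}, so throughout I assume $q$ is large.

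As a first reduction, field and graph-field automorphisms are handled by \eqref{e:ls91}, and Theorem \ref{t:ls91} gives ${\rm fpr}(x) \leqs 4/(3q) \leqs (r+1)^{-1}$ whenever $r \leqs 3q/4 - 1$ and $(G,H,x)$ is not one of the finitely many exceptions in \cite[Table 1]{LS91}. It therefore remains to treat $x \in G \cap {\rm PGL}(V)$ of prime order $r > 3q/4 - 1$. The key structural observation is that a prime $r > q+1$ dividing $|G_0|$ must divide $q^2 + \e q + 1$, so $x$ is a regular semisimple element lying in a unique Singer-type maximal torus; the only maximal subgroups containing it lie in $\C_3$, where $|H| = O(q^2)$ while $|x^G| \geqs |G_0|/(q^2+\e q+1) \gg q^4$ by \cite[Corollary 3.38]{Bur2}, so the trivial bound ${\rm fpr}(x) \leqs |H|/|x^G|$ is more than sufficient. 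This leaves the semisimple elements of order $r$ with $3q/4 - 1 < r \leqs q+1$, together with the unipotent elements ($r = p$).

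For the remaining families I proceed exactly as in Cases 1 and 2 of the proof of Lemma \ref{l:psl2}, bounding $|x^G \cap H| \leqs i_r(H)$ from above and $|x^G|$ from below via the class-size estimates in \cite[Section 3]{Bur2} and \cite[Chapter 3]{BG}; for odd $r$ dividing the relevant torus order one instead computes $|x^{G_0} \cap H_0|$ and $|x^{G_0}|$ directly. In the $\C_2$ and $\C_3$ torus normalizers $|H| = O(q^2)$, in the $\C_5$ subfield subgroups $|x^G \cap H| \leqs |H_0| \sim |{\rm (P)GL}_3^{\e}(q_0)|$, and in the $\C_6$, $\C_8$ and $\mathcal{S}$ subgroups $|H| = O(q^3)$ (with the $\C_6$ and $\mathcal{S}$ cases of bounded order, occurring only for restricted congruences on $q$). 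In each instance, for $x$ of the residual orders the resulting upper bound on ${\rm fpr}(x)$ is readily seen to be at most $(r+1)^{-1}$ once $q$ is large.

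The main obstacle is the unipotent elements of order $r=p$, in particular the transvections with $\nu(x)=1$ (and the regular unipotents when $p=3$). Indeed, when $q=p$ the generic bound $4/(3q)$ fails to beat $(r+1)^{-1} = (p+1)^{-1}$, and even the trivial bound $|H|/|x^G| \sim q^{-1}$ is slightly too weak; one must instead control the fusion of unipotent classes and bound $|x^G \cap H|$ sharply rather than by $|H|$. Verifying that every such low-order case --- along with the genuinely small $\C_6$ and $\mathcal{S}$ subgroups and the boundary values of $q$ --- still satisfies ${\rm fpr}(x) \leqs (r+1)^{-1}$ is the delicate point, and is where the finite {\sc Magma} computation is indispensable. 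Unlike ${\rm L}_4^{\e}(q)$ in Lemma \ref{l:psl4}, the group ${\rm L}_3^{\e}(q)$ yields no genuine exception, so the substance of the argument lies in confirming that each borderline unipotent case falls just on the right side of the bound.
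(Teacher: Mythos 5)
Your skeleton---dispose of small $q$ by computer, handle field and graph-field automorphisms via \eqref{e:ls91}, use Theorem \ref{t:ls91} to reduce to elements of order $r$ large compared with $q$, and then run through the non-subspace collections with class-size estimates---is essentially the paper's argument (the paper works with $q \geqs 16$ and reduces to $r \geqs 13$ rather than to $r > 3q/4-1$, but the effect is the same). Your treatment of the semisimple elements is sound: for $r > q+1$ the element is regular in a Singer torus and only the $\C_3$ normalizer can contain it, and for the residual primes $r \leqs q+1$ the direct counts you describe all give ${\rm fpr}(x) = O(q^{-2})$, which comfortably beats $(r+1)^{-1} \sim q^{-1}$.

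The gap is in your final paragraph. You identify transvections with $q=p$ as ``the main obstacle'' and declare the finite {\sc Magma} computation ``indispensable'' there---but a finite computation cannot settle a case that, on your account, occurs for every large prime $q=p$. What is missing is the observation that closes this case: for $q=p$ large, the only non-subspace maximal subgroups of $G$ whose order is divisible by $p$ are those of type ${\rm O}_{3}(q)$, and there every element of order $p$ acts on $V$ as a regular unipotent $(J_3)$ (the $3$-dimensional module is the symmetric square of the natural ${\rm SL}_2(q)$-module), so $|x^G| \sim q^{6}$ against $|H_0| = q(q^2-1)$ gives ${\rm fpr}(x) = O(q^{-3})$. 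Transvections of $G_0$ do lie in the subfield and unitary subgroups of type ${\rm GL}_{3}^{\e'}(q_0)$, but there $p \leqs q_0 \leqs q^{1/2}$, so they are already eliminated by your own reduction to $r > 3q/4-1$; the paper still treats them explicitly (via $|x^G \cap H| \leqs (q_0-1)(q_0^3+1)$ against $|x^G| = (q+1)(q^3-1)$) only because its reduction is to $r \geqs 13$ rather than to $r$ large relative to $q$. Once you insert this observation your proof goes through; as written, the unipotent case for large prime $q$ is asserted but not established, and the mechanism you propose for it cannot establish it.
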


\begin{proof}
Since ${\rm U}_{3}(2)$ is solvable and ${\rm L}_{3}(2) \cong {\rm L}_{2}(7)$, we may assume $q \geqs 3$. The groups with $3 \leqs q \leqs 13$ can be checked using {\sc Magma}, so for the remainder, we may assume $q \geqs 16$. Let $x \in G$ be an element of prime order $r$ and set $H_0 = H \cap G_0$.

By Theorem \ref{t:ls91}, we have ${\rm fpr}(x) \leqs 4/3q$ (there are no exceptions in \cite[Table 1]{LS91} with $G_0 = {\rm L}_{3}^{\e}(q)$ and $q \geqs 16$). As a consequence, we may assume $r \geqs 13$ divides $|H_0|$ and $x \in G_0$ is either semisimple or unipotent (see \eqref{e:ls91}). Therefore, 
\begin{equation}\label{e:3bd}
|x^G| \geqs \frac{|{\rm GU}_{3}(q)|}{q^3|{\rm GU}_{1}(q)|^2} = (q^3+1)(q-1),
\end{equation}
with equality if $\e=-$, $r=p$ and $x$ has Jordan form $(J_2,J_1)$. Let us also note that $r \leqs q^2+q+1$. If $r \ne p$ then let $i \geqs 1$ be minimal such that $r$ divides $q^i-1$. By inspecting 
\cite[Tables 8.3-8.6]{BHR}, recalling that we may assume $r$ divides $|H_0|$, we can reduce to the cases where $H$ is a geometric subgroup in one of the collections $\C_2$, $\C_3$, $\C_5$ or $\C_8$ (recall that we follow \cite{KL} in defining the various subgroup collections arising in Aschbacher's theorem \cite{asch}, which is consistent with \cite{BHR}).

Suppose $H$ is a $\C_2$-subgroup of type ${\rm GL}_{1}^{\e}(q) \wr S_3$. Since we may assume $r$ divides $|H_0|$, it follows that $r \leqs q+1$ and the trivial bound $|x^G \cap H| \leqs (q+1)^2$ with \eqref{e:3bd} is sufficient. Next assume $H$ is a $\C_3$-subgroup of type ${\rm GL}_{1}^{\e}(q^3)$. Here $r \ne p$ and $(\e,i) = (+,3)$ or $(-,6)$ since $r>3$. Moreover, $|x^{G_0} \cap H| = 3$ and the result follows via \eqref{e:3bd}.

Now assume $H$ is a subfield subgroup of type ${\rm GL}_{3}^{\e}(q_0)$, where $q=q_0^k$ and $k$ is an odd prime. Here $|H_0|<q^{8/3}$ and $r$ is at most $q_0^2+q_0+1$, which implies that $r \leqs q^{2/3}+q^{1/3}+1$. It is easy to check that the trivial bound $|x^G \cap H|<q^{8/3}$ combined with the lower bound on $|x^G|$ in \eqref{e:3bd} is sufficient. Similarly, if $H$ is of type ${\rm O}_{3}(q)$, then $q$ is odd, $r \leqs (q+1)/2$ and $|H_0| = q(q^2-1)$. Once again, the bound in \eqref{e:3bd} is effective.

To complete the proof of the lemma, we may assume $\e=+$, $q=q_0^2$ and $H$ is a subgroup of type ${\rm GL}_{3}^{\e'}(q_0)$.
Note that $r \leqs q+q^{1/2}+1$. If $\nu(x)=2$ then $|x^G| \geqs q(q^2-1)(q^3-1)/3$ (minimal if $r=p$ and $x$ has Jordan form $(J_3)$) and the result follows since $|H_0|<q^4$. On the other hand, if $\nu(x)=1$ then either $r=p$ and $x$ has Jordan form $(J_2,J_1)$, or $r \leqs q_0+1$ and $x = (I_2, \omega)$ (up to conjugacy), where $\omega$ is a primitive $r$-th root of unity. In the former case, $|x^G \cap H| \leqs (q_0-1)(q_0^3+1)$ and $|x^G| = (q+1)(q^3-1)$. And in the latter, we have $|x^{G_0} \cap H| \leqs q_0^2(q_0^2+q_0+1)$ and $|x^G| \geqs q^2(q^2+q+1)$. In both cases, it is routine to check that the given bounds are sufficient.
\end{proof}

Note that in the next lemma we can assume $G_0  \ne {\rm L}_{4}(2)$ since ${\rm L}_{4}(2) \cong A_8$. Let us also observe that each special case appearing in the statement is permutation isomorphic to a subspace action of an isomorphic classical group. For example, in (i), the action is permutation isomorphic to the action of an almost simple group with socle ${\rm PSp}_{4}(3)$ on the set of $2$-dimensional totally singular subspaces of the natural module. Similarly, in (ii) with the action of ${\rm O}_{6}^{-}(2)$ on the set of nonsingular $1$-spaces, and ${\rm P\O}_{6}^{+}(3).2$ (extended by an involutory graph automorphism) on a set of nondegenerate $1$-spaces in (iii). In addition, let us say that an involutory graph automorphism $x$ of $G_0 = {\rm L}_4^{\e}(q)$ is of \emph{symplectic-type} if $C_{G_0}(x)$ has socle ${\rm PSp}_4(q)$.

\begin{lem}\label{l:psl4}
Suppose $G_0 = {\rm L}_{4}^{\e}(q)$ and $H$ is a non-subspace subgroup. Assume $G_0 \ne {\rm L}_4(2)$ and let $x \in G$ be an element of prime order $r$. Then either ${\rm fpr}(x) \leqs (r+1)^{-1}$, or $r=2$ and one of the following holds:
\begin{itemize}\addtolength{\itemsep}{0.2\baselineskip}
\item[{\rm (i)}] $G_0 = {\rm U}_4(2)$, $H$ is of type ${\rm GU}_{1}(2) \wr S_4$, $x = (J_2,J_1^2)$ and ${\rm fpr}(x) = 2/5$;
\item[{\rm (ii)}] $G = {\rm U}_{4}(2).2$, $H$ is of type ${\rm Sp}_{4}(2)$, $x$ is a symplectic-type graph automorphism and ${\rm fpr}(x) = 4/9$; or
\item[{\rm (iii)}] $G = {\rm L}_{4}(3).2_2$, $H$ is of type ${\rm Sp}_{4}(3)$, $x$ is a symplectic-type graph automorphism and ${\rm fpr}(x) = 5/13$.
\end{itemize}
\end{lem}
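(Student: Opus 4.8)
The plan is to follow the template of the proofs of Lemmas~\ref{l:psl2} and~\ref{l:psl3}, combining the Liebeck--Saxl bound of Theorem~\ref{t:ls91}, the explicit lower bounds on $|x^G|$ from \cite[Section~3]{Bur2} expressed in terms of $\nu(x)$, and direct computation in {\sc Magma} \cite{magma} for small $q$. Since $n=4$, the generic estimate from Theorem~\ref{t:fpr} only reads ${\rm fpr}(x) < |x^G|^{-1/4+\iota}$, which is far too weak to be used on its own, so a finer case division is unavoidable. All three genuine exceptions occur at $q \in \{2,3\}$, so the first step is to dispose of the groups with $q$ below an explicit constant using {\sc Magma} (working with \texttt{AutomorphismGroupSimpleGroup}, \texttt{MaximalSubgroups} and \texttt{ConjugacyClasses} to compute $|x^G \cap H|$ and $|x^G|$, hence ${\rm fpr}(x)$); this is exactly where cases (i)--(iii) are detected, since it covers ${\rm U}_4(2)$ and ${\rm L}_4(3)$. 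For the remainder I would assume $q$ large.

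For large $q$, I would first invoke Theorem~\ref{t:ls91}: as there are no relevant entries of \cite[Table~1]{LS91} with $G_0 = {\rm L}_4^\e(q)$ and $q$ large, we obtain ${\rm fpr}(x) \leqs 4/(3q) \leqs (r+1)^{-1}$ whenever $r$ is small, and also, via \eqref{e:ls91}, whenever $x$ is a field or graph-field automorphism. This reduces the problem to two regimes: (a) $x \in {\rm PGL}_4^\e(q)$ is semisimple or unipotent with $r$ a large prime dividing $|H_0|$; and (b) $x$ is an involutory graph automorphism.

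In regime~(a) I would bound $r$ by the largest prime divisor of $|H_0|$ and combine the trivial estimate $|x^G \cap H| \leqs |H_0|$ with the lower bounds on $|x^G|$ from \cite[Corollaries~3.38 and~3.49]{Bur2}, refining through $\nu(x) \geqs i$, where $i$ is minimal with $r \mid q^i-1$ (so that $i \geqs 3$ forces a large class). Running through the surviving non-subspace geometric collections $\C_2$, $\C_3$, $\C_4$, $\C_5$, $\C_6$, $\C_7$, $\C_8$ and the collection $\mathcal{S}$ in turn, reading off the structure of $H_0$ from \cite[Tables~8.8--8.11]{BHR}, one checks in each case that $|H_0|(r+1) \leqs |x^G|$, whence ${\rm fpr}(x) \leqs (r+1)^{-1}$. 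The imprimitive collection $\C_2$ (type ${\rm GL}_1^\e(q)\wr S_4$) deserves a second glance, since it houses the transvection class $(J_2,J_1^2)$ responsible for case~(i) at $q=2$; for $q \geqs 3$ the estimate goes through.

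Regime~(b) is the main obstacle. Here Theorem~\ref{t:ls91} gives only $4/(3q) \leqs 1/3$, which is enough for generic involutions but not for the symplectic-type graph automorphisms, whose fixed-point subgroup $H$ has socle ${\rm PSp}_4(q)$ (a $\C_8$-subgroup when $\e=+$ and a $\C_5$-subgroup when $\e=-$). The constraint $x^G\cap H\neq\emptyset$ forces $H$ to be such a form-type subgroup, so I would compute ${\rm fpr}(x)$ essentially exactly, using the structure of $C_{G_0}(x)$ together with the graph-automorphism class bounds in \cite[Section~3]{Bur2}: since $|x^G| \sim q^5$ while the number of $G$-conjugates of $x$ meeting $H$ grows like $\sqrt{|x^G|}$, one finds ${\rm fpr}(x) = O(q^{-5/2})$, which lies below $1/3$ for all large $q$ and exceeds it only at $q \in \{2,3\}$, giving precisely cases~(ii) and~(iii). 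The delicate point, and the step I expect to consume the most effort, is controlling $|x^G \cap H|$ for these graph involutions precisely enough to guarantee no further exceptions just above the {\sc Magma} range; the orthogonal-type graph automorphisms (centralizer ${\rm O}_4^\pm(q)$) must be treated identically and shown to yield no exceptions at all.
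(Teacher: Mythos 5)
Your overall skeleton matches the paper's: dispose of small $q$ by {\sc Magma} (which is indeed where (i)--(iii) arise), use Theorem \ref{t:ls91} to reduce to semisimple/unipotent elements of large prime order for $q$ large, then run through the surviving Aschbacher collections. However, there is a genuine gap in your regime (a). The claim that ``$|H_0|(r+1) \leqs |x^G|$'' can be verified collection by collection using only the trivial bound $|x^G \cap H| \leqs |H_0|$ is false for the subgroups that matter most. For $H$ of type ${\rm Sp}_{4}(q)$ one has $|H_0| \sim q^{10}$, whereas an element of odd prime order in ${\rm Sp}_4(q)$ necessarily has $\nu(x) \geqs 2$ (its eigenvalues are closed under inversion, so $\nu(x)=1$ is impossible) and the minimal class size is only $|x^G| > \frac{1}{2}q^{8}$ (attained at $(\omega I_2,I_2)$); the trivial bound then gives ${\rm fpr}(x) \lesssim 2q^{2}$, which is useless. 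The same failure occurs for $\C_2$-subgroups of type ${\rm GL}_{2}^{\e}(q) \wr S_2$ against $\nu(x)=1$ semisimple classes ($|x^G| \sim q^{6}$ versus $|H_0| \sim q^{7}$), and for ${\rm GL}_2(q^2)$ and ${\rm O}_4^{\e'}(q)$. This is precisely why the paper cannot lean on Theorem \ref{t:fpr} here ($\iota = 1/4$ for type ${\rm Sp}_4(q)$) and instead computes $|x^G \cap H|$ explicitly for each Jordan form and each value of $\nu(x)$, e.g.\ $|x^{G_0} \cap H| < 2q^{6}$ for $\nu(x)=2$ semisimple elements of ${\rm Sp}_4(q)$. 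Without some such fusion analysis your argument does not close.

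Separately, your regime (b) is misdirected: for $r=2$ the target bound is exactly $(r+1)^{-1} = 1/3$, and $4/(3q) \leqs 1/6$ once $q \geqs 8$, so Theorem \ref{t:ls91} already settles \emph{all} involutions, including symplectic-type graph automorphisms, provided one checks that no case with socle ${\rm L}_4^{\e}(q)$, $q \geqs 8$, appears in \cite[Table 1]{LS91} (none does; the exceptions there are essentially confined to socle ${\rm L}_2(q)$ and small fields). The genuine graph-automorphism exceptions (ii) and (iii) occur only at $q \in \{2,3\}$, inside the computational range, so the asymptotic analysis of $|x^G \cap H|$ for graph involutions that you flag as the main obstacle is not needed at all; the real work in this lemma is the semisimple/unipotent analysis inside the type ${\rm Sp}_4(q)$ and $\C_2$ subgroups described above.
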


\begin{proof}
The result for $q \leqs 7$ can be checked using {\sc Magma}, so we will assume $q \geqs 8$. Let $x \in G$ be an element of prime order $r$. In view of Theorem \ref{t:ls91}, we may assume $r \geqs 7$ and $x \in G_0$ is semisimple or unipotent. Note that $r \leqs q^2+q+1$. Let us also observe that 
\begin{equation}\label{e:psl4}
|x^G| \geqs \frac{|{\rm GL}_{4}(q)|}{|{\rm GL}_{3}(q)||{\rm GL}_{1}(q)|}  = q^3(q^2+1)(q-1),
\end{equation}
minimal if $\e = +$ and $x$ is semisimple with $\nu(x) = 1$. 

By inspecting \cite[Tables 8.8-8.11]{BHR}, noting that we may assume $r$ divides $|H_0|$, we deduce that $H$ is either contained in one of the geometric subgroup collections labeled $\C_{\ell}$ with $\ell \in \{2,3,5,8\}$, or $H \in \mathcal{S}$ is a non-geometric subgroup with socle ${\rm L}_{2}(7)$ or $A_7$. For the non-geometric subgroups we have $r=7$, $q = p \geqs 11$ and the bound in Theorem \ref{t:ls91} is sufficient. We now consider the remaining possibilities for $H$ in turn. As usual, if $x$ is semisimple then we define $i$ to be the smallest positive integer such that $r$ divides $q^i-1$. It will be useful to note that the constant $\iota$ in Theorem \ref{t:fpr} is zero, unless $H$ is a $\C_8$-subgroup of type ${\rm Sp}_{4}(q)$, in which case $\iota = 1/4$.

First assume $H$ is a $\C_2$-subgroup of type ${\rm GL}_{1}^{\e}(q) \wr S_4$ or ${\rm GL}_{2}^{\e}(q) \wr S_2$. In the former case, we have $r \leqs q+1$ and the trivial bound $|x^G \cap H| \leqs (q+1)^3$ combined with \eqref{e:psl4} is sufficient. Now assume $H$ is of type ${\rm GL}_{2}^{\e}(q) \wr S_2$ and note that $r \leqs q+1$ once again. If $\nu(x) = 1$ then 
\[
|x^{G_0} \cap H| \leqs 2\left(\frac{|{\rm GL}_{2}(q)|}{|{\rm GL}_{1}(q)|^2}\right) = 2q(q+1)
\]
(maximal if $\e=+$ and $x$ is semisimple) and the bound in \eqref{e:psl4} is sufficient. Now assume $\nu(x) \geqs 2$. Here
\begin{equation}\label{e:psl411}
|x^G| \geqs \frac{|{\rm GL}_{4}(q)|}{2q^4|{\rm GL}_{2}(q)|} = \frac{1}{2}q(q^3-1)(q^4-1)
\end{equation}
(minimal if $\e=+$, $G = G_0$ and $x$ is unipotent with Jordan form $(J_2^2)$). By applying the bound in Theorem \ref{t:fpr}, noting that $\iota = 0$, we deduce that ${\rm fpr}(x) \leqs (q+2)^{-1}$ and the result follows.

Next let us assume $H$ is of type ${\rm GL}_{2}(q^2)$. Here $r \leqs q^2+1$ and $\nu(x) \geqs 2$ for all $x \in H_0$. In particular, if $r \leqs q+1$ then the previous argument applies (using Theorem \ref{t:fpr} with $\iota = 0$). Now assume $r>q+1$, so $i=4$ and $x$ is a regular semisimple element. Here 
\begin{equation}\label{e:psl42}
|x^G| \geqs \frac{|{\rm GL}_{4}(q)|}{|{\rm GL}_{1}(q^4)|} = q^6(q-1)(q^2-1)(q^3-1)
\end{equation}
and once again the desired result follows by applying Theorem \ref{t:fpr}. 

Now suppose $H$ is of type ${\rm GL}_{4}^{\e'}(q_0)$, where $q=q_0^k$ and $k$ is a prime. Here $r \leqs q_0^2+q_0+1 \leqs q+q^{1/2}+1$ and the result follows via Theorem \ref{t:fpr}, using the lower bound on $|x^G|$ in \eqref{e:psl4}. In order to complete the proof of the lemma, we may assume $H$ is a $\C_8$-subgroup of type ${\rm Sp}_{4}(q)$ or ${\rm O}_{4}^{\e'}(q)$. 

First assume $H$ is of type ${\rm O}_{4}^{\e'}(q)$, in which case $q$ is odd. If $r \leqs q$ then the usual argument (using Theorem \ref{t:fpr} and \eqref{e:psl4}) is sufficient. On the other hand, if $r>q$ then $\e'=-$, $x$ is semisimple, $i=4$, $r \leqs (q^2+1)/2$ and the bound in \eqref{e:psl42} is satisfied. We now conclude by applying Theorem \ref{t:fpr}.

Finally, suppose $H$ is of type ${\rm Sp}_{4}(q)$. Here $\iota = 1/4$, so the bound in Theorem \ref{t:fpr} is not useful and we need to consider the various possibilities for $x$ in turn. Fortunately, the embedding of $H$ in $G$ is transparent and it is easy to determine good bounds on ${\rm fpr}(x)$. First assume $r=p$. If $x$ has Jordan form $(J_2,J_1^2)$, then
\[
|x^G \cap H| \leqs \frac{|{\rm Sp}_{4}(q)|}{q^3|{\rm Sp}_{2}(q)|} = q^4-1
\]
and the bound $|x^G| \geqs (q^2-q+1)(q^4-1)$ is sufficient. Similarly, if $x = (J_2^2)$ then 
\[
|x^G \cap H| \leqs \frac{|{\rm Sp}_{4}(q)|}{q^3|{\rm O}_{2}^{+}(q)|} + \frac{|{\rm Sp}_{4}(q)|}{q^3|{\rm O}_{2}^{-}(q)|} = q^2(q^4-1)
\]
and the result follows since \eqref{e:psl411} holds. And for $x = (J_4)$, the bounds $|x^G \cap H|<q^8$ and $|x^G|>\frac{1}{2}q^{12}$ are sufficient (here we are using the fact that ${\rm PGSp}_{4}(q)$ contains precisely $q^8$ unipotent elements).

Now suppose $r \ne p$, so $i \in \{1,2,4\}$ and $\nu(x) \in \{2,3\}$. If $\nu(x) = 3$ then $x$ is regular, $r \leqs q^2+1$, $|x^{G_0} \cap H| < 2q^8$ and the bound $|x^G|>\frac{1}{2}q^{12}$ is sufficient. Now assume $\nu(x) = 2$, so $i \in \{1,2\}$ and $r \leqs q+1$. Here it is straightforward to verify the bounds $|x^{G_0} \cap H|<2q^6$ and $|x^G|>\frac{1}{2}q^8$, whence ${\rm fpr}(x) < 4q^{-2} \leqs (q+2)^{-1}$ and the result follows.
\end{proof}

Finally, we handle the almost simple symplectic groups with socle ${\rm PSp}_{4}(q)$. Note that we may assume $q \geqs 4$ since ${\rm PSp}_{4}(2)' \cong A_6$ and ${\rm PSp}_{4}(3) \cong {\rm U}_4(2)$.
 
\begin{lem}\label{l:psp4}
Suppose $G_0 = {\rm PSp}_{4}(q)$ and $H$ is a non-subspace subgroup, where $q \geqs 4$. Then ${\rm fpr}(x) \leqs (r+1)^{-1}$ for all elements $x \in G$ of prime order $r$. 
\end{lem}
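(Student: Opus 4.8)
The plan is to follow the template of Lemmas \ref{l:psl2}--\ref{l:psl4}: reduce using Theorems \ref{t:ls91} and \ref{t:fpr}, then dispose of the surviving cases by estimating $|x^G \cap H|$ directly, reading off the maximal non-subspace subgroups of $G$ from \cite[Tables 8.12--8.14]{BHR}. First I would dispatch the groups with $q$ small (say $q \leqslant 8$) with {\sc Magma}, computing ${\rm fpr}(x)$ for a representative of each class of elements of prime order in each maximal non-subspace subgroup; this also absorbs the finitely many exceptions in \cite[Table 1]{LS91}. So assume $q$ is large. Here $n = \dim V = 4$, so Theorem \ref{t:fpr} reads ${\rm fpr}(x) < |x^G|^{-1/4+\iota}$, where $\iota = 0$ or is the exceptional value listed in \cite[Table 1]{Bur1}. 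By \eqref{e:ls91} I may assume $x$ is neither a field nor a graph-field automorphism, and the involutory graph automorphisms occurring when $q$ is even satisfy ${\rm fpr}(x) \leqslant \frac{4}{3q} \leqslant (r+1)^{-1}$ by Theorem \ref{t:ls91}; thus $x \in {\rm Inndiag}(G_0)$ is semisimple or unipotent.

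Next I would use Theorem \ref{t:ls91} to reduce to elements of large order. Since every element order divides one of $(q\pm 1)^2$, $q^2-1$ or $q^2+1$, we have $r \leqslant q^2+1$, and ${\rm fpr}(x) \leqslant \frac{4}{3q} \leqslant (r+1)^{-1}$ whenever $r \leqslant 3q/4 - 1$. This leaves only a short list: semisimple elements whose order is a large prime divisor of $q^2+1$ or of $q \pm 1$, together with the unipotent elements in the case $q = p$ (for which Theorem \ref{t:ls91} is never sharp enough, as $\frac{4}{3p} > (p+1)^{-1}$). For each such $x$ I would record a lower bound on $|x^G|$ in terms of $\nu(x)$ from \cite[Section 3]{Bur2}; for example $|x^G| = (q^4-1)/2$ for a transvection with $q$ odd, $|x^G| \geqslant \tfrac{1}{2}q^4$ when $\nu(x) = 2$, and $|x^G| = q^4(q^2-1)^2$ for a regular semisimple element of order dividing $q^2+1$.

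The heart of the proof is to bound $|x^G \cap H|$ directly for each maximal non-subspace subgroup $H$ containing $x$, using that $r$ divides $|H_0|$. The relevant subgroups are the $\C_2$-subgroups of type ${\rm Sp}_2(q) \wr S_2$ and ${\rm GL}_2(q).2$, the $\C_3$-subgroups of type ${\rm Sp}_2(q^2).2$ and ${\rm GU}_2(q).2$, the subfield subgroups ${\rm Sp}_4(q_0)$ in $\C_5$, and the subgroups in $\C_6 \cup \mathcal{S}$ (including ${\rm Sz}(q)$ when $q$ is even); the ${\rm O}_4^{\pm}(q)$-subgroups in $\C_8$ are subspace and so excluded. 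The subgroups in $\C_6 \cup \mathcal{S}$ have order bounded independently of $q$, or only meet $x^G$ for small $r$, so the trivial bound $|x^G \cap H| \leqslant |H_0|$ against the lower bound on $|x^G|$ suffices. For the geometric subgroups I would use the explicit embedding to determine which $G_0$-classes meet $H$ and count the intersection. For instance, a transvection of $G_0$ lies only in subgroups of type ${\rm Sp}_2(q) \wr S_2$ (and in subfield subgroups), where $|x^G \cap H| \leqslant q^2 - 1$, giving ${\rm fpr}(x) \leqslant 2(q^2-1)/(q^4-1) = 2/(q^2+1) \leqslant (q+1)^{-1} = (r+1)^{-1}$; and a regular semisimple element of order $r \mid q^2+1$ lies in a $\C_3$-subgroup ${\rm Sp}_2(q^2).2$, where $|x^G \cap H| \leqslant 2q^2(q^2-1)$, whence ${\rm fpr}(x) \leqslant 2/(q^2(q^2-1)) \leqslant (q^2+2)^{-1} \leqslant (r+1)^{-1}$.

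The main obstacle is exactly this final step. Because $n = 4$ is small, Theorem \ref{t:fpr} is weak --- its exponent $-1/4+\iota$ is trivial when $\iota = 1/4$, which is attained for the field-extension subgroup ${\rm Sp}_2(q^2)$ --- so it gives nothing for the elements of largest order, and these are the hardest cases precisely because the target $(r+1)^{-1}$ is then smallest. The whole difficulty thus lies in producing sharp, non-asymptotic estimates of $|x^G \cap H|$ for each pairing of a large-order (or, when $q=p$, unipotent) element with a geometric subgroup containing it, and in checking that the resulting inequalities hold for all $q \geqslant 4$, not merely for $q$ large.
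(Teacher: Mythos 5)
Your proposal follows essentially the same route as the paper's proof: a {\sc Magma} check for small $q$, the Liebeck--Saxl bound of Theorem \ref{t:ls91} to reduce to semisimple or unipotent elements of large prime order, and then a case-by-case estimate of $|x^G \cap H|$ for each maximal non-subspace subgroup read off from \cite[Tables 8.12--8.14]{BHR}, with the trivial bound $|x^G\cap H|\leqs |H_0|$ disposing of the Suzuki and other small subgroups. Your diagnosis that Theorem \ref{t:fpr} is useless when $n=4$ and that the work lies in sharp counts of $|x^G\cap H|$ matches exactly what the paper does (it never invokes Theorem \ref{t:fpr} in this lemma), and your sample computations for transvections in ${\rm Sp}_2(q)\wr S_2$ and regular semisimple elements in ${\rm Sp}_2(q^2)$ agree with the paper's up to harmless constants.
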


\begin{proof}
Let $x \in G$ be an element of prime order $r$ and recall that we may assume $r$ divides $|H_0|$, where $H_0 = H \cap G_0$. The groups with $q \leqs 16$ can be checked using {\sc Magma}, so we may assume $q \geqs 17$. In addition, by applying Theorem \ref{t:ls91}, we may assume that $r \geqs 13$ and $x \in G_0$ is either semisimple or unipotent. Note that either $r=p$ and
\begin{equation}\label{e:sp41}
|x^G| \geqs \frac{|{\rm Sp}_{4}(q)|}{dq^3|{\rm Sp}_{2}(q)|} = \frac{1}{d}(q^4-1),
\end{equation}
where $d = (2,q-1)$, or $r \ne p$ and
\begin{equation}\label{e:sp42}
|x^G| \geqs \frac{|{\rm Sp}_{4}(q)|}{|{\rm GU}_{2}(q)|} = q^3(q-1)(q^2+1).
\end{equation}
We now partition the argument into two cases, according to the parity of $q$.

\vs

\noindent\emph{Case 1. $q$ odd.}

\vs

To begin with, we will assume $q$ is odd. By inspecting \cite[Tables 8.12, 8.13]{BHR}, we may assume $H$ is either a geometric subgroup in one of the collections $\C_2$, $\C_3$ or $\C_5$, or $H$ is a non-geometric subgroup with socle ${\rm L}_{2}(q)$. We consider each possibility in turn.

First assume $H$ is a $\C_2$-subgroup of type ${\rm Sp}_{2}(q) \wr S_2$. Suppose $x$ is unipotent, so $r=p$ and $x$ has Jordan form $(J_2,J_1^2)$ or $(J_2^2)$ since we may assume $x \in H$. In the first case,  
\[
|x^G \cap H| \leqs 2\left(\frac{|{\rm Sp}_{2}(q)|}{q}\right) = 2(q^2-1)
\]
and the bound in \eqref{e:sp41} is sufficient. Similarly, if $x = (J_2^2)$ then $|x^G \cap H| \leqs (q^2-1)^2$ and the result follows since
\begin{equation}\label{e:sp43}
|x^G| \geqs \frac{|{\rm Sp}_{4}(q)|}{q^3|{\rm O}_{2}^{-}(q)|} = \frac{1}{2}q(q-1)(q^4-1).
\end{equation}
Now assume $r \ne p$ and note that $r \leqs q+1$. Here  
\[
|x^{G_0} \cap H| \leqs \left(\frac{|{\rm Sp}_{2}(q)|}{|{\rm GL}_{1}(q)|}\right)^2 = q^2(q+1)^2
\]
and the bound in \eqref{e:sp42} is sufficient.

Next suppose $H$ is of type ${\rm GL}_{2}^{\e}(q)$, so $r \leqs q+1$. If $r=p$ then $x = (J_2^2)$ is the only option and it is easy to check that the bounds $|x^G\cap H| \leqs q^2-1$ and \eqref{e:sp43} are effective. Similarly, if $x$ is semisimple then 
\[
|x^{G_0} \cap H| \leqs 2\left(\frac{|{\rm GL}_{2}(q)|}{|{\rm GL}_{1}(q)|^2}\right) = 2q(q+1)
\]
and the result follows via \eqref{e:sp42}. 

A similar argument applies when $H$ is of type ${\rm Sp}_{2}(q^2)$. Indeed, if $r=p$ then $x = (J_2^2)$, $|x^G \cap H| \leqs q^4-1$ and we conclude by applying the bound in \eqref{e:sp43}. For $r \ne p$ we have $r \leqs q^2+1$ and the bounds $|x^{G_0} \cap H| \leqs q^2(q^2+1)$ and \eqref{e:sp42} are sufficient.

To complete the argument for $q$ odd, we may assume that $H$ is either a subfield subgroup of type ${\rm Sp}_{4}(q_0)$, where $q=q_0^k$ with $k$ a prime, or $H$ is a non-geometric subgroup with socle ${\rm L}_{2}(q)$ and $p \geqs 5$. The latter case is easy to handle. Indeed, we have $|H_0|<q^3$, $r \leqs q+1$ and it is easy to check that the nontrivial unipotent elements in $H_0$ have Jordan form $(J_4)$ on the natural module $V$ for $G_0$ (this follows from the fact that $V = S^3(W)$, where $W$ is the natural $2$-dimensional module for $H_0$). Therefore, the bound in \eqref{e:sp42} holds for all $x \in H$ of prime order and the result follows.

Now assume $H$ is a subfield subgroup of type ${\rm Sp}_{4}(q_0)$, where $q=q_0^k$. First assume $k \geqs 3$, in which case $r \leqs q_0^2+1 \leqs q^{2/3}+1$. If $x= (J_2,J_1^2)$ then $|x^G \cap H| \leqs q_0^4-1 \leqs q^{4/3}-1$ and the bound in \eqref{e:sp41} is sufficient. For the remaining elements, the bound in \eqref{e:sp43} is satisfied and the trivial bound $|x^G \cap H| \leqs |H_0|<q_0^{10} \leqs q^{10/3}$ is good enough. 

Finally, suppose $k=2$. If $r=p$, then either $x = (J_2,J_1^2)$, $|x^G \cap H| \leqs q^2-1$ and \eqref{e:sp41} holds, or $|x^G \cap H|<q_0^8 = q^4$ (this upper bound is the total number of unipotent elements in $H_0$) and we have the bound on $|x^G|$ in \eqref{e:sp43}. In both cases, the given  bounds are sufficient. Now assume $r \ne p$ and note that $r \leqs q+1$. If $\nu(x) = 3$ then $|x^G|>\frac{1}{2}q^8$ and the trivial bound $|x^G \cap H| < |H_0|<q^5$ is good enough. Similarly, if $\nu(x)=2$ then \eqref{e:sp42} holds and the result follows since 
\[
|x^{G_0} \cap H| \leqs \frac{|{\rm Sp}_{4}(q_0)|}{|{\rm GL}_{2}(q_0)|} =q^{3/2}(q^{1/2}+1)(q+1).
\]

\vs

\noindent\emph{Case 2. $q$ even.}

\vs

To complete the proof of the lemma, we may assume $q \geqs 32$ is even. In view of Theorem \ref{t:ls91}, we may also assume that $r \geqs 23$. In particular, $x$ is semisimple and \eqref{e:sp42} holds. We now work through the various possibilities for $H$ arising in \cite[Table 8.14]{BHR}. 

First assume $H$ is a Borel subgroup, so $H_0 = [q^4]{:}C_{q-1}^2$ and thus $r \leqs q-1$ (note that $H$ is maximal when $G \not\leqs {\rm \Gamma Sp}_{4}(q)$). If $\nu(x)=3$, then $|x^G|>\frac{1}{2}q^{8}$ and $|x^{G_0} \cap H| = 8q^4$ as explained in the proof of \cite[Lemma 5.8]{Bur20}. Similarly, if $\nu(x)=2$ then $|x^{G_0} \cap H| = 4q^3$ and we have the bound on $|x^G|$ in \eqref{e:sp42}. In both cases, the given bounds are sufficient.

The argument when $H$ is of type ${\rm Sp}_{2}(q) \wr S_2$, ${\rm Sp}_{2}(q^2)$ or ${\rm Sp}_{4}(q_0)$ is entirely similar to the one given above in the case where $q$ is odd. For this reason, we omit the details. Next suppose $H$ is a non-geometric subgroup with socle ${}^2B_2(q)$, in which case $\log_2q$ is odd and we note that $|H_0|<q^5$ and $r \leqs q+\sqrt{2q}+1$. If $\nu(x) = 3$ then $|x^G|>\frac{1}{2}q^8$ and the trivial bound $|x^{G} \cap H|<q^5$ is sufficient. Similarly, if $\nu(x)=2$ then $r$ divides $q-1$, 
\[
|x^{G}| \geqs \frac{|{\rm Sp}_{4}(q)|}{|{\rm GL}_{2}(q)|} = q^3(q+1)(q^2+1)
\]
and once again the trivial bound $|x^{G} \cap H|<q^5$ is good enough.

Finally, let us assume $H = N_G(T)$ is the normalizer of a maximal torus (recall that the $\C_8$-subgroups of type ${\rm O}_{4}^{\e}(q)$ are subspace subgroups, so they are excluded here; see Remark \ref{r:sp4}). Here $H_0 < M < G_0$ for some maximal non-subspace subgroup $M$ of $G_0$ (indeed, $H$ is maximal only if $G \not\leqs {\rm \Gamma Sp}_{4}(q)$) and so the desired bound on ${\rm fpr}(x)$ automatically holds by our earlier work in this proof.
\end{proof}

\begin{rem}\label{r:sp4}
Using the same approach as in the proof of Lemma \ref{l:psp4}, it is straightforward to show that if $G_0 = {\rm PSp}_{4}(q)$, $q \geqs 4$ is even and $H$ is a subspace subgroup of type ${\rm O}_{4}^{\e}(q)$, then ${\rm fpr}(x) \leqs (r+1)^{-1}$ for all $x \in G$ of prime order $r$. See Lemma \ref{l:symp_sub4} for the details. 
\end{rem}

\section{Almost simple classical groups: Subspace actions}\label{s:class2}

In this section we handle the subspace actions of classical groups, which will complete the proof of Theorem \ref{t:main} for almost simple groups. Recall from Definition \ref{d:sub} that the subspace actions correspond to the groups where a point stabilizer $H$ is either contained in Aschbacher's $\C_1$ collection of maximal subgroups, or $G_0 = {\rm Sp}_n(q)$ is a symplectic group with $q$ even and $H \cap G_0 = {\rm O}_n^{\e}(q)$ is a naturally embedded orthogonal group (the stabilizer of a suitable nondegenerate quadratic form). So in almost all cases we may identify $\O$ with a set of subspaces (or pairs of subspaces) of the natural module, which makes subspace actions more amenable to direct computation since we have a concrete description of the action. 

\subsection{Main result and notation}\label{ss:res}

Our main theorem is the following. 

\begin{thm}\label{t:subspace}
Let $G \leqs {\rm Sym}(\O)$ be an almost simple finite primitive permutation group with socle $G_0$ and point stabilizer $H$. Let $x \in G$ be an element of prime order $r$ and assume $G_0$ is a classical group over $\mathbb{F}_q$ and $H$ is a subspace subgroup. Then either 
${\rm fpr}(x) \leqs (r+1)^{-1}$, or one of the following holds:
\begin{itemize}\addtolength{\itemsep}{0.2\baselineskip}
\item[{\rm (i)}] $G$ is permutation isomorphic to a group recorded in part (a) or (b) in Theorem \ref{t:main}(i); 
\item[{\rm (ii)}] $(G,H,x,{\rm fpr}(x))$ is one of the cases listed in Table \ref{tab:class}.
\end{itemize}
\end{thm}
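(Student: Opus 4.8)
The plan is to exploit the concrete description of $\O$ as a set of subspaces (or pairs of subspaces) of the natural module $V$, since the powerful bound of Theorem \ref{t:fpr} is unavailable for subspace actions. Writing $q = p^f$ and taking $x$ of prime order $r$, I would first dispose of the elements lying outside ${\rm PGL}(V)$. A field or graph-field automorphism satisfies $q = q_0^r$, so $r \leqs \log_2 q$ is small and the Liebeck--Saxl bound already gives ${\rm fpr}(x) \leqs (r+1)^{-1}$ via \eqref{e:ls91}, apart from the handful of exceptions recorded in \cite[Table 1]{LS91}, which I would check by hand. This leaves the inner-diagonal elements (unipotent if $r=p$, semisimple if $r \ne p$) together with the involutory and order-$3$ graph automorphisms to be treated directly. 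Throughout, the governing idea is that the fixed points of $x$ are exactly the $x$-invariant subspaces of the relevant dimension and isometry type, and that these are concentrated in the eigenspaces of $x$ that are defined over $\mathbb{F}_q$.

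The quantitative heart of the argument is a family of upper bounds of the shape ${\rm fpr}(x) \leqs q^{-c} + (\text{lower order})$, where $c$ is the codimension of the largest $\mathbb{F}_q$-rational eigenspace of $x$; one obtains these by counting $x$-invariant subspaces compatibly with the eigenspace decomposition of $x$ on $V$ (when $x$ is semisimple) or with its Jordan block structure (when $x$ is unipotent), and then dividing by the Gaussian-binomial count of all subspaces of the given type. Note that $c \geqs \nu(x)$ in the sense of Definition \ref{d:nu}. For semisimple $x$, if $i \geqs 1$ denotes the multiplicative order of $q$ modulo $r$, then the nontrivial eigenvalues of $x$ lie in $\mathbb{F}_{q^i}$ and their Galois orbits over $\mathbb{F}_q$ have size $i$, so $c \geqs i$, while $r \leqs q^i-1$ because $r \mid q^i-1$. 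These two facts together force ${\rm fpr}(x) \leqs (r+1)^{-1}$ as soon as $c$ is even moderately large, reducing the whole problem to a short list of borderline configurations: unipotent elements with $\nu(x)$ small (transvection-like), and semisimple elements with $i=1$ (so $r \mid q-1$), or $i=2$ in the unitary case, again with small codimension. The genuinely delicate point is the boundary $c=1$ with $r=q-1$ prime, where ${\rm fpr}(x)$ hovers just above or below $1/q = 1/(r+1)$; this is precisely where the first row of Table \ref{tab:class} arises.

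For the surviving borderline cases I would proceed type by type through the classical groups --- linear and unitary $({\rm L}_n^{\e}(q))$, symplectic, and orthogonal of each sign --- and, within each, through the admissible subspace actions: the parabolic actions on totally singular $k$-spaces, the actions on nondegenerate $k$-spaces, and the exceptional $\C_8$ action of ${\rm Sp}_n(q)$ with $q$ even on the cosets of ${\rm O}_n^{\pm}(q)$. In each configuration I would write down the exact number of fixed subspaces as a function of the dimensions of the eigenspaces of $x$ (or of its Jordan data) and of the isometry type that the form induces on them, derive a closed form or sharp bound for ${\rm fpr}(x)$, and compare it with $(r+1)^{-1}$. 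The transvection computations reproduce the infinite families of Theorem \ref{t:main}(i)(a),(b) and the unipotent entries of Table \ref{tab:class}, while the $i=1$ semisimple and the graph-automorphism computations account for the remaining rows. Groups of small rank, and all groups over small fields where the asymptotic estimates are inconclusive, I would settle by direct computation in {\sc Magma}.

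The main obstacle will be the orthogonal groups. There the count of fixed subspaces depends not merely on $\nu(x)$ but on the $\pm$-type and the discriminant of the form restricted to each eigenspace, on parity constraints in the dimensions, and on whether $x$ preserves or interchanges the relevant quadratic structure; moreover a single element can present as a reflection, a semisimple involution, or a graph automorphism according to $q$ and the residue of $n$ modulo $4$, and each incarnation must be separated out. Managing this large bookkeeping --- rather than overcoming any single hard inequality --- is what makes the analysis long, and it explains why the orthogonal cases dominate both Table \ref{tab:class} and the tables in the applications.
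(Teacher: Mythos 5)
Your proposal follows essentially the same route as the paper's proof: field and graph-field automorphisms are dispatched via the Liebeck--Saxl bound as in \eqref{e:ls91}, and the remaining semisimple and unipotent elements are handled by counting fixed subspaces inside the $\mathbb{F}_q$-rational eigenspaces, playing the codimension bound $\nu(x) \geqs i$ against $r \mid q^i-1$, with the borderline configurations ($\nu(x)=1$ with $r=q-1$, the $i\leqs 2$ semisimple cases, graph involutions, and the orthogonal type/discriminant bookkeeping) isolated exactly as in Sections \ref{ss:lin}--\ref{ss:orth2} and small cases checked in {\sc Magma}. The only cosmetic difference is that the paper frequently substitutes the explicit upper bounds of Guralnick and Kantor \cite[Section 3]{GK} for exact fixed-subspace counts in the non-critical subcases, which shortens the bookkeeping you describe but does not change the argument.
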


\renewcommand{\arraystretch}{1.2}
{\small \begin{table}
\[
\begin{array}{lcccll} \hline
G_0 & \mbox{Type of $H$} & x & r & {\rm fpr}(x) & \mbox{Conditions} \\ \hline
{\rm L}_{n}(q) & P_1 & (J_2,J_{1}^{n-2}) & q & \frac{1}{q+1} + \frac{q(q^{n-2}-1)}{(q+1)(q^n-1)} & n \geqs 3 \\
\mbox{{\tiny $n \geqs 2$}} & & (\omega, I_{n-1}) & q-1 & \frac{1}{q} +\frac{(q-1)^2}{q(q^n-1)} & \\ 
& & \varphi & 3 & \frac{1}{3} & (n,q) = (2,8) \\
& & & & & \\
{\rm U}_{n}(q) & P_1 & (\omega, I_{n-1}) & 3 & \frac{1}{4}+\frac{3}{4(2^n+1)} & \mbox{$n \geqs 5$ odd, $q=2$} \\
\mbox{{\tiny $n \geqs 3$}} & P_2 & \tau & 2 & \frac{5}{9} \, (q=2); \; \frac{5}{14} \, (q=3) & \mbox{$n=4$, $q \in \{2,3\}$} \\
& &  (\omega I_2, I_2) & 3 & \frac{1}{3} & (n,q) = (4,2) \\
& N_1 & (\omega, I_{n-1}) & 3 & \frac{1}{4} + \frac{3(2^{n-3}+1)}{2^{n-1}(2^n-1)} & \mbox{$n$ even, $q=2$} \\
& & & & & \\
{\rm PSp}_{n}(q) & P_1 & (J_2,J_1^{n-2}) & q & \frac{1}{q+1} + \frac{q(q^{n-2}-1)}{(q+1)(q^n-1)} & \\
\mbox{{\tiny $n \geqs 4$}} & {\rm O}_{n}^{\e}(q) & (J_2,J_1^{n-2}) & 2 & \frac{1}{3}+\frac{2^{n/2-1}-\e}{3(2^{n/2}+\e)} & \mbox{$n \geqs 6$, $q=2$}  \\
& {\rm O}_{n}^{-}(q) & (\Lambda, I_{n-2}) & 3 & \frac{1}{4}+\frac{3}{4(2^{n/2}-1)} & \mbox{$n \geqs 6$, $q= 2$} \\
& & & & & \\
\O_n(q) & P_1 & (-I_{n-1}, I_1)^{+} & 2 & \frac{1}{3} + \frac{2}{3(3^{(n-1)/2}+1)} & q = 3 \\ 
\mbox{{\tiny $n \geqs 7$}} & N_1^{-} & (-I_{n-1}, I_1)^{-} & 2 & \frac{1}{3}+\frac{2(3^{(n-3)/2}+1)}{3^{(n-1)/2}(3^{(n-1)/2}-1)} & q = 3 \\
& & & & & \\ 
{\rm P\O}_{n}^{\e}(q) & P_1 & (J_2,J_1^{n-2}) & 2 & \frac{1}{3}+\frac{2^{n-2}-\e 2^{n/2-1}-2}{3(2^{n/2-1}+\e)(2^{n/2}-\e)} & q = 2 \\ 
\mbox{{\tiny $n \geqs 8$}} & & (-I_{n-1},I_{1}) & 2 & \frac{1}{3}+\frac{2}{3(3^{n/2}+1)} & (q,\e) = (3,-) \\
\mbox{{\tiny $\e=\pm$}} & & (\Lambda,I_{n-2}) & 3 & \frac{1}{4}+\frac{3}{4(2^{n/2}+1)} & (q,\e) = (2,-) \\ 
& P_2 & (\Lambda,I_6) & 5 & \frac{1}{5} & (n,q,\e) = (8,4,+) \\
& N_1 & (-I_{n-1},I_{1})^{\boxtimes} & 2 & \frac{1}{3}+\frac{4}{3(3^{n/2}-1)} & (q,\e) = (3,+) \\
& &  (-I_{n-1},I_{1})^{\square} & 2 & \frac{1}{3}+\frac{2(3^{n/2-2
}+1)}{3^{n/2-1}(3^{n/2}+1)} & (q,\e) = (3,-) \\ 
& & (J_2,J_1^{n-2}) & 2 & \frac{1}{3}+\frac{2^{n/2-1}+\e}{3(2^{n/2}-\e)} & q = 2 \\
& & (\Lambda,I_{n-2}) & 3 & \frac{1}{4}+\frac{3}{4(2^{n/2}-1)} & (q,\e) = (2,+) \\ \hline
\end{array}
\]
\caption{The exceptional subspace actions of classical groups}
\label{tab:class}
\end{table}}
\renewcommand{\arraystretch}{1}

\begin{rem}
Write $q=p^f$ where $p$ is a prime. In the proof of Theorem \ref{t:subspace}, we will often establish stronger upper bounds. For example, if $x$ is unipotent, then $r=p$ and we usually aim to show that ${\rm fpr}(x) \leqs (q+1)^{-1}$. Similarly, if $x$ is semisimple and $r$ is odd then we typically establish a bound on ${\rm fpr}(x)$ in terms of $q$ and the order $i$ of $q$ mod $r$ (that is, in terms of the smallest positive integer $i$ such that $r$ divides $q^i-1$). More precisely, if $i$ is even then we will often show that ${\rm fpr}(x) \leqs (q^{i/2}+2)^{-1}$, which establishes the desired bound since $r$ divides $q^{i/2}+1$.  Similarly, if $i$ is odd then we typically aim to show that ${\rm fpr}(x) \leqs q^{-i}$.
\end{rem}

\begin{rem}
Explicit bounds on fixed point ratios for subspace actions of classical groups are presented in \cite{FM,GK}. In particular, we will repeatedly apply the results of Guralnick and Kantor given in \cite[Section 3]{GK}.
\end{rem}

Before we embark on the proof of Theorem \ref{t:subspace}, we first need to define our notation for subspace actions, which we will use throughout Section \ref{s:class2}. The additional notation appearing in Table \ref{tab:class} is discussed in Remark \ref{r:tab} below.

The various possibilities for $G$ and $H$ that we need to consider are recorded in Table \ref{tab:subspace}, which provides a framework for the proof of Theorem \ref{t:subspace}. In the second column, we present the \emph{type of $H$}, which is designed to describe the type of subspace (or pair of subspaces) stabilized by $H$. For parabolic subgroups, our notation is consistent with \cite{KL}. In particular, we write $P_m$ to denote the stabilizer of a totally singular $m$-space (for $G_0 = {\rm L}_{n}(q)$, we adopt the standard convention that every subspace of $V$ is totally singular). Similarly, if $G_0 = {\rm L}_{n}(q)$ then $P_{m,n-m}$ and ${\rm GL}_{m}(q) \oplus {\rm GL}_{n-m}(q)$ denote the stabilizers of a pair of subspaces $(U,W)$ with $\dim U = m$ and $\dim W = n-m$, where $U \subset W$ in the first case and $V = U \oplus W$ in the second. As indicated in the final column, these two subgroups only arise when $G$ contains graph or graph-field automorphisms (indeed, if $G \leqs {\rm P\Gamma L}_{n}(q)$, then $H$ is non-maximal). Similarly, if $G_0 = {\rm PSp}_{4}(q)$ and $q$ is even then we write $P_{1,2}$ to denote a Borel subgroup of $G$. There is also a parabolic subgroup labeled $P_{1,3,4}$, which arises when $G_0 = {\rm P\O}_{8}^{+}(q)$ and $G$ contains triality automorphisms. 

For $G_0 = {\rm U}_{n}(q)$ or ${\rm PSp}_{n}(q)$, we write $N_m$ for the stabilizer of a nondegenerate $m$-space and this notation extends (with suitable modifications) to orthogonal groups. Suppose $G_0$ is an orthogonal group and let $Q$ be the defining quadratic form on $V$. First recall that if $U$ is a nondegenerate $m$-dimensional subspace of $V$ with $m$ even, then $U$ is a \emph{plus-type} space if it contains a totally singular subspace of dimension $m/2$, otherwise it is a \emph{minus-type} space (and every maximal totally singular subspace has dimension $m/2-1$). Now, if $G_0 = \O_n(q)$ with $nq$ odd then we write $N_{m}^{\eta}$ with $\eta=\pm$ to denote the stabilizer of a nondegenerate $m$-space $U$ with the property that either $m$ is even and $U$ has type $\eta$, or $m$ is odd and the orthogonal complement $U^{\perp}$ is a nondegenerate $(n-m)$-space of type $\eta$. Similarly, if $G_0 = {\rm P\O}_{n}^{\e}(q)$ with $n$ even, then $N_m^{\eta}$ is the stabilizer of a nondegenerate $m$-space of type $\eta$ when $m$ is even. On the other hand, if $m$ is odd then $q$ is odd and we adopt the convention that $N_m$ is the stabilizer of a nondegenerate $m$-space $U$ with square discriminant (that is, the discriminant of the restriction of $Q$ to $U$ is a square in $\mathbb{F}_{q}^{\times}$). We will use the term 
\emph{square $m$-space} to describe such a subspace, noting that the actions on the set of square and nonsquare $m$-spaces are permutation isomorphic (so there is no need to consider nonsquare spaces). Finally, if $n$ and $q$ are even, then we also write $N_1$ to denote the stabilizer of a nonsingular $1$-space.

Various conditions are recorded in the final column of Table \ref{tab:subspace}, which are designed to eliminate an unnecessary repetition of cases. For example, in the first row we may assume $m \leqs n/2$ because the action of $G$ on the set of $m$-spaces is permutation isomorphic to the action on $(n-m)$-spaces. We refer the reader to \cite{BHR,KL} for the precise conditions needed to ensure that the given subgroup $H$ is maximal in $G$.

\begin{table}
\[
\begin{array}{llllll} \hline
G_0 & \mbox{Type of $H$} & \mbox{Conditions} \\ \hline
{\rm L}_{n}(q) & P_m & 1 \leqs m \leqs n/2 \\
& P_{m,n-m} & \mbox{$1 \leqs m < n/2$, $G \not\leqs {\rm P\Gamma L}_{n}(q)$} \\
& {\rm GL}_{m}(q) \oplus {\rm GL}_{n-m}(q) & \mbox{$1 \leqs m < n/2$, $G \not\leqs {\rm P\Gamma L}_{n}(q)$} \\
{\rm U}_{n}(q) & P_m & 1 \leqs m \leqs n/2 \\
& N_m & 1 \leqs m < n/2 \\
{\rm PSp}_{n}(q) & P_m & 1 \leqs m \leqs n/2 \\
& P_{1,2} & \mbox{$n=4$, $p = 2$, $G \not\leqs {\rm P\Gamma Sp}_{4}(q)$} \\
& N_m & \mbox{$2 \leqs m < n/2$, $m$ even} \\
& {\rm O}_{n}^{\e}(q) & p = 2 \\
{\rm P\O}_{n}^{\e}(q) & P_m & 1 \leqs m \leqs n/2 \\
& N_m^{\eta} & \mbox{$1 \leqs m \leqs n/2$, $(\e,\eta) = (-,+)$ if $m=n/2$} \\
& P_{1,3,4} & \mbox{$(n,\e)=(8,+)$, $G \not\leqs {\rm P\Gamma O}_{8}^{+}(q)$} \\ \hline   
\end{array}
\]
\caption{The subspace actions}
\label{tab:subspace}
\end{table}

\begin{rem}\label{r:tab}
Let us comment on the conditions and notation in Table \ref{tab:class}. 
\begin{itemize}\addtolength{\itemsep}{0.2\baselineskip}
\item[{\rm (a)}] In the first column, we record the socle $G_0$ of $G$, noting that the given conditions on $n$ are justified in view of the well known isomorphisms among the low-dimensional classical groups (see \cite[Proposition 2.9.1]{KL}). 
\item[{\rm (b)}] In the second column, we describe the type of $H$ using the same notation as in Table \ref{tab:subspace}. The relevant elements $x$ are described in the third column. If $x$ is unipotent, then the Jordan form of $x$ on the natural module $V$ is presented, where $J_i$ denotes a standard unipotent Jordan block of size $i$. Similarly, if $x$ is semisimple, then we describe the eigenvalues of $x$ on $V$, up to scalars. Here $\omega \in \mathbb{F}_{q^u}^{\times}$ is a primitive $r$-th root of unity and we write $\L$ to denote an arbitrary irreducible element in ${\rm GL}_2(q)$ of order $r$ (in this case, $r$ divides $q+1$). 
\item[{\rm (c)}] In the two rows with $G_0 = \O_n(q)$, we write $(-I_{n-1},I_1)^{\e}$ to denote a semisimple involution whose $(-1)$-eigenspace on $V$ is nondegenerate of type $\e \in \{+,-\}$. Similarly, if $G_0 = {\rm P\O}_{n}^{\e}(q)$ with $n$ even, then $(-I_{n-1},I_1)^{\delta}$ is an involution whose $1$-eigenspace $\la v \ra$ has discriminant $\delta \in \{\square,\boxtimes\}$. That is, if $Q$ is the defining quadratic form, then $Q(v) \in \mathbb{F}_q^{\times}$ is a square or nonsquare according to $\delta$.
\item[{\rm (d)}] Finally, the element $\varphi$ in the third row is a field automorphism of order $3$, while $\tau$ in row $5$ is an involutory graph automorphism with $C_{G_0}(\tau) = {\rm PSp}_{4}(q)$.
\end{itemize}
\end{rem}

We continue to adopt the notation in \cite[Chapter 3]{BG} for unipotent and semisimple elements. In particular, we use the notation of Aschbacher and Seitz \cite{AS} for unipotent involutions in symplectic and orthogonal groups when $p=2$. 

\subsection{Linear groups}\label{ss:lin}

In this section, we begin the proof of Theorem \ref{t:subspace} by handling the linear groups with socle $G_0 = {\rm L}_{n}(q)$. First we consider the groups with $n=2$, in which case $H = P_1$ is a Borel subgroup. As before, we may assume $q \geqs 7$ and $q \ne 9$. Note that in part (i) of the following result, $q$ is even and $r$ is a Mersenne prime. 

\begin{prop}\label{p:psl2}
Let $G \leqs {\rm Sym}(\O)$ be a finite almost simple primitive permutation group with point stabilizer $H = P_1$ and socle $G_0 = {\rm L}_{2}(q)$ with $q \geqs 7$ and $q \ne 9$. Let $x \in G$ be an element of prime order $r$. Then either ${\rm fpr}(x) \leqs (r+1)^{-1}$, or one of the following holds:
\begin{itemize}\addtolength{\itemsep}{0.2\baselineskip}
\item[{\rm (i)}] $x \in G_0$ has order $r=q-1$ and 
\[
{\rm fpr}(x) = \frac{1}{q}+\frac{q-1}{q(q+1)}.
\] 
\item[{\rm (ii)}] $G = {\rm L}_{2}(8){:}3$, $x$ is a field automorphism of order $3$ and ${\rm fpr}(x) = 1/3$.
\end{itemize}
\end{prop}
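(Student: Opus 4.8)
The plan is to exploit the fact that here $\O$ is simply the projective line $\mathbb{P}^1(\mathbb{F}_q)$ of the natural module, so that $|\O| = q+1$ and ${\rm fpr}(x) = |{\rm Fix}_{\O}(x)|/(q+1)$, where ${\rm Fix}_{\O}(x)$ is the set of $1$-spaces fixed by $x$. Since the Dynkin diagram of type $A_1$ admits no symmetry, $G_0$ has no graph (or graph-field) automorphisms, and every element $x$ of prime order $r$ in $G \leqs {\rm P\Gamma L}_2(q)$ is, up to conjugacy, of exactly one of three kinds: unipotent ($r=p$), semisimple ($r \neq p$, lying in ${\rm PGL}_2(q)$), or a field automorphism (forcing $q = q_0^r$). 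I would treat these three families in turn, in each case reading off $|{\rm Fix}_{\O}(x)|$ directly from the action on $\mathbb{P}^1(\mathbb{F}_q)$.

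For a unipotent $x$ (so $r=p$), $x$ fixes a unique $1$-space, giving ${\rm fpr}(x) = 1/(q+1) \leqs 1/(p+1) = (r+1)^{-1}$ since $r = p \leqs q$. For a semisimple $x$ of order $r \neq p$ I would split according to whether $x$ is split or non-split: a non-split element has eigenvalues in $\mathbb{F}_{q^2} \setminus \mathbb{F}_q$ and fixes no $1$-space, so ${\rm fpr}(x) = 0$; a split element (necessarily with $r \mid q-1$) is diagonalisable over $\mathbb{F}_q$ with two distinct eigenlines, so ${\rm fpr}(x) = 2/(q+1)$. The key observation is that $2/(q+1) \leqs (r+1)^{-1}$ if and only if $r \leqs (q-1)/2$, and since $r$ is a prime divisor of $q-1$ this holds unless $r = q-1$. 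The latter forces $q-1$ to be prime, hence $q$ even and $r=q-1$ a Mersenne prime; here ${\rm L}_2(q) = {\rm PGL}_2(q)$ contains an element of order $q-1$ in a split torus, so the exception genuinely occurs, with $x \in G_0$ and ${\rm fpr}(x) = 2/(q+1) = \frac{1}{q} + \frac{q-1}{q(q+1)}$, which is case (i). For $q$ odd one has $r = 2$ (where ${\rm fpr}(x) \leqs 2/(q+1) \leqs 1/3$) or else $r \leqs (q-1)/2$, so the bound holds throughout.

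Finally, for a field automorphism $x$ of order $r$ the fixed $1$-spaces are exactly those defined over the subfield $\mathbb{F}_{q_0}$, giving $|{\rm Fix}_{\O}(x)| = q_0+1$ and ${\rm fpr}(x) = (q_0+1)/(q_0^r+1)$. The remaining task is the elementary inequality $(r+1)(q_0+1) \leqs q_0^r + 1$: since $q = q_0^r \geqs 7$ and $q \neq 9$, the only proper prime power below $16$ is $q = 8 = 2^3$, and for $q \geqs 16$ the inequality follows by a direct estimate (treating $r=2$, $r=3$ and $r \geqs 5$ separately, with the right-hand side growing at least quadratically in $q_0$, respectively exponentially in $r$). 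The single failure is $(r,q_0) = (3,2)$, that is $G = {\rm L}_2(8){:}3$ with ${\rm fpr}(x) = 3/9 = 1/3 > 1/4$, which is case (ii).

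The hard part will be the field-automorphism case, and specifically justifying the exact count $|{\rm Fix}_{\O}(x)| = q_0+1$ for an arbitrary element of prime order $r$ in the outer coset, rather than merely for the standard Frobenius: one must know that every such element is ${\rm PGL}_2(q)$-conjugate to a standard field automorphism (conjugation preserving the cycle type on $\O$), so that the subfield count applies. I would settle this by appealing to the conjugacy classification of field automorphisms of prime order together with the centraliser data in \cite[Chapter 3]{BG}, or, if a clean citation is awkward, by computing ${\rm fpr}(x) = |x^G \cap H|/|x^G|$ from the orders of $C_G(x)$ and of $x^G \cap H$ as in \cite[Section 3]{Bur2}; the exact value at $q=8$ can in any case be verified directly inside ${\rm P\Gamma L}_2(8)$.
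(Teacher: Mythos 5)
Your argument is correct and follows essentially the same route as the paper: identify $\O$ with $\mathbb{P}^1(\mathbb{F}_q)$ and count fixed $1$-spaces directly in the unipotent case ($1$ fixed point), the semisimple case ($0$ or $2$ fixed points, with the exception $r=q-1$ a Mersenne prime), and the field automorphism case ($q_0+1$ fixed points, with the exception $(q_0,r)=(2,3)$). The conjugacy issue you flag for the outer coset is handled in the paper simply by asserting $|C_{\O}(x)|=q_0+1$, and your proposed citation of the standard classification of prime-order field automorphisms is an adequate way to justify it.
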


\begin{proof}
Here $|\O| = q+1$ and we may identify $\O$ with the set of $1$-dimensional subspaces of $V$. First assume $x \in H \cap {\rm PGL}_{2}(q)$. Then either $r=p$ and $|C_{\O}(x)| = 1$, in which case the bound ${\rm fpr}(x) \leqs (r+1)^{-1}$ clearly holds, or $r$ divides $q-1$ and $|C_{\O}(x)| = 2$. In the latter case, we have ${\rm fpr}(x) = 2(q+1)^{-1}$, which is at most $(r+1)^{-1}$ if and only if $r \leqs (q-1)/2$. Notice that if $r>(q-1)/2$ then $r=q-1$, so $q$ is even, $r$ is a Mersenne prime and ${\rm fpr}(x) = 2(r+2)^{-1}$, which is the special case appearing in part (i). Finally, if $q=q_0^r$ and $x$ is a field automorphism, then $|C_{\O}(x)| = q_0+1$ and we deduce that either ${\rm fpr}(x) \leqs (r+1)^{-1}$, or $q_0 = 2$, $r=3$ and ${\rm fpr}(x) = 1/3$ as in part (ii).
\end{proof}

For the remainder of Section \ref{ss:lin}, we will assume $G_0 = {\rm L}_{n}(q)$ with $n \geqs 3$. Our main result is the following (in the statement, we assume $G_0 \ne {\rm L}_{3}(2), {\rm L}_{4}(2)$ since ${\rm L}_{3}(2) \cong {\rm L}_{2}(7)$ and ${\rm L}_{4}(2) \cong A_8$).

\begin{prop}\label{p:psl_sub}
Let $G \leqs {\rm Sym}(\O)$ be a finite almost simple primitive permutation group with point stabilizer $H$ and socle $G_0 = {\rm L}_{n}(q)$ with $n \geqs 3$ and $(n,q) \ne (3,2)$, $(4,2)$. Assume $H$ is a subspace subgroup and $x \in G$ has prime order $r$. Then either ${\rm fpr}(x) \leqs (r+1)^{-1}$, or $H = P_1$ and one of the following holds:
\begin{itemize}\addtolength{\itemsep}{0.2\baselineskip}
\item[{\rm (i)}] $r=p=q$, $x = (J_2,J_{1}^{n-2})$ and 
\[
{\rm fpr}(x) = \frac{1}{q+1}+\frac{q(q^{n-2}-1)}{(q+1)(q^n-1)}.
\]
\item[{\rm (ii)}] $r=q-1$, $x = (\omega, I_{n-1})$ and 
\[
{\rm fpr}(x) = \frac{1}{q}+\frac{(q-1)^2}{q(q^n-1)}.
\]
\end{itemize}
\end{prop}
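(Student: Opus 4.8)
The plan is to use the explicit description of $\O$ available for subspace actions, treating the admissible types of $H$ for $G_0={\rm L}_n(q)$ from Table~\ref{tab:subspace} in turn and, within each, classifying $x$ by its action on the natural module $V$. Write $q=p^f$ and recall that $|\O|=(q^n-1)/(q-1)$ when $H=P_1$. My first step is to narrow the range of $r$ using Theorem~\ref{t:ls91}: we may assume either ${\rm fpr}(x)\leq 4/(3q)$ or $(G,H,x)$ lies in the short list of \cite[Table 1]{LS91}, and the latter involves only small parameters for $G_0={\rm L}_n(q)$ with $n\geq 3$, which (along with the isomorphisms excluding $(n,q)=(3,2),(4,2)$) I would dispatch with {\sc Magma} \cite{magma}. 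Since $4/(3q)\leq(r+1)^{-1}$ as soon as $r+1\leq 3q/4$, only elements with $r+1>3q/4$ survive, and it is precisely among these that the two exceptions live.

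The heart of the argument is $H=P_1$, where $\O$ is the set of projective points; here $G\leq{\rm P\Gamma L}_n(q)$ (as $P_1$ is maximal only there, so no graph automorphisms arise), and $x$ fixes exactly the $1$-spaces spanned by an $\mathbb{F}_q$-rational eigenvector of $\hat x$. If $x$ is unipotent then $r=p$ and the fixed points are the $1$-spaces of $C_V(x)$, so ${\rm fpr}(x)=(q^e-1)/(q^n-1)$ with $e=\dim C_V(x)$; since $e=n-1$ characterises transvections and otherwise $e\leq n-2$, and since $p>3q/4$ forces $f=1$, the only survivor is the transvection $(J_2,J_1^{n-2})$ with $q=p$, whose ratio equals the value in (i) and exceeds $(p+1)^{-1}$. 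If $x$ is semisimple of order $r$, let $i$ be the order of $q$ modulo $r$; for $i\geq 2$ the primitive $r$-th roots of unity are not $\mathbb{F}_q$-rational, so the rational eigenspace has codimension at least $i$ and ${\rm fpr}(x)\leq(q^{n-i}-1)/(q^n-1)$, which combined with $r+1\leq q^{i/2}+2$ (for $i$ even) or $r<q^i$ (for $i$ odd) yields the bound. The delicate case is $i=1$, that is $r\mid q-1$: every proper prime divisor satisfies $r\leq(q-1)/2\leq 3q/4-1$ and has already been removed, leaving only $r=q-1$.

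For this surviving semisimple case I would show that the fixed $1$-spaces lie in the rational eigenspaces and that, summing $(q^{d_j}-1)/(q-1)$ over the eigenvalue multiplicities $d_j$, the total is maximised by concentrating multiplicity, so the extremal element is $x=(\omega,I_{n-1})$; a direct evaluation then gives ${\rm fpr}(x)=1/q+(q-1)^2/(q(q^n-1))=(q^{n-1}+q-2)/(q^n-1)>(r+1)^{-1}=1/q$, which is exactly exception (ii). Field automorphisms, whose fixed-point ratio is of order $q_0^{\,n(1-r)}$, are absorbed by the initial reduction. The step I expect to be the main obstacle is this semisimple analysis: verifying that $(\omega,I_{n-1})$ really is extremal among all semisimple elements of the given order once the $\mathbb{F}_q$-rationality constraint coming from $i$ is imposed, and confirming uniformly in $n\geq 3$ that $r=q-1$ is the unique value for which the boundary inequality is violated.

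For the remaining subspace subgroups of ${\rm L}_n(q)$ I anticipate no exceptions. For $H=P_m$ with $m\geq 2$, passing from $1$-spaces to $m$-spaces strictly decreases every fixed-point ratio, and the subspace estimates of Guralnick and Kantor in \cite[Section 3]{GK}, expressed through $\nu(x)$ (Definition~\ref{d:nu}), fall comfortably below $(r+1)^{-1}$ in the surviving range of $r$, with small cases again settled by {\sc Magma}. The flag stabilisers $P_{m,n-m}$ and the decomposition stabilisers ${\rm GL}_m(q)\oplus{\rm GL}_{n-m}(q)$ occur only when $G$ contains a graph or graph-field automorphism, and their fixed-point ratios are dominated by the corresponding $P_m$ ratios, so they are subsumed by the preceding analysis.
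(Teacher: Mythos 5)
Your analysis of $H=P_1$ is essentially the paper's argument: identify $C_{\O}(x)$ with the $1$-spaces in the $\mathbb{F}_q$-rational eigenspaces, split according to the order $i$ of $q$ modulo $r$, and isolate the transvection and $(\omega,I_{n-1})$ as the extremal elements. The extremality point you flag as the main obstacle is routine: for $i=1$ and $\nu(x)\geqs 2$ the count is maximised by $(\omega I_2,I_{n-2})$ when $n\geqs 4$ (and by a regular element when $n=3$), giving $|C_{\O}(x)|\leqs (q^2-1)/(q-1)+(q^{n-2}-1)/(q-1)$, which is at most $|\O|/q$. Your up-front reduction via Theorem \ref{t:ls91} to the range $r+1>3q/4$ is a harmless variation on the paper's route.

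The genuine gap is in your dismissal of $H=P_m$ with $m\geqs 2$. First, the assertion that passing from $1$-spaces to $m$-spaces strictly decreases every fixed point ratio is false: an element $(\L_1,\L_2)\in{\rm L}_4(q)$ with the $\L_j$ irreducible of degree $2$ fixes no $1$-space but fixes two $2$-spaces, and more generally $(\L^{a},I_{e})$ with $\L$ irreducible of degree $i$ fixes $(q^{ia}-1)/(q^i-1)$ additional $i$-spaces beyond those contained in $C_V(x)$; these arise from $1$-spaces of an extension-field module and are invisible to any eigenspace count on $V$. Second, the Guralnick--Kantor bound ${\rm fpr}(x)<2q^{-m}$ of \eqref{e:gk0} does not fall below $(r+1)^{-1}$ in several families that survive your Liebeck--Saxl reduction and are parametrised by arbitrary $n$, so they cannot be settled by {\sc Magma}: for unipotent $x$ with $m=q=2$ it gives $1/2>1/3$, and for semisimple $x$ with $i=m$, $q=2$ and $r=2^m-1$ a Mersenne prime it gives $2^{1-m}>2^{-m}=(r+1)^{-1}$ (similarly for $i=m$ and $q=3$). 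The paper closes exactly these cases with a dedicated counting lemma (Lemma \ref{l:sub_useful}), which evaluates $|C_{\O}(y)|$ exactly for $y=(\L^{a},I_e)$ and exhibits the extra term $(q^{ia}-1)/(q^i-1)$, together with a direct count of the $2$-spaces fixed by a transvection over $\mathbb{F}_2$; without an argument of this kind your claim that no exceptions arise for $m\geqs 2$ is unsupported. Two smaller omissions in the same part: involutory graph automorphisms must be treated when $m=n/2$ and for the stabilisers of flags and decompositions (they do not lie in ${\rm PGL}_n(q)$, so no eigenspace analysis applies), and for $P_{1,n-1}$ and ${\rm GL}_1(q)\oplus{\rm GL}_{n-1}(q)$ your reduction lands precisely on the two exceptional $P_1$ elements, for which one must still check that the pair action satisfies the bound (it does, since $r\in\{q-1,q\}$ and $|C_{\O}(x)|\leqs 1+(q^{n-1}-1)/(q-1)$).
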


We will prove Proposition \ref{p:psl_sub} in a sequence of lemmas, where we consider each possibility for $H$ arising in Table \ref{tab:subspace}. Before launching into the details, we present the following elementary lemma on the fixed points of semisimple elements in a subspace action. 

\begin{lem}\label{l:sub_useful}
Consider the natural action of $G = {\rm GL}_n(q)$ on the set $\O$ of $m$-dimensional subspaces of the natural module $V$. Let $x \in G$ be a semisimple element of odd prime order $r$ and let $i \geqs 1$ be minimal such that $r$ divides $q^i-1$, so $x$ is conjugate to  
\[
(\L_1^{a_1}, \ldots, \L_t^{a_t},I_e),
\]
where $\L_1, \ldots, \L_t$ represent the distinct conjugacy classes in ${\rm GL}_i(q)$ of elements of order $r$ and the $a_j$ are non-negative integers such that $i\sum_ja_j \leqs n$.
\begin{itemize}\addtolength{\itemsep}{0.2\baselineskip}
\item[{\rm (i)}] We have $|C_{\O}(x)| \leqs |C_{\O}(y)|$, where $y = (\L_1^a,I_e)$ and $a = \sum_j a_j$.
\item[{\rm (ii)}] If $m = i+k$ with $0 \leqs k < i$, then 
\[
|C_{\O}(y)| = \br{e}{m}_q + \br{e}{k}_q\left(\frac{q^{ia}-1}{q^i-1}\right).
\]
\end{itemize}
\end{lem}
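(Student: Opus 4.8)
The plan is to prove Lemma~\ref{l:sub_useful} by exploiting the decomposition of the natural module $V$ into eigenspaces (over the algebraic closure) of the semisimple element $x$, and then counting fixed $m$-subspaces combinatorially. First I would set up the eigenspace structure: since $x$ has odd prime order $r$ and $i$ is the order of $q$ modulo $r$, the nontrivial $r$-th roots of unity lie in $\mathbb{F}_{q^i}$ but in no smaller field, so the nontrivial eigenvalues come in Frobenius orbits of size $i$. Concretely, over $\mathbb{F}_q$ the element $x$ is conjugate to $(\L_1^{a_1},\ldots,\L_t^{a_t},I_e)$ as stated, where each $\L_j$ is an irreducible $i$-dimensional block. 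A subspace $U \in \O$ is fixed by $x$ precisely when $U$ is $x$-invariant, and since $x$ is semisimple, an $x$-invariant subspace is a direct sum of its intersections with the homogeneous components $V_0 = C_V(x)$ (the $1$-eigenspace, of dimension $e$) and the $\L_j$-isotypic components (each of dimension $ia_j$). The key structural fact I would record is that $U$ is $x$-invariant iff $U = \bigoplus_j (U \cap W_j)$ where $W_j$ ranges over these homogeneous components, so counting fixed $m$-spaces reduces to counting invariant subspaces inside each component and summing dimensions to $m$.

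For part~(i), the goal is the inequality $|C_{\O}(x)| \leqs |C_{\O}(y)|$ where $y = (\L_1^a, I_e)$ with $a = \sum_j a_j$. The idea is that splitting the nontrivial part of $x$ across several distinct irreducible types $\L_1,\ldots,\L_t$ can only reduce the number of invariant subspaces compared to concentrating the entire nontrivial part into a single isotypic block of the same total dimension $ia$. Intuitively, within a single $\L_1$-isotypic component of dimension $ia$ there is a Grassmannian of $\mathbb{F}_{q^i}$-subspaces available to intersect with $U$, whereas forcing $U$ to respect several distinct types imposes more constraints. I would make this precise by comparing the generating-function (or direct summation) expressions for the two counts: fixing the dimension contributed by the trivial eigenspace, the nontrivial contribution for $x$ factors as a product over $j$ of Gaussian-binomial-type terms $\binom{a_j}{\cdot}_{q^i}$, whereas for $y$ it is a single term $\binom{a}{\cdot}_{q^i}$, and the superadditivity of Gaussian binomials (a $q$-analogue of Vandermonde) yields the inequality. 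The cleaner route, which I would prefer, is simply to observe that any $x$-invariant $U$ gives, via its dimensions of intersection with each component, a valid configuration for $y$, and this assignment is injective on fixed-point counts after the reduction—so the hard combinatorial comparison collapses to the $q$-Vandermonde bound $\sum_{\sum c_j = c}\prod_j \binom{a_j}{c_j}_{q^i} \leqs \binom{a}{c}_{q^i}$ summed appropriately.

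For part~(ii), with $y = (\L_1^a, I_e)$ and $m = i+k$ where $0 \leqs k < i$, I would count directly. Write $V = V_0 \oplus W$ where $V_0 = C_V(x)$ has dimension $e$ and $W$ is the $\L_1$-isotypic component, an $\mathbb{F}_{q^i}$-vector space of dimension $a$. A fixed $m$-space $U$ decomposes as $U = (U \cap V_0) \oplus (U \cap W)$ with $U \cap W$ being $x$-invariant, hence an $\mathbb{F}_{q^i}$-subspace of $W$ of dimension a multiple of $i$. Setting $\dim(U \cap W) = ib$, we need $ib + (m - ib) = m$ with $m - ib = \dim(U \cap V_0) \leqs e$; since $m = i + k$ with $0 \leqs k < i$, the only possibilities are $b = 0$ (giving $\dim(U \cap V_0) = i+k$, requiring an $(i+k)$-subspace of $V_0$, counted by $\binom{e}{m}_q$) and $b = 1$ (giving $\dim(U \cap V_0) = k$, requiring a $k$-subspace of $V_0$ together with a $1$-dimensional $\mathbb{F}_{q^i}$-subspace of $W$, the latter counted by $(q^{ia}-1)/(q^i-1)$, and the former by $\binom{e}{k}_q$). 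Summing these two contributions gives exactly
\[
|C_{\O}(y)| = \br{e}{m}_q + \br{e}{k}_q\left(\frac{q^{ia}-1}{q^i-1}\right),
\]
as claimed. The main obstacle I anticipate is part~(i): making the ``concentration increases the fixed-point count'' comparison fully rigorous requires a careful $q$-analogue of Vandermonde's identity together with bookkeeping over how the trivial and nontrivial parts split, and one must be attentive to the fact that distinct irreducible types $\L_j$ of the same dimension $i$ still yield non-interacting homogeneous components, so that the only mixing is through the choice of dimensions—this is where a clean injection or generating-function argument earns its keep.
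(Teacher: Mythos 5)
Your proof is correct, and part (ii) coincides with the paper's own argument: a $y$-invariant $m$-space is the direct sum of its intersections with $C_V(y)$ and the $\L_1$-isotypic component $W$, the latter intersection is an $\mathbb{F}_{q^i}$-subspace of $W$ of dimension $0$ or $i$ because $m=i+k<2i$, and the two cases contribute $\br{e}{m}_q$ and $\br{e}{k}_q\cdot(q^{ia}-1)/(q^i-1)$ respectively. For part (i) you take a genuinely more computational route than the paper. Writing $|C_{\O}(x)|=\sum \br{e}{d_0}_q\prod_j\br{a_j}{c_j}_{q^i}$ over dimension vectors with $d_0+i\sum_j c_j=m$, and comparing with the single-block expression for $y$ via the bound $\sum_{\sum_j c_j=c}\prod_j\br{a_j}{c_j}_{q^i}\leqs\br{a}{c}_{q^i}$, is perfectly valid: the omitted powers of $q$ in the $q$-Vandermonde identity are nonnegative, or equivalently the map $(A_1,\ldots,A_t)\mapsto\bigoplus_j A_j$ injects tuples of $\mathbb{F}_{q^i}$-subspaces into the Grassmannian of the direct sum. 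The paper avoids this computation entirely by choosing $y$ inside its conjugacy class so that $C_{\O}(x)\subseteq C_{\O}(y)$ holds as a literal containment of fixed-point sets: on each isotypic component $W_j$ one has $\mathbb{F}_q[x|_{W_j}]\cong\mathbb{F}_{q^i}$, so one may fix a single $\mathbb{F}_{q^i}$-structure on $W_1\oplus\cdots\oplus W_t$ restricting to these and let $y$ act as a scalar of order $r$ for that structure; then $y|_{W_j}$ is a power of $x|_{W_j}$, so every $x$-invariant subspace is $y$-invariant. Your ``cleaner route'' gestures at exactly this injection but, as written, still falls back on the Vandermonde inequality; either version closes the argument, the containment being slightly slicker since it needs no identity at all.
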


\begin{proof}
Part (i) is clear because we can choose $y$ so that there is a natural containment $C_{\O}(x) \subseteq C_{\O}(y)$ of the fixed point sets. Now consider (ii) and observe that $y$ preserves a decomposition
\[
V = W_1 \oplus \cdots \oplus W_a \oplus C_V(y)
\]
where the $W_j$ are isomorphic $i$-dimensional irreducible $\mathbb{F}_q\la y\ra$-modules. Write 
\[
|C_{\O}(y)| = \a+\b,
\]
where $\a$ is the number of $m$-spaces on which $y$ acts trivially (this is simply the number of $m$-spaces in $C_V(y)$). 

Next let $U$ be an $m$-space in $C_{\O}(y)$ on which $y$ acts nontrivially. Since $m=i+k$ with $0 \leqs k <i$, it follows that $y$ preserves a decomposition $U = U_1 \oplus U_2$, acting irreducibly on $U_1$ and trivially on the $k$-space $U_2$. Therefore $\b=\b_1\b_2$,  where $\b_1$ is the number of $m$-spaces in $W = W_1 \oplus \cdots \oplus W_a$ fixed by $z = (\L^a) \in {\rm GL}_{ai}(q) = {\rm GL}(W)$ and $\b_2$ is the number of $k$-spaces in $C_V(y)$. To compute $\b_2$ we may view $z$ as a scalar matrix in an extension field subgroup ${\rm GL}_{a}(q^i) < {\rm GL}_{ai}(q)$. Then each $m$-space in $W$ fixed by $z$ corresponds to a $1$-dimensional subspace of the natural module for ${\rm GL}_{a}(q^i)$. Since $z$ is a scalar in ${\rm GL}_a(q^i)$, it fixes every $1$-space and thus $\b_2$ is the total number of $1$-spaces in $(\mathbb{F}_{q^i})^a$. Therefore
\[
\a = \br{e}{m}_q,\;\; \b_1 = \br{e}{k}_q, \;\; \b_2 = \frac{(q^i)^a-1}{q^i-1}
\]
and this completes the proof of part (ii).
\end{proof}

\begin{lem}\label{l:psl_sub1}
The conclusion to Proposition \ref{p:psl_sub} holds if $H = P_1$.
\end{lem}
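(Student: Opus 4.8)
The plan is to identify $\O$ with the set of $1$-dimensional subspaces of the natural module $V = \mathbb{F}_q^n$, so that $|\O| = (q^n-1)/(q-1)$, and to note that we may assume $x \in H$ (otherwise ${\rm fpr}(x) = 0$). The basic observation is that a $1$-space $\la v\ra$ is fixed by $x$ if and only if $v$ is an eigenvector of a lift $\hat{x} \in {\rm GL}_n(q)$ with eigenvalue in $\mathbb{F}_q$; hence $|C_{\O}(x)| = \sum_{\lambda}(q^{d_\lambda}-1)/(q-1)$, where the sum runs over the eigenvalues $\lambda \in \mathbb{F}_q^{\times}$ of $\hat{x}$ and $d_\lambda$ is the dimension of the corresponding eigenspace. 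Since $H=P_1$ is maximal, graph and graph-field automorphisms are excluded and $G \leqs {\rm P\Gamma L}_n(q)$, so I will treat three cases in turn: $x$ unipotent, $x$ semisimple, and $x$ a field automorphism.

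First, suppose $x$ is unipotent, so $r=p$ and the only eigenvalue is $1$, with $d_1 = n - \nu(x)$ in the notation of Definition \ref{d:nu}. Then ${\rm fpr}(x) = (q^{n-\nu(x)}-1)/(q^n-1)$, and an elementary comparison with $1/(p+1)$ shows that this is at most $(r+1)^{-1}$ whenever $\nu(x) \geqs 2$, and also when $\nu(x)=1$ and $q>p$. The remaining case $\nu(x)=1$ forces $x$ to be the transvection $(J_2,J_1^{n-2})$, and here the same comparison gives ${\rm fpr}(x) > (p+1)^{-1}$ precisely when $q=p$; a direct simplification then yields the value recorded in part (i).

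Next, suppose $x$ is semisimple of order $r \ne p$, and let $i$ be minimal with $r \mid q^i-1$. If $i \geqs 2$ then, since $r$ is prime and $\gcd(r,q-1)=1$, at most one eigenvalue of $\hat{x}$ lies in $\mathbb{F}_q$, and its eigenspace has dimension $d \leqs n-i$ (the non-rational eigenvalues occur in Galois orbits of size $i$). Thus ${\rm fpr}(x) \leqs (q^{n-i}-1)/(q^n-1)$, and the crude bound $(r+1)(q^{n-i}-1) \leqs q^i(q^{n-i}-1) = q^n-q^i \leqs q^n-1$ gives ${\rm fpr}(x) \leqs (r+1)^{-1}$. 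If $i=1$ then $r \mid q-1$ and every eigenvalue is rational; using Lemma \ref{l:sub_useful}(i) to collapse the nontrivial eigenspaces and the convexity of $d \mapsto q^d$, the number of fixed $1$-spaces is maximised over all elements of prime order with $i=1$ by the eigenspace-dimension partition $(n-1,1)$, that is, by $x = (\omega, I_{n-1})$, for which ${\rm fpr}(x) = 1/q + (q-1)^2/(q(q^n-1))$ independently of $r$. Comparing with $(r+1)^{-1}$, and using that a prime divisor of $q-1$ is either $q-1$ itself or at most $(q-1)/2$, this value exceeds $(r+1)^{-1}$ exactly when $r=q-1$, which is part (ii); for every smaller $r$ the inequality $q^n \geqs q^2$ yields ${\rm fpr}(x) \leqs (r+1)^{-1}$, and hence the same holds for all semisimple elements with $i=1$.

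Finally, if $x \not\in {\rm PGL}(V)$ then $x$ is a field automorphism with $q=q_0^r$, and the number of fixed $1$-spaces is $(q_0^n-1)/(q_0-1)$. For $n \geqs 3$ there are no relevant exceptions in \cite[Table 1]{LS91}, so Theorem \ref{t:ls91} gives ${\rm fpr}(x) \leqs 4/(3q) \leqs (r+1)^{-1}$ as in \eqref{e:ls91}; alternatively this follows from a direct estimate, the tightest case being $r=2$, where ${\rm fpr}(x) = (q_0+1)/(q_0^n+1) \leqs 1/3$. The main obstacle is the semisimple $i=1$ analysis: one must correctly account for the scalar (projective) freedom when enumerating eigenvalue configurations, confirm via Lemma \ref{l:sub_useful} that $(\omega, I_{n-1})$ is genuinely extremal, and pin down that $r=q-1$ is the unique value producing an exception. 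The apparent extra cases, such as $(-I_{n-1},I_1)$ when $q=3$, coincide projectively with $(\omega,I_{n-1})$ and are thereby already accounted for.
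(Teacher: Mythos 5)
Your proposal follows essentially the same route as the paper: identify $\O$ with the set of $1$-spaces of $V$, reduce to $x \in {\rm PGL}_{n}(q)$ via the Liebeck--Saxl bound (or a direct count for field automorphisms), and then count $1$-spaces eigenspace by eigenspace; the unipotent case and the semisimple case with $i \geqs 2$ are handled exactly as in the text, and your uniform bound $(r+1)(q^{n-i}-1) \leqs q^{i}(q^{n-i}-1) \leqs q^{n}-1$ is a clean packaging of the paper's estimate ${\rm fpr}(x) \leqs q^{-i}$.

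There is, however, one genuine omission in the semisimple case $i=1$. Showing that $|C_{\O}(x)|$ is maximised by $(\omega,I_{n-1})$ and that this maximal value exceeds $(r+1)^{-1}$ exactly when $r=q-1$ correctly identifies the exception in part (ii), and it disposes of all elements when $r<q-1$; but when $r=q-1$ it does not rule out the possibility that \emph{other} elements of order $q-1$ with $i=1$ --- those with $\nu(x) \geqs 2$, such as $(\omega I_2,I_{n-2})$ or elements with three distinct eigenvalues --- also violate the bound, which would mean the classification in Proposition \ref{p:psl_sub}(ii) is not yet established. The paper closes this by treating $\nu(x) \geqs 2$ separately: for $n=3$ such an $x$ is regular with $|C_{\O}(x)|=3$, and for $n \geqs 4$ one has $|C_{\O}(x)| \leqs (q^2-1)/(q-1)+(q^{n-2}-1)/(q-1)$, attained by $(\omega I_2,I_{n-2})$, which is at most $(q^n-1)/(q(q-1)) = (r+1)^{-1}|\O|$. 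These inequalities do hold, so nothing in your approach fails, but you need to add this verification for the non-extremal elements of order $q-1$ before the lemma is fully proved.
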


\begin{proof}
First observe that $|\O| = (q^n-1)/(q-1)$ and the maximality of $H$ implies that $G \leqs {\rm P\Gamma L}_{n}(q)$. Let $x \in G$ be an element of prime order $r$. In view of \eqref{e:ls91}, we may assume $x \in {\rm PGL}_{n}(q)$ is semisimple or unipotent with $\nu(x) = s$ (see Definition \ref{d:nu}). 

If $r=p$ then $C_{\O}(x)$ is the set of $1$-dimensional subspaces in the $1$-eigenspace $C_V(x)$, so   
\[
|C_{\O}(x)| = \frac{q^{n-s}-1}{q-1},\;\; {\rm fpr}(x) = \frac{q^{n-s}-1}{q^n-1} = \frac{1}{q^s} - \frac{q^s-1}{q^s(q^n-1)}.
\]
From here, it is straightforward to check that ${\rm fpr}(x) > (r+1)^{-1}$ if and only if $s=1$ and $q=p$, which is the case recorded in part (i) of Proposition \ref{p:psl_sub}.

Now suppose $r \ne p$. Let $i \geqs 1$ be minimal such that $r$ divides $q^i-1$ and note that $r+1 \leqs q^i$. First assume $i=1$. If $s=1$ then $x$ is of the form $(\omega, I_{n-1})$ (up to scalars) and 
\[
|C_{\O}(x)| = 1+\frac{q^{n-1}-1}{q-1}, \;\; {\rm fpr}(x) = \frac{1}{q}+\frac{(q-1)^2}{q(q^n-1)}.
\]
This is greater than $(r+1)^{-1}$ if and only if $r = q-1$ (so either $r=2$, or $r$ is a Mersenne prime) and this special case appears in part (ii) of Proposition \ref{p:psl_sub}. Now assume $i=1$ and $s \geqs 2$. If $n=3$ then $x$ is regular, $|C_{\O}(x)| = 3$ and the result follows. For $n \geqs 4$ we observe that $|C_{\O}(x)|$ is maximal when $x$ is of the form $(\omega I_2, I_{n-2})$, whence
\[
|C_{\O}(x)| \leqs \frac{q^2-1}{q-1}+\frac{q^{n-2}-1}{q-1}
\] 
and we deduce that ${\rm fpr}(x) \leqs (r+1)^{-1}$.

Finally, suppose $r \ne p$ and $i \geqs 2$. Here $|C_{\O}(x)|$ is equal to the number of $1$-dimensional subspaces in $C_V(x)$, whence $|C_{\O}(x)|$ is maximal when $x = (\L, I_{n-i})$ in the notation of \cite[Proposition 3.2.1]{BG}. Here the notation indicates that $x$ preserves a decomposition $V = U \oplus W$, acting irreducibly on the $i$-space $U$ and trivially on $W$. This implies that $|C_{\O}(x)|$ is at most $(q^{n-i}-1)/(q-1)$ and the result follows since ${\rm fpr}(x) \leqs q^{-i}$. 
\end{proof}

\begin{lem}\label{l:psl_sub2}
The conclusion to Proposition \ref{p:psl_sub} holds if $H = P_m$ with $2 \leqs m \leqs n/2$. 
\end{lem}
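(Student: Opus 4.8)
The plan is to exploit the concrete description of $\O$ as the set of $m$-dimensional subspaces of $V$, so that $|\O| = \br{n}{m}_q$, and to bound the number of fixed subspaces directly, in the spirit of Lemma \ref{l:psl_sub1}. First, as in the proof of that lemma, I would invoke Theorem \ref{t:ls91} together with \eqref{e:ls91}: if $x$ is a field, graph or graph-field automorphism then ${\rm fpr}(x) \leqs 4/(3q) \leqs (r+1)^{-1}$, with the finitely many genuine exceptions of \cite[Table 1]{LS91} and the small groups checked in {\sc Magma}. (Note that $H = P_m$ can be maximal with $G$ containing graph automorphisms only when $m = n/2$, and such automorphisms are covered by this step.) Hence we may assume $x \in {\rm PGL}_{n}(q)$ is semisimple or unipotent. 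Since $r = p \leqs q$ in the unipotent case it suffices to prove ${\rm fpr}(x) \leqs (q+1)^{-1}$ there, while for semisimple $x$ I would aim for the bounds ${\rm fpr}(x) \leqs q^{-i}$ (when $i$ is odd) or ${\rm fpr}(x) \leqs (q^{i/2}+2)^{-1}$ (when $i$ is even), where $i$ is the order of $q$ modulo $r$; these suffice since $r \leqs q^i$ and $r \mid q^{i/2}+1$ respectively.

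For the unipotent case ($r = p$) the ratio is largest when $\nu(x) = 1$, i.e.\ when $x = (J_2, J_1^{n-2})$ is a transvection, larger values of $\nu(x)$ giving a strictly smaller ratio (this can be read off from \cite[Section 3]{GK}, or checked by the same subspace count). For a transvection with $C_V(x)$ a hyperplane $W$ and $(x-1)V = \la w\ra \subseteq W$, an $m$-space $U$ is fixed precisely when $U \subseteq W$, or $U \not\subseteq W$ and $w \in U$; counting these two families gives
\[
|C_{\O}(x)| = \br{n-1}{m}_q + q^{n-m}\br{n-2}{m-2}_q.
\]
Combining this with the identity $\br{n}{m}_q = \br{n-1}{m}_q + q^{n-m}\br{n-1}{m-1}_q$ reduces ${\rm fpr}(x) \leqs (q+1)^{-1}$ to an elementary inequality between Gaussian binomial coefficients, valid for all $m \geqs 2$ with $2m \leqs n$; the hypothesis $m \geqs 2$ is exactly what makes the extra factor $q^{n-m}$ in the denominator outweigh the numerator, which is where the case $m \geqs 2$ genuinely differs from $P_1$ (where the transvection produces an exception).

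For the semisimple case ($r \ne p$) I would apply Lemma \ref{l:sub_useful}: by part (i) the number of fixed $m$-spaces is maximized by an element $y = (\L_1^a, I_e)$, and part (ii) gives $|C_{\O}(y)| = \br{e}{m}_q + \br{e}{k}_q(q^{ia}-1)/(q^i-1)$ with $m = i + k$, $0 \leqs k < i$ and $e = n - ia$. A short comparison of leading terms shows the extremal configuration is $i = 1$, $a = 1$ (so $x = (\omega, I_{n-1})$ up to scalars), giving $|C_{\O}(x)| = \br{n-1}{m}_q + \br{n-1}{m-1}_q$; the resulting bound ${\rm fpr}(x) \leqs q^{-1} \leqs (r+1)^{-1}$ again follows from an elementary Gaussian-binomial inequality using $m \geqs 2$ and $2m \leqs n$ (this fails for $m = 1$, which is the source of the $(\omega, I_{n-1})$ exception in Proposition \ref{p:psl_sub}(ii)). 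For every other configuration --- $i \geqs 2$, or $a \geqs 2$, or $k$ large --- the same formula yields the stronger bounds $q^{-i}$ or $(q^{i/2}+2)^{-1}$ recorded above.

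The main obstacle is not any single step but the bookkeeping: verifying that the two elementary inequalities between Gaussian binomial coefficients hold uniformly over the whole range $2 \leqs m \leqs n/2$, and confirming that the transvection and $(\omega, I_{n-1})$ really do maximize $|C_{\O}(x)|$ among unipotent, respectively semisimple, classes --- the latter being delivered by the monotonicity in part (i) of Lemma \ref{l:sub_useful}. Finally, the finitely many small groups excluded by the asymptotic estimates, together with the exceptions in \cite[Table 1]{LS91}, would be dispatched by direct computation in {\sc Magma}.
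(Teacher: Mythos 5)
Your overall strategy (reduce to semisimple/unipotent elements and count fixed subspaces directly) is reasonable, and your exact counts for the transvection and for $(\omega,I_{n-1})$ are correct, but there are two genuine gaps. First, the graph automorphisms are not disposed of by Theorem \ref{t:ls91}: for an involutory graph automorphism the target is $(r+1)^{-1}=1/3$, while the Liebeck--Saxl bound only gives $4/(3q)$, which exceeds $1/3$ for $q\in\{2,3\}$. Since $H=P_{n/2}$ with $G$ containing a graph automorphism occurs for every even $n$ when $q\in\{2,3\}$, this is an infinite family, not a finite list of {\sc Magma} cases, and \eqref{e:ls91} does not apply either (it requires $q=q_0^r$, i.e.\ field or graph-field automorphisms). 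The paper closes this with the bound ${\rm fpr}(x)<2q^{-m}$ from \cite[Proposition 3.1 and Lemma 3.11(d)]{GK} (equation \eqref{e:gk0}), which handles all graph automorphisms except $m=q=2$, i.e.\ the excluded group ${\rm L}_4(2)$; you need this bound, or a direct count, here.

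Second, and more substantively, your semisimple analysis leans on the formula in Lemma \ref{l:sub_useful}(ii), which is only valid when $m=i+k$ with $0\leqs k<i$, i.e.\ when $m<2i$. Your phrase ``or $k$ large'' applies the formula outside its domain: when $m\geqs 2i$ a fixed $m$-space may contain two or more irreducible $i$-dimensional summands, and the count is strictly larger than what part (ii) gives. So the whole range $2\leqs i\leqs m/2$ is unaddressed (and for $i=1$, $m\geqs 2$ the formula likewise does not apply, though your hand count for $(\omega,I_{n-1})$ is fine -- what is missing there is a justification that this element maximizes the ratio). Again the paper's fix is \eqref{e:gk0}: since $r+1\leqs q^i$, the bound $2q^{-m}\leqs 2q^{-2i}$ suffices whenever $i\leqs m/2$, and Lemma \ref{l:sub_useful} is invoked only in the residual cases $m\in\{i,i+1\}$ with $q$ small, where $k\in\{0,1\}<i$ so the formula is legitimately available. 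A similar remark applies to your unipotent reduction: the claim that the transvection maximizes ${\rm fpr}$ is only needed (and only verified in the paper) for $q=2$, where order-$p$ elements have all Jordan blocks of size at most $2$; for $q\geqs 3$ the uniform bound $2q^{-m}\leqs (q+1)^{-1}$ makes the comparison unnecessary. In short, your more elementary counting route can be made to work only in the ranges where the counting formulas are actually valid; the uniform bound ${\rm fpr}(x)<2q^{-m}$ is what covers everything else, and without it the proposal does not close.
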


\begin{proof}
Here we identify $\O$ with the set of $m$-dimensional subspaces of $V$ and we note that
\[
|\O| = \frac{|{\rm GL}_{n}(q)|}{q^{m(n-m)}|{\rm GL}_{m}(q)||{\rm GL}_{n-m}(q)|} = \br{n}{m}_q = \prod_{\ell=0}^{m-1}\frac{q^{n-\ell}-1}{q^{m-\ell}-1}.
\]
By applying \cite[Lemma 2.1]{FM} we obtain
\begin{equation}\label{e:om}
q^{m(n-m)} <  |\O| < 2\left(\frac{q}{q-1}\right)q^{m(n-m)}.
\end{equation}
For $m \ne n/2$, notice that the maximality of $H$ implies that $G \leqs {\rm P\Gamma L}_{n}(q)$. 

Let $x \in G$ be an element of prime order $r$. By arguing as in the proof of the previous lemma, we may assume that either
\begin{itemize}\addtolength{\itemsep}{0.2\baselineskip}
\item[{\rm (a)}] $x \in {\rm PGL}_{n}(q)$ is semisimple or unipotent; or
\item[{\rm (b)}] $n$ is even, $m = n/2$ and $x$ is an involutory graph automorphism.
\end{itemize}
By combining \cite[Proposition 3.1]{GK} with \cite[Lemma 3.11(d)]{GK} we observe that
\begin{equation}\label{e:gk0}
{\rm fpr}(x) < 2q^{-m}.
\end{equation}
In particular, if (b) holds then ${\rm fpr}(x) \leqs 1/3$ unless $m=q=2$. But in the latter case we have $G_0 = {\rm L}_{4}(2)$, which is excluded in Proposition \ref{p:psl_sub} (for the record, ${\rm fpr}(x) = 3/7>1/3$ if $x$ is a symplectic-type graph automorphism, otherwise ${\rm fpr}(x) = 3/35$). For the remainder, we may assume $x \in {\rm PGL}_{n}(q)$ is semisimple or unipotent.

If $r=p$ then \eqref{e:gk0} is sufficient unless $m=q=2$. Here $n \geqs 5$ and we claim that $|C_{\O}(x)|$ is maximal when $x = (J_2, J_1^{n-2})$. To see this, suppose $x = (J_2^{\ell},J_1^{n-2\ell})$ with $1 \leqs \ell \leqs n/2$. If $U$ is a $2$-space fixed by $x$, then either $U \subseteq C_V(x)$ or $U = \la u, xu \ra$ with $u \in V \setminus C_V(x)$.
Therefore,
\[
|C_{\O}(x)| = \frac{1}{3}(2^{n-\ell}-1)(2^{n-\ell-1}-1) + \frac{1}{2}\left((2^n-1)-(2^{n-\ell}-1)\right) 
\]
and it is easy to check that this is maximal when $\ell=1$. This justifies the claim and we quickly deduce that ${\rm fpr}(x) \leqs 1/3$ for all $n \geqs 5$. 

So to complete the proof, we may assume $r \ne p$. As usual, let $i \geqs 1$ be minimal such that $r$ divides $q^i-1$. The bound in \eqref{e:gk0} implies that ${\rm fpr}(x) < q^{-1}$, so we may assume $i \geqs 2$. Note that $r$ divides $t = (q^i-1)/(q-1)$. There are two cases to consider.

First assume $i>m$. Here $|C_{\O}(x)|$ is the number of $m$-spaces in $C_V(x)$, whence $x = (\L,I_{n-i})$ has the most fixed points and thus 
\[
|C_{\O}(x)| \leqs \br{n-i}{m}_q < 2\left(\frac{q}{q-1}\right)q^{m(n-i-m)}.
\]
By applying the lower bound on $|\O|$ in \eqref{e:om} we get  
\[
{\rm fpr}(x) < 2\left(\frac{q}{q-1}\right)q^{-mi} \leqs 2\left(\frac{q}{q-1}\right)q^{-2i} \leqs (t+1)^{-1}.
\]

Now suppose $2 \leqs i \leqs m$. Here the bound in \eqref{e:gk0} is sufficient unless $m = i$ and $q \in \{2,3\}$, or $m = i+1$ and $q = 2$. The groups with $n \leqs 6$ can be checked directly with the aid of {\sc Magma}, so we may assume $n \geqs 7$.

First assume $m=i$ and $q \in \{2,3\}$. Now $\dim C_V(x) = n - \ell i$ for some $1 \leqs \ell \leqs \lfloor n/i \rfloor$ and by applying Lemma \ref{l:sub_useful}(i) we see that $|C_{\O}(x)|$ is maximal when $x = (\L^{\ell},I_{n-\ell i})$. Then by part (ii) of Lemma \ref{l:sub_useful} we get
\[
|C_{\O}(x)| = \br{n-\ell i}{i}_q + \frac{q^{i\ell}-1}{q^i-1}.
\]
For $\ell=1$, this implies that
\[
{\rm fpr}(x) < 2\left(\frac{q}{q-1}\right)q^{-i^2} \leqs (t+1)^{-1}
\]
and the result follows. Now assume $\ell \geqs 2$, in which case we compute
\begin{equation}\label{e:rol1}
{\rm fpr}(x) < 4q^{-i^2\ell}+2q^{-i(n+1-\ell-i)}.
\end{equation}
Since $n \geqs 7$, $i \geqs 2$ and $2 \leqs \ell \leqs \lfloor n/i \rfloor$ we deduce that 
\[
4q^{-i(i\ell-1)}+2q^{-i(n-\ell-i)} \leqs 4q^{-6} + 2q^{4-n} \leqs 1
\]
and thus \eqref{e:rol1} implies that ${\rm fpr}(x) < q^{-i}$.

A very similar argument applies when $m = i+1$ and $q = 2$. Once again we can reduce to the case where $x = (\L^{\ell},I_{n-\ell i})$ and Lemma \ref{l:sub_useful} gives 
\[
|C_{\O}(x)| = \br{n-\ell i}{i+1}_2 + \frac{(2^{n-\ell i}-1)(2^{i\ell}-1)}{2^i-1}.
\]
Therefore   
\[
{\rm fpr}(x) < 2^{-i(i+1)\ell+2}+2^{-i(n-i-1)+2} 
\]
and by setting $\ell=1$ we deduce that ${\rm fpr}(x) < 2^{-i} \leqs (r+1)^{-1}$ as required.
\end{proof}

In order to complete the proof of Proposition \ref{p:psl_sub}, we may assume $G \not\leqs {\rm P\Gamma L}_{n}(q)$ and $H$ is of type $P_{m,n-m}$ or ${\rm GL}_{m}(q) \oplus {\rm GL}_{n-m}(q)$ (in both cases we have $m < n/2$). 

\begin{lem}\label{l:psl_sub3}
The conclusion to Proposition \ref{p:psl_sub} holds if $H$ is of type $P_{m,n-m}$ or ${\rm GL}_{m}(q) \oplus {\rm GL}_{n-m}(q)$.
\end{lem}

\begin{proof}
In both cases, we may identify $\O$ with a set of pairs $(U,W)$ of subspaces of $V$, where $\dim U=m$ and $\dim W = n-m$. For $H = P_{m,n-m}$, each pair $(U,W)$ in $\O$ satisfies the condition $U \subset W$, whereas $V = U \oplus W$ when $H$ is of type ${\rm GL}_{m}(q) \oplus {\rm GL}_{n-m}(q)$. Let $x \in G$ be an element of prime order $r$ and note that \eqref{e:gk0} holds (see \cite[Lemma 3.12(a)]{GK}).

By the usual argument, we may assume $x$ is not a field or graph-field automorphism. Next assume $x$ is an involutory graph automorphism. Here \eqref{e:gk0} is sufficient unless $m = 1$, or $m=q=2$. In both cases, it is clear that $|C_{\O}(x)|$ is at most the total number of $m$-spaces in $V$. Therefore, if $m=q=2$ then 
\[
|C_{\O}(x)| \leqs \br{n}{2}_2 = \frac{1}{3}(2^n-1)(2^{n-1}-1)
\]
and we quickly deduce that ${\rm fpr}(x) \leqs 1/3$. For $m=1$ we have $|C_{\O}(x)| \leqs (q^n-1)/(q-1)$ and the same conclusion holds.

To complete the proof, we may assume $x \in {\rm PGL}_{n}(q)$ is semisimple or unipotent. Since $x$ fixes a pair $(U,W) \in \O$ only if it fixes the $m$-space $U$, by applying Lemmas \ref{l:psl_sub1} and \ref{l:psl_sub2} we can immediately reduce to the case where $m=1$ and $x$ is one of the elements arising in parts (i) and (ii) in the statement of Proposition \ref{p:psl_sub}. Since $|C_{\O}(x)|$ is at most the number of $1$-spaces fixed by $x$, we deduce that $|C_{\O}(x)| \leqs 1+(q^{n-1}-1)/(q-1)$ and 
one can check that this bound is sufficient since $r \in \{q-1,q\}$.
\end{proof}

This completes the proof of Proposition \ref{p:psl_sub}.

\subsection{Unitary groups}\label{ss:unit}

Here is our main result for subspace actions of unitary groups. Note that in part (ii), an involutory graph automorphism $x$ of $G_0 = {\rm U}_{4}(q)$ is said to be of \emph{symplectic-type} if $C_{G_0}(x)$ has socle ${\rm PSp}_{4}(q)$.

\begin{prop}\label{p:psu_sub}
Let $G \leqs {\rm Sym}(\O)$ be a finite almost simple primitive permutation group with point stabilizer $H$ and socle $G_0 = {\rm U}_{n}(q)$ with $n \geqs 3$. Assume $H$ is a subspace subgroup of $G$. If $x \in G$ has prime order $r$, then either ${\rm fpr}(x) \leqs (r+1)^{-1}$, or one of the following holds:
\begin{itemize}\addtolength{\itemsep}{0.2\baselineskip}
\item[{\rm (i)}] $H = P_1$, $r=3$, $n$ is odd, $q=2$,  $x = (\omega, I_{n-1})$ and 
\[
{\rm fpr}(x) = \frac{1}{4}+\frac{3}{4(2^n+1)}.
\]
\item[{\rm (ii)}] $H=P_2$, $r=2$, $n=4$, $q \in \{2,3\}$, $x$ is a symplectic-type graph automorphism and ${\rm fpr}(x) = 5/9$ or $5/14$ for $q=2$ or $3$, respectively.
\item[{\rm (iii)}] $H=P_2$, $r=3$, $n=4$, $q = 2$, $x = (\omega I_2, I_2)$ and ${\rm fpr}(x) = 1/3$.
\item[{\rm (iv)}] $H = N_1$, $r=3$, $n$ is even, $q=2$, $x = (\omega, I_{n-1})$ and 
\[
{\rm fpr}(x) = \frac{1}{4} + \frac{3(2^{n-3}+1)}{2^{n-1}(2^n-1)}. 
\]
\end{itemize}
\end{prop}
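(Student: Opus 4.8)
The plan is to follow the same architecture as the proof of Proposition \ref{p:psl_sub} for the linear groups, establishing Proposition \ref{p:psu_sub} through a short sequence of lemmas, one for each type of subspace subgroup $H$ appearing in Table \ref{tab:subspace}: namely $H = P_1$, $H = P_m$ with $2 \leqs m \leqs n/2$, $H = N_1$, and $H = N_m$ with $2 \leqs m < n/2$. In each case I would first invoke Theorem \ref{t:ls91}, together with the estimate \eqref{e:ls91}, to dispose of field and graph-field automorphisms and to reduce to the situation where $r$ is large and $x \in {\rm PGL}_n(q)$ is either unipotent (so $r=p$) or semisimple (so $r \neq p$). For the self-paired middle case $n=4$, $m=2$ I would additionally retain the involutory graph automorphism $\tau$ with $C_{G_0}(\tau) = {\rm PSp}_4(q)$, exactly as in the linear analysis.

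For the parabolic actions I would identify $\O$ with the set of totally isotropic $m$-spaces of the natural unitary module $V$, record the precise value of $|\O|$, and apply the fixed-point-ratio bounds of Guralnick and Kantor from \cite[Section 3]{GK} (the unitary analogue of \eqref{e:gk0}, giving a bound of order $q^{-m}$). This estimate should settle all but a short list of small configurations, essentially $m=1$, or $m$ comparable to the order $i$ of $q$ modulo $r$, or very small $q$, which I would then treat by an exact count. The structural input is that a fixed isotropic $m$-space must lie in an eigenspace of $x$, so for unipotent $x$ the fixed points are the isotropic $m$-spaces of the nondegenerate unitary space $C_V(x)$, and for semisimple $x$ one counts isotropic $m$-spaces inside the eigenspaces; here I would prove a unitary counterpart of Lemma \ref{l:sub_useful} to organise the bookkeeping. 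The feature peculiar to the unitary setting is that an eigenvalue $\omega$ of order dividing $q+1$ satisfies $\omega = \omega^{-q}$, so its eigenspace is nondegenerate; this is precisely what produces the boundary behaviour at $q=2$ (where $r = 3 = q+1$) and yields the exact value ${\rm fpr}(x) = 1/4 + 3/(4(2^n+1))$ of case (i), with the parity of $n$ deciding whether $(r+1)^{-1}$ is exceeded. The remaining parabolic exceptions (ii) and (iii), all with $n=4$ and $q \in \{2,3\}$, I would verify by direct computation in {\sc Magma}.

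For the nondegenerate actions I would identify $\O$ with the set of nondegenerate $m$-spaces, note that $|\O| = q^{n-1}(q^n - (-1)^n)/(q+1)$ when $m=1$, and proceed along the same lines, using the Guralnick--Kantor estimates to reduce to small cases. Here a fixed nondegenerate $1$-space is either an eigenline of $x$ that happens to be nondegenerate, or a nondegenerate point of the $1$-eigenspace; counting both contributions, and again exploiting that eigenvalues of order dividing $q+1$ give nondegenerate eigenlines, leads to the exact evaluation of ${\rm fpr}(x)$ and to exception (iv), once more forced by the $q=2$ boundary.

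The main obstacle I anticipate is the delicate exact analysis of semisimple elements of order $r \mid q+1$ (equivalently $i=2$), where the relevant eigenspaces are nondegenerate and the inequality ${\rm fpr}(x) \leqs (r+1)^{-1}$ is genuinely tight: both $1/(r+1)$ and ${\rm fpr}(x)$ are then of order $q^{-2}$, so crude estimates never suffice and one must compute the fixed-point counts exactly, keeping careful track of the isotropic-versus-nondegenerate type of each eigenspace and of the parity of $n$. Separating the true exceptions at $q=2$ (and $q=3$ for ${\rm U}_4$) from the many near-misses is where essentially all of the work lies, whereas the generic cases are dispatched quickly by the Guralnick--Kantor bounds.
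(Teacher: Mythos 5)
Your proposal matches the paper's proof in essentially every respect: the same four-lemma decomposition by type of $H$ ($P_1$, $P_m$, $N_1$, $N_m$), the same use of Theorem \ref{t:ls91} to remove field automorphisms, the same reliance on the Guralnick--Kantor bounds of \cite[Section 3]{GK} for the generic cases, the same exact eigenspace counting (including the unitary analogue of Lemma \ref{l:sub_useful}) for the tight $i=2$ cases where the nondegeneracy of the $\omega$-eigenspace at $q=2$ produces exceptions (i) and (iv), and the same {\sc Magma} verification of the $n=4$ exceptions. The only small understatement is that involutory graph automorphisms must be treated by a direct count (of $\mathbb{F}_q$-rational subspaces) for every subspace action with $q\leqs 3$, not only for the self-paired case $n=4$, $m=2$, but this is exactly the kind of detail your plan would absorb in execution.
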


\begin{lem}\label{l:psu_sub1}
The conclusion to Proposition \ref{p:psu_sub} holds if $H = P_1$.
\end{lem}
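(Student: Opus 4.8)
The plan is to follow the strategy of Lemma \ref{l:psl_sub1}, but now we must count \emph{isotropic} $1$-spaces rather than arbitrary $1$-spaces, and it is precisely the constraint imposed by the Hermitian form that removes the unipotent exception seen in the linear case. First I would identify $\O$ with the set of isotropic $1$-spaces of the natural module $V=\mathbb{F}_{q^2}^n$, so that
\[
|\O| = \frac{(q^n-(-1)^n)(q^{n-1}-(-1)^{n-1})}{q^2-1},
\]
and more generally I would record that a nondegenerate Hermitian $k$-space contains exactly $(q^k-(-1)^k)(q^{k-1}-(-1)^{k-1})/(q^2-1)$ isotropic $1$-spaces. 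The key observation is that $x$ fixes an isotropic $1$-space $\la v\ra$ if and only if $v$ is an isotropic eigenvector of $\hat{x}$, so $|C_{\O}(x)|$ equals the number of isotropic $1$-spaces lying in the eigenspaces of $x$. Exactly as in the linear case, \eqref{e:ls91} and Theorem \ref{t:ls91} reduce us to the situation where $x \in {\rm PGU}_{n}(q)$ is semisimple or unipotent: all elements outside ${\rm PGU}_n(q)$ satisfy ${\rm fpr}(x) \leqs \frac{4}{3q} \leqs (r+1)^{-1}$ (for field and graph-field automorphisms since then $q=q_0^r$, and for involutory graph-type automorphisms since $r=2$ and $q\geqs 4$), after dealing with the finitely many exceptions in \cite[Table 1]{LS91} and the small groups with {\sc Magma} \cite{magma}.

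Next I would treat the unipotent elements ($r=p$). Here $C_{\O}(x)$ consists of the isotropic $1$-spaces in the (possibly degenerate) $1$-eigenspace $C_V(x)$, and this count is maximal for a transvection $x=(J_2,J_1^{n-2})$. In that case $C_V(x)=e^{\perp}$ is degenerate with radical the isotropic centre $\la e\ra$, and a short count of isotropic vectors gives $|C_{\O}(x)| = 1 + q^2 N$, where $N$ is the number of isotropic $1$-spaces in a nondegenerate $(n-2)$-space. This yields ${\rm fpr}(x)$ of order $q^{-2}$, so ${\rm fpr}(x) \leqs (q+1)^{-1} \leqs (r+1)^{-1}$ (recall $r=p\leqs q$), with larger $\nu(x)$ only decreasing the count; thus no unipotent exception arises, in contrast with the linear case.

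Then I would analyse the semisimple elements ($r\neq p$), letting $i$ be the order of $q$ modulo $r$. Using the description of semisimple classes in \cite[Chapter 3]{BG}, the $r$-eigenspaces are nondegenerate precisely when $i=2$ (equivalently $r\mid q+1$) and are totally isotropic otherwise. By a unitary analogue of Lemma \ref{l:sub_useful}(i) I would reduce to the element with the largest possible eigenspace, and compare the two competing extremal shapes: a large nondegenerate eigenspace versus a large totally isotropic eigenspace. The first wins, and the extremal case is $\nu(x)=1$ with $i=2$, i.e.\ $x=(\omega,I_{n-1})$, for which
\[
{\rm fpr}(x) = \frac{q^{n-2}-(-1)^n}{q^n-(-1)^n}.
\]
I would then check that this exceeds $(r+1)^{-1}$ precisely when $q=2$ and $n$ is odd, in which case $r=3$ and the displayed value equals $\frac14+\frac{3}{4(2^n+1)}$, giving case (i); all remaining semisimple configurations ($\nu(x)\geqs 2$, totally isotropic eigenspaces, or $q\geqs 3$) satisfy ${\rm fpr}(x)\leqs (r+1)^{-1}$ after a routine estimate.

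The main obstacle is the semisimple bookkeeping: correctly identifying, among all semisimple elements of order $r$, the eigenvalue configuration maximising the number of fixed isotropic points. This requires weighing a nondegenerate eigenspace (few isotropic points, but possibly of large dimension) against totally isotropic eigenspaces (all of whose points are isotropic), together with careful tracking of the $(-1)^n$ parity signs throughout. The degenerate $1$-eigenspace count for the unipotent transvection is a secondary, but genuinely unitary, point of care.
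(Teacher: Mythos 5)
Your proposal follows essentially the same route as the paper: identify $\O$ with the totally singular $1$-spaces, count fixed points as isotropic $1$-spaces inside eigenspaces, verify that the transvection gives $|C_{\O}(x)| = 1+q^2\a$ and hence no unipotent exception, and locate the unique semisimple exception at $x=(\omega,I_{n-1})$ with $i=2$, $q=2$, $n$ odd. The unipotent and semisimple analyses are correct in outline (the full case division over $i$ modulo $4$ still has to be carried out, but you flag this as the main bookkeeping task and identify the right extremal comparison).

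There is, however, one genuine gap in your reduction to ${\rm PGU}_n(q)$. For an involutory graph automorphism $x$ you invoke ${\rm fpr}(x) \leqs \frac{4}{3q} \leqs \frac{1}{3}$, which requires $q \geqs 4$, and you propose to dispose of the remaining cases as ``finitely many exceptions'' or ``small groups'' via {\sc Magma}. But for $q \in \{2,3\}$ the dimension $n$ is unbounded, so this is an infinite family that no finite computation covers, and Theorem \ref{t:ls91} gives nothing better than $\frac{4}{3q} > \frac{1}{3}$ there. The paper closes this with a separate counting argument: a graph automorphism can be arranged to act as the semilinear map fixing an $\mathbb{F}_q$-structure, so $C_{\O}(x)$ is contained in the set of $1$-spaces defined over $\mathbb{F}_q$, giving $|C_{\O}(x)| \leqs (q^n-1)/(q-1)$ and hence ${\rm fpr}(x) \leqs (q^n-1)(q+1)/\bigl((q^n-(-1)^n)(q^{n-1}-(-1)^{n-1})\bigr) \leqs \frac{1}{3}$ for all $q$ and all $n$ (with the single small case $(n,q)=(3,3)$ checked directly). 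You need some such uniform-in-$n$ bound for graph automorphisms at $q=2,3$; once that is supplied, the rest of your argument goes through as in the paper.
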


\begin{proof}
First observe that we may identify $\O$ with the set of $1$-dimensional totally singular subspaces of $V$ and note that 
\[
|\O| = \frac{|{\rm GU}_{n}(q)|}{q^{2n-3}|{\rm GU}_{n-2}(q)||{\rm GL}_{1}(q^2)|} = \frac{(q^n-(-1)^{n})(q^{n-1} - (-1)^{n-1})}{q^2-1}.
\]
Let $x \in G$ be an element of prime order $r$. 

If $x$ is a field automorphism, then $r$ is odd, $q=q_0^r$ and the usual argument via \eqref{e:ls91} applies. Next suppose $x$ is an involutory graph automorphism. As explained in the proof of \cite[Lemma 3.14]{GK} (see Case D), $|C_{\O}(x)|$ is at most the number of $1$-dimensional subspaces of $(\mathbb{F}_q)^n$, so $|C_{\O}(x)| \leqs (q^n-1)/(q-1)$ and we deduce that 
\[
{\rm fpr}(x) \leqs \frac{(q^n-1)(q+1)}{(q^n-(-1)^{n})(q^{n-1} - (-1)^{n-1})}.
\]
One can check that this expression is at most $1/3$ unless $(n,q) = (3,3)$, in which case a straightforward {\sc Magma} calculation gives ${\rm fpr}(x) = 1/7$. For the remainder of the proof, we may assume $x \in {\rm PGU}_{n}(q)$ is semisimple or unipotent. 

First assume $r=p$ and note that $|C_{\O}(x)|$ coincides with the number of totally singular $1$-spaces in $C_V(x)$. We claim that ${\rm fpr}(x) \leqs (r+1)^{-1}$. If $x \ne (J_2,J_1^{n-2})$ then $\dim C_V(x) \leqs n-2$ and one checks that the bound
\[
|C_{\O}(x)| \leqs \br{n-2}{1}_{q^2} = \frac{q^{2n-4}-1}{q^2-1}
\]
yields ${\rm fpr}(x) \leqs (q+1)^{-1}$. Now assume $x = (J_2, J_1^{n-2})$. Here $x$ preserves an orthogonal decomposition $V = U \perp W$, where $x$ has Jordan form $(J_2)$ on the nondegenerate $2$-space $U$. Setting $C_U(x) = \la u \ra$, which is totally singular, we have  
\[
C_{\O}(x) = \{ \la u \ra, \la \l u + w \ra \,:\, \mbox{$\l \in \mathbb{F}_{q^2}$ and $\la w \ra \subseteq W$ is totally singular} \}.
\]
Therefore, $|C_{\O}(x)| = \a q^2 + 1$, where 
\[
\a = \frac{(q^{n-2} - (-1)^{n-2})(q^{n-3} - (-1)^{n-3})}{q^2-1}
\]
is the number of totally singular $1$-spaces in $W$ (this can also be computed via \cite[Lemma 2.13(2)]{FM}, noting that $\la u \ra$ is the radical of $C_V(x)$). It is now a routine exercise to check that ${\rm fpr}(x) \leqs (q+1)^{-1}$ for all $n$ and $q$.

Finally, suppose $r \ne p$ and let $i \geqs 1$ be minimal such that $r$ divides $q^i-1$. We will adopt the notation for semisimple elements given in \cite[Proposition 3.3.2]{BG}.

First assume $i=2$, so $r$ is odd, $r$ divides $q+1$ and $|C_{\O}(x)|$ is equal to the total number of totally singular $1$-spaces in each eigenspace of $\hat{x}$ on $V$ (where $x$ is the image of $\hat{x} \in {\rm GU}_{n}(q)$ modulo scalars). Since every eigenspace of $\hat{x}$ is nondegenerate, it follows that $|C_{\O}(x)|$ is maximal when $x$ is of the form $(\omega,I_{n-1})$, in which case
\begin{equation}\label{e:unit}
{\rm fpr}(x) = \frac{q^{n-2} - (-1)^{n}}{q^n-(-1)^n}.
\end{equation}
If $n$ is even, then it is straightforward to check that ${\rm fpr}(x) \leqs (q+2)^{-1}$ and the result follows. On the other hand, if $n$ is odd then the same conclusion holds if and only if $q \geqs 3$. Indeed, if $q=2$ then $r=3$ and 
\[
{\rm fpr}(x) = \frac{1}{4} + \frac{3}{4(2^n+1)},
\]
which corresponds to the special case recorded in part (i) of Proposition \ref{p:psu_sub}. If $n$ is odd, $q=2$ and $x$ is any other element of order $3$, then $|C_{\O}(x)|$ is maximal when $x = (\omega I_2, I_{n-2})$ and thus    
\[
|C_{\O}(x)| \leqs \frac{(2^2-1)(2+1)}{2^2-1} + \frac{(2^{n-2}+1)(2^{n-3}-1)}{2^2-1}.
\]
It is easy to check that ${\rm fpr}(x) \leqs 1/4$.

Next assume $i \equiv 2 \imod{4}$ and $i \geqs 6$. Note that $r$ divides $t = (q^{i/2}+1)/(q+1)$ and $|C_{\O}(x)|$ coincides with the number of totally singular $1$-spaces in $C_V(x)$. It follows that $|C_{\O}(x)|$ is maximal when $x = (\L,I_{n-i/2})$ and it is easy to check that ${\rm fpr}(x) \leqs (t+1)^{-1}$ as required.

Now suppose $i \equiv 0 \imod{4}$, so $r$ divides $q^{i/2}+1$. Once again, $|C_{\O}(x)|$ is the number of totally singular $1$-spaces in $C_V(x)$ and thus elements of the form $(\L,\L^{-q},I_{n-i})$ fix the most points. Given this observation, it is straightforward  to verify the bound ${\rm fpr}(x) \leqs (q^{i/2}+2)^{-1}$.

Finally, suppose $i$ is odd. First assume $r=2$, so $q$ is odd and $i=1$. Here $|C_{\O}(x)|$ is maximal when $x$ is of the form $(-I_1, I_{n-1})$, in which case \eqref{e:unit} holds and it is easy to check that ${\rm fpr}(x) \leqs 1/3$. Next assume $r$ is odd and $i=1$, so $|C_{\O}(x)|$ is maximal when $x$ is of the form $((\L,\L^{-q})^{\ell},I_{n-2\ell})$ for some $\ell \geqs 1$. Here the $1$-eigenspace is nondegenerate, while the other two eigenspaces are totally singular, whence
\[
|C_{\O}(x)| = 2\left(\frac{q^{2\ell}-1}{q^2-1}\right) + \frac{(q^{n-2\ell}-(-1)^n)(q^{n-2\ell-1}-(-1)^{n-1})}{q^2-1}.
\]

If $n=4$ then $|C_{\O}(x)|$ is maximal when $\ell=2$, in which case $|C_{\O}(x)| = 2(q^2+1)$ and ${\rm fpr}(x) = 2(q^3+1)^{-1}$. For $n \geqs 5$ one can check that $|C_{\O}(x)|$ is maximal when $\ell=1$ and it is plain to see that the same conclusion holds when $i \geqs 3$ (since in this case, $|C_{\O}(x)|$ is just the number of totally singular $1$-spaces in $C_V(x)$). Here $x = (\L,\L^{-q},I_{n-2i})$ preserves a decomposition $V = (U_1 \oplus U_2) \perp W$, where $U_1$ and $U_2$ are totally singular $i$-spaces and $W=C_V(x)$ is nondegenerate (or trivial). Moreover, $|C_{\O}(x)| = \a+\b$, where $\a$ is the number of totally singular $1$-spaces in $W$ and we set $\b=2$ if $i=1$ (since $x$ also fixes the totally singular $1$-spaces $U_1$ and $U_2$), otherwise $\b=0$. It is now straightforward to verify the bound ${\rm fpr}(x) \leqs q^{-i}$ and the result follows.
\end{proof}

\begin{lem}\label{l:psu_sub2}
The conclusion to Proposition \ref{p:psu_sub} holds if $H = P_m$ with $2 \leqs m \leqs n/2$.
\end{lem}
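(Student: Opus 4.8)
The plan is to mirror the treatment of the linear case in Lemma~\ref{l:psl_sub2}, modified to account for the Hermitian form. First I would identify $\O$ with the set of totally singular $m$-dimensional subspaces of $V$, record the exact value of $|\O|$, and extract the size estimates from \cite{FM}. As usual, the Liebeck--Saxl bound in \eqref{e:ls91} disposes of field and graph-field automorphisms, leaving the cases where $x$ is either an involutory graph automorphism or an element of ${\rm PGU}_{n}(q)$ of prime order $r$. For an involutory graph automorphism I would bound $|C_{\O}(x)|$ by the number of $m$-spaces supported on the associated $\mathbb{F}_q$-rational structure, exactly as in the proof of Lemma~\ref{l:psu_sub1} (compare Case~D of \cite[Lemma 3.14]{GK}); this yields ${\rm fpr}(x) \leqs 1/3$ apart from a short list of cases with $n=4$, $m=2$ and $q \in \{2,3\}$, which are the symplectic-type graph automorphisms recorded in part~(ii) of Proposition~\ref{p:psu_sub} and can be confirmed directly with {\sc Magma}.

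For $x \in {\rm PGU}_{n}(q)$ the principal tool is the Guralnick--Kantor estimate for subspace actions from \cite[Section 3]{GK}, giving a bound of the shape ${\rm fpr}(x) < 2q^{-m}$ that is decisive whenever $m$ and $q$ are not both small; this confines the remaining analysis to a bounded family of configurations with $q \in \{2,3\}$ and $m$ close to $n/2$, and the low-dimensional groups (say $n \leqs 6$) I would simply check in {\sc Magma}. When $r = p$ the element $x$ is unipotent and $|C_{\O}(x)|$ is precisely the number of totally singular $m$-spaces contained in the $1$-eigenspace $C_V(x)$; thus $|C_{\O}(x)|$ is maximised by $x=(J_2,J_1^{n-2})$, and a direct count of the fixed subspaces, following the $(J_2,J_1^{n-2})$ computation in Lemma~\ref{l:psu_sub1}, gives ${\rm fpr}(x) \leqs (q+1)^{-1}$.

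The substance of the proof lies in the semisimple case $r \ne p$. Writing $i$ for the least positive integer with $r \mid q^i-1$ and using the notation of \cite[Proposition 3.3.2]{BG}, I would partition the argument according to the congruence class of $i$ modulo $4$, as in Lemma~\ref{l:psu_sub1}, since this determines whether the eigenspaces of $\hat{x}$ on $V$ are nondegenerate or arise in totally singular dual pairs. In each branch $|C_{\O}(x)|$ is maximised by an element of simple form, typically $(\omega, I_{n-1})$, $(\L, I_{n-i})$ or $((\L,\L^{-q})^{\ell}, I_{n-2\ell})$, and the number of fixed totally singular $m$-spaces is computed by a Hermitian analogue of Lemma~\ref{l:sub_useful}: one counts the totally singular $m$-spaces meeting the nondegenerate summand $C_V(x)$ and weights by the contribution of the isotropic eigenspaces. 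Comparing the resulting expression with $|\O|$, and with the bound on $r$ in terms of $q^{i/2}$ when $i$ is even (respectively $q^{i}$ when $i$ is odd), should yield ${\rm fpr}(x) \leqs (r+1)^{-1}$ outside the boundary configurations.

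The main obstacle I expect is the boundary case $m = n/2$ of maximal totally singular subspaces with $q \in \{2,3\}$, where a semisimple element with a large fixed space fixes a disproportionately high share of the $m$-spaces. This is precisely where the genuine exception~(iii) of Proposition~\ref{p:psu_sub} emerges, namely $x=(\omega I_2, I_2)$ with $r=3$, $n=4$, $q=2$ and ${\rm fpr}(x)=1/3$; here the task is not to prove a clean uniform inequality but to pin down the exact fixed-point count, and hence the precise value of ${\rm fpr}(x)$, so the delicate bookkeeping of isotropic subspaces inside the eigenspaces, together with targeted {\sc Magma} verification of the smallest cases, is what the argument will turn on.
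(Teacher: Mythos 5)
Your overall architecture matches the paper's (split off field/graph automorphisms, use a Guralnick--Kantor estimate for generic parameters, then count fixed subspaces directly for small $m$, $q$ and for the extremal semisimple classes), and your semisimple analysis by the congruence class of $i$ is essentially the argument the paper runs. But there is a genuine error in your unipotent case. You assert that for $r=p$ the set $C_{\O}(x)$ is ``precisely the number of totally singular $m$-spaces contained in the $1$-eigenspace $C_V(x)$,'' and conclude that $|C_{\O}(x)|$ is maximised by $x=(J_2,J_1^{n-2})$. This is only true for $m=1$: for $m\geqs 2$ a totally singular $m$-space fixed by a unipotent element need only be $x$-invariant, not pointwise fixed, so it need not lie in $C_V(x)$ at all. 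Concretely, for $x=(J_2^2)$ in ${\rm U}_4(q)$ the paper shows (via the isomorphism with ${\rm P\O}_6^{-}(q)$) that $|C_{\O}(x)|=q^2+q+1$, even though $C_V(x)$ contains at most one totally singular $2$-space; and for $n=5$, $m=2$ the class $(J_2^2,J_1)$ fixes on the order of $q^6$ totally singular $2$-spaces while $(J_2,J_1^3)$ fixes only $q^3+1$, so your claimed maximiser is wrong as well. This is exactly the point where the paper has to work hardest --- invoking the exceptional isomorphism ${\rm U}_4(q)\cong {\rm P\O}_6^{-}(q)$ for $n=4$ and carrying out an explicit geometric count of invariant totally singular $2$-spaces for each Jordan type when $n=5$ and $q$ is odd --- and your proposal skips it on the strength of a false reduction.

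A secondary point: the bound you quote, ${\rm fpr}(x)<2q^{-m}$, is the Guralnick--Kantor estimate for \emph{linear} groups (the one used in Lemma \ref{l:psl_sub2}). For unitary groups in the action on totally singular $m$-spaces the relevant estimate, \cite[Proposition 3.15]{GK}, has the shape $q^{-(n-1)/2}+2q^{-(n-2)}+q^{-2m}$ for $n\geqs 6$; the exponent $2m$ (reflecting that subspaces are defined over $\mathbb{F}_{q^2}$) and the dimension-dependent terms are what make the generic reduction work, and $2q^{-m}$ alone would not suffice in the range you need it.
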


\begin{proof}
Identify $\O$ with the set of totally singular $m$-dimensional subspaces of $V$ and note that
\[
|\O| = \frac{|{\rm GU}_{n}(q)|}{q^{m(2n-3m)}|{\rm GU}_{n-2m}(q)||{\rm GL}_{m}(q^2)|}.
\]
By applying \cite[Proposition 3.9]{Bur2}, we get
\[
\frac{1}{2}q^{m(2n-3m)} < |\O| < 2\left(\frac{q+1}{q}\right)q^{m(2n-3m)}.
\]
Suppose $x \in G$ has prime order $r$. If $n \geqs 6$ then \cite[Proposition 3.15]{GK}
gives
\begin{equation}\label{e:gk}
{\rm fpr}(x) < q^{-(n-1)/2} + 2q^{-(n-2)} + q^{-2m}.
\end{equation}

As usual, the desired bound holds if $x$ is a field automorphism. Next suppose $x$ is an involutory graph automorphism, so $r=2$ and we may assume $q \leqs 3$ in view of Theorem \ref{t:ls91}. The groups with $n \leqs 5$ can be checked using {\sc Magma}, noting the two special cases that arise when $n=4$, $H=P_2$ and $x$ is a symplectic-type graph automorphism (see part (ii) in the statement of Proposition \ref{p:psu_sub}). 
On the other hand, if $n \geqs 6$ then the bound in \eqref{e:gk} is sufficient unless $(n,q) = (6,2)$, which we can handle using {\sc Magma} (we get ${\rm fpr}(x) \leqs 5/33$, with equality if $m=3$ and $C_{G_0}(x) = {\rm Sp}_6(2)$). For the remainder, we may assume $x \in {\rm PGU}_{n}(q)$ is semisimple or unipotent.

First assume $r=p$. If $n \geqs 6$ then the bound in \eqref{e:gk} gives ${\rm fpr}(x) \leqs (q+1)^{-1}$ unless $(n,q) = (6,2)$, which we can check using {\sc Magma}. Now assume $n \in \{4,5\}$ and $m=2$. If $p=2$ and $q \geqs 4$ then Theorem \ref{t:ls91} yields ${\rm fpr}(x) \leqs 1/3$ as required, while the case $q=2$ can be handled using {\sc Magma}. Therefore, we may assume $q$ is odd. 

For $n=4$, we claim that $|C_{\O}(x)| \leqs q^2+q+1$, which immediately implies that 
${\rm fpr}(x) \leqs (q+1)^{-1}$. To see this, first observe that $G_0 \cong {\rm P\O}_6^{-}(q)$ and the action of $G_0$ on $\O$ is permutation isomorphic to the action of ${\rm P\O}_6^{-}(q)$ on the set $\Gamma$ of totally singular $1$-dimensional subspaces of the $6$-dimensional orthogonal module $W$. The effect of this isomorphism on unipotent elements is as follows:
\[
(J_2,J_1^2) \mapsto (J_2^2,J_1^2),\;\; (J_2^2) \mapsto (J_3,J_1^3),\;\; (J_3,J_1) \mapsto (J_3^2),\;\; (J_4) \mapsto (J_5,J_1).
\]
In particular, if $x \in G_0$ is sent to $y \in {\rm P\O}_{6}^{-}(q)$, then $|C_{\O}(x)| = |C_{\Gamma}(y)|$ is equal to the number of totally singular $1$-spaces in $C_W(y)$. If $\dim C_W(y) \leqs 2$, then we deduce that $|C_{\O}(x)| \leqs q+1$. In the remaining two cases, we can appeal to the analysis of unipotent elements in the proof of Lemma \ref{l:orth2_sub1}, which allows us to conclude that $|C_{\O}(x)| = q+1$ if $x=(J_2,J_1^2)$ and $|C_{\O}(x)| = q^2+q+1$ when $x = (J_2^2)$. This justifies the claim.

Next assume $n=5$, so $|\O| = (q^5+1)(q^3+1)$. First observe that $|C_{\O}(x)|=1$ when $x = (J_3,J_2)$, $(J_4,J_1)$ or $(J_5)$. Next let $x = (J_2,J_1^3)$. Here $x$  preserves an orthogonal decomposition $V = V_1 \perp V_2$, where $\dim V_1 = 2$ and $C_{V_1}(x) = \la u \ra$ is totally singular. Then every space in $C_{\O}(x)$ is of the form $\la u, w\ra$, where $\la w \ra$ is a totally singular $1$-space in the nondegenerate $3$-space $V_2$, and thus $|C_{\O}(x)| = q^3+1$. 

Suppose $x = (J_3,J_1^2)$. Here we claim that 
\[
|C_{\O}(x)| \leqs (q^2-1)^2+q+1  
\]
which implies that ${\rm fpr}(x) \leqs 1/(q+1)$. To see this, write $V = V_1 \perp V_2$ as an orthogonal decomposition into nondegenerate spaces, where $x$ has Jordan form $(J_3,J_1)$ on $V_1$ and $V_2 = \la v \ra$ is centralized by $x$. If $(\, , \,)$ is the defining unitary form on $V$, then we may assume $(v,v)=1$. Let $U$ be a totally singular $2$-space fixed by $x$. By our above analysis of the case $n=4$, we see that there are at most $q+1$ such spaces contained in $V_1$. Now assume $U \cap V_1 = \la u \ra$ is $1$-dimensional, so $U = \la u, w+v\ra$ for some $w \in \la u \ra^{\perp} \cap V_1$ with $(w,w) = -1$. Here $\la u \ra$ has to be the radical of the $2$-space  $C_{V_1}(x)$ and we calculate that there are $(q^2-1)^2$ nondegenerate $1$-spaces in the $3$-space $\la u \ra^{\perp} \cap V_1$. This justifies the claim.

Finally, suppose $x = (J_2^2,J_1)$. Here the claim is
\[
|C_{\O}(x)| \leqs (q^4-1)(q^2-1)+q^2+q+1,
\]
which once again is sufficient. Write $V = V_1 \perp V_2$, where $x$ has Jordan form $(J_2^2)$ on $V_1$ and $x$ centralizes $V_2 = \la v \ra$. From our earlier work in the case $n=4$, we see that $x$ fixes $q^2+q+1$ totally singular $2$-spaces in $V_1$. Now suppose $U = \la u, w+v\ra$ is a totally singular $2$-space fixed by $x$. Here $\la u\ra$ is contained in the totally singular $2$-space $C_{V_1}(x)$, so there are $q^2+1$ choices for $\la u \ra$. As before, there are $(q^2-1)^2$ nondegenerate $1$-spaces in $\la u \ra^{\perp} \cap V_1$, whence there are at most $(q^2+1)(q^2-1)^2$ totally singular $2$-spaces fixed by $x$ that are not contained in $V_1$. The result follows.

Now assume $r \ne p$ and let $i \geqs 1$ be minimal such that $r$ divides $q^i-1$. First assume $r=2$. For $q \geqs 5$, the bound in Theorem \ref{t:ls91} is clearly sufficient and so we may assume $q=3$. If $n \geqs 6$ then the bound in \eqref{e:gk} is effective, while the remaining cases with $n \in \{4,5\}$ and $m=2$ can be checked using {\sc Magma}. Now assume $r$ is odd. There are several cases to consider.

\vs

\noindent \emph{Case 1. $i \equiv 2 \imod{4}$.}

\vs

First assume $i=2$, so $r$ divides $q+1$. If $n \geqs 6$ then the bound in \eqref{e:gk} is sufficient unless $(n,q) = (6,2)$, which we can handle directly using {\sc Magma}. Now assume $n \in \{4,5\}$ and $m=2$. For $n=4$, it is easy to check that  $|C_{\O}(x)|$ is maximal when $x = (\omega I_2, I_2)$, whereas $x = (\omega, I_4)$ has the most fixed points when $n=5$. Therefore, if $n=4$ we have $|C_{\O}(x)| \leqs (q+1)^2$ since every nondegenerate $2$-space contains $q+1$ totally singular $1$-spaces. Similarly, $|C_{\O}(x)|$ is at most $(q^3+1)(q+1)$ when $n=5$, which is the number of totally singular $2$-spaces in a nondegenerate $4$-space. One can check that these bounds yield ${\rm fpr}(x) \leqs (q+2)^{-1}$ unless $(n,q) = (4,2)$. Here $r=3$ and ${\rm fpr}(x) = 1/3$; this special case is recorded in part (iii) of Proposition \ref{p:psu_sub}.

Now assume $i \geqs 6$. Note that $r$ divides $t = (q^{i/2}+1)/(q+1)$, so it suffices to show that ${\rm fpr}(x) \leqs (t+1)^{-1}$. If $m \geqs i/2$ then $n \geqs i$ and the bound in \eqref{e:gk} implies that 
\[
{\rm fpr}(x) < q^{-(i-1)/2}+2q^{-(i-2)}+q^{-i}.
\]
One can check that this yields ${\rm fpr}(x) \leqs (t+1)^{-1}$ unless $(i,q) = (6,2)$. But $2^6-1$ does not have a primitive prime divisor, so the case $(i,q) = (6,2)$ does not arise. Now assume $m < i/2$. Here $|C_{\O}(x)|$ is the number of totally singular $m$-spaces in the nondegenerate $1$-eigenspace $C_V(x)$, which is maximal when $x = (\L, I_{n-i/2})$. Therefore,
\[
|C_{\O}(x)| < 2\left(\frac{q+1}{q}\right)q^{m(2(n-i/2)-3m)} = 2\left(\frac{q+1}{q}\right)q^{m(2n-3m)}\cdot q^{-mi}
\]
and the result follows since 
\[
{\rm fpr}(x) < 4\left(\frac{q+1}{q}\right)q^{-mi} \leqs (t+1)^{-1}.
\]

\vs

\noindent \emph{Case 2. $i \equiv 0 \imod{4}$.}

\vs

Now suppose $i \equiv 0 \imod{4}$, so $n \geqs i$ and $r$ divides $q^{i/2}+1$. First assume $n \in \{i,i+1\}$, in which case every element of order $r$ is of the form $(\L,\L^{-q},I_{n-i})$ and we see that $|C_{\O}(x)| = 2$ if $m = i/2$, otherwise $|C_{\O}(x)| = 0$. Therefore, ${\rm fpr}(x) < 4q^{-n^2/4}$, which in turn implies that ${\rm fpr}(x) < (q^{i/2}+2)^{-1}$ unless $(n,q) = (4,2)$. In this special case, we have $(m,r)=(2,5)$ and we compute ${\rm fpr}(x) = 2/27$. For the remainder, we may assume $n \geqs i+2$. 

If $m \geqs i/2$ then the bound in \eqref{e:gk} is sufficient unless $q=2$ and $(n,i) = (6,4)$, $(7,4)$, $(8,4)$ or $(10,8)$. Each of these cases can be handled directly. For example, suppose $(n,q,i) = (10,2,8)$, so $r=17$, $m \in\{4,5\}$ and $x = (\L,\L^{-2},I_2)$ preserves an orthogonal decomposition $V = (U_1 \oplus U_2) \perp W$, where $U_1$ and $U_2$ are totally singular $4$-spaces and $W = C_V(x)$. If $m=4$ then $C_{\O}(x) = \{U_1, U_2\}$ and the result follows. Similarly, if $m = 5$ then each space in $C_{\O}(x)$ is of the form $U_j \oplus \la w \ra$, where $\la w \ra \subseteq W$ is totally singular. Since $W$ contains $q+1 = 3$ totally singular $1$-spaces, it follows that $|C_{\O}(x)|=6$ and once again the desired bound holds. The other cases can be handled in a similar fashion (either by hand or via {\sc Magma}).   

Now assume $m< i/2$. Here $|C_{\O}(x)|$ is the number of totally singular $m$-spaces in $C_V(x)$, whence $x = (\L,\L^{-q},I_{n-i})$ has the most fixed points. Working with this element, we compute 
\[
{\rm fpr}(x) < 4\left(\frac{q+1}{q}\right)q^{-2mi} \leqs (q^{i/2}+2)^{-1}
\]
and the result follows.

\vs

\noindent \emph{Case 3. $i$ odd.}

\vs

Finally, let us assume $i$ is odd. Suppose $i=1$, so $q \geqs 4$ (since $r$ is odd) and $r+1 \leqs q$. If $n \geqs 6$ then the bound in \eqref{e:gk} is sufficient, so we may assume $n \in \{4,5\}$ and $m=2$. We claim that 
\[
|C_{\O}(x)| \leqs \left\{\begin{array}{ll}
2(q^3+1) & \mbox{if $n=5$} \\
q^2+3 & \mbox{if $n=4$}
\end{array}\right.
\]
To see this, first observe that $|C_{\O}(x)|=4$ if $x$ is regular, so we may assume $x = (\L,\L^{-q},I_{n-2})$ or $((\L,\L^{-q})^2,I_{n-4})$. Now $x = (\L,\L^{-q},I_{n-2})$ preserves a decomposition $V = (U_1 \oplus U_2) \perp W$ with $W = C_V(x)$ and  each totally singular $2$-space fixed by $x$ is of the form $U_j \oplus \la w \ra$, where $\la w \ra$ is a totally singular $1$-space in $W = C_V(x)$. Therefore, $|C_{\O}(x)|$ is $2(q^3+1)$ if $n=5$ and $2(q+1)$ if $n=4$. Similarly, $x = ((\L,\L^{-q})^2,I_{n-4})$ preserves a decomposition $V = (U_1 \oplus U_2) \perp W$, where $W=C_V(x)$ and $x$ acts as a scalar on the totally singular $2$-spaces $U_1$ and $U_2$. Therefore, the totally singular $2$-spaces fixed by $x$ are $U_1$, $U_2$ and $\la u \ra \oplus \la u' \ra$, where $\la u \ra \subseteq U_1$ is an arbitrary $1$-space and $\la u' \ra = U_2 \cap \la u \ra^{\perp}$. This implies that $|C_{\O}(x)| = q^2+3$, which is the total number of subspaces of $U_1$. This justifies the claim and it is easy to check that ${\rm fpr}(x) \leqs q^{-1}$.

Now assume $i \geqs 3$, so $n \geqs 2i \geqs 6$ and $r$ divides $t = (q^i-1)/(q-1)$. If $n \in \{2i,2i+1\}$ then $x = (\L,\L^{-q},I_{n-2i})$ is the only possibility and it is easy to check that ${\rm fpr}(x) \leqs (t+1)^{-1}$ since $|C_{\O}(x)| = 2$ if $m=i$, otherwise $|C_{\O}(x)| = 0$. Now assume $n \geqs 2i+2$. If $m \geqs i$ then \eqref{e:gk} is sufficient unless $(n,q,i) = (8,2,3)$. Here $m \in \{3,4\}$, $r=7$ and the result follows since $|C_{\O}(x)| \leqs 6$. Finally, suppose $m < i$. In this case, $|C_{\O}(x)|$ is the number of totally singular $m$-spaces in $C_V(x)$ and we deduce that  
\[
{\rm fpr}(x) < 2\left(\frac{q+1}{q}\right)q^{-4mi} \leqs (t+1)^{-1}
\]
as required.
\end{proof}

\begin{lem}\label{l:psu_sub3}
The conclusion to Proposition \ref{p:psu_sub} holds if $H = N_1$.
\end{lem}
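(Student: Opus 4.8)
The plan is to mirror the arguments in the proofs of Lemmas \ref{l:psu_sub1} and \ref{l:psu_sub2}, identifying $\O$ with the set of nondegenerate $1$-dimensional subspaces of the natural module $V$. A routine index computation gives
\[
|\O| = \frac{|{\rm GU}_{n}(q)|}{(q+1)|{\rm GU}_{n-1}(q)|} = \frac{q^{n-1}(q^n-(-1)^n)}{q+1}.
\]
The first reduction is the standard one: if $x$ is a field or graph-field automorphism then $q=q_0^r$ and \eqref{e:ls91} applies, while if $x$ is an involutory graph automorphism then $r=2$, so Theorem \ref{t:ls91} allows us to assume $q \leqs 3$ and the finitely many surviving cases can be settled with {\sc Magma} (bounding $|C_{\O}(x)|$ crudely by the number of all $1$-spaces of $V$ for the larger $n$). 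Thus I may assume $x \in {\rm PGU}_{n}(q)$ is semisimple or unipotent, and the task becomes counting the nondegenerate fixed $1$-spaces $\la v\ra$, namely those with $v$ an eigenvector of $\hat{x}$ and $(v,v)\neq 0$.

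For the unipotent case ($r=p$) every fixed $1$-space lies in $C_V(x)$, whose dimension is $n-\nu(x)$, so the count decreases as $\nu(x)$ grows and the extremal element is $x=(J_2,J_1^{n-2})$. Here $C_V(x) = \la u\ra \perp W'$ with $u$ singular and $W'$ nondegenerate of dimension $n-2$; writing a fixed vector as $\alpha u + w$ one has $(\alpha u + w,\alpha u + w)=(w,w)$, so I would compute $|C_{\O}(x)| = q^2\cdot(\text{nondegenerate } 1\text{-spaces in } W')$ and obtain ${\rm fpr}(x) = (q^{n-2}-(-1)^n)/(q^n-(-1)^n) \leqs (q+1)^{-1}$, which suffices since $r=p\leqs q$ (the only equality, at $q=2$, $n=3$, is excluded as ${\rm U}_3(2)$ is solvable).

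For the semisimple case I would split on $i$, the order of $q$ modulo $r$, exploiting the unitary eigenspace dichotomy: an eigenvalue $\mu$ of $\hat{x}$ has a nondegenerate eigenspace precisely when $\mu^{q+1}=1$, and otherwise $E_\mu$ is totally singular (paired with $E_{\mu^{-q}}$). Since totally singular subspaces contain \emph{no} nondegenerate $1$-spaces, only the nondegenerate eigenspaces contribute. In each of the cases $i$ odd with $r$ odd, $i\equiv 2\pmod 4$ with $i\geqs 6$, and $i\equiv 0\pmod 4$, the nontrivial eigenvalues satisfy $\mu^{q+1}\neq 1$, so the count is governed by a large $1$-eigenspace (extremal elements $((\L,\L^{-q}),I_{n-2})$, $(\L,I_{n-i/2})$ or $(\L,\L^{-q},I_{n-i})$), yielding ${\rm fpr}(x)\leqs q^{-1}$, $(t+1)^{-1}$ or $(q^{i/2}+2)^{-1}$ exactly as in Lemma \ref{l:psu_sub2}; the case $r=2$ (so $q$ odd, $i=1$, $x=(-I_1,I_{n-1})$) has nondegenerate eigenspaces but the resulting ratio is of order $q^{-2}\leqs 1/3$.

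The delicate case is $i=2$, where $r\mid q+1$ is odd and \emph{every} eigenspace is nondegenerate; this is where I expect the main obstacle and where the exceptional family (iv) emerges. Here $|C_{\O}(x)|$ is maximized by $x=(\omega,I_{n-1})$ — any other order-$r$ element has its largest eigenspace of dimension at most $n-2$, and convexity of the count $q^{d-1}(q^d-(-1)^d)/(q+1)$ in $d$ forces a strictly smaller total — giving
\[
{\rm fpr}(x) = \frac{q^{n-2}(q^{n-1}+(-1)^n)+(q+1)}{q^{n-1}(q^n-(-1)^n)}.
\]
Comparing with $(q+2)^{-1}$ (which suffices since $r\mid q+1$ forces $r+1\leqs q+2$) reduces to a polynomial inequality whose leading term factors as $q^{2n-3}(q-2)(q+1)$: this is positive for $q\geqs 3$, giving the bound, but vanishes at $q=2$. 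For $q=2$ we have $r=3$, and for $n$ even the formula simplifies to $\tfrac14 + 3(2^{n-3}+1)/(2^{n-1}(2^n-1)) > \tfrac14$, which is precisely case (iv), whereas for $n$ odd the same computation gives ${\rm fpr}(x)\leqs \tfrac14$. Confirming that $(\omega,I_{n-1})$ is the genuine maximizer for $q=2$, $n$ even then establishes that (iv) is the only exception, completing the proof.
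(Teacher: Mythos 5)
Your treatment of the semisimple and unipotent elements is essentially the paper's argument: the same identification of $\O$ and value of $|\O|$, the same eigenspace dichotomy, the same extremal elements in each congruence class of $i$, the same formula for ${\rm fpr}(x)$ when $x=(\omega,I_{n-1})$ and $i=2$, and the correct isolation of the exception for $q=2$ with $n$ even. The step that does not work as written is the involutory graph automorphism case. Theorem \ref{t:ls91} only disposes of $q \geqs 4$ there, and for $q \in \{2,3\}$ the dimension $n$ is unbounded, so there are not ``finitely many surviving cases'' to hand to {\sc Magma}. Worse, the fallback you propose for larger $n$ --- bounding $|C_{\O}(x)|$ by the number of all $1$-spaces of $V$ --- is vacuous: that count is $(q^{2n}-1)/(q^2-1) \sim q^{2n-2}$, whereas $|\O| = q^{n-1}(q^n-(-1)^n)/(q+1) \sim q^{2n-1}/(q+1)$, so the resulting ``bound'' on ${\rm fpr}(x)$ is roughly $(q+1)/q > 1$ and gives nothing.

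The missing idea is to exploit the semilinear nature of a graph automorphism rather than just its order. Embedding ${\rm GU}_n(q)$ in ${\rm GL}_n(q^2)$, an involutory graph automorphism acts on $V$ as an involutory field automorphism, so every $1$-space it fixes is defined over $\mathbb{F}_q$ (with respect to a suitable $\mathbb{F}_q$-structure); hence $|C_{\O}(x)| \leqs (q^n-1)/(q-1) \sim q^{n-1}$, which is smaller than $|\O|$ by a factor of roughly $q^n$, and ${\rm fpr}(x) \leqs 1/3$ follows at once for all $n$ and $q$ except $(n,q)=(4,2)$, which is then a single {\sc Magma} check. With that substitution the rest of your argument goes through.
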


\begin{proof}
We may identify $\O$ with the set of nondegenerate $1$-dimensional subspaces of $V$ and we note that
\[
|\O| = \frac{|{\rm GU}_{n}(q)|}{|{\rm GU}_{n-1}(q)||{\rm GU}_{1}(q)|} = \frac{q^{n-1}(q^n-(-1)^n)}{q+1}.
\]
Let $x \in G$ be an element of prime order $r$. If $x$ is a field automorphism, then the usual argument applies. Next suppose $x$ is an involutory graph automorphism. By embedding ${\rm GU}_n(q)$ in ${\rm GL}_{n}(q^2)$, we may view $x$ as an involutory field automorphism of ${\rm GL}_{n}(q^2)$ and thus $|C_{\O}(x)|$ is at most the number of $1$-dimensional subspaces of $V$ that are defined over $\mathbb{F}_q$. In other words, $|C_{\O}(x)| \leqs (q^n-1)/(q-1)$ and the result follows unless $(n,q) = (4,2)$. In the latter case, we have $|\O| = 40$ and using {\sc Magma} we calculate that ${\rm fpr}(x) \leqs 1/5$.

To complete the argument, we may assume $x \in {\rm PGU}_{n}(q)$ is semisimple or unipotent. If $r=p$ then $|C_{\O}(x)|$ is the number of nondegenerate $1$-spaces in $C_V(x)$. In particular, if $\dim C_V(x) \leqs n-2$ then $|C_{\O}(x)|$ is at most 
$(q^{2n-4}-1)/(q^2-1)$, which is the total number of $1$-spaces in an $(n-2)$-dimensional vector space over $\mathbb{F}_{q^2}$, and we immediately deduce that ${\rm fpr}(x) \leqs (q+1)^{-1}$. Now assume $\dim C_V(x) = n-1$, so $x = (J_2,J_1^{n-2})$ preserves a decomposition $V = U \perp W$ into nondegenerate spaces, where $\dim U = 2$ and $C_V(x) = \la u \ra \oplus W$ with $\la u \ra$ totally singular. Then every nondegenerate $1$-space in $C_V(x)$ is of the form $\la \l u + w\ra$, where $\l \in \mathbb{F}_{q^2}$ and $\la w \ra$ is a nondegenerate $1$-space in $W$. Therefore, 
\[
|C_{\O}(x)| = q^2\left(\frac{q^{n-3}(q^{n-2}-(-1)^{n-2})}{q+1}\right)
\]
and once again it is straightforward to check that ${\rm fpr}(x) \leqs (q+1)^{-1}$.

For the remainder, let us assume $r \ne p$ and $x$ is semisimple. Let $i \geqs 1$ be minimal such that $r$ divides $q^i-1$ and set 
\begin{equation}\label{e:j}
j = \left\{\begin{array}{ll}
i/2 & \mbox{if $i \equiv 2 \imod{4}$} \\
i & \mbox{if $i \equiv 0 \imod{4}$} \\
2i & \mbox{if $i$ is odd.} 
\end{array}\right.
\end{equation}

First assume $i=2$, so $r$ divides $q+1$. Here the eigenspaces of $\hat{x}$ on $V$ are nondegenerate (where $x$ is the image of $\hat{x} \in {\rm GU}_{n}(q)$ modulo scalars) and $|C_{\O}(x)|$ is the total number of nondegenerate $1$-spaces in each eigenspace. As a consequence, we quickly deduce that $|C_{\O}(x)|$ is maximal when $x = (\omega, I_{n-1})$, in which case 
\begin{equation}\label{e:u0}
{\rm fpr}(x) = \frac{q^{n-2}(q^{n-1}-(-1)^{n-1})+q+1}{q^{n-1}(q^n-(-1)^n)}
\end{equation}
and one can check that this is at most $(q+2)^{-1}$ unless $n$ is even and $q=2$. In this special case we have $r=3$ and 
\[
{\rm fpr}(x) = \frac{1}{4} + \frac{3(2^{n-3}+1)}{2^{n-1}(2^n-1)},
\]
so this is a genuine exception and it is recorded in part (iv) of Proposition \ref{p:psu_sub}. Let us also observe that if $n$ is even, $q=2$, $r=3$ and $\nu(x) \geqs 2$, then 
\[
|C_{\O}(x)| \leqs \frac{q(q^2-1)+ q^{n-3}(q^{n-2}-1)}{q+1}
\]
(maximal if $x = (\omega I_2, I_{n-2})$) and it is easy to verify the bound ${\rm fpr}(x) \leqs 1/4$.

Now assume $i \ne 2$. If $r=2$ then $|C_{\O}(x)|$ is maximal when $x = (-I_{1}, I_{n-1})$, in which case \eqref{e:u0} holds and we deduce that ${\rm fpr}(x) \leqs 1/3$. Now assume $r$ is odd. Here $|C_{\O}(x)|$ is the number of nondegenerate $1$-spaces in the $1$-eigenspace $C_V(x)$ and we deduce that 
\begin{equation}\label{e:j2}
|C_{\O}(x)| \leqs \frac{q^{n-j-1}(q^{n-j}-(-1)^{n-j})}{q+1}.
\end{equation}
For example, suppose $i \equiv 2 \imod{4}$. Here $j = i/2 \geqs 3$, $r$ divides $t=(q^{j}+1)/(q+1)$ and $|C_{\O}(x)|$ is maximal when $x = (\L,I_{n-j})$, which gives the upper bound in \eqref{e:j2}. Moreover, it is straightforward to show that ${\rm fpr}(x) \leqs (t+1)^{-1}$. A very similar argument applies when $i \not\equiv 2 \imod{4}$ and we omit the details.
\end{proof}

Finally, we complete the proof of Proposition \ref{p:psu_sub} by handling the case where $H$ is the stabilizer of a nondegenerate $m$-space with $m \geqs 2$.

\begin{lem}\label{l:psu_sub4}
The conclusion to Proposition \ref{p:psu_sub} holds if $H = N_m$ with $2 \leqs m < n/2$.
\end{lem}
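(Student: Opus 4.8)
The plan is to follow the strategy of Lemmas \ref{l:psu_sub1}--\ref{l:psu_sub3}. First I would identify $\O$ with the set of nondegenerate $m$-dimensional subspaces of $V$, so that
\[
|\O| = \frac{|{\rm GU}_n(q)|}{|{\rm GU}_m(q)||{\rm GU}_{n-m}(q)|},
\]
which has degree $2m(n-m)$ as a polynomial in $q$; I would record the resulting estimate $|\O| > \frac{1}{2}q^{2m(n-m)}$ (with a matching upper bound) via \cite[Proposition 3.9]{Bur2} or \cite[Lemma 2.1]{FM}. As usual, the bound ${\rm fpr}(x) \leqs (r+1)^{-1}$ for field and graph-field automorphisms is immediate from Theorem \ref{t:ls91} and \eqref{e:ls91}, while involutory graph automorphisms are dealt with by reducing to $q \leqs 3$ through Theorem \ref{t:ls91} and bounding $|C_{\O}(x)|$ crudely by the number of nondegenerate $m$-spaces defined over $\mathbb{F}_q$, exactly as in the proof of Lemma \ref{l:psu_sub3}. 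So the substance lies with $x \in {\rm PGU}_{n}(q)$ semisimple or unipotent.

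For such $x$, I would first apply the fixed point ratio estimates of Guralnick and Kantor \cite[Section 3]{GK} for nondegenerate subspaces, which yield a bound of the shape ${\rm fpr}(x) < c\,q^{-\nu(x)}$ with $c$ an absolute constant. Since $m \geqs 2$ and $m < n/2$ force $2m(n-m)$ to be fairly large, this reduces the problem to a bounded list of small cases (small $n$, small $q$ and small $i$, where $i \geqs 1$ is minimal with $r \mid q^i-1$), which can be checked directly using {\sc Magma}. The remaining work is to handle the semisimple elements with $i$ small relative to $m$ by explicit fixed-point counts.

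Here I would split into the three congruence cases $i \equiv 2 \imod{4}$, $i \equiv 0 \imod{4}$ and $i$ odd, just as in Lemma \ref{l:psu_sub3}, using the parameter $j$ from \eqref{e:j}. In each case the eigenspace structure of $\hat{x}$ shows that a nondegenerate $m$-space is fixed only if it decomposes compatibly with the eigenspace decomposition, and the count is maximized by an element of the form $(\L, I_{n-j})$ (or $(\omega, I_{n-1})$ when $i=2$), for which $C_{\O}(x)$ consists essentially of the nondegenerate $m$-spaces lying in the nondegenerate $1$-eigenspace of dimension $n-j$. Counting these in the spirit of Lemma \ref{l:sub_useful} and comparing with $|\O| > \frac{1}{2}q^{2m(n-m)}$ should give ${\rm fpr}(x) \leqs (t+1)^{-1}$, where $r \mid t$ with $t = (q^{j}+1)/(q+1)$, $q^{i/2}+1$ or $(q^i-1)/(q-1)$ according to the case. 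The unipotent case $r=p$ is similar, with $|C_{\O}(x)|$ equal to the number of nondegenerate $m$-spaces in $C_V(x)$ unless $x = (J_2,J_1^{n-2})$, where one must add the extra fixed spaces arising from the single nontrivial Jordan block.

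The main obstacle will be the boundary configurations in which $i$ (or the Jordan contribution in the unipotent case) is comparable to $m$, so that $x$ fixes nondegenerate $m$-spaces \emph{not} contained in $C_V(x)$: there the naive containment argument fails and one must carefully enumerate the spaces meeting several eigenspaces, then verify that no such configuration pushes ${\rm fpr}(x)$ above $(r+1)^{-1}$. Because $m \geqs 2$ and $m < n/2$ bound the codimension of $C_V(x)$ from below, however, $|C_{\O}(x)|$ remains a lower-order term relative to $|\O|$, so I expect no genuine exceptions to arise here, consistent with the absence of any $N_m$ case with $m \geqs 2$ in the statement of Proposition \ref{p:psu_sub}. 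This completes the proof of Proposition \ref{p:psu_sub}, and with it the treatment of the unitary subspace actions.
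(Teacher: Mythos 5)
Your plan follows the paper's proof essentially step for step: identify $\O$ with the set of nondegenerate $m$-spaces, dispose of field and graph automorphisms via Theorem \ref{t:ls91} and a crude count of subspaces defined over $\mathbb{F}_q$, use the Guralnick--Kantor bound \cite[Proposition 3.16]{GK} to reduce to a short list of configurations, and finish the remaining semisimple elements by counting nondegenerate $m$-spaces in $C_V(x)$ after splitting on the residue of $i$ modulo $4$ and reducing to elements of the form $(\L^{\ell},I_{n-j\ell})$ or $((\L,\L^{-q})^{\ell},I_{n-j\ell})$. The only inaccuracy is your description of the Guralnick--Kantor estimate as having the shape $c\,q^{-\nu(x)}$ --- \cite[Proposition 3.16]{GK} bounds ${\rm fpr}(x)$ in terms of $n$, $m$ and $q$ rather than $\nu(x)$ --- but this does not change the strategy, and the paper confirms your prediction that no exceptions arise in this case.
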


\begin{proof}
Let us identify $\O$ with the set of nondegenerate $m$-dimensional subspaces of $V$ 
and observe that $n \geqs 5$ and 
\begin{equation}\label{e:unim2}
\left(\frac{q-1}{q}\right)q^{2m(n-m)} < |\O| = \frac{|{\rm GU}_{n}(q)|}{|{\rm GU}_{m}(q)||{\rm GU}_{n-m}(q)|} < q^{2m(n-m)}
\end{equation}
(see Section 2 in \cite{FM}, for example). Let $x \in G$ be an element of prime order $r$. If $n \geqs 6$ then \cite[Proposition 3.16]{GK} gives
\begin{equation}\label{e:gk00}
{\rm fpr}(x) < 2q^{-(n-4)}+q^{-(n-1)}+q^{-2\lfloor m/2 \rfloor} + q^{-2(n-m)}.
\end{equation}
For integers $a \geqs b$, it will be convenient to write $f(a,b)$ for the number of nondegenerate $b$-spaces in an $a$-dimensional unitary space over $\mathbb{F}_{q^2}$. In particular, $f(a,b) \leqs q^{2b(a-b)}$ and $|\O| = f(n,m)$.

For an involutory graph automorphism $x$ we have $|C_{\O}(x)| \leqs \br{n}{m}_q$ and by combining the relevant bounds in \eqref{e:om} and \eqref{e:unim2} we deduce that
\[
{\rm fpr}(x) < 2\left(\frac{q}{q-1}\right)^2q^{-m(n-m)} \leqs \frac{1}{3}.
\]
Field automorphisms can be handled in the usual way, so for the remainder we may assume $x \in {\rm PGU}_{n}(q)$ is semisimple or unipotent.

First assume $r=p$. If $n \geqs 6$ then the bound in \eqref{e:gk00} is sufficient unless $q=2$ and $n \in \{6,7,8\}$, or $(n,q) = (6,3)$. All of these cases can be checked using {\sc Magma}. Now assume $n=5$, so $m=2$. Here $|C_{\O}(x)|$ is maximal when $x = (J_2,J_1^3)$ and we observe that $|x^G \cap H| = \a(2,q)+\a(3,q)$ and $|x^G| = \a(5,q)$, where 
\[
\a(d,q) = \frac{|{\rm GU}_{d}(q)|}{q^{2d-3}|{\rm GU}_{d-2}(q)||{\rm GU}_{1}(q)|}
\]
is the number of transvections in ${\rm GU}_{d}(q)$. Since ${\rm fpr}(x) = |x^G \cap H|/|x^G|$, it is straightforward to check that ${\rm fpr}(x) \leqs (q+1)^{-1}$ as required.

For the remainder, let us assume $x$ is semisimple. As before, let $i \geqs 1$ be minimal such that $r$ divides $q^i-1$ and define $j$ as in \eqref{e:j}.

Suppose $i \equiv 2 \imod{4}$. We may write $x = (x_1,x_2)$ in terms of an orthogonal decomposition $V = V_1 \perp V_2$ into nondegenerate spaces, where $\la x \ra$ acts homogeneously (and nontrivially) on $V_1$ with the additional property that $V_1$ and $V_2$ have no common $\la x \ra$-irreducible constituent. If $U$ is a nondegenerate $m$-space fixed by $x$, then $U = U_1 \perp U_2$ and $U_i \subseteq V_i$ is fixed by $x_i$. In particular, $U$ is fixed by $y = (x_1,1)$ and thus $C_{\O}(x) \subseteq C_{\O}(y)$. Therefore, in the notation of \cite[Proposition 3.3.2]{BG}, we may assume that $x = (\L^{\ell},I_{n-j\ell})$ for some $\ell \geqs 1$. Similar reasoning shows that we may assume $x = ((\L,\L^{-q})^{\ell},I_{n-j\ell})$ when $i \not\equiv 2 \imod{4}$. 

With this observation in hand, let us begin the main analysis by considering the case  $i=2$. If $m \geqs 4$ then the upper bound in \eqref{e:gk00} is sufficient, so we may assume $m \in \{2,3\}$ and $x = (\omega I_{\ell}, I_{n-\ell})$. In terms of the $f(a,b)$ notation introduced above, we have
\[
|C_{\O}(x)| = \sum_{k=0}^{m} f(\ell,m-k)\cdot f(n-\ell,k)
\]
and it is easy to check that ${\rm fpr}(x) \leqs (q+2)^{-1}$. For example, if $m=2$ then we get
\[
|C_{\O}(x)| < q^{4\ell-8}+q^{2n-4}+q^{4n-4\ell-8},
\]
which is sufficient when combined with the lower bound on $|\O|$ in \eqref{e:unim2}.

Next assume $i \equiv 2 \imod{4}$ and $i \geqs 6$, in which case $r$ divides $t = (q^{i/2}+1)/(q+1)$. Suppose $m \geqs i/2$. Here \eqref{e:gk00} is sufficient unless $i=6$, $m=3$ and either $n=7$ or $(n,q) = (8,3)$ (note that $q \geqs 3$ if $i=6$). These cases can be handled directly. For example, if $(n,m,i) = (7,3,6)$ then we may assume $x = (\L,I_4)$ or $(\L^2,I_1)$. In the latter case, we compute
\[
|x^{G_0} \cap H| = \frac{|{\rm GU}_{3}(q)|}{|{\rm GU}_{1}(q^3)|} \cdot \frac{|{\rm GU}_{4}(q)|}{|{\rm GU}_{1}(q^3)||{\rm GU}_{1}(q)|} < 2q^{18}
\]
and thus ${\rm fpr}(x) < 4q^{-18}$ since $|x^G|>\frac{1}{2}q^{36}$. Finally, if $m < i/2$ then $|C_{\O}(x)|$ is equal to the number of nondegenerate $m$-spaces in $C_V(x)$, which implies that $x = (\L,I_{n-i/2})$ has the most fixed points. Therefore $|C_{\O}(x)|  \leqs f(n-i/2,m)$ and we quickly deduce that ${\rm fpr}(x) \leqs (t+1)^{-1}$ as required.

Now suppose $i \equiv 0 \imod{4}$, so $r$ divides $q^{i/2}+1$. If $m \geqs i$ then the bound in \eqref{e:gk00} is sufficient. On the other hand, if $m < i$ then we may assume $x = (\L,\L^{-q},I_{n-i})$, in which case $|C_{\O}(x)| = f(n-i,m)$ and it is easy to check that ${\rm fpr}(x) \leqs (q^{i/2}+2)^{-1}$. 

Finally, suppose $i$ is odd. If $i=1$ then $q \geqs 3$ and for $n \geqs 6$ one can check that the bound in \eqref{e:gk00} is sufficient unless $(n,m,q) = (6,2,3)$. In the latter case, $r=2$, $|C_{\O}(x)|$ is maximal when $x = (-I_1,I_5)$ and we compute ${\rm fpr}(x) \leqs 1/81$. Similarly, if $i=1$ and $n=5$ then $m=2$ and $|C_{\O}(x)|$ is maximal when $x = (-I_1,I_4)$, in which case 
$|C_{\O}(x)| = f(4,1)+f(4,2)$ and we obtain ${\rm fpr}(x) \leqs q^{-1}$. Finally, suppose $i \geqs 3$ and note that $r$ divides $t = (q^i-1)/(q-1)$. If $m \geqs 2i$ then it is easy to check that \eqref{e:gk00} is sufficient. For $m<2i$ we observe that $|C_{\O}(x)|$ is maximal when $x = (\L,\L^{-q},I_{n-2i})$. Here $|C_{\O}(x)| = f(n-2i,m)$ and it is straightforward to verify the bound ${\rm fpr}(x) \leqs (t+1)^{-1}$.
\end{proof}

This completes the proof of Proposition \ref{p:psu_sub}.

\subsection{Symplectic groups}\label{ss:symp}

Next we turn to the subspace actions of almost simple symplectic groups. Throughout this section, we assume $G_0 \ne {\rm PSp}_{4}(2)', {\rm PSp}_{4}(3)$ since ${\rm PSp}_{4}(2)' \cong A_6$ and ${\rm PSp}_{4}(3) \cong {\rm U}_{4}(2)$. 

\begin{prop}\label{p:symp_sub}
Let $G \leqs {\rm Sym}(\O)$ be a finite almost simple primitive permutation group with point stabilizer $H$ and socle $G_0 = {\rm PSp}_{n}(q)$ with $n \geqs 4$ and $(n,q) \ne (4,2)$, $(4,3)$. Assume $H$ is a subspace subgroup of $G$ and let $x \in G$ be an element of prime order $r$. Then either ${\rm fpr}(x) \leqs (r+1)^{-1}$, or one of the following holds:
\begin{itemize}\addtolength{\itemsep}{0.2\baselineskip}
\item[{\rm (i)}] $H = P_1$, $r=q=p$, $x = (J_2,J_1^{n-2})$ and 
\[
{\rm fpr}(x) = \frac{1}{q+1} + \frac{q(q^{n-2}-1)}{(q+1)(q^n-1)}.
\]
\item[{\rm (ii)}] $H$ is of type ${\rm O}_{n}^{\e}(q)$, $r=q=2$, $x = (J_2,J_1^{n-2})$ and 
\[
{\rm fpr}(x) = \frac{1}{3}+\frac{2^{n/2-1}-\e}{3(2^{n/2}+\e)}.
\] 
\item[{\rm (iii)}] $H$ is of type ${\rm O}_{n}^{-}(q)$, $r=3$, $q=2$, $x = (\omega, \omega^{-1}, I_{n-2})$ and 
\[
{\rm fpr}(x) = \frac{1}{4}+\frac{3}{4(2^{n/2}-1)}.
\]
\end{itemize}
\end{prop}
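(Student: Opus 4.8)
The plan is to prove Proposition \ref{p:symp_sub} by the method already used for linear and unitary groups in Propositions \ref{p:psl_sub} and \ref{p:psu_sub}, treating in turn the possibilities for the subspace subgroup $H$ recorded in Table \ref{tab:subspace}: the parabolic subgroups $P_m$, the Borel subgroup $P_{1,2}$ (when $n=4$ and $q$ is even), the nondegenerate subspace stabilizers $N_m$ (with $m$ even), and the $\C_8$-type subgroups of type ${\rm O}_{n}^{\e}(q)$ (with $q$ even). First I would carry out the usual reductions: Theorem \ref{t:ls91} together with \eqref{e:ls91} disposes of field and graph-field automorphisms and forces $r$ to be large, leaving only $x \in {\rm PGSp}_{n}(q)$ (unipotent if $r=p$, semisimple if $r \ne p$) and involutory graph automorphisms, the latter occurring only for $n=4$, $q$ even. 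Groups over small fields, and the $n=4$ subtleties (the Borel $P_{1,2}$ and graph automorphisms), would be checked with {\sc Magma}, so that in the general argument one may assume $n \geqs 6$ (whence $G_0$ has no graph automorphisms).

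For $H=P_1$ I would identify $\O$ with the set of all $1$-spaces of $V$, so $|\O|=(q^n-1)/(q-1)$ exactly as in the linear case. If $x$ is unipotent then $C_{\O}(x)$ consists of the $1$-spaces in $C_V(x)$, giving ${\rm fpr}(x)=(q^{n-\nu(x)}-1)/(q^n-1)$; this exceeds $(q+1)^{-1}$ only when $\nu(x)=1$ and $q=p$, i.e. for the transvection $(J_2,J_1^{n-2})$, producing exception (i). The crucial difference from the linear case concerns semisimple $x$ with $r$ odd: the symplectic pairing $\lambda \leftrightarrow \lambda^{-1}$ of eigenvalues forbids an element of shape $(\omega,I_{n-1})$, so the largest eigenspace has codimension at least $2$, and a direct estimate on $|C_{\O}(x)|$ then gives ${\rm fpr}(x)\leqs(r+1)^{-1}$ with \emph{no} exception. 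For $H=P_m$ ($2\leqs m\leqs n/2$) and $H=N_m$ I would apply the fixed-point-ratio bounds of Guralnick and Kantor in \cite[Section 3]{GK}, together with counting tools analogous to Lemma \ref{l:sub_useful}, splitting the semisimple analysis according to the order $i$ of $q$ modulo $r$; these cases yield no exceptions. The Borel case $P_{1,2}$ reduces immediately to the $P_1$ analysis, since a fixed flag forces a fixed $1$-space.

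The hard part will be the ${\rm O}_{n}^{\e}(q)$ action with $q$ even, which is where exceptions (ii) and (iii) arise. Here $\O$ is not a set of subspaces: it may be identified with the coset space $Sp_n(q)/{\rm O}_n^{\e}(q)$ of nondegenerate quadratic forms polarizing to the symplectic form, of size $\tfrac12 q^{n/2}(q^{n/2}+\e)$, so one must compute ${\rm fpr}(x)=|x^{G_0}\cap {\rm O}_n^{\e}(q)|/|x^{G_0}|$ by determining which symplectic class representatives fuse into the orthogonal subgroup. For the transvection $w\mapsto w+B(w,v)v$ a short computation shows it preserves a form $Q$ exactly when $Q(v)=1$; counting such vectors gives $|x^{G_0}\cap {\rm O}_n^{\e}(q)|=q^{n-1}-\e q^{n/2-1}$ and hence ${\rm fpr}(x)=(q^{n-1}-\e q^{n/2-1})/(q^n-1)$, which is at most $1/3$ for $q\geqs 4$ but exceeds $1/3$ when $q=2$, giving exception (ii) (and one checks this equals the stated formula). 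For the order-$3$ element $(\omega,\omega^{-1},I_{n-2})$ with $q=2$ (so $i=2$) I would count via nondegenerate minus-type $2$-spaces, using that only ${\rm O}_2^{-}(2)\cong S_3$ contains order-$3$ elements, which produces exception (iii) when $\e=-$. All remaining unipotent and semisimple classes are handled by the same class-intersection counts, compared against the available lower bounds for $|x^{G_0}|$ from \cite[Section 3]{Bur2}.

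I expect the genuine obstacles to be twofold: (a) confirming that no unexpected exception survives among the small-rank and small-field instances of the ${\rm O}_n^{\e}(q)$ action, which require exact computation rather than estimates; and (b) matching the exceptions correctly against permutation-isomorphic orthogonal actions. For the latter I would exploit the isomorphism $Sp_n(q)\cong {\rm O}_{n+1}(q)$ ($q$ even), under which $\O$ is reinterpreted as a set of nonsingular $1$-spaces of the $(n+1)$-dimensional orthogonal module; this both provides an independent check on the counts of the ${\rm O}_n^{\e}(q)$ case and confirms that (ii) and (iii) are the only exceptions, ensuring each is listed exactly once.
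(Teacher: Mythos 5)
Your proposal is correct and follows essentially the same route as the paper: the same case division by type of $H$ from Table \ref{tab:subspace}, the same reduction of outer automorphisms via Theorem \ref{t:ls91}, the same subspace-counting arguments (with the Guralnick--Kantor bounds) for $P_m$ and $N_m$, and the same exceptional classes emerging from $P_1$ and the ${\rm O}_n^{\e}(q)$ action, with your transvection count $q^{n-1}-\e q^{n/2-1}=q^{n/2-1}(q^{n/2}-\e)$ agreeing exactly with the paper's computation of $|x^G\cap H|$. The only organizational difference is that the paper disposes of the $n=4$ Borel subgroup $P_{1,2}$ (and the ${\rm O}_4^{\e}(q)$ case) within the non-subspace analysis of Lemma \ref{l:psp4}, whereas you reduce it to $P_1$; both work.
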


Note that in view of the proof of Lemma \ref{l:psp4}, we are free to assume that $G \leqs {\rm P\Gamma Sp}_{4}(q)$ when $n=4$.

\begin{lem}\label{l:symp_sub1}
The conclusion to Proposition \ref{p:symp_sub} holds if $H = P_1$. 
\end{lem}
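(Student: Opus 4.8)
The plan is to follow the template of Lemma \ref{l:psl_sub1}, since here $\O$ is again a set of $1$-spaces of the natural module $V$. Because the symplectic form is alternating, every $1$-space of $V$ is totally singular, so $\O$ is the full set of projective points and $|\O| = (q^n-1)/(q-1)$. The maximality of $H = P_1$ forces $G \leqs {\rm P\Gamma Sp}_n(q)$, and in particular (as noted after the statement of Proposition \ref{p:symp_sub}) we may assume $G \leqs {\rm P\Gamma Sp}_4(q)$ when $n=4$, so no graph or triality automorphisms intervene. If $x$ is a field automorphism then $q = q_0^r$ and the bound ${\rm fpr}(x) \leqs 4/(3q) \leqs (r+1)^{-1}$ of \eqref{e:ls91} applies. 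Thus I may assume throughout that $x \in {\rm PGSp}_n(q)$ is either unipotent ($r = p$) or semisimple ($r \ne p$), and I set $s = \nu(x)$ as in Definition \ref{d:nu}.

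First I would dispose of the unipotent case. Here $C_{\O}(x)$ is precisely the set of $1$-spaces contained in the $1$-eigenspace $C_V(x)$, which has dimension $n - s$, so
\[
|C_{\O}(x)| = \frac{q^{n-s}-1}{q-1}, \qquad {\rm fpr}(x) = \frac{q^{n-s}-1}{q^n-1} = \frac{1}{q^s} - \frac{q^s-1}{q^s(q^n-1)}.
\]
Exactly as in Lemma \ref{l:psl_sub1}, a direct comparison shows this exceeds $(r+1)^{-1} = (p+1)^{-1}$ only when $s = 1$ and $q = p$, that is, when $x = (J_2,J_1^{n-2})$ is a transvection with $r = q = p$; a short manipulation then verifies the stated value ${\rm fpr}(x) = (q^{n-1}-1)/(q^n-1) = (q+1)^{-1} + q(q^{n-2}-1)/((q+1)(q^n-1))$, giving case (i). For $s \geqs 2$ one has ${\rm fpr}(x) < q^{-2} \leqs (p+1)^{-1}$, and for $s=1$ with $q > p$ the inequality $q^{n-1}(p+1-q) \leqs p$ shows ${\rm fpr}(x) \leqs (p+1)^{-1}$.

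For semisimple $x$ the decisive structural feature --- and the reason that no analogue of the exception in Proposition \ref{p:psl_sub}(ii) survives --- is that the eigenvalues of an element of ${\rm Sp}_n(q)$ pair as $\lambda,\lambda^{-1}$; in particular there is no element of shape $(\omega,I_{n-1})$, and every nontrivial eigenspace off the fixed space $C_V(x)$ is matched by a dual one of the same dimension. Since a $1$-space is fixed exactly when its spanning vector is an $\mathbb{F}_q$-rational eigenvector, writing $i$ for the least positive integer with $r \mid q^i-1$, only the fixed space $C_V(x)$, together with the $\mathbb{F}_q$-rational eigenspaces when $i=1$, contribute fixed points. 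When $i \geqs 2$ the nontrivial eigenvalues lie outside $\mathbb{F}_q$, so $|C_{\O}(x)|$ equals the number of $1$-spaces in $C_V(x)$; using the parametrisation of semisimple classes in \cite[Chapter 3]{BG} one has ${\rm codim}\, C_V(x) \geqs c$, where $c$ is the dimension of the smallest nontrivial nondegenerate $\la x\ra$-invariant subspace (either $i$ or $2i$, according as the relevant irreducible module is self-dual or not), and in each case $r+1 \leqs q^c$, whence ${\rm fpr}(x) < q^{-c} \leqs (r+1)^{-1}$. When $i=1$ (including the case $r=2$ with $q$ odd, where $x=(-I_{2k},I_{n-2k})$), the fixed-point count is maximised by the most unbalanced configuration, namely $(\omega,\omega^{-1},I_{n-2})$ or $(-I_2,I_{n-2})$ for $n \geqs 5$, and by a balanced configuration such as $(\omega I_2,\omega^{-1}I_2)$ when $n=4$; in every case a direct estimate shows ${\rm fpr}(x) \leqs (r+1)^{-1}$, using the exclusions $(n,q) \ne (4,2),(4,3)$.

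The main obstacle will be the bookkeeping in the semisimple case with $i=1$: I must verify that the fixed-point count really is maximised by keeping the fixed space as large as possible (codimension $2$) rather than by distributing the perturbation among several small paired eigenspaces, and I must confirm that the symplectic pairing genuinely eliminates the $(\omega,I_{n-1})$ configuration responsible for the linear-group exception in Proposition \ref{p:psl_sub}. The comparison is clear-cut for $n \geqs 5$ but reverses in the smallest case $n=4$, so this boundary case needs separate attention; together with the small-field cases, where the crude estimates are tight, these are controlled precisely by the hypotheses $n \geqs 4$ and $(n,q) \ne (4,2),(4,3)$, and can be settled by direct computation (for instance with {\sc Magma}) if necessary.
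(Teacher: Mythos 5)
Your proposal is correct and follows essentially the same route as the paper: identify $\O$ with the projective points of $V$, count fixed $1$-spaces via eigenspaces, isolate the transvection exception in the unipotent case, and use the $\lambda,\lambda^{-1}$ eigenvalue pairing to rule out a symplectic analogue of the $(\omega,I_{n-1})$ exception for linear groups. Your observation that the maximising semisimple configuration for $i=1$ switches to the balanced $(\omega I_2,\omega^{-1}I_2)$ when $n=4$ (still satisfying the bound, since ${\rm fpr}(x)=2/(q^2+1)$ and $r\leqs q-1$) is in fact slightly more careful than the paper's blanket claim that $(\L,\L^{-1},I_{n-2})$ is always extremal.
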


\begin{proof}
First identify $\O$ with the set of $1$-dimensional subspaces of $V$ (note that every $1$-dimensional subspace is totally singular) and observe that $|\O| = (q^n-1)/(q-1)$.  Let $x \in G$ be an element of prime order $r$. The usual argument applies if $x$ is a field automorphism, so we may assume $x \in {\rm PGSp}_{n}(q)$ is semisimple or unipotent.

First assume $r=p$ and recall that each block $J_i$ in the Jordan form of $x$ on $V$ has even multiplicity if $i$ is odd. Here $|C_{\O}(x)|$ is equal to the number of $1$-dimensional subspaces of $C_V(x)$, so $|C_{\O}(x)|$ is maximal when $x = (J_2,J_1^{n-2})$. Working with this element, we get $|C_{\O}(x)| = (q^{n-1}-1)/(q-1)$ 
and thus ${\rm fpr}(x) = (q^{n-1}-1)/(q^n-1)$. If $q \ne p$, then it is easy to check that this gives ${\rm fpr}(x) \leqs (r+1)^{-1}$ as required. However, if $q=p$ then ${\rm fpr}(x) > (q+1)^{-1}$ and this case is recorded in part (i) of Proposition \ref{p:symp_sub}. Finally, if $q=p$ and $\nu(x) \geqs 2$, then $|C_{\O}(x)|$ is maximal when $x$ has Jordan form $(J_2^2,J_1^{n-4})$, in which case $|C_{\O}(x)| = (q^{n-2}-1)/(q-1)$ and we deduce that ${\rm fpr}(x) \leqs (q+1)^{-1}$.

For the remainder, let us assume $r \ne p$. If $r=2$ then $|C_{\O}(x)|$ is maximal when $x = (-I_2,I_{n-2})$ and we compute 
\[
|C_{\O}(x)| = \frac{q^2-1}{q-1}+\frac{q^{n-2}-1}{q-1}.
\]
This implies that ${\rm fpr}(x) \leqs 1/3$. Now assume $r$ is odd and let $i \geqs 1$ be minimal such that $r$ divides $q^i-1$. Set $j = 2i$ if $i$ is odd, otherwise $j = i$. Note that if $i \geqs 2$ then $|C_{\O}(x)|$ coincides with the number of $1$-dimensional subspaces in $C_V(x)$. In particular, if $i$ is even, then $|C_{\O}(x)|$ is maximal when $x = (\L, I_{n-i})$, so $|C_{\O}(x)| \leqs (q^{n-j}-1)/(q-1)$ and it is easy to check that ${\rm fpr}(x) \leqs (q^{i/2}+2)^{-1}$ as required. The same upper bound on $|C_{\O}(x)|$ holds if $i \geqs 3$ is odd (with equality if $x = (\L,\L^{-1},I_{n-j})$) and we deduce that ${\rm fpr}(x) \leqs q^{-i}$. Finally, if $i=1$ then $|C_{\O}(x)| \leqs 2+(q^{n-2}-1)/(q-1)$ and once again the result follows.
\end{proof}

\begin{lem}\label{l:symp_sub2}
The conclusion to Proposition \ref{p:symp_sub} holds if $H = P_m$ with $2 \leqs m \leqs n/2$. 
\end{lem}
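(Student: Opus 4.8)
The plan is to run the argument along exactly the lines of Lemma~\ref{l:psl_sub2} (linear groups) and Lemma~\ref{l:psu_sub2} (unitary groups), now taking $\O$ to be the set of totally singular $m$-dimensional subspaces of $V$. Writing $n=2\nu$, I would first record
\[
|\O| = \br{\nu}{m}_q\prod_{\ell=\nu-m+1}^{\nu}(q^{\ell}+1),
\]
together with the standard elementary estimates for $|\O|$ (cf.\ \cite{Bur2,FM}), and then bring in the Guralnick--Kantor fixed point ratio bounds of \cite[Section 3]{GK}, which in this subspace action yield an inequality of the shape ${\rm fpr}(x) < c\,q^{-m}$ for all $x$ of prime order. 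Field automorphisms are disposed of at once via \eqref{e:ls91}, and since we may assume $G \leqs {\rm P\Gamma Sp}_{4}(q)$ when $n=4$ there are no graph automorphisms to consider; hence it remains to treat $x \in {\rm PGSp}_{n}(q)$ unipotent or semisimple. Because $m \geqs 2$, the displayed bound already gives a saving of order $q^{-2}$, and the whole point is that---in contrast to the $P_1$ case of Lemma~\ref{l:symp_sub1}---there are no genuine exceptions to record, so the task is simply to push this saving down to ${\rm fpr}(x) \leqs (r+1)^{-1}$ in the residual small configurations.

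For unipotent elements ($r=p$) the bound ${\rm fpr}(x) < c\,q^{-m}$ settles everything except when $q$ and $m$ are simultaneously small, typically $q \in \{2,3\}$ with $m=2$. In these cases I would show directly that $|C_{\O}(x)|$ is maximal for the transvection $x=(J_2,J_1^{n-2})$: a fixed totally singular $m$-space $U$ meets $C_V(x)$ in codimension at most $\nu(x)$ (since $\dim (x-1)U \leqs \nu(x)$), and decomposing $U$ relative to $C_V(x)$---exactly as in the linear case---gives a count monotone in the number of nontrivial Jordan blocks, hence largest for the transvection. Evaluating the resulting count then yields ${\rm fpr}(x) \leqs (q+1)^{-1} \leqs (r+1)^{-1}$, with the finitely many small-dimensional groups verified using {\sc Magma}.

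For semisimple elements ($r \ne p$) I would set $i$ to be the order of $q$ modulo $r$ and put $j=2i$ if $i$ is odd and $j=i$ if $i$ is even, recalling that $r \leqs q^i-1$ when $i$ is odd and $r \mid q^{i/2}+1$ when $i$ is even (so it is enough to prove ${\rm fpr}(x) \leqs q^{-i}$, respectively ${\rm fpr}(x) \leqs (q^{i/2}+2)^{-1}$). The case $r=2$ is immediate: $|C_{\O}(x)|$ is maximal for $x=(-I_2,I_{n-2})$ and one reads off ${\rm fpr}(x) \leqs 1/3$. For $r$ odd with $i$ large relative to $m$, every fixed totally singular $m$-space lies in the nondegenerate $1$-eigenspace $C_V(x)$; thus $|C_{\O}(x)|$ is at most the number of totally singular $m$-spaces of a symplectic space of dimension $n-j\ell$, which is maximal at $\ell=1$, i.e.\ at $x=(\L,I_{n-j})$ (or $x=(\L,\L^{-1},I_{n-j})$ when $i$ is odd), and comparison with $|\O|$ produces the required saving of order $q^{-mj}$.

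The hard part will be the semisimple configurations with small $i$, above all $i=1$ (and to a lesser extent $i=2$) together with small $m$, where the ``fixed spaces lie in $C_V(x)$'' reduction fails. When $i=1$ the $\omega$- and $\omega^{-1}$-eigenspaces of $x$ are totally singular and mutually dual while the $1$-eigenspace is nondegenerate symplectic, so a fixed totally singular $m$-space may distribute its dimension across all three pieces; as in the $i$-odd analysis of Lemma~\ref{l:psu_sub2}, one must therefore count such spaces exactly---reducing to representatives $x=(\omega I_a,\omega^{-1}I_a,I_{n-2a})$, determining the value of $a$ for which $|C_{\O}(x)|$ is maximal, and checking the bound directly---with the remaining genuinely small pairs $(n,q)$ handled by {\sc Magma}.
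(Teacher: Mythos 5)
Your overall strategy (the Guralnick--Kantor bounds from \cite[Section 3]{GK}, exact counts of $|C_{\O}(x)|$ in the residual configurations, {\sc Magma} for small groups) matches the paper's, and your semisimple outline is essentially sound. But there is a concrete false step in your unipotent analysis. You claim that $|C_{\O}(x)|$ is maximal for the transvection $x=(J_2,J_1^{n-2})$, justified by a monotonicity ``in the number of nontrivial Jordan blocks, exactly as in the linear case''. For totally singular $m$-spaces of a symplectic space this is simply not true once $m\geqs 2$: for $G_0={\rm PSp}_4(q)$ with $q$ odd and $m=2$, the element $(J_2^2)$ fixes $2q+1$ totally singular $2$-spaces while the transvection fixes only $q+1$ (visible via the isomorphism with $\O_5(q)$ acting on singular $1$-spaces, which is how the paper treats $n=4$); and for ${\rm Sp}_6(2)$ with $m=3$ the $a_2$-involution $(J_2^2,J_1^2)$ fixes $39$ Lagrangians while the transvection fixes only $15$. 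What controls the count is not just $\dim C_V(x)$ but the position of $(x-1)V$ relative to the form: a fixed Lagrangian must lie in $v^\perp$ and hence contain the centre $v$ of a transvection, which is very restrictive, whereas an $a_2$-involution has totally singular image $(x-1)V$ and fixes every totally singular space containing it. The lemma's conclusion survives (the true maxima still give ${\rm fpr}(x)\leqs (q+1)^{-1}$), but your proposed derivation asserts false intermediate statements and so does not constitute a proof.

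Relatedly, you underestimate the residual set. The bound \eqref{e:sp1} from \cite[Proposition 3.15]{GK} is only invoked for $n\geqs 6$, so the whole case $n=4$, $m=2$ with $q$ odd --- for every $q$, not just small $q$ --- needs a separate argument; the paper transfers it to $\O_5(q)$ acting on singular $1$-spaces via the exceptional isomorphism. Even for $n\geqs 6$ the bound fails for every $2\leqs m\leqs n/2$ when $q=2$ and $n\in\{6,8,10\}$, and when $(n,q)=(6,3)$, so ``typically $q\in\{2,3\}$ with $m=2$'' should be replaced by this explicit finite list (which can indeed be delegated to {\sc Magma}, as the paper does). Finally, in the semisimple case the delicate configurations are not confined to $i\in\{1,2\}$: when $m=i$ (respectively $m=2i$ with $i$ odd), $q=2$ and $r=2^i-1$ is a Mersenne prime, the generic bounds fail for arbitrarily large $n$, and one needs the exact count of fixed spaces obtained by viewing $x$ inside a field-extension subgroup ${\rm Sp}_{2\ell}(2^m)<{\rm Sp}_{2\ell m}(2)$ as in Lemma \ref{l:sub_useful}(ii); your sketch does not anticipate this infinite family.
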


\begin{proof}
Here we identify $\O$ with the set of totally singular $m$-spaces in $V$, so  
\[
\frac{1}{2}q^{m(2n-3m+1)/2} <  |\O| = \frac{|{\rm Sp}_{n}(q)|}{q^{m(2n-3m+1)/2}|{\rm Sp}_{n-2m}(q)||{\rm GL}_{m}(q)|} <  2\left(\frac{q}{q-1}\right)q^{m(2n-3m+1)/2}.
\] 
We may assume $x \in {\rm PGSp}_{n}(q)$ has prime order $r$. For $n \geqs 6$,  \cite[Proposition 3.15]{GK} gives
\begin{equation}\label{e:sp1}
{\rm fpr}(x) < 2q^{-(n/2-1)}+q^{-n/2}+q^{-m}.
\end{equation}

First assume $r=p$. If $n \geqs 6$ then one can check that the upper bound in \eqref{e:sp1} is sufficient unless $q=2$ and $n \in \{6,8,10\}$, or $(n,q) = (6,3)$. All of these special cases can be handled using {\sc Magma}. For example, suppose $(n,q) = (6,2)$. If $m=2$, then $|\O| = 315$ and $|C_{\O}(x)| \leqs 75$ (maximal if $x = (J_2,J_1^4)$), and for $m=3$ we have $|\O| = 135$ and $|C_{\O}(x)| \leqs 39$ (maximal if $x$ is an $a_2$-type involution in the notation of \cite{AS}). In both cases, ${\rm fpr}(x) \leqs 1/3$ as required. 

Now assume $r=p$ and $n=4$, so $m=2$. If $p=2$, then $H$ is ${\rm Aut}(G_0)$-conjugate to $P_1$ and so the result in this case follows from the proof of the previous lemma. For $q$ odd, we can use the fact $G_0 \cong \O_5(q)$ and the action of $G_0$ on $\O$ is permutation isomorphic to the action of $\O_5(q)$ on the set $\Gamma$ of $1$-dimensional totally singular subspaces of the $5$-dimensional orthogonal module. In terms of the respective Jordan forms, this isomorphism induces the following correspondence: 
\[
(J_2,J_1^2) \mapsto (J_2^2,J_1),\;\; (J_2^2) \mapsto (J_3,J_1^2),\;\; (J_4) \mapsto (J_5).
\]
If $x = (J_4)$ then it is easy to see that $|C_{\O}(x)|=1$. In the remaining cases, we can appeal to the proof of Lemma \ref{l:orth1_sub1}, which shows that $|C_{\Gamma}(y)| = q+1$ if $y \in \O_5(q)$ has Jordan form $(J_2^2,J_1)$, and $|C_{\Gamma}(y)| = 2q+1$ if $y = (J_3,J_1^2)$. In particular, $|C_{\O}(x)| \leqs 2q+1$ when $q$ is odd and once again we deduce that ${\rm fpr}(x) \leqs (q+1)^{-1}$. 

Next suppose $r=2$ and $p$ is odd, so $x$ is a semisimple involution. If $q \geqs 5$ then the bound in Theorem \ref{t:ls91} is sufficient, so we may assume $q=3$ (and thus $n \geqs 6$ since we are excluding the case $(n,q) = (4,3)$). Here the bound in \eqref{e:sp1} is sufficient unless $(n,m) = (6,2)$, in which case a {\sc Magma} computation yields ${\rm fpr}(x) \leqs 5/91$. 

For the remainder, let us assume $r \ne p$ and $r$ is odd. Let $i \geqs 1$ be minimal such that $r$ divides $q^i-1$. First assume $i=1$, so $q \geqs 4$. If $n \geqs 6$ then the bound in \eqref{e:sp1} is effective. Similarly, if $i=2$ and $n \geqs 6$, then the same bound is sufficient unless $m=q=2$ or $(n,m,q) = (8,4,2)$. In the latter case we have $r=3$ and a {\sc Magma} calculation shows that ${\rm fpr}(x) \leqs 1/51$. Now suppose $m=i=q=2$, so $n \geqs 6$, $r=3$ and $x = (\L^{\ell},I_{n-2\ell})$ for some $\ell \geqs 1$. Let $\a$ be the number of totally singular $2$-spaces in $C_V(x)$. Then by arguing as in the proof of Lemma \ref{l:sub_useful}(ii) we deduce that $|C_{\O}(x)| \leqs \a + (2^{2\ell}-1)/3$ and it is straightforward to check that ${\rm fpr}(x) \leqs 1/4$.

Now assume $i \in \{1,2\}$ and $n=4$, in which case $m=2$. If $i=1$ then $|C_{\O}(x)| \leqs 2(q+1)$ (maximal if $x = (\L,\L^{-1},I_2)$) and thus ${\rm fpr}(x) \leqs q^{-1}$ as required. Similarly, if $i=2$ then ${\rm fpr}(x)>0$ if and only if $x = (\L^2)$, in which case $|C_{\O}(x)| = q+1$ and we conclude that ${\rm fpr}(x) \leqs (q+2)^{-1}$. 

Next suppose $i \geqs 4$ is even. First assume $m \geqs i$, so $n \geqs 2i$. Here one can check that the bound in \eqref{e:sp1} yields ${\rm fpr}(x) \leqs (q^{i/2}+2)^{-1}$ unless $i=4$ and $(n,q) = (8,2)$, $(8,3)$ or $(10,2)$. Each of these cases can be handled using {\sc Magma}. Now assume $m<i$. In this case, $|C_{\O}(x)|$ coincides with the number of totally singular $m$-spaces in $C_V(x)$, so by working with the element $x = (\L, I_{n-i})$ we deduce that  
\[
{\rm fpr}(x) <4\left(\frac{q}{q-1}\right)q^{-mi} \leqs (q^{i/2}+2)^{-1}
\]  
and the result follows. 

Finally, let us assume $i \geqs 3$ is odd. Note that $r$ divides $t = (q^i-1)/(q-1)$. If $m<i$ then it is clear that $|C_{\O}(x)|$ is maximal when $x = (\L,\L^{-1},I_{n-2i})$ and thus
\[
{\rm fpr}(x) < 4\left(\frac{q}{q-1}\right)q^{-2mi} \leqs (t+1)^{-1}.
\]
Now suppose $m \geqs i$. If $n<4i$ then $x = (\L, \L^{-1},I_{n-2i})$ is the only possibility  and we have $|C_{\O}(x)| = \b+2\gamma$, where $\b$ (respectively, $\gamma$) is the number of totally singular $m$-spaces (respectively $(m-i)$-spaces) in $C_V(x)$. Therefore,
\[
|C_{\O}(x)| < 2\left(\frac{q}{q-1}\right)q^{m(2n-3m+1)/2}\left(q^{-2im}+2q^{-i(2n-2m+1-i)/2}\right)
\]
and thus
\[
{\rm fpr}(x) < 4\left(\frac{q}{q-1}\right)\left(q^{-2im}+2q^{-i(2n-2m+1-i)/2}\right).
\]
One can check that this gives ${\rm fpr}(x) \leqs (t+1)^{-1}$ unless $(n,m,q)= (6,3,2)$ and $i=3$. Here $r=7$, $|C_{\O}(x)| = 2$ and the result follows.

To complete the proof, we may assume $i \geqs 3$ is odd with $m \geqs i$ and $n \geqs 4i$. If $m \geqs i+1$ then one can check that the bound in \eqref{e:sp1} is sufficient unless $(n,m,q,i) = (12,4,2,3)$. Here $r=7$ and either $x  = ((\L,\L^{-1})^2)$ and $|C_{\O}(x)| = 0$, or $x = (\L,\L^{-1},I_6)$ and $|C_{\O}(x)| = \b+2\gamma$ as above. The reader can check that ${\rm fpr}(x) \leqs 1/8$. Finally, suppose $m=i$. In this case, we find that \eqref{e:sp1} is sufficient unless $q=2$ and $r = 2^i-1$ is a Mersenne prime. Here $\dim C_V(x) = n-2m\ell$ with $\ell \geqs 1$ and it is straightforward to see that $|C_{\O}(x)|$ is maximal when $x=((\L,\L^{-1})^{\ell},I_{n-2m\ell})$. Then by arguing as in the proof of Lemma \ref{l:sub_useful}(ii), viewing $z = ((\L,\L^{-1})^{\ell})$ as an element of the field extension subgroup ${\rm Sp}_{2\ell}(2^m)<{\rm Sp}_{2\ell m}(2)$, we deduce that 
\[
|C_{\O}(x)| = \delta + 2\left(\frac{2^{m\ell}-1}{2^m-1}\right),
\]
where $\delta$ is the number of totally singular $m$-spaces in $C_V(x)$. From here, the desired bound ${\rm fpr}(x) \leqs 2^{-m}$ quickly follows.
\end{proof}

\begin{lem}\label{l:symp_sub3}
The conclusion to Proposition \ref{p:symp_sub} holds if $H = N_m$ with $2 \leqs m < n/2$ even. 
\end{lem}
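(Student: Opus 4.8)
The plan is to follow the template established in Lemmas \ref{l:symp_sub1} and \ref{l:symp_sub2}, and in particular to mirror the analysis of the unitary $N_m$ action carried out in Lemma \ref{l:psu_sub4}. First I would identify $\O$ with the set of nondegenerate $m$-spaces of $V$, so that
$$|\O| = \frac{|{\rm Sp}_n(q)|}{|{\rm Sp}_m(q)||{\rm Sp}_{n-m}(q)|},$$
and a routine estimate (as in \cite[Section 2]{FM}) gives $c\,q^{m(n-m)} < |\O| < C\,q^{m(n-m)}$ for absolute constants $c, C$. Since $m \geqs 2$ and $m < n/2$ we have $n \geqs 6$, so $G_0$ has no graph automorphisms; field automorphisms are dispatched by \eqref{e:ls91} via Theorem \ref{t:ls91}, and so we may assume $x \in {\rm PGSp}_n(q)$ is semisimple or unipotent. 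The main tool will be the Guralnick--Kantor estimate for this action from \cite[Section 3]{GK}, which has the shape ${\rm fpr}(x) < q^{-(n/2-1)} + \cdots + q^{-m}$ and which resolves all but finitely many pairs $(n,q)$ together with the cases where the relevant invariant $i$ (defined below) is small.

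For the unipotent case $r = p$ I would note that ${\rm fpr}(x)$ is maximised by the transvection $x = (J_2, J_1^{n-2})$: such an $x$, with centre $\la v \ra$, fixes a nondegenerate $m$-space $U$ precisely when $v \in U$ or $U \subseteq v^{\perp}$, so $|C_{\O}(x)|$ is bounded by the number of nondegenerate $m$-spaces lying in the hyperplane $v^{\perp}$ plus those containing $v$. A direct count of these gives ${\rm fpr}(x) \leqs (q+1)^{-1}$, and the GK bound handles all non-transvection unipotent classes; the finitely many small cases (small $n$ with $q \in \{2,3\}$) are checked in {\sc Magma}.

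For the semisimple case with $r$ odd, let $i \geqs 1$ be minimal with $r \mid q^i - 1$ and set $j = 2i$ if $i$ is odd and $j = i$ otherwise. Writing $V = V_1 \perp V_2$ with $\la x\ra$ acting homogeneously and nontrivially on $V_1$ and trivially on $V_2 = C_V(x)$, the inclusion $C_{\O}(x) \subseteq C_{\O}(y)$ for $y = (x|_{V_1},1)$ lets me reduce to $x = (\L^{\ell}, I_{n-j\ell})$ (respectively $((\L,\L^{-1})^{\ell}, I_{n-2i\ell})$ when $i$ is odd) in the notation of \cite[Chapter 3]{BG}. If $m < j$ then every fixed nondegenerate $m$-space lies in $C_V(x)$, whose dimension is at most $n-j$, so $|C_{\O}(x)| \leqs C' q^{m(n-j-m)}$ and ${\rm fpr}(x) < (C'/c)\,q^{-mj} \leqs (C'/c)\,q^{-2i}$, comfortably below $(q^{i/2}+2)^{-1}$ (when $i$ is even, as $r \mid q^{i/2}+1$) or $(t+1)^{-1}$ with $t = (q^i-1)/(q-1)$ (when $i$ is odd). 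If instead $m \geqs j$ I would apply the GK bound, which suffices except for finitely many small $(n,q,i)$; these I would treat by explicit fixed-space counts (using the function counting nondegenerate subspaces, as in Lemma \ref{l:psu_sub4}) or directly in {\sc Magma}. The residual case $r = 2$ with $p$ odd is a semisimple involution, extremal at $(-I_2, I_{n-2})$; Theorem \ref{t:ls91} covers $q \geqs 5$, and $q = 3$ with small $n$ is checked in {\sc Magma}. Since none of the exceptions (i)--(iii) of Proposition \ref{p:symp_sub} can occur for $H = N_m$ with $m \geqs 2$, the conclusion is simply ${\rm fpr}(x) \leqs (r+1)^{-1}$ throughout.

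The main obstacle I anticipate is the small-$i$ semisimple analysis ($i \in \{1,2\}$) combined with small $q$, where the asymptotic GK bound is too weak: here one must compute $|C_{\O}(x)|$ exactly from the eigenspace decomposition and verify that the resulting expression is at most $(q+2)^{-1}$ (for $i=2$, since $r \mid q+1$) or $q^{-1}$ (for $i=1$). This bookkeeping, together with isolating the precise list of small $(n,q)$ requiring machine computation, is the most laborious step, though it is entirely parallel to the unitary computation already carried out in Lemma \ref{l:psu_sub4}.
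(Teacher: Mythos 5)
Your overall strategy matches the paper's: the same identification of $\O$ with nondegenerate $m$-spaces and the same bounds on $|\O|$, the reduction of semisimple elements to the standard forms $(\L^{\ell},I_{n-\ell i})$ and $((\L,\L^{-1})^{\ell},I_{n-2\ell i})$, a direct count of fixed spaces inside $C_V(x)$ when $m<j$, and the Guralnick--Kantor bound from \cite[Proposition 3.16]{GK} elsewhere. The gap is in your claim that for $m \geqs j$ the GK bound ``suffices except for finitely many small $(n,q,i)$''. When $m=j$ the term $q^{-m/2}$ in \eqref{e:gk2} equals $q^{-i/2}$ (for $i$ even), which already meets or exceeds the target $(q^{i/2}+2)^{-1}$, so the bound fails for \emph{every} $n$; concretely, for $m=i=q=2$ and $r=3$ the GK bound gives at least $1/2$ while the target is $1/4$, for all $n \geqs 6$. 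The same phenomenon occurs for the infinite family $(m,i,q)=(4,2,2)$ inside your ``$m \geqs i+2$'' branch, and for semisimple involutions with $m=2$ and $q=3$ (where the term $q^{-m/2}=1/3$ equals the target for all $n$). Since the exceptional set is infinite, your fallback of checking residual cases in {\sc Magma} is not available.

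These boundary cases also cannot be finished by ``explicit fixed-space counts'' of nondegenerate subspaces of $C_V(x)$: once $m \geqs j$, an element $x=(\L^{\ell},I_{n-\ell i})$ fixes nondegenerate $m$-spaces that are \emph{not} contained in $C_V(x)$, namely those meeting the homogeneous part of $V$ in a nonzero $\la x\ra$-submodule, so that count undercounts $|C_{\O}(x)|$ (your $m<j$ argument relies precisely on this contribution being absent). What is needed, and what the paper supplies at $m=j$, is a uniform estimate valid for all $n$, $q$ and $\ell$ of $|x^G\cap H|$ as a short sum of terms of the shape $|{\rm Sp}_a(q)|/(|{\rm GU}_b(q^{m/2})||{\rm Sp}_c(q)|)$, one for each way the fixed $m$-space can split across the homogeneous components of $x$, paired with a lower bound for $|x^G|$. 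Your unipotent analysis, the $m<j$ semisimple analysis and the reduction to ${\rm PGSp}_n(q)$ are otherwise sound and follow the paper's route.
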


\begin{proof}
Here $n \geqs 6$ and we identify $\O$ with the set of nondegenerate $m$-dimensional subspaces of $V$. Note that 
\[
q^{m(n-m)} < |\O| = \frac{|{\rm Sp}_{n}(q)|}{|{\rm Sp}_{m}(q)||{\rm Sp}_{n-m}(q)|} < 2q^{m(n-m)}.
\]
Let $x \in G$ be an element of prime order $r$. By \cite[Proposition 3.16]{GK} we have
\begin{equation}\label{e:gk2}
{\rm fpr}(x) < 2q^{-(n/2-d)} +q^{-n/2} + q^{-m/2} + q^{-(n-m)},
\end{equation}
where $d = (2,q-1)$. Our aim is to establish the bound ${\rm fpr}(x) \leqs (r+1)^{-1}$ with no exceptions. By the usual argument, we may assume $x \in {\rm PGSp}_{n}(q)$.

Suppose $x$ is unipotent, so $r=p$. If $m \geqs 4$ then it is easy to check that the bound in \eqref{e:gk2} is sufficient unless $(n,m,q) = (10,4,2)$. In the latter case, an easy {\sc Magma} computation shows that ${\rm fpr}(x) \leqs 26/341$ (maximal if $x$ is a $b_1$ involution). Now assume $m=2$. Here $|C_{\O}(x)|$ is maximal when $x = (J_2,J_1^{n-2})$ and we compute the bounds 
\[
|x^G \cap H| \leqs (q^2-1) + (q^{n-2}-1),\;\; |x^G| \geqs \frac{1}{d}(q^n-1),
\]
whence 
\[
{\rm fpr}(x) \leqs \frac{d(q^{n-2}+q^2-2)}{q^n-1} \leqs (q+1)^{-1}
\]
and the result follows.

Now assume $r \ne p$ and $x$ is semisimple. Suppose $r=2$. By inspecting the bounds in Theorem \ref{t:ls91} and \eqref{e:gk2}, we may assume $m=2$ and $q=3$. 
The case $n=6$ can be handled using {\sc Magma}, so let us assume $n \geqs 8$. Here one can check that $|C_{\O}(x)|$ is maximal when $x = (-I_2,I_{n-2})$, which preserves an orthogonal decomposition $V = U \perp W$ into nondegenerate spaces with $\dim U = 2$. In particular, $C_{\O}(x)$ comprises $U$ and every nondegenerate $2$-space in $W$, whence 
\[
|C_{\O}(x)| = 1+\frac{|{\rm Sp}_{n-2}(3)|}{|{\rm Sp}_{2}(3)||{\rm Sp}_{n-4}(3)|} < 2\cdot 3^{2(n-4)}
\]
and thus ${\rm fpr}(x) < 2/81$. 

To complete the proof, we may assume $x$ is semisimple and $r$ is odd. Let $i \geqs 1$ be minimal such that $r$ divides $q^i-1$. First assume $i$ is even, so $r$ divides $q^{i/2}+1$. By arguing as in the proof of Lemma \ref{l:psu_sub4}, we observe that $|C_{\O}(x)|$ is maximal when $x$ is of the form $(\L^{\ell},I_{n-\ell i})$ for some $\ell \geqs 1$. If $m<i$ then $|C_{\O}(x)|$ coincides with the number of nondegenerate $m$-spaces in $C_V(x)$, so $|C_{\O}(x)|$ is maximal when $x = (\L,I_{n-i})$ and we deduce that ${\rm fpr}(x) < 2q^{-mi}$. This yields ${\rm fpr}(x) \leqs (q^{i/2}+2)^{-1}$ and the result follows. 

Next assume $m \geqs i+2$. Here one can check that the upper bound in \eqref{e:gk2} is sufficient unless $(n,m,i,q) = (14,6,4,2)$ or $(m,i,q) = (4,2,2)$. In the former case, $r = 5$ and $x = (\L^{\ell},I_{14-4\ell})$ with $1 \leqs \ell \leqs 3$, and it is straightforward to verify the bound ${\rm fpr}(x) \leqs 1/6$ by computing $|x^G \cap H|$ and $|x^G|$. For example, if $\ell=1$ then
\[
|x^G \cap H| = \frac{|{\rm Sp}_{6}(2)|}{|{\rm GU}_{1}(4)||{\rm Sp}_{2}(2)|} + \frac{|{\rm Sp}_{8}(2)|}{|{\rm GU}_{1}(4)||{\rm Sp}_{4}(2)|} < 2^{16}(2^8+1)
\]
and $|x^G|>2^{47}$. Similarly, if $(m,i,q) = (4,2,2)$ then $r=3$ and we have $x = (\L^{\ell},I_{n-2\ell})$ with $1 \leqs \ell \leqs n/2$. Here $|x^G|>\frac{1}{2}2^{2n\ell-3\ell^2+\ell}$ and for $\ell \geqs 2$ we compute
\begin{align*}
|x^G \cap H| < & \; \frac{|{\rm Sp}_{4}(2)|}{|{\rm GU}_{2}(2)|} \cdot \frac{|{\rm Sp}_{n-4}(2)|}{|{\rm GU}_{\ell-2}(2)||{\rm Sp}_{n-2\ell}(2)|} + \frac{|{\rm Sp}_{4}(2)|}{|{\rm GU}_{1}(2)||{\rm Sp}_{2}(2)|} \cdot \frac{|{\rm Sp}_{n-4}(2)|}{|{\rm GU}_{\ell-1}(2)||{\rm Sp}_{n-2\ell-2}(2)|} \\
& \; + \frac{|{\rm Sp}_{n-4}(2)|}{|{\rm GU}_{\ell}(2)||{\rm Sp}_{n-2\ell-4}(2)|} \\
< & \; 2^{2n\ell-3\ell^2+\ell}\left(2^{-4n+4\ell+8} + 2^{-2n-2\ell+10}+2^{-8\ell}\right).
\end{align*}
It is easy to check that this yields ${\rm fpr}(x) \leqs 1/4$ and a very similar argument shows that the same conclusion holds when $\ell=1$. 

To complete the argument when $i$ is even, it remains to handle the case $m=i$. As noted above, we may assume $x = (\L^{\ell},I_{n-m\ell})$ and we compute
\begin{align*}
|x^G| & > \frac{1}{2}q^{m\ell(2n - \ell-m\ell+1)/2} \\
|x^G \cap H| & < \frac{|{\rm Sp}_{m}(q)|}{|{\rm GU}_{1}(q^{m/2})|} \cdot \frac{|{\rm Sp}_{n-m}(q)|}{|{\rm GU}_{\ell-1}(q^{m/2})||{\rm Sp}_{n-m\ell}(q)|} + \frac{|{\rm Sp}_{n-m}(q)|}{|{\rm GU}_{\ell}(q^{m/2})||{\rm Sp}_{n-m-m\ell}(q)|} \\
& < q^{m\ell(2n - \ell-m\ell+1)/2}\left(q^{m\ell+m^2-m-mn}+q^{-m^2\ell}\right).
\end{align*}
It is straightforward to check that these bounds imply that ${\rm fpr}(x) \leqs (q^{m/2}+2)^{-1}$.

Finally, suppose $i$ is odd.  If $m<2i$ then $|C_{\O}(x)|$ is the number of nondegenerate $m$-spaces in $C_V(x)$, which is maximal when $x = (\L,\L^{-1},I_{n-2i})$. The reader can check that ${\rm fpr}(x)<2q^{-2mi} \leqs q^{-i}$ and the result follows. For $m \geqs 2i+2$ it is easy to show that the upper bound in \eqref{e:gk2} is sufficient, so we may assume $m=2i$. As noted above, we may also assume that $x = ((\L,\L^{-1})^{\ell},I_{n-m\ell})$ and it is straightforward to show that 
\[
{\rm fpr}(x) < 8\left(q^{m\ell+m^2-m-mn}+q^{-m^2\ell}\right),
\]
which yields ${\rm fpr}(x) \leqs q^{-i}$. 
\end{proof}

\begin{lem}\label{l:symp_sub4}
The conclusion to Proposition \ref{p:symp_sub} holds if $q$ is even and $H$ is of type ${\rm O}_{n}^{\e}(q)$. 
\end{lem}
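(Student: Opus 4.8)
Throughout set $m=n/2$. Since $q$ is even I would identify $\O$ with the set of nondegenerate quadratic forms $Q$ of type $\e$ whose polarization is the fixed alternating form $B$ on $V$, so that $|\O|=|{\rm Sp}_n(q)|/|{\rm O}_n^{\e}(q)|=\frac{1}{2}q^{m}(q^{m}+\e)$. The key observation is that $x\in{\rm Sp}_n(q)$ fixes $Q\in\O$ exactly when $x\in{\rm O}(Q)$, so $|C_{\O}(x)|$ is the number of $x$-invariant type-$\e$ forms. By the usual reduction I may assume $x$ is not a field automorphism (the bound \eqref{e:ls91} via Theorem \ref{t:ls91} handling that case), and hence $x\in{\rm PGSp}_n(q)$ is either unipotent ($r=p=2$) or semisimple ($r$ odd). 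For $n$ large the relevant fixed point ratio estimate in \cite[Section 3]{GK} reduces the problem to a short list of extremal element types and small parameters $(n,q)$, the latter settled with {\sc Magma} \cite{magma}; this also disposes of the $n=4$ case where ${\rm O}_4^{\e}(q)$ is a $\C_8$-subgroup, as required for Remark \ref{r:sp4}. It then remains to treat the extremal classes for general $n$.

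For the unipotent case $r=2$ and the target is $(r+1)^{-1}=\frac13$. Using the Aschbacher--Seitz labelling \cite{AS} of unipotent involutions in ${\rm Sp}_n(q)$, I would first show that $|C_{\O}(x)|$ is maximised by the transvection $x=t_{v,\lambda}=(J_2,J_1^{n-2})$. For this $x$ I would count invariant forms directly: a computation in even characteristic shows $t_{v,\lambda}\in{\rm O}(Q)$ iff $Q(v)=\lambda^{-1}$, so $v$ is nonsingular and the invariant forms cut out an affine subspace of dimension $n-1$ inside the $n$-dimensional affine space of all forms polarizing to $B$. Counting how many of these $q^{n-1}$ forms have type $\e$ reduces, via restriction to $v^{\perp}/\langle v\rangle$, to a standard type count on an $(n-2)$-space, yielding the exceptional value $\frac{1}{3}+\frac{2^{m-1}-\e}{3(2^{m}+\e)}$ when $q=2$ and ${\rm fpr}(x)\approx q^{-1}\leq\frac13$ when $q\geq 4$. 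All other unipotent classes have $\nu(x)\geq 2$, hence strictly fewer invariant forms and ${\rm fpr}(x)\leq\frac13$.

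For semisimple $x$ of odd prime order $r$, let $i$ be minimal with $r\mid q^{i}-1$ and decompose $V$ into the nondegenerate $1$-eigenspace $C_V(x)$ together with the nontrivial isotypic components, which are mutually $B$-orthogonal. Since an $x$-invariant form splits as an orthogonal sum over this decomposition, $|C_{\O}(x)|$ factorises; on each nontrivial component the relation $Q(\lambda w)=\lambda^{2}Q(w)$ forces the relevant eigenvectors to be singular, which pins down the admissible types. I would show $|C_{\O}(x)|$ is maximised by $x=(\omega,\omega^{-1},I_{n-2})$ in the case $i=2$ (so $r\mid q+1$), giving the exceptional value $\frac{1}{4}+\frac{3}{4(2^{m}-1)}$ when $q=2$, $r=3$, $\e=-$, while in all remaining semisimple cases the estimate ${\rm fpr}(x)\leq(q^{i/2}+2)^{-1}\leq(r+1)^{-1}$ holds, exactly as in the proofs of Lemmas \ref{l:symp_sub2} and \ref{l:symp_sub3}.

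The main obstacle will be the exact enumeration of $x$-invariant type-$\e$ quadratic forms in even characteristic: because $x$ need not act semisimply in the unipotent case, and because the \emph{type} of the form must be tracked through the orthogonal decomposition, one must combine the Aschbacher--Seitz description with a careful discriminant/parity count to identify the extremal class and extract the precise ratios recorded in parts (ii) and (iii) of Proposition \ref{p:symp_sub}.
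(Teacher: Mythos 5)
Your overall strategy coincides with the paper's: identify $\O$ with the quadratic forms of type $\e$ polarizing to the symplectic form, observe that $|C_{\O}(x)|$ is the number of $x$-invariant such forms, and compute this class by class (the paper phrases it as $|x^{G_0}\cap H|/|x^{G_0}|$, which is the same count). You also land on exactly the two exceptional cases. However, there are two concrete gaps. First, in the unipotent case your deduction ``all other unipotent classes have $\nu(x)\geqs 2$, hence strictly fewer invariant forms and ${\rm fpr}(x)\leqs\frac13$'' is a non sequitur: the comparison is with the transvection, whose fixed point ratio \emph{exceeds} $\frac13$ when $q=2$, so having fewer fixed points than it proves nothing. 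The paper instead uses the bound ${\rm fpr}(x)<4q^{-s}$ for $\nu(x)=s$ from \cite{Bur2}, which still fails for $q=2$ and $s\in\{2,3\}$, and then computes the $a_2$, $c_2$ and $b_3$ classes exactly; note that the $a_2$ class attains ${\rm fpr}(x)=\frac13$ with equality when $(n,\e)=(6,+)$, so there is genuinely no slack for a soft domination argument here and the explicit counts are unavoidable.

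Second, your closing estimate for the semisimple case, ``${\rm fpr}(x)\leqs(q^{i/2}+2)^{-1}\leqs(r+1)^{-1}$ in all remaining cases,'' is false as stated when $i$ is odd: there $r$ need only divide $q^i-1$ and can be as large as $q^i-1$ (e.g.\ $q=2$, $i=3$, $r=7$ gives $(q^{i/2}+2)^{-1}>\frac18$), so the correct target is ${\rm fpr}(x)\leqs q^{-i}$. This stronger bound does follow from the exact formula ${\rm fpr}(x)=q^{e/2}(q^{e/2}+\e)/(q^{n/2}(q^{n/2}+\e))$ with $e=\dim C_V(x)\leqs n-2i$, which is what the paper uses, but you need to say so rather than quote the even-$i$ bound. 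A minor further point: the appeal to \cite{GK} plus {\sc Magma} for small $(n,q)$ is unnecessary baggage here, since the coset count is exact and uniform in $n$ and $q$ for every class; the paper's proof of this lemma uses neither.
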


\begin{proof}
Here $H \cap G_0 = {\rm O}_{n}^{\e}(q)$ and $|\O| = q^{n/2}(q^{n/2}+\e)/2$. Let $x \in G$ be an element of prime order $r$. As usual, we may assume $x \in {\rm PGSp}_{n}(q)$. 

First assume $r=2$. If $x = (J_2,J_1^{n-2})$ is a $b_1$-type involution then 
\[
|x^G \cap H| = \frac{|{\rm O}_{n}^{\e}(q)|}{2|{\rm Sp}_{n-2}(q)|} = q^{n/2-1}(q^{n/2}-\e),\;\; |x^G| = q^n-1
\]
and thus ${\rm fpr}(x) = q^{n/2-1}/(q^{n/2}+\e)$. This is at most $1/3$ if and only if $q \geqs 4$, so the case $q=2$ is an exception and it is recorded in part (ii) of Proposition \ref{p:symp_sub}. Now assume $x \in G$ is an involution with $\nu(x) = s \geqs 2$. By applying the bounds in the proof of \cite[Proposition 3.22]{Bur2}, we deduce that ${\rm fpr}(x) < 4q^{-s}$, which is sufficient unless $q=2$ and $s \in \{2,3\}$. If $x=a_2$ then
\[
|x^G \cap H| = \frac{|{\rm O}_{n}^{\e}(2)|}{2^{2n-7}|{\rm O}_{n-4}^{\e}(2)||{\rm Sp}_{2}(2)|} = \frac{1}{3}(2^{n-2}-1)(2^{n/2-2}+\e)(2^{n/2}-\e)
\]
and 
\[
|x^G| = \frac{|{\rm Sp}_{n}(2)|}{2^{2n-5}|{\rm Sp}_{n-4}(2)||{\rm Sp}_{2}(2)|} = \frac{1}{3}(2^{n-2}-1)(2^{n}-1)
\]
which yields ${\rm fpr}(x)  = (2^{n/2-2}+\e)/(2^{n/2}+\e) \leqs 1/3$ (with equality if $(n,\e) = (6,+)$). A similar calculation shows that the same conclusion holds when $x = c_2$. And for $x = b_3$ we get
\[
{\rm fpr}(x) = \frac{2^{n/2-3}}{2^{n/2}+\e} \leqs \frac{1}{7}
\]
and the result follows.

For the remainder, let us assume $r$ is odd. As usual, let $i \geqs 1$ be minimal such that $r$ divides $q^i-1$. First assume $i$ is even and write $x = (I_e,\L_1^{a_1}, \ldots, \L_t^{a_t})$, where $e = \dim C_V(x)$. If $e=0$ then by computing $|x^{G_0}\cap H|$ and $|x^{G_0}|$ we deduce that $|C_{\O}(x)| = 1$ and ${\rm fpr}(x) \leqs (q^{i/2}+2)^{-1}$ as required. Now assume $e>0$, so we may view $C_V(x)$ as a nondegenerate orthogonal space of type $\e'$ and we get
\[
{\rm fpr}(x) = \frac{q^{e/2}(q^{e/2}+\e')}{q^{n/2}(q^{n/2}+\e)} \leqs q^{-i/2}\left(\frac{q^{(n-i)/2}+1}{q^{n/2}-1}\right)
\]
since $e \leqs n-i$. One can now check that this bound gives ${\rm fpr}(x) \leqs (q^{i/2}+2)^{-1}$ unless $i=q=2$. Here $r=3$ and we quickly reduce to the case where $x = (\L, I_{n-2})$. If $\e=+$ then $\e' = -$ and we obtain ${\rm fpr}(x) \leqs 1/4$. However, if $\e=-$ then $\e' = +$ and we compute
\[
{\rm fpr}(x) = \frac{1}{4}+\frac{3}{4(2^{n/2}-1)}.
\]
The latter case is recorded in part (iii) of Proposition \ref{p:symp_sub}. 

Finally, suppose $i$ is odd. As above, if $e=0$ then $|C_{\O}(x)|=1$ and we deduce that ${\rm fpr}(x) \leqs q^{-i}$. Similarly, for $e>0$ we get 
\[
{\rm fpr}(x) = \frac{q^{e/2}(q^{e/2}+\e)}{q^{n/2}(q^{n/2}+\e)} \leqs q^{-i}\left(\frac{q^{n/2-i}+1}{q^{n/2}+1}\right) \leqs q^{-i}
\]
and the result follows.
\end{proof}

\subsection{Odd dimensional orthogonal groups}\label{ss:orth1}

We begin our analysis of subspace actions of orthogonal groups by handling the groups with socle $G_0 = \O_n(q)$, where $nq$ is odd. Note that we may assume $n \geqs 7$ since $\O_3(q) \cong {\rm L}_2(q)$ and $\O_5(q) \cong {\rm PSp}_4(q)$. 

Our main result is the following. Note that in part (i), $x \in {\rm SO}_{n}(q)$ is an involution of type $(-I_{n-1},I_1)$ with a plus-type $(-1)$-eigenspace. Whereas in part (ii), the $(-1)$-eigenspace of $x$ is a minus-type space.

\begin{prop}\label{p:orth1_sub}
Let $G \leqs {\rm Sym}(\O)$ be a finite almost simple primitive permutation group with point stabilizer $H$ and socle $G_0 = \O_n(q)$, where $n \geqs 7$ and $q$ is odd. Assume $H$ is a subspace subgroup of $G$. If $x \in G$ has prime order $r$, then either ${\rm fpr}(x) \leqs (r+1)^{-1}$, or one of the following holds:
\begin{itemize}\addtolength{\itemsep}{0.2\baselineskip}
\item[{\rm (i)}] $H = P_1$, $r=2$, $q=3$, $x = (-I_{n-1}, I_1)^{+}$ and 
\[
{\rm fpr}(x) = \frac{1}{3} + \frac{2}{3(3^{(n-1)/2}+1)}.
\]
\item[{\rm (ii)}] $H = N_1^{-}$, $r=2$, $q=3$, $x = (-I_{n-1}, I_1)^{-}$ and 
\[
{\rm fpr}(x)= \frac{1}{3}+\frac{2(3^{(n-3)/2}+1)}{3^{(n-1)/2}(3^{(n-1)/2}-1)}.
\]
\end{itemize}
\end{prop}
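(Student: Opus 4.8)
The plan is to mirror the template already used for the symplectic and unitary subspace actions, proceeding through the subgroup types in Table~\ref{tab:subspace} in turn: the totally singular stabilizers $H = P_1$ and $H = P_m$ with $2 \leqslant m \leqslant (n-1)/2$, and the nondegenerate stabilizers $H = N_1^{\eta}$ and $H = N_m^{\eta}$ with $2 \leqslant m \leqslant (n-1)/2$. Since the socle is of type $B_{(n-1)/2}$ there are no graph automorphisms, so the only elements outside ${\rm PGO}_{n}(q) = {\rm SO}_{n}(q)$ are field automorphisms; these are dealt with immediately by \eqref{e:ls91}. This reduces us throughout to elements $x \in {\rm SO}_{n}(q)$ that are unipotent (so $r = p$ is odd) or semisimple (so $r \neq p$), and as usual I would set $\nu(x) = s$ (Definition~\ref{d:nu}) and, for semisimple $x$, take $i \geqslant 1$ minimal with $r \mid q^i - 1$, writing $j = 2i$ if $i$ is odd and $j = i$ otherwise.

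For $H = P_1$ everything is explicit. Here $|\O| = (q^{n-1}-1)/(q-1)$ and $C_{\O}(x)$ is the set of singular $1$-spaces contained in the eigenspaces of $x$. For $x$ unipotent this count is maximised by $x = (J_2, J_1^{n-2})$ and a direct computation gives ${\rm fpr}(x) \leqslant (q+1)^{-1}$, and the same bound follows for semisimple $x$ once $i \geqslant 2$ or $\nu(x) \geqslant 2$. The delicate case is the semisimple involution $x = (-I_{n-1}, I_1)$, whose fixed singular $1$-spaces all lie in the even-dimensional $(-1)$-eigenspace of type $\eta$; counting isotropic $1$-spaces there yields
\[
{\rm fpr}(x) = \frac{q^{(n-3)/2} + \eta}{q^{(n-1)/2} + \eta},
\]
which exceeds $(r+1)^{-1} = \tfrac13$ precisely when $\eta = +$ and $q = 3$. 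This is exactly case~(i), and one checks it dominates every other involution class in ${\rm SO}_{n}(q)$.

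For $H = P_m$ with $m \geqslant 2$ and for the nondegenerate stabilizers $H = N_m^{\eta}$, I would invoke the explicit fixed point ratio estimates of Guralnick and Kantor in \cite[Propositions 3.15, 3.16]{GK}, together with the order estimates for $|\O|$ from \cite{FM}, exactly as in the symplectic lemmas. These bounds settle all configurations outside a short list of small pairs $(n,q)$; for each surviving case I would either reduce to a single homogeneous semisimple block and count eigenspace-fixed subspaces (as in Lemma~\ref{l:sub_useful}), or verify the bound with {\sc Magma}. The stabilizer $N_1^{-}$ then receives the same explicit treatment as $P_1$: for $x = (-I_{n-1}, I_1)$ the fixed nondegenerate $1$-spaces are the $1$-eigenspace together with the nondegenerate $1$-spaces in the $(-1)$-eigenspace, and the resulting ratio beats $\tfrac13$ only for the minus-type eigenspace when $q = 3$, giving case~(ii).

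The main obstacle will be the precise analysis of the two involution classes of type $(-I_{n-1}, I_1)$. Distinguishing the plus- and minus-type $(-1)$-eigenspaces, correctly counting the singular (respectively nondegenerate) $1$-spaces in an even-dimensional orthogonal space of each type, and confirming that these elements maximise $|C_{\O}(x)|$ among all semisimple involutions is what isolates the genuine exceptions from the cases where the bound holds. Once this bookkeeping is done, the remaining estimates are routine applications of the Guralnick--Kantor bounds, supplemented by a finite computational check of small-rank groups.
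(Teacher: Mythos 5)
Your overall strategy coincides with the paper's: explicit counting of fixed singular (resp.\ nondegenerate) $1$-spaces for $H = P_1$ and $H = N_1^{\eta}$, the Guralnick--Kantor bounds from \cite[Propositions 3.15, 3.16]{GK} for $H = P_m$ and $H = N_m^{\eta}$ with $m \geqs 2$, and a {\sc Magma} check for the surviving small cases; your identification of the two exceptional involution classes and the unified formula $(q^{(n-3)/2}+\eta)/(q^{(n-1)/2}+\eta)$ for $P_1$ are both correct.

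There is one concrete slip in the $P_1$ analysis: you claim the fixed point count for unipotent elements is maximised by $x = (J_2, J_1^{n-2})$, but no such element exists in ${\rm SO}_n(q)$ when $q$ is odd, since every Jordan block of even size must occur with even multiplicity in an orthogonal group of odd characteristic. The classes with $\dim C_V(x) = n-2$ that actually have to be examined are $(J_2^2, J_1^{n-4})$ and $(J_3, J_1^{n-3})$, and both require a separate count (for the latter the answer depends on the type of the complement $W$ of the $J_3$-block, and one must take the plus-type case to get the maximum). The resulting bound ${\rm fpr}(x) \leqs (q+1)^{-1}$ still holds for both classes, so the proof survives, but as written the step would fail; the same correction is needed wherever you invoke $(J_2, J_1^{n-2})$ in this orthogonal setting.
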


\begin{lem}\label{l:orth1_sub1}
The conclusion to Proposition \ref{p:orth1_sub} holds if $H = P_1$. 
\end{lem}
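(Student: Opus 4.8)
The plan is to work directly with the concrete model of the action, as in the earlier $P_1$ lemmas. We identify $\O$ with the set of singular (isotropic) $1$-spaces of the natural orthogonal module $V = \mathbb{F}_q^n$, so that $|\O| = (q^{n-1}-1)/(q-1)$, and compute $|C_\O(x)|$ by hand. The guiding principle is that a singular $1$-space $\la v\ra$ is fixed by $x$ if and only if $v$ is an eigenvector of $\hat{x}$ with eigenvalue lying in $\mathbb{F}_q$; hence $C_\O(x)$ is the set of singular points lying in the $\mathbb{F}_q$-rational eigenspaces of $\hat{x}$. Since $G_0 = \O_n(q)$ is of type $B_m$ it admits no graph automorphisms, so after disposing of field automorphisms via Theorem \ref{t:ls91} and \eqref{e:ls91} we may assume $x \in {\rm PGO}_n(q) = {\rm SO}_n(q)$ is semisimple or unipotent.

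First I would treat the unipotent case $r = p$ (odd), where $C_\O(x)$ consists of the singular points of the $1$-eigenspace $C_V(x)$. Because $q$ is odd there are no transvections in ${\rm SO}_n(q)$ (even Jordan blocks occur with even multiplicity), so $\nu(x) \geqs 2$ and $\dim C_V(x) \leqs n-2$; the count is maximised by $x = (J_3, J_1^{n-3})$ or $x = (J_2^2, J_1^{n-4})$. The delicate point is that $C_V(x)$ is typically degenerate: writing $C_V(x) = R \perp W_0$ with $R = {\rm rad}\,C_V(x)$ totally singular and $W_0$ nondegenerate, a vector $r + w_0$ is singular precisely when $w_0$ is, so the number of singular points of $C_V(x)$ is determined by $\dim R$ and the number of singular points of $W_0$. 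Carrying out this bookkeeping (which for $n=5$ gives the values $|C_\O(x)| = q+1$ for $(J_2^2,J_1)$ and $2q+1$ for $(J_3,J_1^2)$ quoted in the proof of Lemma \ref{l:symp_sub2}) yields ${\rm fpr}(x) \leqs (q+1)^{-1}$ in every case, with no exceptions.

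Next I would handle the semisimple involutions $r = 2$, which is where the exception appears. Such an element has the form $(-I_{2s}, I_{n-2s})$, and \emph{both} eigenspaces contribute, so $|C_\O(x)|$ equals the number of singular points in the $(-1)$-eigenspace ${\rm O}_{2s}^{\eta}(q)$ plus the number in the nondegenerate odd-dimensional $1$-eigenspace. Using the standard formulas for the number of points on a quadric, and comparing the leading $q$-power exponents as $s$ ranges over $1 \leqs s \leqs (n-1)/2$, I would show that this total is maximised at $s = (n-1)/2$ and $\eta = +$, that is, at $x = (-I_{n-1}, I_1)^{+}$. For this element a short computation gives
\[
{\rm fpr}(x) = \frac{q^{(n-3)/2}+1}{q^{(n-1)/2}+1},
\]
which is at most $1/3$ exactly when $q^{(n-3)/2}(q-3) \geqs 2$, i.e. for all odd $q \geqs 5$; when $q = 3$ this rearranges to the formula in part (i) and exceeds $1/3$, giving the stated exception. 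The minus-type analogue $(-I_{n-1},I_1)^{-}$ has ${\rm fpr}(x) = (q^{(n-3)/2}-1)/(q^{(n-1)/2}-1) < 1/3$, so it does not arise for $P_1$.

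Finally, for semisimple $x$ of odd order $r \ne p$ I would let $i$ be minimal with $r \mid q^i-1$. When $i \geqs 2$ the only $\mathbb{F}_q$-rational eigenvalue of $\hat{x}$ is $1$, so $C_\O(x)$ is just the set of singular points of $C_V(x)$; this is maximised by a single block $(\L, I_{n-i})$ (for $i$ even) or a dual pair $(\L, \L^{-1}, I_{n-2i})$ (for $i$ odd), giving ${\rm fpr}(x)$ of order $q^{-i}$ respectively $q^{-2i}$, comfortably below $(r+1)^{-1}$. When $i = 1$ the eigenvalues $\omega, \omega^{-1} \in \mathbb{F}_q^{\times}$ span totally singular eigenspaces that also contribute; a convexity comparison in the block sizes shows $|C_\O(x)|$ is maximised by $x = (\omega, \omega^{-1}, I_{n-2})$, for which ${\rm fpr}(x)$ has order $q^{-2}$, and since $r \leqs q-1$ this is at most $(r+1)^{-1}$. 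No further exceptions occur, completing the proof. I expect the main obstacle to be the geometric point-counting of the previous two paragraphs: controlling the radical structure of $C_V(x)$ in the unipotent case, and verifying rigorously that the involution count is genuinely maximised at $(-I_{n-1},I_1)^{+}$, since it is this comparison that isolates the single exceptional family and its exact fixed point ratio.
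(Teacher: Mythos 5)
Your proposal is correct and follows essentially the same route as the paper: identify $\O$ with the singular points of the quadric, reduce to semisimple/unipotent elements in ${\rm SO}_n(q)$, count fixed singular $1$-spaces in the relevant eigenspaces (using the radical decomposition of $C_V(x)$ in the unipotent case), and isolate the reflections $(-I_{n-1},I_1)^{\pm}$ as the extremal involutions, with the plus-type reflection at $q=3$ giving the unique exception. Your simplified form ${\rm fpr}(x) = (q^{(n-3)/2}+1)/(q^{(n-1)/2}+1)$ agrees with the paper's expression after factoring $q^{n-1}-1$, and all the asserted counts and inequalities check out.
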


\begin{proof}
As usual, we identify $\O$ with the set of $1$-dimensional totally singular subspaces of $V$, noting that 
\[
|\O| = \frac{|{\rm SO}_{n}(q)|}{q^{n-2}|{\rm SO}_{n-2}(q)||{\rm GL}_{1}(q)|} = \frac{q^{n-1}-1}{q-1}.
\]
Let $x \in G$ be an element of prime order $r$. By arguing in the usual fashion, we may assume $x \in {\rm SO}_{n}(q)$ is semisimple or unipotent.

First assume $r=p$, in which case $|C_{\O}(x)|$ is the number of totally singular $1$-spaces in $C_V(x)$. Let us also recall that each $J_i$ block in the Jordan form of $x$ on $V$ has even multiplicity if $i$ is even. If $x = (J_2^2,J_{1}^{n-4})$ then $C_V(x) = U \oplus W$, where $U$ is a totally singular $2$-space and $W$ is a nondegenerate $(n-4)$-space, and we compute  
\[
|C_{\O}(x)| = \frac{q^2-1}{q-1} + q^2\left(\frac{q^{n-5}-1}{q-1}\right) = \frac{q^{n-3}-1}{q-1}.
\] 
Similarly, if $x = (J_3,J_1^{n-3})$ then $C_V(x) = \la u \ra \oplus W$, where $u$ is totally singular and $W$ is nondegenerate of dimension $n-3$. Here $C_{\O}(x)$ comprises $\la u \ra$ and every $1$-space of the form $\la \l u + w \ra$, where $\la w \ra \subseteq W$ is totally singular and $\l \in \mathbb{F}_q$. Therefore, $|C_{\O}(x)|$ is  maximal when $W$ is a plus-type space and in this situation we get   
\[
|C_{\O}(x)| = 1 + q \left(\frac{(q^{(n-5)/2}+1)(q^{(n-3)/2}-1)}{q-1}\right).
\]
In both cases, it is easy to check that ${\rm fpr}(x) \leqs (q+1)^{-1}$. If $x$ is any other unipotent element of order $p$, then $\dim C_V(x) \leqs n-4$ and the bound $|C_{\O}(x)| \leqs (q^{n-4}-1)/(q-1)$ is sufficient.

Now assume $r\ne p$. If $r=2$ then $|C_{\O}(x)|$ is maximal when $x \in {\rm SO}_{n}(q)$ is an involution of the form $(-I_{n-1},I_1)$ with a plus-type $(-1)$-eigenspace. Here
\[
|C_{\O}(x)| = \frac{(q^{(n-3)/2}+1)(q^{(n-1)/2}-1)}{q-1}
\]
and thus
\[
{\rm fpr}(x) = \frac{(q^{(n-3)/2}+1)(q^{(n-1)/2}-1)}{q^{n-1}-1}.
\]
For $q \geqs 5$ it is easy to check that this is at most $1/3$ and the result follows. On the other hand, if $q=3$ then
\[
{\rm fpr}(x) = \frac{1}{3} + \frac{2}{3(3^{(n-1)/2}+1)}
\]
and this special case is recorded in part (i) of Proposition \ref{p:orth1_sub}. Note that if $q=3$ and $x$ is any other involution, then $|C_{\O}(x)|$ is maximal when $x = (-I_{n-1},I_1)$ has a minus-type $(-1)$-eigenspace, in which case  
\[
|C_{\O}(x)| = \frac{1}{2}(3^{(n-3)/2}-1)(3^{(n-1)/2}+1)
\]
and we conclude that ${\rm fpr}(x) \leqs 1/3$.

To complete the proof, we may assume $x$ is semisimple and $r$ is odd. Let $i \geqs 1$ be minimal such that $r$ divides $q^i-1$. Note that if $i \geqs 2$ then $C_{\O}(x)$ 
comprises the totally singular $1$-spaces in the nondegenerate space $C_V(x)$. First assume $i$ is even. By the previous observation, it follows that $|C_{\O}(x)|$ is maximal when $x = (\L, I_{n-i})$, in which case $|C_{\O}(x)| = (q^{n-i-1}-1)/(q-1)$ and it is easy to check that ${\rm fpr}(x) \leqs (q^{i/2}+2)^{-1}$. Similarly, if $i \geqs 3$ is odd then $|C_{\O}(x)| \leqs (q^{n-2i-1}-1)/(q-1)$ and we obtain ${\rm fpr}(x) \leqs q^{-i}$. Finally, if $i=1$ then $|C_{\O}(x)| = 2+(q^{n-3}-1)/(q-1)$ (maximal when $x = (\L,\L^{-1},I_{n-2})$) and once again the desired bound holds.
\end{proof}

\begin{lem}\label{l:orth1_sub2}
The conclusion to Proposition \ref{p:orth1_sub} holds if $H = P_m$ with $2 \leqs m < n/2$. 
\end{lem}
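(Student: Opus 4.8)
The plan is to follow the template of Lemmas \ref{l:psl_sub2} and \ref{l:symp_sub2}, the point being that for $m \geqs 2$ no genuine exceptions survive. First I would identify $\O$ with the set of totally singular $m$-spaces of $V$, so that
\[
|\O| = \frac{|{\rm SO}_n(q)|}{q^{m(2n-3m-1)/2}|{\rm SO}_{n-2m}(q)||{\rm GL}_m(q)|},
\]
and record the estimate $\tfrac{1}{2}q^{m(2n-3m-1)/2} < |\O| < 2\left(\tfrac{q}{q-1}\right)q^{m(2n-3m-1)/2}$ obtained from \cite[Proposition 3.9]{Bur2} (or \cite[Lemma 2.1]{FM}). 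Since $n$ is odd, $G_0$ admits no graph automorphism, so every $x$ of prime order is either a field automorphism or lies in ${\rm SO}_n(q)$; field automorphisms satisfy $q=q_0^r$ and are dispatched by \eqref{e:ls91}, so I may assume $x \in {\rm SO}_n(q)$ is semisimple or unipotent. The main tool will be the Guralnick--Kantor subspace bound \cite[Proposition 3.15]{GK}, which for $P_m$ yields an estimate of the shape ${\rm fpr}(x) < 2q^{-(n-3)/2} + q^{-(n-1)/2} + q^{-m}$; here the dominant term $q^{-m} \leqs q^{-2}$ is exactly what buys the extra room that prevented the $(-I_{n-1},I_1)^{+}$ and $N_1^{-}$ exceptions of parts (i), (ii) of Proposition \ref{p:orth1_sub} from reappearing.

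Next I would treat the unipotent elements ($r=p$, with $q$ odd). Here $|C_{\O}(x)|$ is controlled by the number of totally singular $m$-spaces fixed by $x$, and the bound above already gives ${\rm fpr}(x) \leqs (q+1)^{-1}$ except when $q=3$. For $q=3$ the small cases (bounded $n$) can be checked in {\sc Magma}, while for large $n$ I would argue that $|C_{\O}(x)|$ is maximised at $x=(J_2^2,J_1^{n-4})$ or $(J_3,J_1^{n-3})$, exactly as in the proof of Lemma \ref{l:orth1_sub1}, and that dividing the resulting count by the lower bound on $|\O|$ gives at most $(q+1)^{-1}$ once $m \geqs 2$.

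For the semisimple elements I split on $r$. When $r=2$, Theorem \ref{t:ls91} gives ${\rm fpr}(x) \leqs 4/3q \leqs 1/3$ for $q \geqs 5$, reducing me to $q=3$, where I show directly that the involution maximising $|C_{\O}(x)|$ still satisfies the bound; the crucial point is that the $q^{-m}$ factor with $m \geqs 2$ keeps us strictly below $1/3$. When $r$ is odd I introduce the order $i$ of $q$ modulo $r$. For $i \geqs 2$ the set $C_{\O}(x)$ consists precisely of the totally singular $m$-spaces lying in the nondegenerate $1$-eigenspace $C_V(x)$, so $|C_{\O}(x)|$ is maximised by $x=(\L,I_{n-i})$ when $i$ is even and by $x=(\L,\L^{-1},I_{n-2i})$ when $i$ is odd; feeding these counts into the estimate for $|\O|$ yields ${\rm fpr}(x) \leqs (q^{i/2}+2)^{-1}$ for $i$ even (recall $r \mid q^{i/2}+1$) and ${\rm fpr}(x) \leqs q^{-i}$ for $i$ odd (recall $r \mid (q^i-1)/(q-1)$), which are the target bounds. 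The case $i=1$ is handled separately, with the extremal element $x=(\L,\L^{-1},I_{n-2})$.

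The main obstacle I anticipate is the bookkeeping in the odd-order semisimple case when $m$ is comparable to $i$: there the crude bound of \cite[Proposition 3.15]{GK} is not quite strong enough, and I will instead compute $|C_{\O}(x)|$ exactly as the number of totally singular $m$-spaces in $C_V(x)$, using the subspace-counting formulas for orthogonal spaces and the notation of \cite{BG}. This isolates a short finite list of small triples $(n,q,i)$ where the inequality is tight or fails, each of which I dispatch by an explicit {\sc Magma} computation; the remainder of the argument is routine estimation once the correct extremal element in each class has been pinned down.
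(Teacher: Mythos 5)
Your overall strategy is the paper's own: identify $\O$ with the totally singular $m$-spaces, record the estimate on $|\O|$, dispatch field automorphisms via \eqref{e:ls91}, and then play the bound of \cite[Proposition 3.15]{GK} against a case division by $r$ and by the order $i$ of $q$ modulo $r$, pinning down the extremal elements $(\L,I_{n-i})$ and $(\L,\L^{-1},I_{n-2i})$ and leaving a short list of small cases to {\sc Magma}. The unipotent and $r=2$ discussions, and the regime $m<i$, all match the published argument.

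There is, however, a genuine error in your key counting step. You assert that for every $i\geqs 2$ the set $C_{\O}(x)$ consists precisely of the totally singular $m$-spaces contained in $C_V(x)$, and you repeat this in your final paragraph as the ``exact'' computation you would fall back on when $m$ is comparable to $i$. That identification is only valid when $m$ is smaller than the dimension of the nontrivial irreducible $\la x\ra$-summands of $V$; once $m\geqs i$ an invariant totally singular $m$-space can meet the moving part of $V$. The simplest counterexample is $n=2i+1$, $m=i$, $x=(\L,\L^{-1},I_1)$ with $i$ odd: here $C_V(x)$ is $1$-dimensional and contains no $m$-spaces at all, yet $x$ fixes exactly the two totally singular $i$-spaces $U_1,U_2$ on which it acts irreducibly, so $|C_{\O}(x)|=2$, not $0$. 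A similar phenomenon occurs for $i$ even when $x$ has repeated irreducible blocks: two isomorphic minus-type $i$-blocks span a plus-type $2i$-space containing $q^{i/2}+1$ invariant totally singular $i$-spaces (compare the ${\rm GU}_2(q^{m/2})$ computation in the proof of Lemma \ref{l:orth2_sub2}). The paper therefore makes the ``$m$-spaces inside $C_V(x)$'' claim only for $m<i$, and treats $m\geqs i$ either by the Guralnick--Kantor bound (for $n\geqs 2i+5$, together with the isolated case $(n,q,i)=(9,3,4)$) or by the explicit counts $|C_{\O}(x)|=2$ and $2(q+1)$ when $n\in\{2i+1,2i+3\}$. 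Your final conclusion is not endangered, since the true counts in these boundary cases are tiny and the bound holds comfortably, but as written your fallback computation would be carried out with an incorrect description of the fixed-point set, so this step must be repaired before the argument is complete.
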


\begin{proof}
Here $\O$ is the set of totally singular $m$-spaces in $V$ and we have 
\[
\frac{1}{2}q^{m(2n-3m-1)/2} < |\O| = \frac{|{\rm SO}_{n}(q)|}{q^{m(2n-3m-1)/2}|{\rm SO}_{n-2m}(q)||{\rm GL}_{m}(q)|} < 2q^{m(2n-3m-1)/2}.
\]
Let $x \in G$ be an element of prime order $r$. As usual, we may assume $x \in {\rm SO}_{n}(q)$ is semisimple or unipotent. By \cite[Proposition 3.15]{GK} we have 
\begin{equation}\label{e:gk3}
{\rm fpr}(x) < 2q^{-(n-3)/2} + q^{-(n-1)/2} + q^{-m}. 
\end{equation}

If $r = p$ then the bound in \eqref{e:gk3} is sufficient unless $(n,q) = (7,3)$, in which case a {\sc Magma} computation yields ${\rm fpr}(x) \leqs 37/280$. Similarly, if $r=2$ and $p$ is odd, then the bound in \eqref{e:gk3} is sufficient unless $(n,m,q) = (7,2,3)$, where we obtain ${\rm fpr}(x) \leqs 1/7$ by a straightforward {\sc Magma} calculation. 

Finally, suppose $r \ne p$ and $r$ is odd. Let $i \geqs 1$ be minimal such that $r$ divides $q^i-1$. If $i \leqs 2$ then it is easy to check that the bound in \eqref{e:gk3} is always sufficient, so we may assume $i \geqs 3$. Suppose $i \geqs 4$ is even. If $m \geqs i$ then $n \geqs 2i+1$ and \eqref{e:gk3} implies that
${\rm fpr}(x) < 2q^{-i}+2q^{1-i}$, which in turn is less than $(q^{i/2}+1)^{-1}$ unless $(n,q,i) = (9,3,4)$. Here $r=5$ and one checks that the previous bound yields ${\rm fpr}(x) \leqs 1/6$. On the other hand, if $m<i$ then $|C_{\O}(x)|$ is the number of totally singular $m$-spaces in $C_V(x)$, whence
\[
|C_{\O}(x)| < 2q^{m(2n-3m-1)/2}\cdot q^{-mi}
\] 
(maximal when $x = (\L,I_{n-i})$) and thus ${\rm fpr}(x) <4q^{-mi} \leqs (q^{i/2}+2)^{-1}$. 

Similar reasoning applies when $i \geqs 3$ is odd. First note that $r$ divides $t = (q^i-1)/(q-1)$. Now, if $m<i$ then $|C_{\O}(x)|$ is maximal when $x = (\L,\L^{-1},I_{n-2i})$, in which case
\[
|C_{\O}(x)| < 2q^{m(2n-3m-1)/2} \cdot q^{-2mi}
\]
and we get ${\rm fpr}(x) < 4q^{-2mi} \leqs (t+1)^{-1}$ as required. Now assume $m \geqs i$. 
If $n=2i+1$ then $m=i$, $x = (\L,\L^{-1},I_1)$ and $|C_{\O}(x)|=2$. Similarly, if $n = 2i+3$ then $x = (\L,\L^{-1},I_3)$ and either $m=i$ and $|C_{\O}(x)|=2$, or $m=i+1$ and $|C_{\O}(x)|=2(q+1)$. In each of these cases, the desired bound holds. Finally, if $n \geqs 2i+5$ then the bound in \eqref{e:gk3} is sufficient.
\end{proof}

\begin{lem}\label{l:orth1_sub3}
The conclusion to Proposition \ref{p:orth1_sub} holds if $H = N_1^{\e}$. 
\end{lem}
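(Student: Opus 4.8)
The plan is to follow the template of Lemma \ref{l:orth1_sub1}, now working with nondegenerate rather than totally singular $1$-spaces. First I would identify $\O$ with the set of nondegenerate $1$-spaces $\la v\ra$ of $V$ for which $\la v\ra^{\perp}$ is a nondegenerate $(n-1)$-space of type $\e$, and record that, writing $m=(n-1)/2$,
\[
|\O| = \frac{|{\rm SO}_{n}(q)|}{|{\rm SO}_{1}(q)||{\rm SO}_{n-1}^{\e}(q)|} = \frac{1}{2}q^{m}(q^{m}+\e).
\]
Using the bound \eqref{e:ls91} together with Theorem \ref{t:ls91}, field automorphisms are eliminated at once, so I may assume $x \in {\rm SO}_{n}(q)$ is semisimple or unipotent. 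The key geometric observation is that every $\la v\ra \in \O$ is nondegenerate, so $x$ fixes $\la v\ra$ only if $v$ is an eigenvector of $x$; preserving $Q$ forces the eigenvalue to be $\pm 1$. Hence $C_{\O}(x)$ consists exactly of the nondegenerate $1$-spaces of type $\e$ lying in the $(\pm1)$-eigenspaces of $x$, and for $x$ unipotent or semisimple of odd order only the $1$-eigenspace $C_V(x)$ contributes.

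For $r=p$ I would bound $|C_{\O}(x)|$ by the total number of nondegenerate $1$-spaces of $C_V(x)$; since $\dim C_V(x) \leqs n-2$ for any nontrivial unipotent $x$, this count is $O(q^{n-3})$ against $|\O| \sim \tfrac{1}{2}q^{n-1}$, giving ${\rm fpr}(x) \leqs (q+1)^{-1}$ with room to spare (the single small case $(n,q)=(7,3)$ being checked directly with {\sc Magma}). For $r \ne p$ odd, letting $i$ be minimal with $r \mid q^i-1$, the space $C_V(x)$ has codimension at least $i$ when $i$ is even and at least $2i$ when $i$ is odd (maximized by $x=(\L,I_{n-i})$, resp.\ $x=(\L,\L^{-1},I_{n-2i})$), so the same crude bounding yields ${\rm fpr}(x) \leqs (q^{i/2}+2)^{-1}$ for $i$ even and ${\rm fpr}(x) \leqs q^{-i}$ for $i$ odd. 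These suffice because $r \mid q^{i/2}+1$ (so $r+1 \leqs q^{i/2}+2$) in the even case and $r \leqs q^i-1$ (so $r+1 \leqs q^i$) in the odd case.

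The delicate case, and the main obstacle, is $r=2$ with $p$ odd, where $x=(-I_{k},I_{n-k})$ and $C_{\O}(x)$ collects nondegenerate $1$-spaces of type $\e$ from \emph{both} eigenspaces. After arguing that $|C_{\O}(x)|$ is maximized by the reflection-like involution $x=(-I_{n-1},I_1)$ (the largest eigenspace), I would count, inside a nondegenerate even-dimensional orthogonal space of prescribed type, the nondegenerate $1$-spaces in each discriminant class. The count of nonsingular vectors in ${\rm O}_{2m}^{\tau}(q)$ is $(q-1)(q^{2m-1}-\tau q^{m-1})$, splitting evenly into the two square classes, so that each class yields $\tfrac{1}{2}(q^{2m-1}-\tau q^{m-1})$ nondegenerate $1$-spaces; the genuinely fiddly step is to track how the type $\e$ of $\la v\ra^{\perp}$ corresponds to $Q(v)$ being a square or a nonsquare and to the type of the $(-1)$-eigenspace. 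This bookkeeping is precisely what creates the asymmetry: for $q \geqs 5$ the resulting fixed point ratio is at most $1/3$ for every involution, while for $q=3$ it remains $\leqs 1/3$ when $\e=+$ but exceeds $1/3$ exactly for the minus-type involution $x=(-I_{n-1},I_1)^{-}$ when $\e=-$, producing
\[
{\rm fpr}(x)= \frac{1}{3}+\frac{2(3^{(n-3)/2}+1)}{3^{(n-1)/2}(3^{(n-1)/2}-1)}
\]
as in part (ii) of Proposition \ref{p:orth1_sub}. I expect that keeping the discriminant and type conventions consistent---so that one genuinely lands on $N_1^-$ rather than $N_1^+$---will be the trickiest point, with the remaining non-extremal involutions dispatched by the cruder bound $|C_{\O}(x)| \leqs$ (total nondegenerate $1$-spaces in $C_V(x)$) once $\dim C_V(x) \leqs n-3$.
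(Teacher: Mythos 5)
Your proposal is correct and follows essentially the same route as the paper: identify $C_{\O}(x)$ with the $\e$-type nondegenerate $1$-spaces in the $(\pm1)$-eigenspaces, locate the extremal elements $(-I_{n-1},I_1)$, $(\L,I_{n-i})$, $(\L,\L^{-1},I_{n-2i})$, and isolate the $q=3$, $\e=-$ reflection as the unique exception. The only real divergence is in the unipotent case: your crude count of \emph{all} nondegenerate $1$-spaces in $C_V(x)$ has less room to spare than you suggest when $q=3$ and $\dim C_V(x)=n-2$ (it genuinely fails for $(n,q)=(7,3)$, which you do flag for \textsc{Magma}, and only just succeeds for $n\geqs 9$), which is why the paper instead computes $|x^G\cap H|$ and $|x^G|$ exactly for the two classes $(J_2^2,J_1^{n-4})$ and $(J_3,J_1^{n-3})$.
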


\begin{proof}
First identify $\O$ with the set of nondegenerate $1$-spaces $U$ such that $U^{\perp}$ has type $\e$ (in this situation, we will refer to $U$ itself as an $\e$-type $1$-space). Note that 
\[
|\O| = \frac{|{\rm SO}_{n}(q)|}{2|{\rm SO}_{n-1}^{\e}(q)|} = \frac{1}{2}q^{(n-1)/2}(q^{(n-1)/2}+\e).
\]
Let $x \in G$ be an element of prime order $r$. We may assume $x \in {\rm SO}_{n}(q)$ is semisimple or unipotent.

First assume $r=p$ and note that $|C_{\O}(x)|$ coincides with the total number of $\e$-type $1$-spaces in $C_V(x)$. In particular, if $\dim C_V(x) \leqs n-3$ then $|C_{\O}(x)|$ is at most $(q^{n-3}-1)/(q-1)$ and it is easy to check that ${\rm fpr}(x) \leqs (q+1)^{-1}$. Therefore, we may assume $x=(J_2^2,J_1^{n-4})$ or $(J_3,J_1^{n-3})$. In the former case, we compute
\[
|x^G \cap H| = \frac{|{\rm SO}_{n-1}^{\e}(q)|}{q^{2n-9}|{\rm Sp}_{2}(q)||{\rm SO}_{n-5}^{\e}(q)|},\;\; 
|x^G| = \frac{|{\rm SO}_{n}(q)|}{q^{2n-7}|{\rm Sp}_{2}(q)||{\rm SO}_{n-4}(q)|} 
\]
and we obtain ${\rm fpr}(x) \leqs (q+1)^{-1}$. A very similar calculation establishes the same bound when $x = (J_3,J_1^{n-3})$.

Now assume $r \ne p$. First we handle the case $r=2$. Let $x = (-I_{n-1},I_1)$ be an involution with a minus-type $(-1)$-eigenspace. If $\e=+$ then
\[
|x^G\cap H| = \frac{|{\rm SO}_{n-1}^{+}(q)|}{2|{\rm SO}_{n-2}(q)|},\;\; |x^G| = \frac{|{\rm SO}_{n}(q)|}{2|{\rm SO}_{n-1}^{-}(q)|}
\]
and we compute ${\rm fpr}(x) = q^{-1} \leqs 1/3$. Similarly, if $\e=-$ then 
\[
{\rm fpr}(x) = \frac{2+q^{(n-3)/2}(q^{(n-1)/2}+1)}{q^{(n-1)/2}(q^{(n-1)/2}-1)},
\]
which is at most $1/3$ if $q \geqs 5$. However, if $q=3$ then one checks that ${\rm fpr}(x)>1/3$ and so this case is recorded in part (ii) of Proposition \ref{p:orth1_sub}. Now if $x$ is any other involution, then $|C_{\O}(x)|$ is maximal when $x = (-I_{n-1},I_1)$ with a plus-type $(-1)$-eigenspace and it is straightforward to check that ${\rm fpr}(x)  \leqs 1/3$. 

Finally, let us assume $x$ is semisimple and $r$ is odd. Let $i \geqs 1$ be minimal such that $r$ divides $q^i-1$ and note that $|C_{\O}(x)|$ is maximal when $x = (\L,I_{n-i})$ (for $i$ even) or $x = (\L,\L^{-1},I_{n-2i})$ (for $i$ odd). If we fix such an element $x$, then $|C_{\O}(x)|$ is the number of $\e'$-type $1$-spaces in $C_V(x)$, where $\e' = \e$ if $i$ is odd, otherwise $\e'=-\e$. For example, if $i$ is even and $\e=+$, then $x$ preserves an orthogonal decomposition $V = U \perp W$, with $x$ acting irreducibly on the minus-type $i$-space $U$. It follows that $|C_{\O}(x)|$ is the number of nondegenerate $1$-spaces in $W = C_V(x)$ with a minus-type orthogonal complement in $W$, whence $\e'=-$. By setting $j = i$ if $i$ is even and $j = 2i$ if $i$ is odd, we deduce that   
\[
|C_{\O}(x)| \leqs \frac{1}{2}q^{(n-j-1)/2}(q^{(n-j-1)/2}+\e').
\]
For $i$ even, it is easy to check that this bound implies that ${\rm fpr}(x) \leqs (q^{i/2}+2)^{-1}$. Similarly, if $i$ is odd then we obtain ${\rm fpr}(x) \leqs q^{-i}$.
\end{proof}

\begin{lem}\label{l:orth1_sub4}
The conclusion to Proposition \ref{p:orth1_sub} holds if $H = N_m^{\eta}$ with $2 \leqs  m < n/2$. 
\end{lem}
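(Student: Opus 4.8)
The plan is to follow the template established in the preceding lemmas of this proposition, combined with the treatment of the symplectic and unitary analogues in Lemmas \ref{l:symp_sub3} and \ref{l:psu_sub4}. First I would identify $\O$ with the set of nondegenerate $m$-spaces of the relevant type, recalling the convention of Table \ref{tab:subspace}: when $m$ is even the fixed $m$-space $U$ itself has type $\eta$, while when $m$ is odd it is the complement $U^{\perp}$ that carries the type $\eta$. Either way the point stabilizer is, up to index $2$, a direct product of two orthogonal groups and
\[
|\O| = \frac{|{\rm SO}_n(q)|}{2|{\rm SO}_m^{\eta_1}(q)||{\rm SO}_{n-m}^{\eta_2}(q)|}, \qquad \tfrac{1}{2}q^{m(n-m)} < |\O| < 2q^{m(n-m)},
\]
where exactly one of the two denominator factors is an odd-dimensional orthogonal group (carrying no type). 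Since $G_0 = \O_n(q)$ has Dynkin type $B_{(n-1)/2}$ there are no graph automorphisms, and field automorphisms are disposed of via \eqref{e:ls91}, so I may assume $x \in {\rm SO}_n(q)$ is semisimple or unipotent. The principal tool is then the Guralnick--Kantor estimate \cite[Proposition 3.16]{GK} for nondegenerate subspace stabilizers, which I would apply exactly as in Lemmas \ref{l:symp_sub3} and \ref{l:psu_sub4}.

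Next I would split into the usual three cases. For unipotent $x$ (so $r=p$) the Guralnick--Kantor bound settles all but a short list of small triples $(n,m,q)$, which I would check with {\sc Magma}; for $m=2$ I would instead compute $|x^G \cap H|$ and $|x^G|$ directly, noting as in Lemma \ref{l:orth1_sub1} that $|C_{\O}(x)|$ is maximised by $x=(J_2^2,J_1^{n-4})$ or $(J_3,J_1^{n-3})$. For semisimple involutions ($r=2$, $p$ odd) Theorem \ref{t:ls91} handles $q \geqs 5$, leaving $q=3$ to the Guralnick--Kantor bound or a {\sc Magma} check, the maximal fixed-point count arising from $x=(-I_{n-1},I_1)$ of suitable type.

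For odd semisimple $r$, writing $i$ for the multiplicative order of $q$ modulo $r$, I would run the homogeneity reduction of Lemma \ref{l:psu_sub4}: the containment $C_{\O}(x)\subseteq C_{\O}(y)$ for an appropriate $y$ reduces the analysis to $x=(\L^{\ell},I_{n-\ell i})$ when $i$ is even and $x=((\L,\L^{-1})^{\ell},I_{n-2i\ell})$ when $i$ is odd. When $m$ is smaller than $i$ (respectively $2i$), the set $C_{\O}(x)$ consists exactly of the type-$\eta$ nondegenerate $m$-spaces lying in the $1$-eigenspace $C_V(x)$, so $|C_{\O}(x)|$ is bounded by the count of such $m$-spaces in a space of dimension $n-i$ (respectively $n-2i$); combining this with the lower bound on $|\O|$ yields ${\rm fpr}(x)\leqs (q^{i/2}+2)^{-1}$ (respectively $q^{-i}$), which suffices since $r$ divides $q^{i/2}+1$ (respectively $q^i-1$). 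For the larger values of $m$ the Guralnick--Kantor bound is already sufficient, apart from finitely many explicit cases treated directly or with {\sc Magma}.

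I expect the main obstacle to be the careful bookkeeping of the orthogonal type data: tracking how the type $\eta$ of a fixed $m$-space is inherited from, or complementary to, the type of its intersection with $C_V(x)$, together with the square/nonsquare discriminant distinction when $m$ is odd. This must be controlled well enough to verify the counting bounds on $|C_{\O}(x)|$ in every residue class of $i$, and in particular to confirm that --- in contrast with the $P_1$ and $N_1^{-}$ actions --- no genuine exception survives here. Isolating the small pairs falling outside the range of the Guralnick--Kantor estimate, and confirming them with {\sc Magma}, will be the routine endgame.
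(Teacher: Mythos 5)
Your proposal matches the paper's proof in all essentials: the same identification of $\O$ with nondegenerate $m$-spaces carrying the type data, the same reliance on \cite[Proposition 3.16]{GK} as the main estimate, the same three-way case division (unipotent, semisimple involutions, odd-order semisimple split by the parity of $i$), the same reduction to counting $\eta$-type $m$-spaces in $C_V(x)$ when $m<i$ (resp.\ $m<2i$), and the same endgame of explicit class-size or {\sc Magma} computations for the residual small cases (which in the paper are precisely $n=2m+1$ with $m\in\{i,i+1\}$ or $\{2i,2i+1\}$). The argument is correct and no genuinely different route is taken.
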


\begin{proof}
Recall that if $m$ is even, then we may identify $\O$ with the set of nondegenerate $m$-spaces of type $\eta \in \{+,-\}$. Similarly, if $m$ is odd then we take $\O$ to be the set of nondegenerate $m$-spaces $U$ such that $U^{\perp}$ has type $\eta$. If $m$ is even, then  
\[
\frac{1}{4}\left(\frac{q}{q+1}\right)q^{m(n-m)} < |\O| = \frac{|{\rm SO}_{n}(q)|}{2|{\rm SO}_{m}^{\eta}(q)||{\rm SO}_{n-m}(q)|} < q^{m(n-m)}
\]
and it is easy to check that the same upper and lower bounds on $|\O|$ are also valid when $m$ is odd. Let $x \in G$ be an element of prime order $r$. As usual, we may assume $x \in {\rm SO}_{n}(q)$ is semisimple or unipotent. By \cite[Proposition 3.16]{GK} we note that 
\begin{equation}\label{e:gk4}
{\rm fpr}(x) < 2q^{-(n-3)/2}+q^{-(n-1)/2}+q^{-m} +q^{-(n-m-\a)/2},
\end{equation}
where $\a = 1-\eta$ if $m$ is odd, otherwise $\a=1$.

First assume $r=p$. If $n \geqs 9$ then the bound in \eqref{e:gk4} is sufficient, so let us assume $n=7$. If $m=2$, or if $m=3$ and $\eta=+$, then one checks that \eqref{e:gk4} is effective if $q \geqs 7$, while the cases $q \in \{3,5\}$ can be checked using {\sc Magma}. Now assume $m=3$ and $\eta=-$. Here we find that $|C_{\O}(x)|$ is maximal when $x = (J_3,J_1^4)$ and we compute
\begin{align*}
|x^G \cap H| & = \frac{|{\rm SO}_{4}^{-}(q)|}{2q^2} + \frac{|{\rm SO}_{3}(q)|}{q} = \frac{1}{2}(q^2-1)(q^2+3) \\
|x^G| & = \frac{|{\rm SO}_{7}(q)|}{2q^5|{\rm SO}_{4}^{-}(q)|} = \frac{1}{2}q^2(q^2-1)(q^6-1),
\end{align*}
which immediately implies that ${\rm fpr}(x) \leqs (q+1)^{-1}$.

If $r=2$ then the bound in \eqref{e:gk4} is sufficient unless $(n,q) = (7,3)$; in the latter case, we can use {\sc Magma} to show that ${\rm fpr}(x) \leqs 1/3$. 

Finally, suppose $r \ne p$ and $r$ is odd. Let $i \geqs 1$ be minimal such that $r$ divides $q^i-1$.  First assume $i$ is even. As in previous cases, we are free to assume that $x = (\L^{\ell},I_{n-\ell i})$ for some $\ell \geqs 1$. If $m<i$ then $|C_{\O}(x)|$ is the number of $\eta$-type $m$-spaces in $C_V(x)$, which is maximal when $x = (\L, I_{n-i})$ and we deduce that
\[
{\rm fpr}(x) < 4\left(\frac{q+1}{q}\right)q^{-mi} \leqs (q^{i/2}+2)^{-1}.
\]
For $m \geqs i$ one checks that the bound in \eqref{e:gk4} is sufficient unless $n=2m+1$ and $m \in \{i,i+1\}$.

Suppose $n=2m+1$ and $m=i+1$. Here $m$ is odd and there are three cases to consider. If $x = (\L,I_{m+2})$ then
\[
|x^{G_0} \cap H| = \frac{|{\rm SO}_{m}(q)|}{|{\rm GU}_{1}(q^{i/2})|} + \frac{|{\rm SO}_{m+1}^{\eta}(q)|}{|{\rm SO}_{2}^{-\eta}(q)||{\rm GU}_{1}(q^{i/2})|} < q^{\frac{1}{2}m^2}
\]
and by estimating $|x^G|$ we deduce that 
\[
{\rm fpr}(x) < 2\left(\frac{q+1}{q}\right)q^{-m^2+m+1/2} \leqs (q^{i/2}+2)^{-1}.
\]
Similarly, one can check that the same conclusion holds if $x = (\L^2,I_3)$. Finally, if $n = 7$ and $x = (\L^3,I_1)$ then 
\[
|x^{G_0} \cap H| \leqs \frac{|{\rm SO}_{3}(q)|}{|{\rm GU}_{1}(q)|} \cdot \frac{|{\rm O}_{4}^{+}(q)|}{|{\rm GU}_{2}(q)|} < 2q^4
\]
and the result follows since $|x^G|>\frac{1}{2}(q+1)^{-1}q^{13}$.

Now assume $n=2m+1$ and $m=i$, in which case $x = (\L,I_{m+1})$ or $(\L^2,I_1)$. If $x = (\L^2,I_1)$ then
\[
|x^{G_0} \cap H| \leqs \frac{|{\rm O}_{m}^{-}(q)|}{|{\rm GU}_{1}(q^{m/2})|} \cdot \frac{|{\rm SO}_{m+1}(q)|}{|{\rm GU}_{1}(q^{m/2})|} < 2q^{m(m-1)}
\]
and we deduce that 
\[
{\rm fpr}(x) < 4\left(\frac{q+1}{q}\right)q^{-m^2} \leqs (q^{i/2}+2)^{-1}.
\]
One can check that the same conclusion holds when $x = (\L,I_{m+1})$.

A very similar argument applies when $i$ is odd. If $m<2i$ then $|C_{\O}(x)|$ is maximal when $x = (\L,\L^{-1}, I_{n-2i})$ and we deduce that 
\[
{\rm fpr}(x) < 4 \left(\frac{q+1}{q}\right)q^{-2mi} \leqs q^{-i}.
\]
For $m \geqs 2i$, one can check that the bound in \eqref{e:gk4} is sufficient unless $n=2m+1$ and $m \in \{2i,2i+1\}$. The analysis of these remaining cases is essentially identical to the argument given above in the analogous cases with $i$ even. We leave the reader to check the details.
\end{proof}

\subsection{Even dimensional orthogonal groups}\label{ss:orth2}

To complete the proof of Theorem \ref{t:subspace}, it remains to handle the subspace actions of the even dimensional orthogonal groups. 

Our main result is Proposition \ref{p:orth2_sub} below. Note that in part (v), we write $x = (-I_{n-1},I_{1})^{\delta}$ to denote a semisimple involution such that $C_V(x) = \la v \ra$ has discriminant $\delta \in \{\square, \boxtimes\}$ (that is, if $Q$ is the defining quadratic form on $V$, then $Q(v) \in \mathbb{F}_{q}^{\times}$ is a square if $\delta = \square$ and a nonsquare if $\delta = \boxtimes$). Also note that in part (ii), both $(-I_{n-1},I_{1})^{\square}$ and $(-I_{n-1},I_{1})^{\boxtimes}$ have the same number of fixed points on $\O$, so there is no need to specify a label.

\begin{prop}\label{p:orth2_sub}
Let $G \leqs {\rm Sym}(\O)$ be a finite almost simple primitive permutation group with point stabilizer $H$ and socle $G_0 = {\rm P\O}_n^{\e}(q)$ with $n \geqs 8$ even. Assume $H$ is a subspace subgroup of $G$. If $x \in G$ has prime order $r$, then either ${\rm fpr}(x) \leqs (r+1)^{-1}$, or one of the following holds:
\begin{itemize}\addtolength{\itemsep}{0.2\baselineskip}
\item[{\rm (i)}] $H = P_1$, $r=q=2$, $x = (J_2,J_1^{n-2})$ and 
\[
{\rm fpr}(x) = \frac{1}{3}+\frac{2^{n-2}-\e 2^{n/2-1}-2}{3(2^{n/2-1}+\e)(2^{n/2}-\e)}.
\]
\item[{\rm (ii)}] $H = P_1$, $\e=-$, $r=2$, $q=3$, $x = (-I_{n-1},I_1)$ and 
\[
{\rm fpr}(x) = \frac{1}{3}+\frac{2}{3(3^{n/2}+1)}.
\]
\item[{\rm (iii)}] $H = P_1$, $\e=-$, $r=3$, $q=2$, $x = (\omega,\omega^{-1},I_{n-2})$  and 
\[
{\rm fpr}(x) = \frac{1}{4}+\frac{3}{4(2^{n/2}+1)}.
\]
\item[{\rm (iv)}] $H=P_2$, $(\e,n,q)=(+,8,4)$, $r=5$, $x = (\omega, \omega^{-1},I_6)$ or $(\omega I_4, \omega^{-1}I_4)$, and ${\rm fpr}(x) = 1/5$.
\item[{\rm (v)}] $H = N_1$, $r=2$, $q=3$, $x = (-I_{n-1},I_{1})^{\delta}$ and 
\[
{\rm fpr}(x) = \left\{\begin{array}{ll}
\frac{1}{3}+\frac{4}{3(3^{n/2}-1)} & \mbox{if $\e=+$} \\
\frac{1}{3}+\frac{2(3^{n/2-2
}+1)}{3^{n/2-1}(3^{n/2}+1)} & \mbox{if $\e=-$,}
\end{array}\right.
\]
where $\delta = \boxtimes$ if $\e=+$, otherwise $\delta = \square$.

\item[{\rm (vi)}] $H = N_1$, $r=q=2$, $x = (J_2,J_1^{n-2})$ and 
\[
{\rm fpr}(x) = \frac{1}{3}+\frac{2^{n/2-1}+\e}{3(2^{n/2}-\e)}.
\] 
\item[{\rm (vii)}] $H = N_1$, $\e=+$, $r=3$, $q=2$, $x = (\omega, \omega^{-1}, I_{n-2})$ and 
\[
{\rm fpr}(x) = \frac{1}{4}+\frac{3}{4(2^{n/2}-1)}.
\]   
\end{itemize}
\end{prop}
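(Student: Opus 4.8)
The plan is to follow the template already established for the odd-dimensional case in Proposition \ref{p:orth1_sub}, proving Proposition \ref{p:orth2_sub} through a sequence of lemmas indexed by the type of subspace subgroup $H$ recorded in Table \ref{tab:subspace} for $G_0 = {\rm P\O}_n^{\e}(q)$: namely $H = P_1$, then $H = P_m$ with $2 \leqs m \leqs n/2$, then $H = N_1$, then $H = N_m^{\eta}$ with $2 \leqs m \leqs n/2$, and finally the triality parabolic $H = P_{1,3,4}$ arising when $(n,\e) = (8,+)$. In each case I would first identify $\O$ with the appropriate set of totally singular or nondegenerate subspaces, record a formula or a two-sided estimate for $|\O|$, and then reduce to the situation where $x \in {\rm PGO}_n^{\e}(q)$ is semisimple or unipotent. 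This reduction is routine: field and graph-field automorphisms are handled by Theorem \ref{t:ls91} exactly as in \eqref{e:ls91}, while involutory graph automorphisms (and the triality automorphisms in the $D_4$ case) either satisfy ${\rm fpr}(x) \leqs 1/3$ after bounding $|C_{\O}(x)|$ by the total number of $m$-spaces, or reduce to finitely many small cases checkable in {\sc Magma} \cite{magma}.

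For the main analysis I would apply the fixed point ratio estimates of Guralnick and Kantor from \cite[Propositions 3.15, 3.16]{GK} to dispose of all but finitely many $(n,q)$, handling the residual low-dimensional or small-field cases by direct computation. When $x$ is unipotent (so $r=p$), $|C_{\O}(x)|$ equals the number of singular or nondegenerate $m$-spaces contained in the $1$-eigenspace $C_V(x)$, which is maximised by the smallest nontrivial Jordan type $x = (J_2,J_1^{n-2})$; this produces the transvection exceptions (i) and (vi) when $q=2$ and yields the sharper bound ${\rm fpr}(x) \leqs (q+1)^{-1}$ otherwise. When $x$ is semisimple of odd prime order $r \neq p$, I would let $i$ be minimal with $r \mid q^i-1$ and argue, as in the symplectic and unitary lemmas, that $|C_{\O}(x)|$ is maximised by $x = (\L, I_{n-i})$ for $i$ even and by $x = (\L, \L^{-1}, I_{n-2i})$ for $i$ odd, obtaining ${\rm fpr}(x) \leqs (q^{i/2}+2)^{-1}$ or ${\rm fpr}(x) \leqs q^{-i}$; the borderline values $q \in \{2,3\}$ with $i \in \{1,2\}$ give exceptions (iii), (iv) and (vii). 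The involution case $r=2$ needs separate treatment according to whether the $(-1)$-eigenspace is of plus- or minus-type, which is precisely where the discriminant labels $\square, \boxtimes$ and the type data in (ii) and (v) become essential; careful bookkeeping of these types for each relevant eigenspace is the source of most of the tedium.

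The two principal obstacles I anticipate are both confined to low rank. First, the sporadic exception (iv) with $G_0 = {\rm P\O}_8^+(4)$, $H = P_2$ and $r = 5$ must be extracted by hand or in {\sc Magma}: here $i = 2$, and the two classes $(\omega,\omega^{-1},I_6)$ and $(\omega I_4, \omega^{-1}I_4)$ of order-$5$ elements both attain ${\rm fpr}(x) = 1/5$, so the generic Guralnick--Kantor bound only just fails and the fixed point counts must be computed directly. Second, and more seriously, the triality parabolic $H = P_{1,3,4}$ forces $(n,\e) = (8,+)$ and $G$ to contain triality automorphisms, so $\O$ is no longer a set of subspaces in the usual sense; here I would exploit the $S_3$-symmetry of triality acting on the three families of singular subspaces to relate the fixed points of $x$ on $\O$ to its fixed points in the ordinary $P_1$ action, reducing the bound to the $P_1$ lemma together with a direct analysis of the triality and graph-field automorphisms, and confirming that no exception beyond those already listed occurs. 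Throughout the $n=8$ boundary I would cross-check the small computations against the results already established, noting that the $P_1$ lemma proved here is in fact the input (cited as the case $\O_6^-(q)\cong {\rm U}_4(q)$) used in the unitary and symplectic sections.
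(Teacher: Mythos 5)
Your proposal follows essentially the same route as the paper: the same decomposition into lemmas indexed by the type of $H$, the same reduction to semisimple and unipotent elements of ${\rm PGO}_n^{\e}(q)$, the same reliance on the Guralnick--Kantor bounds from \cite[Propositions 3.15, 3.16]{GK} supplemented by {\sc Magma} for the residual small cases (which is indeed where exception (iv) is extracted), and the same identification of the maximising classes $(J_2,J_1^{n-2})$, $(\L,I_{n-i})$ and $(\L,\L^{-1},I_{n-2i})$. The only cosmetic difference is the triality parabolic $P_{1,3,4}$, where the paper dispenses with any appeal to the $S_3$-symmetry and simply notes that ${\rm fpr}(x)$ for this action is bounded above by the fixed point ratio for the $P_1$ action, reducing at once to the earlier lemma.
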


The proof of Proposition \ref{p:orth2_sub} will be given in the sequence of lemmas presented below. In the proofs, we will sometimes write $\ell^{\eta}$ to denote a nondegenerate $\ell$-space of type $\eta$. For example, if $\e=-$ then $V = U \perp W = 2^{+} \perp (n-2)^{-}$ denotes an orthogonal decomposition of $V$ into nondegenerate spaces, where $U$ is a plus-type $2$-space and $W$ is a minus-type space of dimension $n-2$.

\begin{lem}\label{l:orth2_sub1}
The conclusion to Proposition \ref{p:orth2_sub} holds if $H = P_1$. 
\end{lem}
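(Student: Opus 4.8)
The plan is to follow the template of Lemmas \ref{l:orth1_sub1} and \ref{l:symp_sub1}, adapted to the even-dimensional type-$\e$ setting. First I would identify $\O$ with the set of singular $1$-dimensional subspaces of the natural module $V$, so that
\[
|\O| = \frac{(q^{n/2}-\e)(q^{n/2-1}+\e)}{q-1}.
\]
As usual, field and graph-field automorphisms force $q=q_0^r$ and are eliminated by \eqref{e:ls91} via Theorem \ref{t:ls91}; a genuine graph automorphism (which can occur only when $\e=+$, with triality excluded since that case belongs to the $P_{1,3,4}$ row of Table \ref{tab:subspace}) is a reflection and is therefore absorbed into the semisimple involution analysis. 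Thus I may assume $x \in {\rm PGO}_{n}^{\e}(q)$ is unipotent ($r=p$) or semisimple ($r\ne p$), and in each case $|C_{\O}(x)|$ is the number of singular $1$-spaces lying in the union of the eigenspaces of $\hat{x}$ on $V$.

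For the unipotent case $r=p$, the quantity $|C_{\O}(x)|$ is the number of singular $1$-spaces in $C_V(x)$. I would show this is maximised by $x=(J_2,J_1^{n-2})$, compute the number of singular points in the corresponding degenerate $(n-1)$-space $C_V(x)$ (which carries a $1$-dimensional radical), and verify that the resulting ratio is at most $(q+1)^{-1}$ unless $q=2$, which produces the value in part (i). Every other unipotent element has $\nu(x)\geq 2$, so $\dim C_V(x)\leq n-2$ and the crude bound $|C_{\O}(x)|\leq (q^{n-2}-1)/(q-1)$ already gives ${\rm fpr}(x)\leq (q+1)^{-1}$.

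For the semisimple case I would split on $r$. When $r=2$ (so $q$ is odd) the count is maximised by the image of a reflection $(-I_{n-1},I_1)$, whose $(-1)$-eigenspace is a nondegenerate odd-dimensional space of dimension $n-1$ containing the type-independent total of $(q^{n-2}-1)/(q-1)$ singular points; a short computation then gives ${\rm fpr}(x)\leq 1/3$ with equality forced only when $(\e,q)=(-,3)$, yielding part (ii), while involutions of larger $\nu$ have strictly fewer fixed points. When $r$ is odd I set $i$ to be the order of $q$ modulo $r$: for $i\geq 2$ the fixed singular points all lie in the nondegenerate $1$-eigenspace $C_V(x)$, which is largest for $x=(\L,I_{n-i})$ ($i$ even) or $x=(\L,\L^{-1},I_{n-2i})$ ($i$ odd), giving ${\rm fpr}(x)\leq (q^{i/2}+2)^{-1}$ or $q^{-i}$ respectively; the case $i=1$ is handled separately, with the maximum at $x=(\omega,\omega^{-1},I_{n-2})$. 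The sole surviving exception is $i=2$, $q=2$, $r=3$, $\e=-$ with $x=(\omega,\omega^{-1},I_{n-2})$, which is part (iii).

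The main obstacle will be the exact evaluation of $|C_{\O}(x)|$ for the near-extremal elements — the char-$2$ element $(J_2,J_1^{n-2})$ and the small-support semisimple elements — since these are precisely where the exceptional values (i)--(iii) arise, so the inequalities are tight and each must be computed exactly, with careful tracking of the type ($\e$, and that of $C_V(x)$) of the spaces involved, rather than merely estimated. In particular the unipotent computation requires the count of singular points in a degenerate orthogonal space in even characteristic, and the semisimple computations require determining the type of $C_V(x)$ (opposite to $\e$ in the minus-type-$2$-space situation driving (iii)).
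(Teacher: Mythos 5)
Your overall architecture matches the paper's: the same identification of $\O$ with the totally singular $1$-spaces, the same reduction to semisimple and unipotent elements of ${\rm PGO}_{n}^{\e}(q)$, and your semisimple analysis (the reflection computation for $r=2$, the split on the order $i$ of $q$ modulo $r$, and the location of the exceptions in parts (ii) and (iii)) is essentially the argument the paper gives. However, there is a genuine gap in your unipotent case. You assert that every unipotent element other than $(J_2,J_1^{n-2})$ has $\dim C_V(x)\leqs n-2$ and that the crude bound $|C_{\O}(x)|\leqs (q^{n-2}-1)/(q-1)$ already yields ${\rm fpr}(x)\leqs (q+1)^{-1}$. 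This is false: since $|\O|=(q^{n/2-1}+\e)(q^{n/2}-\e)/(q-1)$, the quotient $(q^{n-2}-1)/\bigl((q^{n/2-1}+\e)(q^{n/2}-\e)\bigr)$ is roughly $q^{-1}$ and always exceeds $(q+1)^{-1}$; for instance $n=8$, $q=3$, $\e=+$ gives $364/1120\approx 0.325>1/4$. Counting all $1$-spaces in $C_V(x)$ only suffices when $\dim C_V(x)\leqs n-3$. The elements with $\dim C_V(x)=n-2$ --- namely $a_2$ and $c_2$ in characteristic $2$, and $(J_2^2,J_1^{n-4})$, $(J_3,J_1^{n-3})$ in odd characteristic --- must be handled by computing the exact number of \emph{singular} $1$-spaces in the degenerate fixed space, which is what the paper does. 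Note also that $(J_2,J_1^{n-2})$ lies in the orthogonal group only when $p=2$, so in odd characteristic the extremal unipotent classes are precisely the two you consign to the failed crude bound; the gap is therefore not confined to a fringe case.

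A secondary, harmless slip: graph automorphisms induced by elements of ${\rm O}_{n}^{\e}(q)$ outside the kernel exist for both $\e=+$ and $\e=-$ (only triality is specific to $(n,\e)=(8,+)$), but since you fold all of ${\rm PGO}_{n}^{\e}(q)$ into the semisimple and unipotent analysis in any case, this does not affect the structure of the argument.
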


\begin{proof}
Identify $\O$ with the set of totally singular $1$-dimensional subspaces of $V$ and note that 
\[
|\O| = \frac{|{\rm SO}_{n}^{\e}(q)|}{q^{n-2}|{\rm SO}_{n-2}^{\e}(q)||{\rm GL}_{1}(q)|} = \frac{(q^{n/2-1}+\e)(q^{n/2}-\e)}{q-1}.
\]
Let $x \in G$ be an element of prime order $r$. If $G_0 = {\rm P\O}_{8}^{+}(q)$ then  the maximality of $H$ implies that $G$ does not contain any triality automorphisms (see \cite{K} or \cite[Table 8.50]{BHR}, for example). Since field and graph-field automorphisms can be handled in the usual manner, we may assume $x \in {\rm PGO}_{n}^{\e}(q)$ is semisimple or unipotent.

First assume $r=p$, in which case $|C_{\O}(x)|$ is equal to the number of totally singular $1$-spaces in $C_V(x)$. In particular, if $\dim C_V(x) \leqs n-3$ then 
$|C_{\O}(x)| \leqs (q^{n-3}-1)/(q-1)$ and we quickly deduce that ${\rm fpr}(x) \leqs (q+1)^{-1}$. Therefore, we may assume $\dim C_V(x) \geqs n-2$ and we will consider the cases $p=2$ and $p$ odd separately.

Suppose $p=2$ and note that the condition $\dim C_V(x) \geqs n-2$ implies that $x$ is of type $b_1$, $a_2$ or $c_2$ in the notation of \cite{AS}. First assume $x = (J_2,J_1^{n-2})$ is a $b_1$-type involution, which fixes an orthogonal decomposition $U \perp W = 2^{+} \perp (n-2)^{\e}$ of the natural module. Here $C_U(x) = \la u \ra$ is nonsingular and we may assume $Q(u) = 1$. Then $C_{\O}(x)$ comprises the set of totally singular $1$-spaces in $W$, together with the spaces $\la u + w\ra$, where $\la w \ra \subseteq W$ is nonsingular and $Q(w) = 1$. It follows that $|C_{\O}(x)|$ coincides with the total number of $1$-dimensional subspaces of $W$ and thus
\begin{equation}\label{e:oo2}
{\rm fpr}(x) = \frac{q^{n/2-1}-\e}{q^{n/2}-\e}.
\end{equation}
If $q \geqs 4$ then it is easy to check that ${\rm fpr}(x) \leqs 1/3$. However, it $q=2$ then ${\rm fpr}(x) > 1/3$ and this case is recorded in part (i) of Proposition \ref{p:orth2_sub}. 

Now assume $x= (J_2^2,J_1^{n-4})$ is of type $a_2$ or $c_2$. We claim that ${\rm fpr}(x) \leqs 1/3$. If $x= a_2$ then $x$ fixes an orthogonal decomposition $U \perp W = 4^{+} \perp (n-4)^{\e}$ with $C_U(x) = \la u_1,u_2\ra$ totally singular. It follows that $C_{\O}(x)$ comprises the set of $1$-dimensional subspaces of $C_U(x)$, together with every $1$-space of the form $\la u + w \ra$, where $u \in C_U(x)$ and $\la w \ra \subseteq W$ is totally singular. Therefore,
\[
|C_{\O}(x)| = q^2\left(\frac{(q^{n/2-3}+\e)(q^{n/2-2}-\e)}{q-1}\right)+q+1.
\]
Similarly, if $x = c_2$ then $x$ fixes a decomposition $U_1 \perp U_2 \perp W = 2^{+} \perp 2^{-} \perp (n-4)^{-\e}$ with $C_{U_i}(x) = \la u_i \ra$ and $Q(u_i) = 1$. Here $C_{\O}(x)$ comprises the spaces $\la \l(u_1+u_2)+w \ra$ and $\la u+w'\ra$, where $\l \in \mathbb{F}_q$ and $w,w' \in W$, $u \in \la u_1,u_2 \ra$ are vectors such that $Q(w) = 0$ and $Q(w')=Q(u)=1$. Let $\a$ be the number of totally singular $1$-spaces in $W$. Then there are $1+q\a$ spaces of the form $\la \l(u_1+u_2)+w \ra$ and there are $q((q^{n-4}-1)/(q-1)-\a)$ spaces of the form $\la u+w'\ra$, whence 
\[
|C_{\O}(x)| = q\left(\frac{q^{n-4}-1}{q-1}\right)+1. 
\]
In both cases, it is easy to check that ${\rm fpr}(x) \leqs 1/3$.

Next assume $r=p>2$. Since we are free to assume $\dim C_V(x) \geqs n-2$, it follows that $x = (J_2^2,J_1^{n-4})$ or $(J_3,J_1^{n-3})$ because every even size Jordan block must occur with an even multiplicity. First assume $x = (J_2^2,J_1^{n-4})$, which fixes an orthogonal decomposition $U \perp W = 4^{+} \perp (n-4)^{\e}$ with $C_U(x) = \la u_1,u_2\ra$ totally singular. Then $C_{\O}(x)$ comprises every $1$-dimensional subspace of $\la u_1, u_2\ra$, together with the $1$-spaces $\la u+w\ra$, where $u \in \la u_1,u_2\ra$ and $\la w \ra \subseteq W$ is totally singular. Therefore, if $\a$ denotes the number of totally singular $1$-spaces in $W$, then $|C_{\O}(x)| = \a q^2 + q+1$ and it is easy to check that ${\rm fpr}(x) \leqs (q+1)^{-1}$. Now assume $x = (J_3,J_1^{n-3})$, fixing a decomposition $U \perp W = 3 \perp (n-3)$ with $C_U(x) = \la u \ra$. Here $C_{\O}(x)$ comprises $\la u \ra$ and $\la \l u+w \ra$, where $\l \in \mathbb{F}_q$ and $\la w \ra$ is a totally singular $1$-space in $W$. Therefore, $|C_{\O}(x)| = q(q^{n-4}-1)/(q-1)+1$ and once again the desired bound holds.

For the remainder, we may assume $r \ne p$. Suppose $r=2$ and note that the bound in Theorem \ref{t:ls91} is sufficient if $q \geqs 5$, so we may assume $q=3$. Here $|C_{\O}(x)|$ is maximal when $x$ is of the form $(-I_{n-1},I_1)$; there are two such conjugacy classes and elements in both classes have the same number of fixed points. We get $|C_{\O}(x)| = (q^{n-2}-1)/(q-1)$ and thus \eqref{e:oo2} holds. One can now check that ${\rm fpr}(x) \leqs 1/3$ unless $(\e,q) = (-,3)$, which is the case recorded in part (ii) of Proposition \ref{p:orth2_sub}. Now, if $(\e,q) = (-,3)$ and $x$ is some other involution, then $|C_{\O}(x)|$ is maximal when $x = (-I_{n-2},I_{2})$ with a plus-type $(-1)$-eigenspace and we compute
\[
|C_{\O}(x)| = \frac{1}{2}(3^{n/2-2}+1)(3^{n/2-1}-1),
\]
which coincides with the number of totally singular $1$-spaces in the $(-1)$-eigenspace of $x$. Once again, it is easy to check that ${\rm fpr}(x) \leqs 1/3$.

To complete the proof, suppose $r \ne p$ and $r$ is odd. As usual, let $i \geqs 1$ be minimal such that $r$ divides $q^i-1$. Note that if $i \geqs 2$ then $|C_{\O}(x)|$ is the number of totally singular $1$-spaces in $C_V(x)$.

First assume $i$ is even. By the previous observation, $|C_{\O}(x)|$ is maximal when $x = (\L,I_{n-i})$, in which case 
\[
|C_{\O}(x)| \leqs \frac{(q^{(n-i)/2-1}-\e)(q^{(n-i)/2}+\e)}{q-1}
\]
and one can check that this bound yields ${\rm fpr}(x) \leqs (q^{i/2}+2)^{-1}$ unless $\e = -$ and $q=i=2$. Here $r=3$ and we get
\[
{\rm fpr}(x) = \frac{1}{4} + \frac{3}{4(2^{n/2}+1)}.
\]
This special case is recorded in part (iii) of Proposition \ref{p:orth2_sub}. If $\e=-$,  $q=i=2$ and $x$ is any other element of order $3$, then $|C_{\O}(x)|$ is maximal when $x = (\L^2, I_{n-4})$. Here $|C_{\O}(x)| = (2^{n/2-3}-1)(2^{n/2-2}+1)$ is the number of totally singular $1$-spaces in a nondegenerate minus-type space of dimension $n-4$ and we quickly deduce that ${\rm fpr}(x) \leqs 1/4$.

Now assume $i$ is odd. Here $|C_{\O}(x)|$ is maximal when $x = (\L,\L^{-1},I_{n-2i})$, in which case $|C_{\O}(x)| = \a+\b$, where
\[
\a = \frac{(q^{(n-2i)/2-1}+\e)(q^{(n-2i)/2}-\e)}{q-1}
\]
is the number of totally singular $1$-spaces in $C_V(x)$ and we set $\b=2$ if $i=1$, otherwise $\b=0$. It is straightforward to check that ${\rm fpr}(x) \leqs q^{-i}$ and the result follows.
\end{proof}

\begin{lem}\label{l:orth2_sub2}
The conclusion to Proposition \ref{p:orth2_sub} holds if $H = P_m$ with $2 \leqs m \leqs n/2$. 
\end{lem}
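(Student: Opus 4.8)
The plan is to follow the template already used for the parabolic cases in Lemmas~\ref{l:symp_sub2} and~\ref{l:orth1_sub2}. First I identify $\O$ with the set of totally singular $m$-dimensional subspaces of $V$, recording the estimate
\[
\tfrac{1}{2}q^{m(2n-3m-1)/2} < |\O| < 2q^{m(2n-3m-1)/2}
\]
(when $m=n/2$ necessarily $\e=+$ and $H=P_{n/2}$ is the stabilizer of a maximal totally singular subspace, i.e.\ of one of the two $\O_n^+(q)$-orbits of such spaces). As in the earlier lemmas, field and graph-field automorphisms are eliminated by Theorem~\ref{t:ls91} via \eqref{e:ls91}, so I may assume $x \in {\rm PGO}_n^{\e}(q)$ is semisimple or unipotent. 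The one structural point needing care is that when $(n,\e)=(8,+)$ the maximality of $H=P_m$ forces $G$ to contain no triality automorphism except when $m=2$, since $P_2$ alone lies over the triality-fixed central node; in that case I must also treat triality-type graph automorphisms of order $3$ directly, where the bound required is ${\rm fpr}(x)\leqs 1/4$.

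The main engine is the Guralnick--Kantor estimate \cite[Proposition 3.15]{GK}, which in this setting has the shape ${\rm fpr}(x) < 2q^{-(n/2-1)} + q^{-n/2} + q^{-m}$. For the unipotent case $r=p$ this already yields ${\rm fpr}(x)\leqs (q+1)^{-1}$ apart from a short list of pairs $(n,q)$ with $q\in\{2,3\}$ and $n$ small, which I verify with {\sc Magma} \cite{magma}. For a semisimple involution ($r=2$, $p$ odd) Theorem~\ref{t:ls91} settles $q\geqs 5$ and reduces me to $q=3$, where again only a handful of small $(n,m)$ survive the bound and are checked computationally.

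For odd semisimple $x$ with $r\ne p$, let $i$ be minimal with $r\mid q^i-1$. If $m<i$, then any fixed totally singular $m$-space lies in $C_V(x)$ (the nontrivial $\la x\ra$-constituents being irreducible of dimension $i>m$), so $|C_\O(x)|$ is maximised at $x=(\L,I_{n-i})$ for $i$ even and at $x=(\L,\L^{-1},I_{n-2i})$ for $i$ odd; combining with the $|\O|$-bounds gives ${\rm fpr}(x) < 4q^{-mi}$ (resp.\ $4q^{-2mi}$), which, since $m\geqs 2$ forces $i\geqs 3$, is comfortably below $(q^{i/2}+2)^{-1}$ (resp.\ $(t+1)^{-1}$ with $t=(q^i-1)/(q-1)$). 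For $m\geqs i$ the Guralnick--Kantor bound is adequate apart from a bounded collection of small configurations that reduce to a finite check, using Lemma~\ref{l:sub_useful}-style extension-field counting wherever a sharper value of $|C_\O(x)|$ is needed.

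I expect the boundary regime $m\approx i$ to be the main obstacle, and it is precisely there that the genuine exception in part~(iv) of Proposition~\ref{p:orth2_sub} emerges. When $(n,\e,m,q)=(8,+,2,4)$ we have $i=2$ and $r=5\mid q+1$, and the Guralnick--Kantor bound is insufficient; here I compute $|C_\O(x)|$ directly for $x=(\omega,\omega^{-1},I_6)$ (with $\nu(x)=2$) and for $x=(\omega I_4,\omega^{-1}I_4)$ (with $\nu(x)=4$, its two $4$-dimensional eigenspaces being totally singular and dual under the form), obtaining ${\rm fpr}(x)=1/5>1/6=(r+1)^{-1}$, which gives the recorded exception. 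The delicate part of this analysis is pinning down $|C_\O(x)|$ for each relevant semisimple class in the regime $m=i$, distinguishing whether a fixed totally singular $m$-space sits inside $C_V(x)$, inside one homogeneous $\la x\ra$-component, or runs diagonally between dual totally singular eigenspaces, and then confirming that no further exceptions arise from the $m=n/2$ maximal totally singular spaces (with their two-family structure) or from the triality elements at $(n,\e,m)=(8,+,2)$.
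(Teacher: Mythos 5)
Your proposal follows essentially the same route as the paper's proof: the same identification of $\O$ with bounds on $|\O|$, the Guralnick--Kantor estimate \cite[Proposition 3.15]{GK} as the main engine, reduction of the odd semisimple case according to the parity of $i$ and the comparison of $m$ with $i$, eigenspace and extension-field counting in the boundary regime $m \approx i$, and a finite computational check of small cases from which the $(n,q,\e)=(8,4,+)$, $H=P_2$, $r=5$ exception emerges. The only corrections needed are quantitative: the Guralnick--Kantor bound you quote is the $\e=-$ version, whereas for $\e=+$ the correct (weaker) bound is $2q^{-(n-4)/2}+q^{-(n-2)/2}+q^{-m}$, and your upper bound $2q^{m(2n-3m-1)/2}$ on $|\O|$ fails (e.g.\ for $(n,\e,m,q)=(8,-,2,2)$ one has $|\O|=1071>2^{10}$) and should be $4q^{m(2n-3m-1)/2}$; neither changes the structure of the argument, but the first does enlarge the finite list of small $(n,q)$ that must be verified directly --- the paper checks $(8,2),(8,3),(8,4),(10,2),(12,2)$ by computer.
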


\begin{proof}
For $m < n/2$ we identify $\O$ with the set of totally singular $m$-spaces in $V$. On the other hand, if $m=n/2$ then $\e=+$ and we identify $\O$ with one of the two $G_0$-orbits on the set of totally singular $m$-spaces (the actions of $G$ on each orbit are permutation isomorphic, so it does not matter which orbit we choose). Note that
\[
|\O| = \frac{|{\rm SO}_{n}^{\e}(q)|}{q^{m(2n-3m-1)/2}|{\rm SO}_{n-2m}^{\e}(q)||{\rm GL}_{m}(q)|} 
\]  
if $m<n/2$, whereas
\[
|\O| = \frac{|{\rm O}_{n}^{+}(q)|}{2q^{n(n-2)/8}|{\rm GL}_{n/2}(q)|} 
\]
if $m=n/2$. In particular, we have
\begin{equation}\label{e:bd4}
\frac{1}{2}\left(\frac{q}{q+1}\right)q^{m(2n-3m-1)/2} < |\O| < 4q^{m(2n-3m-1)/2}
\end{equation}
for all $m$. Note that if $G_0 = {\rm P\O}_{8}^{+}(q)$ then we may assume $m \in \{2,3\}$ since the action of $G$ on each orbit of totally singular $4$-spaces is permutation isomorphic to its action on the set of totally singular $1$-spaces, which we handled in the previous lemma.

Let $x \in G$ be an element of prime order $r$. As usual, we may assume $x$ is not a field or graph-field automorphism. If
\begin{equation}\label{e:cases}
(n,q) \in \{ (8,2), (8,3), (8,4), (10,2), (12,2)\}
\end{equation}
then the desired result can be checked directly using {\sc Magma}, so for the remainder we will exclude these cases from the analysis. By \cite[Proposition 3.15]{GK} we have
\begin{equation}\label{e:gk5}
{\rm fpr}(x) < 2q^{-(n-2-2\a)/2} + q^{-(n-2\a)/2} + q^{-m},
\end{equation}
where $\a=1$ if $\e=+$, otherwise $\a=0$. This bound immediately implies that ${\rm fpr}(x) \leqs 1/3$ and so we may assume $r$ is odd. In addition, we note that ${\rm fpr}(x) \leqs 1/4$ if $n = 8$ and $q \geqs 4$, which handles the special case where $G_0 = {\rm P\O}_{8}^{+}(q)$ and $x$ is a triality graph automorphism. In particular, for the remainder we may assume $x \in {\rm PGO}_{n}^{\e}(q)$ is semisimple or unipotent.

If $r=p>2$ then the bound in \eqref{e:gk5} implies that ${\rm fpr}(x) \leqs (q+1)^{-1}$, so we may assume $r \ne p$ and $r$ is odd. Let $i \geqs 1$ be minimal such that $r$ divides $q^i-1$. There are two cases to consider, according to the parity of $i$.

First assume $i$ is even. If $m<i$ then $|C_{\O}(x)|$ is maximal when $x = (\L,I_{n-i})$, in which case $|C_{\O}(x)|$ is the number of totally singular $m$-spaces in the $(-\e)$-type space $C_V(x)$. By applying the bounds in \eqref{e:bd4}, we deduce that
\[
{\rm fpr}(x) < 8\left(\frac{q+1}{q}\right)q^{-mi} \leqs (q^{i/2}+2)^{-1}
\]
and the result follows. If $m \geqs i+1$ then one can check that the bound in \eqref{e:gk5} is sufficient (recall that we may exclude the cases in \eqref{e:cases}). Therefore, we may assume $m=i$. If $n \geqs 2m+4$ then \eqref{e:gk5} is sufficient unless $m=q=2$, in which case $r=3$ and $x = (\L^{\ell},I_{n-2\ell})$ for some $\ell \geqs 1$. In addition, we may assume $n \geqs 14$ (see \eqref{e:cases}). Let $\b$ be the number of totally singular $2$-spaces in $C_V(x)$. If $\ell=1$ then $|C_{\O}(x)|=\b$ and we compute  
\[
{\rm fpr}(x) \leqs \frac{(2^{n/2-3}-\e)(2^{n/2-2}-\e)}{(2^{n/2-1}-\e)(2^{n/2}-\e)} \leqs \frac{1}{4}
\]
as required. Now assume $\ell \geqs 2$. By arguing as in the proof of Lemma \ref{l:sub_useful}(ii) we deduce that $|C_{\O}(x)| \leqs \b + (2^{2\ell}-1)/3$ and once again we conclude that ${\rm fpr}(x) \leqs 1/4$.

To complete the analysis of the case $i$ even, we may assume $m=i \geqs 4$ and $n \in \{2m,2m+2\}$. Suppose $n=2m$, in which case $x$ is of the form $(\L,I_m)$, $(\L^2)$ or $(\L_1,\L_2)$ (the latter two possibilities only occur when $\e=+$). Here $x$ has fixed points on $\O$ if and only if $x = (\L^2)$, so let us assume $x$ is of this form. We may embed $\hat{x}$ as a scalar in a subgroup ${\rm GU}_{2}(q^{m/2}) < {\rm O}_{2m}^{+}(q)$ and we note that there is a bijective correspondence between the set of totally singular $m$-spaces in $V$ fixed by $x$ and the set of totally singular $1$-spaces in the natural module for ${\rm GU}_{2}(q^{m/2})$. This implies that $|C_{\O}(x)| = q^{m/2}+1$ and the bound ${\rm fpr}(x) \leqs (q^{m/2}+2)^{-1}$ quickly follows. Similarly, if $n=2m+2$ then we may assume $x = (\L^2,I_2)$ and the result follows since $|C_{\O}(x)| = q^{m/2}+1$. 

Now assume $i$ is odd. If $m<i$ then it is plain to see that $|C_{\O}(x)|$ is maximal when $x = (\L,\L^{-1},I_{n-2i})$, in which case $C_{\O}(x)$ is the set of totally singular $m$-spaces in $C_V(x)$ and by applying the bounds in \eqref{e:bd4} we deduce that 
\[
{\rm fpr}(x) < 8\left(\frac{q+1}{q}\right)q^{-2mi} \leqs q^{-i}
\]
and the result follows. Now assume $m \geqs i$. If $i=1$ then $q \geqs 4$ and one checks that the bound in \eqref{e:gk5} implies that ${\rm fpr}(x) \leqs q^{-1}$. Now assume $i \geqs 3$ and note that $r$ divides $t = (q^i-1)/(q-1)$. If $n<4i$ then $x = (\L,\L^{-1},I_{n-2i})$ is the only possibility and we get $|C_{\O}(x)| = 2\b$, where $\b$ is the number of totally singular $(m-i)$-spaces in $C_V(x)$, which is a nondegenerate $(n-2i)$-space of type $\e$. Therefore,
\[
{\rm fpr}(x) < 8\left(\frac{q+1}{q}\right)q^{-i(2n-2m-i-1)/2}
\]
and we deduce that ${\rm fpr}(x) \leqs (t+1)^{-1}$. 

To complete the proof, suppose $i \geqs 3$ is odd, $m \geqs i$ and $n \geqs 4i$. If $m \geqs i+1$ then the bound in \eqref{e:gk5} is sufficient unless $(n,m,q,i) = (14,4,2,3)$, in which case  $r=7$ and $x = (\L,\L^{-1},I_8)$ or $((\L,\L^{-1})^2,I_2)$. With the aid of {\sc Magma}, it is easy to check that ${\rm fpr}(x) \leqs 1/8$ in both cases. 

Finally, suppose $m=i$. In this case, one checks that \eqref{e:gk5} is sufficient unless $q=2$ and $r = 2^i-1$ is a Mersenne prime. Here we proceed as in the final paragraph in the proof of Lemma \ref{l:symp_sub2}. In particular, we first observe that $|C_{\O}(x)|$ is maximal when $x$ is of the form $((\L,\L^{-1})^{\ell},I_{n-2m\ell})$ for some $\ell \geqs 1$ and we compute 
\[
|C_{\O}(x)| \leqs \gamma + 2\left(\frac{2^{m\ell}-1}{2^m-1}\right),
\]
where $\gamma$ is the number of totally singular $m$-spaces in $C_V(x)$. It is now routine to check that this yields ${\rm fpr}(x) \leqs 2^{-m}$ as required. 
\end{proof}

If $G_0 = {\rm P\O}_{8}^{+}(q)$ and $G$ contains triality graph or graph-field automorphisms, then $G$ has a maximal parabolic subgroup $H$ of type $P_{1,3,4}$. This special case is handled in the following lemma.

\begin{lem}\label{l:orth2_triality}
The conclusion to Proposition \ref{p:orth2_sub} holds if $G_0 = {\rm P\O}_{8}^{+}(q)$ and $H = P_{1,3,4}$.  
\end{lem}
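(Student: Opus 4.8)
The plan is to exploit the triality symmetry of the flag stabilised by $H$ and to reduce the estimate to the $P_1$ and $P_{n/2}$ subspace actions already treated in Lemmas \ref{l:orth2_sub1} and \ref{l:orth2_sub2}. Recall that $G_0 = {\rm P\O}_8^+(q)$, that $G$ contains triality automorphisms (this is exactly the maximality hypothesis for $H = P_{1,3,4}$; see \cite{K} or \cite[Table 8.50]{BHR}), and that $H$ stabilises a flag $(U_1,U_3,U_4)$, where $U_1$ is a totally singular $1$-space and $U_3,U_4$ are totally singular $4$-spaces from the two distinct $G_0$-orbits (families) with $U_1 \subset U_3 \cap U_4$. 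Write $\Sigma_1,\Sigma_3,\Sigma_4$ for the sets of totally singular subspaces of these three respective types. The natural projections $\O \to \Sigma_j$, sending a flag to each of its three components, are surjective with constant fibres (by $G_0$-transitivity on flags and on each $\Sigma_j$), and triality cyclically permutes the types $\Sigma_1 \to \Sigma_3 \to \Sigma_4 \to \Sigma_1$. In particular the actions on $\Sigma_3$ and $\Sigma_4$ are permutation isomorphic to the action on $\Sigma_1$ (this is precisely the reduction already used in the proof of Lemma \ref{l:orth2_sub2}, where the totally singular $4$-spaces furnish the stabiliser $P_{n/2}$).

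Let $x \in G$ have prime order $r$. The key dichotomy is whether the image of $x$ in ${\rm Out}(G_0) = S_3 \times C_f$ fixes one of the three types or permutes them cyclically. Suppose first that $x$ fixes some type $\Sigma_j$; this covers every element of ${\rm PGO}_8^+(q)$, every field automorphism, and every involutory graph or graph-field automorphism, since each $S_3$-transposition fixes exactly one type and interchanges the other two. Then the projection $\O \to \Sigma_j$ is $\la x \ra$-equivariant, so each fixed point of $x$ on $\O$ lies over a fixed point on $\Sigma_j$, and constancy of the fibres gives ${\rm fpr}(x,\O) \leqs {\rm fpr}(x,\Sigma_j)$. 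Here $\Sigma_j$ is a subspace action with socle ${\rm P\O}_8^+(q)$ and point stabiliser $P_1$ (if $j=1$) or $P_{n/2}$ (if $j \in \{3,4\}$), so for $q \geqs 3$ Lemmas \ref{l:orth2_sub1} and \ref{l:orth2_sub2} yield ${\rm fpr}(x,\Sigma_j) \leqs (r+1)^{-1}$ with no exceptions, since $\e=+$: the $b_1$-involution exception of Proposition \ref{p:orth2_sub}(i) requires $q=2$, and the exception in part (iv) involves $P_2$, which never occurs as one of our projections. This settles all such $x$ for $q \geqs 3$.

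It remains to treat the elements $x$ of order $r=3$ whose image in ${\rm Out}(G_0)$ cyclically permutes the three types, namely the triality and triality-field automorphisms; these fix no type, so no projection is available. For these I would apply Theorem \ref{t:ls91} directly to the flag action $\O = G/H$, obtaining ${\rm fpr}(x,\O) \leqs 4/(3q) \leqs 1/4 = (r+1)^{-1}$ as soon as $q \geqs 7$ (for graph-field automorphisms $q=q_0^3 \geqs 8$), noting that no case with socle ${\rm P\O}_8^+(q)$ occurs in \cite[Table 1]{LS91}. The finitely many residual configurations — the full field $q=2$ (where the projection bound fails only for the $b_1$-type involution inherited from Lemma \ref{l:orth2_sub1}(i) and its triality images), together with the triality elements for $q \in \{3,4,5\}$ not reached by the crude estimate — are then verified computationally in {\sc Magma}. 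The main obstacle will be the bookkeeping in the second and third steps: one must check that every prime-order element either fixes one of the three triality-types (so that the clean projection reduction to the $P_1$ and $P_{n/2}$ actions applies) or else has order $3$ and is genuinely of triality type, and one must ensure that the small-field computations correctly dispose of all the leftover cases, in particular the $q=2$ exception carried over from the $1$-space action.
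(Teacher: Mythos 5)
Your proposal is correct and follows essentially the same route as the paper: the paper likewise bounds ${\rm fpr}(x,\O)$ for type-preserving elements by the fixed point ratio on totally singular $1$-spaces (quoting Lemma \ref{l:orth2_sub1}, with the sole residual case $q=2$, $x=b_1$ checked in {\sc Magma}, where ${\rm fpr}(x)=1/15$), and disposes of triality automorphisms via Theorem \ref{t:ls91} for $q\geqs 7$ and a {\sc Magma} computation for $q\leqs 5$. Your extra bookkeeping with the three projections and the $S_3$-action on types is a slightly more systematic packaging of the same reduction, not a different argument.
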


\begin{proof}
Let $x \in G$ be an element of prime order $r$ and note that 
\[
|\O| = \frac{|{\rm O}_{8}^{+}(q)|}{2q^{11}|{\rm GL}_{2}(q)||{\rm GL}_{1}(q)|^2} = (q^4+q^2+1)(q^2+1)^2(q+1)^3.
\]
The usual argument applies if $x$ is a field or graph-field automorphism. Next assume $x$ is a triality graph automorphism, so $r=3$. If $q \geqs 7$ then the bound in Theorem \ref{t:ls91} is sufficient, so we may assume $q \leqs 5$. In each of these cases we can use {\sc Magma} to show that $|C_{\O}(x)| \leqs (q^6-1)/(q-1)$, which yields ${\rm fpr}(x) \leqs 1/225$, with equality if $q=2$ and $C_{G_0}(x) = G_2(2)$. (For $q=5$ we can construct $H$ by observing that $H = N_G(P)$, where $P$ is a suitable index-five subgroup of a Sylow $5$-subgroup of $G_0$.)

For the remainder, we may assume $x \in {\rm PGO}_{8}^{+}(q)$ is either semisimple or unipotent. Since ${\rm fpr}(x) = |x^G \cap H|/|x^G|$, it follows that ${\rm fpr}(x)$ is at most the corresponding fixed point ratio for the action of $x$ on totally singular $1$-spaces. Therefore, our earlier analysis of the case $H = P_1$ (see Lemma \ref{l:orth2_sub1}) immediately implies that ${\rm fpr}(x) \leqs (r+1)^{-1}$ unless $q = 2$ and $x = b_1$ is a transvection. But here an easy {\sc Magma} computation yields ${\rm fpr}(x) = 1/15$ and the result follows.
\end{proof}

\begin{lem}\label{l:orth2_sub3}
The conclusion to Proposition \ref{p:orth2_sub} holds if $H = N_1$ and $q$ is odd. 
\end{lem}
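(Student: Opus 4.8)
The plan is to follow the template already used for the $N_1$ stabilizers in Lemmas \ref{l:orth1_sub3} and \ref{l:psu_sub3}, combined with the eigenspace analysis of Lemma \ref{l:orth2_sub1}. First I identify $\O$ with the set of nondegenerate $1$-spaces of square discriminant (recall from the notation in Section \ref{ss:res} that the actions on square and nonsquare $1$-spaces are permutation isomorphic), so that
\[
|\O| = \frac{|{\rm SO}_{n}^{\e}(q)|}{2|{\rm SO}_{n-1}(q)|} = \frac{1}{2}q^{n/2-1}(q^{n/2}-\e).
\]
As in the earlier lemmas, field and graph(-field) automorphisms are handled via Theorem \ref{t:ls91} and \eqref{e:ls91}, so I may assume $x \in {\rm PGO}_{n}^{\e}(q)$ is semisimple or unipotent; a handful of small cases such as $(n,q)=(8,3)$ will be checked directly with {\sc Magma}. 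The key observation throughout is that $|C_{\O}(x)|$ equals the number of square-discriminant nondegenerate $1$-spaces lying in an eigenspace of $x$, since an eigenvector for eigenvalue $\l \ne \pm 1$ is totally singular.

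For the unipotent case ($r=p$ odd), $|C_{\O}(x)|$ is the number of square $1$-spaces in $C_V(x)$. If $\dim C_V(x) \leqs n-3$ then the crude bound $|C_{\O}(x)| \leqs (q^{n-3}-1)/(q-1)$ together with the expression for $|\O|$ gives ${\rm fpr}(x) \leqs (q+1)^{-1}$. Since even-size Jordan blocks occur with even multiplicity, the only unipotent elements with $\dim C_V(x) = n-2$ are $x=(J_2^2,J_1^{n-4})$ and $x=(J_3,J_1^{n-3})$, and for these I compute $|x^G \cap H|$ and $|x^G|$ exactly (working with the orthogonal decomposition of $C_V(x)$, exactly as in Lemma \ref{l:orth2_sub1}) to confirm ${\rm fpr}(x) \leqs (q+1)^{-1}$.

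For the odd-order semisimple case ($r$ odd, $r \ne p$), let $i$ be minimal with $r \mid q^i-1$. Then $|C_{\O}(x)|$ is maximised by $x=(\L,I_{n-i})$ when $i$ is even and by $x=(\omega,\omega^{-1},I_{n-2i})$ when $i$ is odd, in each case bounded above by the total number of $1$-spaces in $C_V(x)$. This yields ${\rm fpr}(x) < 2q^{-i}$ (respectively $2q^{-2i}$), which suffices to give ${\rm fpr}(x) \leqs (q^{i/2}+2)^{-1}$ (respectively $q^{-i} \leqs (r+1)^{-1}$); here I note that the borderline case $i=2$, $q=3$ does not arise, since then $r$ would be an odd prime dividing $q+1=4$, and the case $i=1$ is dealt with directly using $r \leqs q-1$. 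No exceptions arise in this regime.

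The remaining and hardest case is $r=2$ with $q$ odd, where the exception in part (v) lives. For $q \geqs 5$ the Liebeck--Saxl bound ${\rm fpr}(x) \leqs 4/(3q) \leqs 1/3$ of Theorem \ref{t:ls91} is decisive, so I may take $q=3$. Every involution in ${\rm PGO}_{n}^{\e}(3)$ is (the image of) an element $(-I_k,I_{n-k})$, and since the fixed $1$-spaces lie in the union of the two eigenspaces, $|C_{\O}(x)|$ is maximised when one eigenspace has dimension $n-1$, i.e.\ when $x$ is a reflection $(-I_{n-1},I_1)^{\delta}$; the involutions with $2 \leqs k \leqs n-2$ give $|C_{\O}(x)| = O(q^{n-3})$ and hence ${\rm fpr}(x) \leqs 1/3$. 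The main obstacle is then the discriminant bookkeeping: I must count the square-discriminant nondegenerate $1$-spaces in the $(n-1)$-dimensional space $v^{\perp}$, using the standard formulas for $|\{v : Q(v)=c\}|$ (see \cite{FM} or \cite[Chapter 3]{BG}), and track how the type of $v^{\perp}$, and hence this count, depends jointly on $\e$ and on the discriminant $\delta$ of $\langle v\rangle$. Comparing the two reflection classes shows the maximum is attained for $\delta=\boxtimes$ when $\e=+$ and for $\delta=\square$ when $\e=-$, producing exactly the two $\e$-dependent values of ${\rm fpr}(x)$ recorded in part (v); all other involutions, and all cases with $q \geqs 5$, satisfy the bound.
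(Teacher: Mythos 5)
Your proposal is correct and follows essentially the same route as the paper: the same identification of $\O$ with square $1$-spaces, the same reduction of the unipotent case to $(J_2^2,J_1^{n-4})$ and $(J_3,J_1^{n-3})$, the same discriminant bookkeeping for reflections yielding the exceptions in part (v), and the same eigenspace counting for odd-order semisimple elements. The only deviations are cosmetic — you use the total number of $1$-spaces in $C_V(x)$ rather than the exact count of square $1$-spaces (correctly noting that the resulting borderline case $i=2$, $q=3$ cannot occur for odd $r$), and you invoke Theorem \ref{t:ls91} for involutions with $q\geqs 5$ where the paper computes directly.
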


\begin{proof}
Here we may assume $\O$ is the set of nondegenerate $1$-dimensional subspaces of $V$ with square discriminant (recall that we refer to such a subspace as a  \emph{square $1$-space}). Note that 
\[
|\O| = \frac{|{\rm SO}_{n}^{\e}(q)|}{2|{\rm SO}_{n-1}(q)|} = \frac{1}{2}q^{n/2-1}(q^{n/2}-\e).
\]  
We may assume $x \in {\rm PGO}_{n}^{\e}(q)$ has prime order $r$, noting that the maximality of $H$ implies that $G$ does not contain any triality automorphisms when $(n,\e) = (8,+)$. 

First assume $r=p$ and note that $|C_{\O}(x)|$ is the number of square $1$-spaces in $C_V(x)$. In particular, if $\dim C_V(x) \leqs n-3$ then $|C_{\O}(x)| \leqs (q^{n-3}-1)/(q-1)$ and it is easy to check that ${\rm fpr}(x) \leqs (q+1)^{-1}$. Therefore, we may assume $\dim C_V(x) = n-2$, in which case $x$ has Jordan form $(J_2^2,J_1^{n-4})$ or $(J_3,J_1^{n-3})$. If $x = (J_2^2,J_1^{n-4})$ then 
\[
|x^G \cap H| = \frac{|{\rm SO}_{n-1}(q)|}{q^{2n-9}|{\rm Sp}_{2}(q)||{\rm SO}_{n-5}(q)|},\;\; |x^G| = \frac{|{\rm SO}_{n}^{\e}(q)|}{q^{2n-7}|{\rm Sp}_{2}(q)||{\rm SO}_{n-4}^{\e}(q)|}
\]
and we deduce that 
\[
{\rm fpr}(x) = \frac{q^{(n-4)/2}-\e}{q^{n/2}-\e}.
\]
Similarly, if $x = (J_3,J_1^{n-3})$ then
\[
|x^G \cap H| \leqs \frac{|{\rm SO}_{n-1}(q)|}{q^{n-3}|{\rm SO}_{n-4}^{+}(q)|},\;\; |x^G| = \frac{|{\rm SO}_{n}^{\e}(q)|}{2q^{n-2}|{\rm SO}_{n-3}(q)|}
\]
and we get
\[
{\rm fpr}(x) \leqs \frac{2(q^{(n-4)/2}+1)}{q^{n/2}-\e}.
\]
In both cases, one can check that ${\rm fpr}(x) \leqs (q+1)^{-1}$ and the result follows.

Next assume $r=2$. Here $|C_{\O}(x)|$ is equal to the number of square $1$-spaces in the eigenspaces of $x$ on $V$. As a consequence, we see that $|C_{\O}(x)|$ is maximal when $x = (-I_{n-1},I_1)$ fixes an orthogonal decomposition $V = U \perp W$ with $W = C_V(x)$. There are two cases to consider, according to the discriminant of the defining quadratic form $Q$ restricted to $W$. First assume this discriminant is a square, so $x = (-I_{n-1},I_1)^{\square}$, $W$ is contained in $\O$ and we have $|C_{\O}(x)| = 1+\a$, where $\a$ is the number of square $1$-spaces in $U$. If $Y \subseteq U$ is such a subspace, then the orthogonal complement of $Y$ in $U$ is a nondegenerate $(n-2)$-space of type $\eta$ and we get 
\[
\a = \frac{1}{2}q^{(n-2)/2}(q^{(n-2)/2}+\eta).
\]
If $\eta = -$, then it is easy to check that ${\rm fpr}(x) \leqs 1/3$. The same conclusion holds if $\eta =+$ and $q \geqs 5$. However, if $\eta = +$ and $q=3$ then ${\rm fpr}(x) > 1/3$ and this case is recorded in part (v) of Proposition \ref{p:orth2_sub}. Moreover, one can check that $\eta=-\e$, so we only get ${\rm fpr}(x)>1/3$ when $\e=-$. Similarly, if $x = (-I_{n-1},I_1)^{\boxtimes}$ then $|C_{\O}(x)|=\a$ as above with $\eta=\e$ and we deduce that ${\rm fpr}(x) \leqs 1/3$ unless $(q,\e) = (3,+)$. Again, this special case is recorded in Proposition \ref{p:orth2_sub}. To summarize, ${\rm fpr}(x) > 1/3$ if and only if $q=3$ and $x = (-I_{n-1},I_1)^{\delta}$, where $\delta = \boxtimes$ if $\e=+$, and $\delta = \square$ if $\e=-$.

Finally, let us assume $r \ne p$ and $r$ is odd. As usual, let $i \geqs 1$ be minimal such that $r$ divides $q^i-1$ and note that $|C_{\O}(x)|$ is equal to the number of square $1$-spaces in $C_V(x)$. In particular, if $i$ is even then $|C_{\O}(x)|$ is maximal when $x = (\L,I_{n-i})$ and we compute
\[
|C_{\O}(x)| \leqs \frac{1}{2}q^{(n-i)/2-1}(q^{(n-i)/2}+\e), 
\]
which implies that ${\rm fpr}(x) \leqs (q^{i/2}+2)^{-1}$.
Similarly, if $i$ is odd then $|C_{\O}(x)|$ is maximal when $x = (\L,\L^{-1},I_{n-2i})$ and we get
\[
|C_{\O}(x)| \leqs \frac{1}{2}q^{(n-2i)/2-1}(q^{(n-2i)/2}-\e). 
\]
This gives ${\rm fpr}(x) \leqs q^{-i}$ and the result follows.
\end{proof}

\begin{lem}\label{l:orth2_sub4}
The conclusion to Proposition \ref{p:orth2_sub} holds if $H = N_1$ and $q$ is even. 
\end{lem}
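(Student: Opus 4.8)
The plan is to follow the template of the preceding lemmas, exploiting the fact that for $q$ even a nonsingular $1$-space stabilizer has $H \cap G_0 \cong {\rm Sp}_{n-2}(q)$. First I would identify $\O$ with the set of nonsingular $1$-spaces of the natural module $V$ and record that $|\O| = q^{n/2-1}(q^{n/2}-\e)$, noting that all nonsingular points are equivalent since every element of $\mathbb{F}_q^{\times}$ is a square. As in Lemma \ref{l:orth2_sub1}, field and graph-field automorphisms are disposed of via \eqref{e:ls91} and Theorem \ref{t:ls91}, and triality automorphisms do not arise when $(n,\e)=(8,+)$ because the maximality of $H$ forces $G$ to contain no triality; so I may assume $x \in {\rm PGO}_n^{\e}(q)$ is semisimple or unipotent. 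The central observation is that $x$ fixes a nonsingular point $\langle v\rangle$ precisely when $v$ lies in an eigenspace of $\hat{x}$ on which $Q$ is nonzero; for semisimple $x$ of odd order every eigenspace for a nontrivial eigenvalue is totally singular (since $Q(v)=Q(xv)=\omega^2 Q(v)$ with $\omega^2\neq 1$), so in all relevant cases $|C_{\O}(x)|$ equals the number of nonsingular points in the nondegenerate orthogonal space $C_V(x)$.

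Next I would treat the unipotent elements, where $r=p=2$. For the transvection $x=(J_2,J_1^{n-2})$ of type $b_1$ I would compute ${\rm fpr}(x)$ exactly, as in the $b_1$ analysis of Lemma \ref{l:orth2_sub1}: writing $V=U\perp W=2^{+}\perp(n-2)^{\e}$ with $C_U(x)=\langle u\rangle$ nonsingular, one counts the nonsingular points inside $C_V(x)=\langle u\rangle\perp W$ and obtains the value in part (vi), which exceeds $1/3$ only when $q=2$ and is at most $1/3$ for $q\geq 4$. For the remaining unipotent involutions (the types $a_2$, $c_2$, $b_3,\ldots$, and in general those with $\nu(x)=s\geq 2$) I would invoke the class-size estimates in the proof of \cite[Proposition 3.22]{Bur2} to get ${\rm fpr}(x)<4q^{-s}$, which forces ${\rm fpr}(x)\leq 1/3$ except for a short list of small cases with $q=2$ and $s\in\{2,3\}$; these I would settle by the explicit computation of $|x^G\cap H|$ and $|x^G|$ used in Lemma \ref{l:symp_sub4}, or directly with {\sc Magma}.

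For the semisimple elements ($r$ odd) I would argue that $|C_{\O}(x)|$ is maximised by taking $C_V(x)$ as large as possible, i.e.\ $x=(\L,I_{n-i})$ when $i$ is even and $x=(\L,\L^{-1},I_{n-2i})$ when $i$ is odd, where $i$ is the order of $q$ modulo $r$. The decisive case is $i=2$ (so $r\mid q+1$), where $x=(\omega,\omega^{-1},I_{n-2})$ is extremal and $C_V(x)$ is a nondegenerate $(n-2)$-space of type $-\e$; the number of nonsingular points in it, divided by $|\O|$, yields exactly the expression in part (vii) when $q=2$, $\e=+$ and $r=3$, and is at most $(q+2)^{-1}\leq(r+1)^{-1}$ in every other instance. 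For the remaining values of $i$ (odd, or $i\equiv 0\pmod 4$, or $i\equiv 2\pmod 4$ with $i\geq 6$) I would bound the number of nonsingular points in $C_V(x)$ against $|\O|$ to obtain ${\rm fpr}(x)\leq(q^{i/2}+2)^{-1}$ or ${\rm fpr}(x)\leq q^{-i}$ according to the divisibility of $r$, matching $(r+1)^{-1}$; the finitely many small-$n$, small-$q$ exceptions thrown up by these estimates I would verify with {\sc Magma}.

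The main obstacle is the bookkeeping of orthogonal types in the exact computations for the $b_1$ transvection and the $i=2$ semisimple element: the genuine exceptions (vi) and (vii) are pinned down precisely by whether $C_V(x)$, and hence the count of nonsingular points it contains, has type $+$ or $-$, which flips against $\e$, and it is exactly the combination $\e=+$, $q=2$ that makes part (vii) an exception while $\e=-$ stays below the bound. Getting every sign and parity correct, and confirming that no further exceptions survive for $q\geq 4$, is the delicate part; everything else follows the now-familiar pattern.
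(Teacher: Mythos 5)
Your proposal is correct and follows essentially the same route as the paper: identify $\O$ with the nonsingular $1$-spaces, reduce to semisimple or unipotent $x$, compute the $b_1$-class exactly to isolate exception (vi), bound the remaining unipotent involutions (the paper reduces to $q=2$ via Theorem \ref{t:ls91} and computes $a_2$, $c_2$ directly, which is interchangeable with your $4q^{-s}$ estimate plus explicit small-case checks), and for odd $r$ observe that $C_{\O}(x)$ is the set of nonsingular points in $C_V(x)$, with the extremal elements $(\L,I_{n-i})$ and $(\L,\L^{-1},I_{n-2i})$ yielding exception (vii) precisely when $\e=+$ and $q=i=2$. Your sign bookkeeping for the type of $C_V(x)$ matches the paper's computation exactly.
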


\begin{proof}
Here we identify $\O$ with the set of nonsingular $1$-dimensional subspaces of $V$, whence
\[
|\O| = \frac{|\O_{n}^{\e}(q)|}{|{\rm Sp}_{n-2}(q)|} = q^{n/2-1}(q^{n/2}-\e).
\]
Note that if $G_0 = \O_8^{+}(q)$ then the maximality of $H$ implies that $G$ does not contain any triality automorphisms. Also note that if $G = {\rm O}_{n}^{\e}(q)$, then $H = 2 \times {\rm Sp}_{n-2}(q)$ is the centralizer of a $b_1$-type involution.

Let $x \in G$ be an element of prime order $r$. As usual, we may assume $x \in {\rm O}_{n}^{\e}(q)$ is semisimple or unipotent. First assume $r=2$. By arguing as in the proof of the previous lemma, we may assume $\dim C_V(x) \geqs n-2$ and thus $x$ is an involution of type $b_1$, $a_2$ or $c_2$ in the notation of \cite{AS}. In addition, we may assume $q=2$ since the bound in Theorem \ref{t:ls91} is sufficient if $q \geqs 4$. If $x = b_1$ then
\[
|x^G \cap H| = 1+\frac{|{\rm Sp}_{n-2}(2)|}{2^{n-3}|{\rm Sp}_{n-4}(2)|} = 2^{n-2},\;\; |x^G| = |\O|
\]
and we deduce that 
\[
{\rm fpr}(x) = \frac{1}{3}+\frac{2^{n/2-1}+\e}{3(2^{n/2}-\e)}.
\]
This special case is recorded in part (vi) of Proposition \ref{p:orth2_sub}. Similarly, if  $x=a_2$ then we compute
\[
|x^G \cap H| = \frac{|{\rm Sp}_{n-2}(2)|}{2^{2n-9}|{\rm Sp}_{2}(2)||{\rm Sp}_{n-6}(2)|},\;\; |x^G| = \frac{|{\rm O}_{n}^{\e}(2)|}{2^{2n-7}|{\rm Sp}_{2}(2)||{\rm O}_{n-4}^{\e}(2)|} 
\]
and this gives
\[
{\rm fpr}(x) = \frac{2^{n/2-2}-\e}{2^{n/2}-\e} \leqs \frac{1}{3}.
\]
And for $x = c_2$ we obtain ${\rm fpr}(x) = 2^{n/2-2}/(2^{n/2}-\e)$ and once again ${\rm fpr}(x) \leqs 1/3$.

To complete the proof, we may assume $r$ is odd, so $x$ is semisimple. Let $i \geqs 1$ be minimal such that $r$ divides $q^i-1$ and observe that $C_{\O}(x)$ is the set of nonsingular $1$-spaces in $C_V(x)$. Suppose $i$ is even. Then $|C_{\O}(x)|$ is maximal when $x = (\L,I_{n-i})$, in which case
\[
|C_{\O}(x)| = q^{(n-i)/2-1}(q^{(n-i)/2}+\e)
\]
and we deduce that ${\rm fpr}(x) \leqs (q^{i/2}+2)^{-1}$ unless $\e=+$ and $q=i=2$. Here $r=3$ and we get
\[
{\rm fpr}(x) = \frac{1}{4}+\frac{3}{4(2^{n/2}-1)}
\]
as recorded in part (vii) of Proposition \ref{p:orth2_sub}. Note that if $q=2$ and $x$ is any other element of order $r=3$, then $|C_{\O}(x)|$ is maximal when $x = (\L^2,I_{n-4})$ and it is easy to check that ${\rm fpr}(x) \leqs 1/4$. Finally, if $i$ is odd then $|C_{\O}(x)|$ is maximal when $x = (\L,\L^{-1},I_{n-2i})$, so 
\[
|C_{\O}(x)| \leqs q^{n/2-i-1}(q^{n/2-i}-\e)
\]
and it is straightforward to check that ${\rm fpr}(x) \leqs q^{-i}$.
\end{proof}

\begin{lem}\label{l:orth2_sub5}
The conclusion to Proposition \ref{p:orth2_sub} holds if $H = N_m^{\eta}$ with $2 \leqs m \leqs n/2$. 
\end{lem}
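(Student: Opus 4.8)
The proof follows the template established in the preceding lemmas of this subsection, and since $H = N_m^{\eta}$ with $m \geq 2$ there are no exceptions to record: the goal is simply to prove ${\rm fpr}(x) \leq (r+1)^{-1}$ throughout. Following the conventions fixed before Theorem~\ref{t:subspace}, I would identify $\O$ with the set of nondegenerate $m$-spaces of type $\eta$ (for $m$ even), or with the set of square $m$-spaces (for $m$ odd, so $q$ is odd); in the former case
\[
|\O| = \frac{|{\rm SO}_n^{\e}(q)|}{2|{\rm SO}_m^{\eta}(q)||{\rm SO}_{n-m}^{\eta'}(q)|}, \qquad \eta\eta' = \e,
\]
with an analogous formula when $m$ is odd, and the order estimates of Section~2 in \cite{FM} yield two-sided bounds of the shape $cq^{m(n-m)} < |\O| < c'q^{m(n-m)}$. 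By the usual reduction via Theorem~\ref{t:ls91} and \eqref{e:ls91}, I may assume $x \in {\rm PGO}_n^{\e}(q)$ is semisimple or unipotent; note that when $(n,\e) = (8,+)$ the maximality of $H$ forces $G$ to contain no triality automorphisms (see \cite{K}), so triality graph and graph-field elements do not arise. The principal tool is \cite[Proposition 3.16]{GK}, which provides an explicit upper bound on ${\rm fpr}(x)$ of the same shape as \eqref{e:gk4}, with exponents adjusted by a parameter $\a \in \{0,1\}$ recording whether $\e = +$; this estimate disposes of the overwhelming majority of cases.

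For unipotent $x$ (so $r = p$) the bound from \cite[Proposition 3.16]{GK} is sufficient because $m \geq 2$ and $n \geq 8$, leaving only a short list of small $(n,q)$ to be cleared by {\sc Magma}. For a semisimple involution $x$ (so $r = 2$), Theorem~\ref{t:ls91} gives ${\rm fpr}(x) \leq 4/(3q) \leq 1/3$ once $q \geq 5$, so only $q = 3$ remains; here $|C_{\O}(x)|$ is maximized by an element of type $(-I_{n-1},I_1)$ or $(-I_{n-2},I_2)$, and a direct count of square (or type-$\eta$) $1$- and $2$-spaces in the relevant eigenspaces yields ${\rm fpr}(x) \leq 1/3$.

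For odd $r \ne p$, set $i$ minimal with $r \mid q^i - 1$ and argue by the parity of $i$. As in the proofs of Lemmas~\ref{l:psu_sub4} and \ref{l:symp_sub3}, the containment $C_{\O}(x) \subseteq C_{\O}(y)$, where $y$ is obtained by trivializing a non-homogeneous constituent, lets me reduce to $x = (\L^{\ell}, I_{n-\ell i})$ when $i$ is even and $x = ((\L,\L^{-1})^{\ell}, I_{n-\ell i})$ when $i$ is odd. When $m < i$ (for $i$ even) or $m < 2i$ (for $i$ odd), the quantity $|C_{\O}(x)|$ equals the number of type-$\eta$ $m$-spaces in the nondegenerate space $C_V(x)$, which is maximal at $\ell = 1$; combined with the lower bound on $|\O|$ this gives ${\rm fpr}(x) \leq (q^{i/2}+2)^{-1}$ and ${\rm fpr}(x) \leq q^{-i}$ respectively. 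When $m \geq i$ (or $m \geq 2i$) the bound of \cite[Proposition 3.16]{GK} again suffices apart from the boundary configurations with $n \in \{2m, 2m+2\}$ (and a few small cases), which I treat by embedding $\hat{x}$ as a scalar in an extension-field subgroup ${\rm GU}_2(q^{m/2}) < {\rm O}_{2m}^{+}(q)$, exactly as in the proof of Lemma~\ref{l:orth2_sub2}, and counting the fixed $m$-spaces directly.

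The main obstacle is precisely the odd-order semisimple case at the threshold $m = i$ (or $m = 2i$) for small $q$, where the Guralnick--Kantor estimate is no longer sharp and one must compute $|C_{\O}(x)|$ exactly via the extension-field embedding; the remaining difficulty is the bookkeeping of the finitely many low-rank cases that require direct verification in {\sc Magma}.
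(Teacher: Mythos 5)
Your proposal is correct and follows essentially the same route as the paper: identify $\O$ with the relevant orbit of nondegenerate $m$-spaces, bound $|\O|$, reduce to semisimple/unipotent elements in ${\rm PGO}_n^{\e}(q)$ (no triality), lean on \cite[Proposition 3.16]{GK} for the bulk of cases, split the odd semisimple case by the parity of $i$ with the reduction to $(\L^{\ell},I_{n-\ell i})$ or $((\L,\L^{-1})^{\ell},I_{n-2\ell i})$, and clear the boundary configurations and small $(n,q)$ by direct counting or {\sc Magma}. The only divergences are at the level of execution (the paper disposes of semisimple involutions entirely via the Guralnick--Kantor bound rather than an eigenspace count, and settles the threshold cases $m=i$ by computing $|x^G\cap H|$ and $|x^G|$ explicitly rather than via the ${\rm GU}_2(q^{m/2})$ embedding), and these do not affect the validity of your argument.
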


\begin{proof}
Here we identify $\O$ with an orbit of nondegenerate $m$-dimensional subspaces of $V$ of type $\eta$. More precisely, if $m$ is even then $\O$ is the complete set of subspaces of the given type and either $m < n/2$, or $(\e,\eta)=(-,+)$ and $m=n/2$. On the other hand, if $m$ is odd then $m<n/2$, $q$ is odd and we may assume that $\O$ is the set of nondegenerate $m$-spaces with square discriminant. In all cases, let us observe that
\[
\frac{1}{4}\left(\frac{q}{q+1}\right)q^{m(n-m)} < |\O| = \frac{|{\rm O}_{n}^{\e}(q)|}{|{\rm O}_{m}^{\eta}(q)||{\rm O}_{n-m}^{\eta'}(q)|}<2q^{m(n-m)},
\]
where $\e = \eta\eta'$.

Let $x \in G$ be an element of prime order $r$. As usual, the desired bound quickly follows if $x$ is a field or graph-field automorphism, while the maximality of $H$ implies that $G$ does not contain triality automorphisms when $(n,\e) = (8,+)$. Therefore, for the remainder we may assume $x \in {\rm PGO}_{n}^{\e}(q)$ is unipotent or semisimple. By inspecting \cite[Proposition 3.16]{GK} we deduce that
\begin{equation}\label{e:gk7}
{\rm fpr}(x) < 2q^{-(n-4)/2}+q^{-(n-2)/2}+q^{-m} + q^{-(n-m-\a)/2},
\end{equation}
where $\a=2$ if $m$ is even, otherwise $\a=1$. The groups with $(n,q)$ as in \eqref{e:cases} can be handled directly using {\sc Magma}, so we will exclude these cases for the remainder of the proof.

First assume $r=p=2$. If $q \geqs 4$ then the bound in Theorem \ref{t:ls91} is sufficient,  so we may assume $q=2$. As noted above, we may also assume that $n \geqs 14$ and one can check that the bound in \eqref{e:gk7} is effective unless $(n,m) = (14,2)$. Here $H = N_2^{-}$ (as noted in \cite{KL}, $N_2^{+}$ is non-maximal when $q \leqs 3$) and $|C_{\O}(x)|$ is maximal when $x = (J_2,J_1^{12})$ is a $b_1$-type involution, in which case
\[
|x^G \cap H| \leqs \frac{1}{2}|{\rm O}_{2}^{-}(2)| + \frac{|{\rm O}_{12}^{-}(2)|}{2|{\rm Sp}_{10}(2)|} = 2083
\]
and we deduce that ${\rm fpr}(x) \leqs 1/3$ since $|x^G| \geqs 8128$.

Next assume $r=p>2$. Here the bound in \eqref{e:gk7} is sufficient for $n \geqs 10$, so we may assume $n=8$ and $q \geqs 5$. If $m \leqs 3$ then \eqref{e:gk7} is good enough unless $(m,q) = (2,5)$. In fact, by carefully inspecting \cite[Proposition 3.16]{GK}, we may assume that $(\e,\eta) = (+,-)$ and with the aid of {\sc Magma} one checks that ${\rm fpr}(x) \leqs 1/620$ (maximal if $x$ has Jordan form $(J_3,J_1^5)$). Finally, suppose $(n,m) = (8,4)$. Here $\e=-$ and we are free to assume that $H = N_4^{+}$, which means that \cite[Proposition 3.16]{GK} yields ${\rm fpr}(x) \leqs 3q^{-2}+2q^{-4} \leqs (q+1)^{-1}$ for $q \geqs 5$.

To complete the proof, we may assume $r \ne p$. If $r=2$ then one checks that the bound in \eqref{e:gk7} is sufficient (recall that we may assume $q \geqs 5$ when $n=8$). Now assume $r$ is odd and let $i \geqs 1$ be minimal such that $r$ divides $q^i-1$. As before, we note that $|C_{\O}(x)|$ is maximal when $x$ is of the form $(\L^{\ell},I_{n-\ell i})$ (if $i$ is even) or $((\L,\L^{-1})^{\ell},I_{n-2\ell i})$ (if $i$ is odd) for some $\ell \geqs 1$.

Suppose $i$ is even. If $m<i$ then $C_{\O}(x)$ is the set of $\eta$-type $m$-spaces in $C_V(x)$, so $|C_{\O}(x)|$ is maximal when $x = (\L,I_{n-i})$ and we deduce that 
\[
{\rm fpr}(x) < 8\left(\frac{q+1}{q}\right)q^{-mi} \leqs (q^{i/2}+2)^{-1}
\]
as required. 

Now assume $m \geqs i$ (with $i$ even). If $i=2$ then one can check that \eqref{e:gk7} is sufficient unless $m=q=2$ and $n \geqs 14$. Here $\eta=-$ (since $H$ is a maximal subgroup of $G$), $r=3$ and we compute
\begin{align*}
|x^G \cap H| & \leqs 2\left(\frac{|{\rm SO}_{n-2}^{-}(q)|}{|{\rm GU}_{\ell-1}(q)||{\rm SO}_{n-2\ell}^{+}(q)|}\right) + \frac{|{\rm SO}_{n-2}^{-}(q)|}{|{\rm GU}_{\ell}(q)||{\rm SO}_{n-2\ell-2}^{+}(q)|} \\
& < 2q^{2n\ell - 3\ell^2-\ell}\left(2q^{-2n+2\ell+2}+q^{-4\ell}\right)
\end{align*}
and
\[
|x^G| > \frac{1}{2}\left(\frac{q}{q+1}\right)q^{2n\ell-3\ell^2-\ell}.
\]
The resulting upper bound on ${\rm fpr}(x)$ is sufficient unless $\ell=1$. For $\ell=1$, we verify the bound ${\rm fpr}(x) \leqs 1/4$ by working with the precise values for $|x^G \cap H|$ and $|x^G|$. 

Now assume $m \geqs i \geqs 4$ (we are continuing to assume that $i$ is even). If $m<n/2$ then \eqref{e:gk7} is sufficient unless $(n,m,q,i) = (10,4,3,4)$. Here $r=5$ and $x = (\L,I_6)$ or $(\L^2,I_2)$. For $x = (\L,I_6)$ we get 
\[
|x^G \cap H| \leqs \frac{|{\rm O}_{4}^{-}(3)|}{|{\rm GU}_{1}(9)|} + \frac{|{\rm SO}_{6}^{-}(3)|}{|{\rm GU}_{1}(9)||{\rm SO}_{2}^{+}(3)|} < 2\cdot 3^{12}
\]
and the bound $|x^G|>\frac{1}{8}3^{29}$ is clearly sufficient. The case $x = (\L^2,I_2)$ is similar. Finally, suppose $n=2m$, so $\e=-$ and we can slightly strengthen the bound in \eqref{e:gk7} by replacing the term $q^{-(n-2)/2}$ by $q^{-n/2}$ (see \cite[Proposition 3.16]{GK}). Working with this modified bound, we can now reduce the problem to the cases where $(m,q,i) = (6,3,6)$ or $(5,3,4)$, or $m=i=4$. In the latter case, $r$ divides $q^2+1$ and $x = (\L,I_4)$ since $\e=-$. In particular, we calculate
\[
{\rm fpr}(x) = \frac{2}{q^8(q^4+q^2+1)(q^4+1)} \leqs (q^2+2)^{-1}
\]
and the result follows. The two other special cases that we need to consider can be handled in a similar fashion.

Finally, let us assume $i$ is odd. If $m < 2i$ then $|C_{\O}(x)|$ is maximal when $x = (\L,\L^{-1},I_{n-2i})$ and we deduce that
\[
{\rm fpr}(x) < 8\left(\frac{q+1}{q}\right)q^{-2mi} \leqs q^{-i}.
\]
On the other hand, if $m \geqs 2i$ then one can check that the bound in \eqref{e:gk7} is sufficient, noting that $r$ divides $(q^i-1)/(q-1)$ if $i \geqs 3$.
\end{proof}

\vs

This completes the proof of Proposition \ref{p:orth2_sub}, which in turn completes the proof of Theorem \ref{t:subspace} and our analysis of subspace actions of classical groups.

\section{Product type groups}\label{s:product}

In this section, we complete the proof of Theorem \ref{t:main} by handling the product-type primitive groups. Here we have $G \leqs L \wr S_k$ in its product action on $\O = \Gamma^k$, where $k \geqs 2$ and $L \leqs {\rm Sym}(\Gamma)$ is a primitive group of diagonal or almost simple type. In addition, if $T$ denotes the socle of $L$, then $T^k$ is the socle of $G$ and the subgroup of $S_k$ induced by the conjugation action of $G$ on the $k$ factors of $T^k$ is transitive. There are two cases to consider.

\begin{prop}\label{p:pt1}
If $L$ is a diagonal type group, then ${\rm fpr}(x) \leqs (r+1)^{-1}$ for all $x \in G$ of prime order $r$.
\end{prop}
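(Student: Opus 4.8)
The plan is to reduce the statement to the diagonal-type bound already proved in Proposition \ref{p:diagonal}, via the standard fixed-point calculus for product actions.

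First I would write $x = (x_1, \ldots, x_k)\pi$ with each $x_i \in L$ and $\pi \in S_k$, and observe that since $x$ has prime order $r$ its image $\pi$ has order $1$ or $r$. A point $(\gamma_1, \ldots, \gamma_k) \in \Gamma^k$ is fixed by $x$ precisely when $\gamma_{i\pi} = \gamma_i^{x_i}$ for all $i$; running this relation around a cycle $C = (a, a\pi, \ldots, a\pi^{\ell-1})$ of $\pi$ shows that $\gamma_a$ may be chosen freely subject to being fixed by the product $g_C = x_a x_{a\pi}\cdots x_{a\pi^{\ell-1}} \in L$, after which the remaining coordinates on $C$ are determined. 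This yields
\[
{\rm fpr}(x) = \prod_C {\rm fpr}(g_C, \Gamma)\,|\Gamma|^{1 - \ell_C},
\]
the product running over the cycles $C$ of $\pi$, with $\ell_C$ the length of $C$.

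The argument then splits according to $\pi$. If $\pi = 1$ every cycle has length one and $g_C = x_i$, so the formula collapses to ${\rm fpr}(x) = \prod_i {\rm fpr}(x_i, \Gamma)$; as $x \neq 1$, some $x_i$ is nontrivial, hence of prime order $r$, and Proposition \ref{p:diagonal} gives ${\rm fpr}(x_i, \Gamma) \leqslant (r+1)^{-1}$, while all remaining factors are at most $1$. If $\pi \neq 1$ then $\pi$ has $m \geqslant 1$ cycles of length $r$ together with $f$ fixed points; bounding each ${\rm fpr}(g_C, \Gamma) \leqslant 1$ gives ${\rm fpr}(x) \leqslant |\Gamma|^{m(1-r)} \leqslant |\Gamma|^{1-r}$. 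Since $L$ is of diagonal type, its socle is $T^d$ with $T$ nonabelian simple and $d \geqslant 2$, so $|\Gamma| = |T|^{d-1} \geqslant |T| \geqslant 60$; as $60^{r-1} \geqslant r+1$ for every prime $r$, we obtain $|\Gamma|^{1-r} \leqslant (r+1)^{-1}$, completing both cases.

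The calculation is routine once the cycle-product formula is in place, and the only substantive input is Proposition \ref{p:diagonal}. The step needing a little care is the $\pi \neq 1$ case: the crude estimate ${\rm fpr}(g_C, \Gamma) \leqslant 1$ on the moved cycles is what produces the saving $|\Gamma|^{1-r}$, and the point to verify is simply that $|\Gamma|$ is large enough — here $|\Gamma| \geqslant 60$ — to dominate $r+1$. I expect no real obstacle beyond recording the fixed-point formula correctly.
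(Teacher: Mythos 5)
Your proposal is correct and follows essentially the same route as the paper: split on whether $\pi$ is trivial, invoke Proposition \ref{p:diagonal} when $\pi=1$, and when $\pi\ne 1$ use the product-action fixed-point count to get ${\rm fpr}(x)\leqs |\Gamma|^{1-r}$, which suffices since $|\Gamma|\geqs 60$ for a diagonal type group. The only difference is cosmetic: you spell out the cycle-product formula that the paper summarises as ``a straightforward computation with the product action.''
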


\begin{proof}
Write $x = (x_1, \ldots, x_k)\pi \in G$, where $x_i \in L$ and $\pi \in S_k$. If $\pi = 1$ then $x_i^r = 1$ for all $i$ and at least one $x_i$ is nontrivial, whence 
\[
{\rm fpr}(x) = \prod_i {\rm fpr}(x_i,\Gamma) \leqs (r+1)^{-1}
\]
by Proposition \ref{p:diagonal}. Now assume $\pi \ne 1$, in which case $\pi$ has cycle-shape $(r^h,1^{k-hr})$ for some $h \geqs 1$. In this situation, a straightforward computation with the product action shows that 
$|C_{\O}(x)| \leqs |\Gamma|^{k - h(r-1)}$ and thus
\[
{\rm fpr}(x) \leqs |\Gamma|^{-h(r-1)} \leqs |\Gamma|^{1-r} \leqs (r+1)^{-1}
\]
since $|\Gamma| \geqs 60$.
\end{proof}

\begin{prop}\label{p:pt2}
Suppose $L \leqs {\rm Sym}(\Gamma)$ is almost simple with point stabilizer $J$ and let 
\[
x = (x_1, \ldots, x_k)\pi \in G
\]
be an element of prime order $r$. Then ${\rm fpr}(x) > (r+1)^{-1}$ only if $\pi=1$ and one of the following holds (up to permutation isomorphism):
\begin{itemize}\addtolength{\itemsep}{0.2\baselineskip}
\item[{\rm (i)}] $L=S_n$ or $A_n$ acting on $\ell$-element subsets of $\{1, \ldots, n\}$ with $1 \leqs \ell < n/2$.
\item[{\rm (ii)}] $x$ is conjugate to $(x_1,1, \ldots, 1)$ and $(L,J,x_1)$ is one of the cases in parts (b)-(d) of Theorem \ref{t:main}(i).
\end{itemize}
\end{prop}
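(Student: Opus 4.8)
The plan is to split the analysis according to whether the top component $\pi$ of $x$ is trivial, using that in the product action on $\O=\Gamma^k$ the fixed points of $x=(x_1,\ldots,x_k)\pi$ are controlled by the cycle-shape of $\pi$ together with the fixed point ratios of the $x_i$ on $\Gamma$.

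First I would deal with $\pi\neq 1$, exactly as in the proof of Proposition \ref{p:pt1}. Since $x$ has prime order $r$, the permutation $\pi$ has cycle-shape $(r^h,1^{k-hr})$ for some $h\geqs 1$. A direct computation with the product action shows that each of the $h$ nontrivial cycles contributes a factor of at most $|\Gamma|$ to $|C_{\O}(x)|$ (the coordinates in a cycle are determined by a single one of them, subject to being fixed by a product of the relevant $x_i$), so that $|C_{\O}(x)|\leqs |\Gamma|^{k-h(r-1)}$ and hence ${\rm fpr}(x)\leqs |\Gamma|^{-h(r-1)}\leqs |\Gamma|^{1-r}$. As $L$ is primitive and almost simple we have $|\Gamma|\geqs 5$, and since $|\Gamma|^{r-1}\geqs 5^{r-1}\geqs r+1$ for every prime $r$ this gives ${\rm fpr}(x)\leqs(r+1)^{-1}$. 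Thus ${\rm fpr}(x)>(r+1)^{-1}$ forces $\pi=1$.

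For the rest of the argument I would assume $\pi=1$, so that $x=(x_1,\ldots,x_k)$ with each $x_i$ of order $1$ or $r$; here the fixed point set is a direct product and ${\rm fpr}(x)=\prod_{i\in S}{\rm fpr}(x_i,\Gamma)$, where $S=\{i:x_i\neq 1\}$ is the support of $x$ and each $x_i$ with $i\in S$ has order $r$. If $L$ is permutation isomorphic to $S_n$ or $A_n$ acting on $\ell$-element subsets, then conclusion (i) holds and we are done, so I may assume otherwise. The key point is then to invoke the sharpened bound of Corollary \ref{c:main}. Since the almost simple case of Theorem \ref{t:main} has already been established in Sections \ref{s:as}--\ref{s:class2}, Corollary \ref{c:main} applies to the primitive almost simple group $L$ (the case $k=1$), and as $L$ is not of $\ell$-subset type it yields ${\rm fpr}(x_i,\Gamma)\leqs(r+1)^{-1/2}$ for every $i\in S$. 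Consequently, if $|S|\geqs 2$ then ${\rm fpr}(x)\leqs\big((r+1)^{-1/2}\big)^{2}=(r+1)^{-1}$, contradicting ${\rm fpr}(x)>(r+1)^{-1}$. Hence $|S|=1$; relabelling coordinates we may take $x=(x_1,1,\ldots,1)$, and then ${\rm fpr}(x)={\rm fpr}(x_1,\Gamma)>(r+1)^{-1}$ forces $(L,J,x_1)$ to be one of the cases in Theorem \ref{t:main}(i)(b)--(d) (part (a) being excluded), which is conclusion (ii).

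The main obstacle is precisely the step ruling out $|S|\geqs 2$: the ordinary estimate ${\rm fpr}(x_i,\Gamma)\leqs(r+1)^{-1}$ is too weak, and one genuinely needs the square-root strengthening ${\rm fpr}(x_i,\Gamma)\leqs(r+1)^{-1/2}$ provided by Corollary \ref{c:main}. I would therefore want to confirm that all the exceptional ratios obey this stronger inequality; the tightest instances for $r=2$ are the unipotent cases with $q=2$ and the transposition case in part (b), where ${\rm fpr}$ tends to $1/2$, and the graph automorphism $\tau$ of ${\rm U}_4(2).2$ with ${\rm fpr}=5/9$, each of which stays safely below $(r+1)^{-1/2}=1/\sqrt{3}$.
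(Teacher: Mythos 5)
Your proposal is correct and follows essentially the same route as the paper: rule out $\pi\neq 1$ via the cycle-shape bound from Proposition \ref{p:pt1} with $|\Gamma|\geqs 5$, then use the $k=1$ (almost simple) instance of Corollary \ref{c:main} to show that two nontrivial coordinates already force ${\rm fpr}(x)\leqs(r+1)^{-1}$. Your closing check that the exceptional ratios in Theorem \ref{t:main}(i)(b)--(d) stay below $(r+1)^{-1/2}$ is exactly the verification underpinning the paper's appeal to Corollary \ref{c:main}, so there is nothing to add.
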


\begin{proof}
Suppose ${\rm fpr}(x) > (r+1)^{-1}$. By arguing as in the proof of the previous proposition, noting that $|\Gamma| \geqs 5$, we see that $\pi=1$ and thus ${\rm fpr}(x) = \prod_i {\rm fpr}(x_i,\Gamma)$. In addition, we may assume $L$ is neither $S_n$ nor $A_n$ acting on $\ell$-sets. If $x_1$ is nontrivial, then either ${\rm fpr}(x_1,\Gamma) \leqs (r+1)^{-1}$ or $(L,J,x_1)$ is one of the special cases arising in parts (b)-(d) of Theorem \ref{t:main}(i). By Corollary \ref{c:main}, which is an easy consequence of Theorem \ref{t:main} for almost simple groups (see below), we see that 
\[
{\rm fpr}(x_1,\Gamma) \cdot {\rm fpr}(x_2,\Gamma) \leqs (r+1)^{-1}
\]
if $x_1$ and $x_2$ are both nontrivial. Therefore, we conclude that $x$ is conjugate to $(x_1, 1, \ldots, 1)$ and the proof is complete.
\end{proof} 

\vs

This completes the proof of Theorem \ref{t:main}. Finally, let us prove Corollary \ref{c:main}.

\begin{proof}[Proof of Corollary \ref{c:main}]
Let $G \leqs {\rm Sym}(\O)$ be a finite primitive group and let $x \in G$ be an element of prime order $r$. By Theorem \ref{t:main}, we may assume that $G \leqs L \wr S_k$ acts on $\O = \Gamma^k$ with the product action, where $L \leqs {\rm Sym}(\Gamma)$ is almost simple and $k \geqs 1$ (note that the desired conclusion clearly holds if $G$ is an affine group since $r+1 <r^2$). In addition, we can exclude the special case where $L$ is permutation isomorphic to $S_n$ or $A_n$ acting on $\ell$-element subsets of $\{1, \ldots, n\}$. If $k =1$ then $G$ is almost simple and the desired conclusion follows by inspecting the special cases that arise in parts (b)-(d) of Theorem \ref{t:main}(i). Finally, by combining this observation with Proposition \ref{p:pt2}, we conclude that ${\rm fpr}(x) \leqs (r+1)^{-1/2}$ when $k \geqs 2$.
\end{proof}

\section{Minimal index}\label{s:appl}

Let $G \leqs {\rm Sym}(\O)$ be a primitive permutation group of degree $m$ with point stabilizer $H$. Recall that 
\[
{\rm Ind}(G) = \min\{ {\rm ind}(x) \,:\, 1 \ne x \in G\}
\]
is the \emph{minimal index} of $G$, where 
\[
{\rm ind}(x) = m - {\rm orb}(x) = m - \frac{1}{|x|}\sum_{y \in \la x \ra}|C_{\O}(y)|
\]
is the \emph{index} of $x$ and ${\rm orb}(x)$ is the number of orbits of $x$ on $\O$. If we wish to specify the set $\O$, we will write ${\rm Ind}(G,\O)$ and ${\rm ind}(x,\O)$. 

Observe that if $x \in G$ has order $r$, then ${\rm orb}(x) \geqs m/r$ and thus ${\rm ind}(x) \leqs m(1-1/r)$. In particular, if $x$ is an involution then ${\rm ind}(x) \leqs m/2$, with equality if and only if $x$ acts fixed point freely on $\O$. Consequently, if $|G|$ is even then ${\rm Ind}(G) \leqs m/2$, with equality only if $|H|$ is odd (of course, if $|H|$ is even then $G$ contains involutions with fixed points).

In this final section we prove Theorems \ref{t:mind} and \ref{t:mind2}. We will also establish Theorem \ref{t:odd} in the special case where $|G|$ is odd. We begin with the following easy lemma.

\begin{lem}\label{l:prime}
If ${\rm Ind}(G) = {\rm ind}(x)$ then $x$ has prime order.
\end{lem}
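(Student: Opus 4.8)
The plan is to argue by contradiction, exploiting the fact that passing to a suitable power of $x$ cannot decrease the number of orbits. Recall that ${\rm ind}(x) = m - {\rm orb}(x)$, where ${\rm orb}(x)$ is the number of orbits of $\langle x\rangle$ on $\O$. Writing $\ell_1, \ldots, \ell_t$ for the sizes of these orbits (equivalently, the cycle lengths of $x$ viewed as a permutation of $\O$), we have ${\rm orb}(x) = t$ and $|x| = {\rm lcm}(\ell_1, \ldots, \ell_t)$. The single ingredient I would record first is the elementary cycle-splitting identity: for any integer $d$, the $\ell_i$-cycle of $x$ on its $i$-th orbit breaks, under $x^d$, into exactly $\gcd(\ell_i, d)$ cycles (each of length $\ell_i/\gcd(\ell_i,d)$). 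Summing over $i$ gives
\[
{\rm orb}(x^d) = \sum_{i=1}^{t} \gcd(\ell_i, d) \geqs t = {\rm orb}(x),
\]
and hence ${\rm ind}(x^d) \leqs {\rm ind}(x)$ for every $d$, with strict inequality as soon as some $\gcd(\ell_i,d) > 1$.

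Next I would carry out the reduction. Suppose for contradiction that $|x| = n$ is not prime; since $x \neq 1$ we have $n > 1$, so $n$ is composite and admits a prime divisor $p$ with $n/p > 1$. Taking $d = p$ in the identity above: because $p$ divides $n = {\rm lcm}(\ell_1, \ldots, \ell_t)$, at least one orbit size $\ell_j$ is divisible by $p$, and for that index $\gcd(\ell_j, p) = p \geqs 2$. As every other summand is at least $1$, we obtain ${\rm orb}(x^p) > {\rm orb}(x)$ and therefore ${\rm ind}(x^p) < {\rm ind}(x) = {\rm Ind}(G)$. Moreover $x^p \neq 1$ since $n/p > 1$, so $x^p$ is a nonidentity element of $G$ with strictly smaller index, contradicting the minimality of ${\rm Ind}(G)$. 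This forces $|x|$ to be prime.

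There is no serious obstacle here: the argument is purely combinatorial and relies only on the behaviour of cycle lengths under taking powers. The one point to state carefully is the cycle-splitting identity and the observation that a prime $p$ dividing ${\rm lcm}(\ell_1,\dots,\ell_t)$ must divide some individual $\ell_j$; everything else is immediate. I would make sure the contradiction is drawn against minimality over \emph{nonidentity} elements, which is exactly why the hypothesis $n/p>1$ (guaranteeing $x^p \neq 1$) is needed.
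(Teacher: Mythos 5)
Your proof is correct and follows essentially the same route as the paper: both derive a contradiction by showing that $x^p$, for $p$ a prime divisor of the composite order $|x|$, is a nonidentity element with strictly more orbits and hence strictly smaller index. Your direct count ${\rm orb}(x^d)=\sum_{i}\gcd(\ell_i,d)$ is, if anything, a cleaner packaging of the paper's cycle-splitting argument (which routes through the auxiliary element $z=x^{q}$ before arriving at the same conclusion about $x^p$).
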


\begin{proof}
Suppose otherwise, say $|x|=pq$ with $p$ a prime and $q>1$. Then $z = x^q$ has order $p$ and each $\la x \ra$-orbit is a union of $\la z \ra$-orbits. Since ${\rm Ind}(G) = {\rm ind}(x)$, it follows that every $\la x \ra$-orbit is a $\la z \ra$-orbit and vice versa. But if $\{\a,\a^z, \ldots, \a^{z^{p-1}}\}$ is an orbit of $\la z \ra$ of length $p$, then $\la x^p \ra$ is the stabilizer of $\a$ in $\la x \ra$ and we deduce that ${\rm orb}(x^p) > {\rm orb}(x)$, which is a contradiction. 
\end{proof}

Next we handle two important special cases.

\begin{prop}\label{p:mind1}
Let $G = S_n$ or $A_n$ acting on $\ell$-element subsets of $\{1, \ldots, n\}$, where $n \geqs 5$ and $1 \leqs \ell < n/2$. Then
\[
{\rm Ind}(G) = \left\{\begin{array}{ll}
\binom{n-2}{\ell-1} & \mbox{if $G = S_n$} \\
\frac{1}{2}\left(\binom{n}{\ell} - \binom{n-4}{\ell} - 2\binom{n-4}{\ell-2}-\binom{n-4}{\ell-4}\right) & 
\mbox{if $G = A_n$.}
\end{array}\right.
\]
Moreover, ${\rm Ind}(G) = {\rm ind}(x)$ if and only if
\begin{itemize}\addtolength{\itemsep}{0.2\baselineskip}
\item[{\rm (i)}] $G = S_n$ and $x$ is a transposition;
\item[{\rm (ii)}] $G = A_n$ and $x$ is a double transposition; or
\item[{\rm (iii)}] $G = A_n$, $\ell=1$ and $x$ is a $3$-cycle.
\end{itemize}
\end{prop}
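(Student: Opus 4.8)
The plan is to reduce everything to a single binomial computation via generating functions. By Lemma \ref{l:prime} we may assume $x$ has prime order $r$; then $x$ has cycle-shape $(r^k,1^{n-rk})$ on $\{1,\dots,n\}$ and every nontrivial power of $x$ has the same cycle-shape, so ${\rm fpr}(x^j) = {\rm fpr}(x)$ for $1 \leqs j \leqs r-1$. Feeding this into the displayed formula for ${\rm ind}$ yields the clean identity
\[
{\rm ind}(x) = \frac{r-1}{r}\bigl(m - |C_{\O}(x)|\bigr), \qquad m = \br{n}{\ell}_{\!\!}\genfrac{}{}{0pt}{}{}{}\, ,
\]
where $m = \binom{n}{\ell}$. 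The key tool is that the generating function for the number of $\ell$-subsets fixed by $x$ is $\sum_{\ell}|C_{\O}(x,\ell)|z^{\ell} = \prod_c (1+z^c)$, the product running over the cycles of $x$ (a subset is fixed iff each cycle-support lies entirely inside or outside it). Thus for the three elements that matter I would record $M_r := m - |C_{\O}(y_r)| = [z^{\ell}]\bigl((1+z)^n - (1+z^r)(1+z)^{n-r}\bigr)$ for an $r$-cycle $y_r$, and $M' := [z^{\ell}]\bigl((1+z)^n - (1+z^2)^2(1+z)^{n-4}\bigr)$ for a double transposition. A short Pascal computation gives $M_2 = 2\binom{n-2}{\ell-1}$ and $M' = \binom{n}{\ell} - \binom{n-4}{\ell} - 2\binom{n-4}{\ell-2} - \binom{n-4}{\ell-4}$, matching the two claimed values of ${\rm Ind}(G)$ after multiplying by $\tfrac{1}{2}$.

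First I would dispose of the dependence on the precise element for fixed $r$. By Proposition \ref{p:alt2}(i) we have $|C_{\O}(x)| \leqs |C_{\O}(y_r)|$, so among order-$r$ elements the $r$-cycle minimizes ${\rm ind}$; the inequality is strict unless $x$ is a single $r$-cycle, which I would verify directly from the factorization $\prod_c(1+z^c)$ (an extra cycle of length $r$ strictly lowers the relevant coefficient). In $A_n$ the case $r=2$ needs the separate input that the best even involution is the double transposition: writing $g(k) = [z^{\ell}](1+z^2)^k(1+z)^{n-2k}$, the telescoping identity
\[
(1+z^2)^k(1+z)^{n-2k} - (1+z^2)^{k+1}(1+z)^{n-2k-2} = 2z\,(1+z^2)^k(1+z)^{n-2k-2}
\]
shows $g$ is strictly decreasing on the relevant range, so $k=2$ (the smallest even value, giving a genuine element of $A_n$) is optimal.

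Next I would settle the comparison across different primes. Since $|C_{\O}(y_{r+1})| \leqs |C_{\O}(y_r)|$ (again by Pascal, or because fewer moved points force fewer moved subsets), $M_r$ is nondecreasing in $r$ while $\tfrac{r-1}{r}$ is strictly increasing, so ${\rm ind}(y_r) = \tfrac{r-1}{r}M_r$ is strictly increasing in $r$ once $M_r > 0$. For $G = S_n$ this at once gives that the transposition ($r=2$) is the unique minimizer, completing that case. For $G = A_n$ the surviving competition is between the double transposition and the $3$-cycle, and I expect this to be the only real obstacle. It is resolved by the crucial identity
\[
4M_3 - 3M' = [z^{\ell}]\,(1+z)^{n-4}\bigl((1+z)^4 - 4(1+z^3)(1+z) + 3(1+z^2)^2\bigr) = 12\br{}{}_{\!} \binom{n-4}{\ell-2},
\]
where the inner bracket collapses to $12z^2$. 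Hence $\tfrac{1}{2}M' \leqs \tfrac{2}{3}M_3$ with equality if and only if $\binom{n-4}{\ell-2} = 0$, that is $\ell = 1$. Combined with the monotonicity in $r$, this shows the double transposition attains ${\rm Ind}(A_n) = \tfrac{1}{2}M'$ and is strictly beaten by no other prime-order element, except that a $3$-cycle ties precisely when $\ell = 1$. Finally I would read off the equality cases: for $r \geqs 3$ the strictness of $|C_{\O}(x)| \leqs |C_{\O}(y_r)|$ forces $x$ to be an $r$-cycle, and when $\ell = 1$ the natural action shows that only a single $3$-cycle (not $(3^2,1^{n-6})$) ties, yielding exactly conclusions (i)--(iii).
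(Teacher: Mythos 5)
Your proposal is correct and follows essentially the same route as the paper: reduce to prime order, identify the minimizers within each order as the transposition, double transposition and $3$-cycle, compute their indices, and compare, with the tie at $\ell=1$ coming from $\tfrac{2}{3}M_3-\tfrac{1}{2}M'=2\binom{n-4}{\ell-2}$. The generating-function bookkeeping, the telescoping identity and the cross-prime monotonicity merely supply (correctly) the verifications that the paper's terser proof leaves implicit.
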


\begin{proof}
If $1 \ne y \in G$ has odd order, then ${\rm ind}(y)$ is minimal when $y$ is a $3$-cycle, in which case 
\[
{\rm ind}(y) = \frac{2}{3}\left(\binom{n}{\ell}-\binom{n-3}{\ell}-\binom{n-3}{\ell-3}\right).
\]
Now let $x \in G$ be an involution and observe that ${\rm ind}(x)$ is minimal when $x$ is a transposition (for $G = S_n$) or a double transposition (for $G = A_n$). If $x$ is a transposition, then 
\[
{\rm ind}(x) = \binom{n-2}{\ell-1} < {\rm ind}(y).
\]
Similarly, if $x$ is a double transposition then
\[
{\rm ind}(x) = \frac{1}{2}\left(\binom{n}{\ell} - \binom{n-4}{\ell} - 2\binom{n-4}{\ell-2}-\binom{n-4}{\ell-4}\right)
\]
and this is strictly less than ${\rm ind}(y)$ when $\ell \geqs 2$. However, if $\ell=1$ then ${\rm ind}(x) = {\rm ind}(y) = 2$. The result follows. 
\end{proof}

\begin{prop}\label{p:mind2}
Let $G \leqs {\rm Sym}(\O)$ be an almost simple primitive classical group of degree $m$ in a subspace action with socle $G_0$ and point stabilizer $H$, where $(G,H)$ is one of the cases appearing in Table \ref{tab:class}. Then the following hold:
\begin{itemize}\addtolength{\itemsep}{0.2\baselineskip}
\item[{\rm (i)}] ${\rm Ind}(G) = {\rm ind}(x)$ only if $|x| \in \{2,3\}$.
\item[{\rm (ii)}] ${\rm Ind}(G) = {\rm ind}(x)$ for some involution $x \in G$.
\item[{\rm (iii)}] ${\rm Ind}(G) = {\rm ind}(x)$ for some element $x \in G$ of order $3$ if and only if $G = {\rm L}_2(8){:}3$ and $H =P_1$, in which case $m=9$ and ${\rm Ind}(G) = 4$.
\item[{\rm (iv)}] ${\rm Ind}(G)< m/4$ if and only if $(G,H,m,{\rm Ind}(G))$ is one of the cases in Table \ref{tab:subc}. In each of these cases, ${\rm Ind}(G) \geqs 3m/14$.
\end{itemize}
\end{prop}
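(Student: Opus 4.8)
The plan is to reduce everything to the behaviour of ${\rm fpr}$ on involutions and elements of order $3$, and then read off the relevant data from Table \ref{tab:class}. By Lemma \ref{l:prime}, any $x$ with ${\rm ind}(x) = {\rm Ind}(G)$ has prime order $r$. Since $C_{\O}(y) = C_{\O}(y^{-1})$ for every $y$, we have ${\rm fpr}(y) = {\rm fpr}(y^{-1})$, so the formula ${\rm ind}(x) = m(1 - |x|^{-1}\sum_{y \in \la x\ra}{\rm fpr}(y))$ collapses to
\[
{\rm ind}(x) = \tfrac{m}{2}(1 - {\rm fpr}(x)) \ \ (r = 2), \qquad {\rm ind}(x) = \tfrac{2m}{3}(1 - {\rm fpr}(x)) \ \ (r = 3).
\]
Writing $f_r = \max\{{\rm fpr}(y) : y \in G,\ |y| = r\}$, the least index attained by an involution is $I_2 = \frac{m}{2}(1 - f_2)$ and by an element of order $3$ is $I_3 = \frac{2m}{3}(1 - f_3)$. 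Crucially, $f_2$ and $f_3$ are exactly the fixed point ratios tabulated in Table \ref{tab:class} whenever $(G,H)$ has an order-$2$ (resp. order-$3$) row there, and are at most $(r+1)^{-1}$ otherwise, by Theorem \ref{t:main}.

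For part (i), I would first dispose of $r \geq 5$. Every power $x^j$ is an element of order $r$, so ${\rm fpr}(x^j) \leq f_r$ and hence ${\rm ind}(x) \geq m\frac{r-1}{r}(1 - f_r)$. If $(G,H,r)$ is not an exception in Table \ref{tab:class} then $f_r \leq (r+1)^{-1}$, giving ${\rm ind}(x) \geq m\frac{r-1}{r+1} \geq \frac{2m}{3}$; for the finitely many parametrized exceptions with $r \geq 5$ (namely ${\rm L}_2(q)$ with $r = q-1$, the transvection and $(\omega, I_{n-1})$ rows of ${\rm L}_n(q)$ and ${\rm PSp}_n(q)$, and the $r = 5$ row for ${\rm P\O}_8^{+}(4)$) I would substitute the tabulated $f_r$ and check directly that $\frac{r-1}{r}(1 - f_r) > \frac12$. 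In all cases ${\rm ind}(x) > m/2$, whereas every involution $t$ satisfies ${\rm ind}(t) = \frac{m}{2}(1 - {\rm fpr}(t)) \leq m/2$ and $G$ has even order; so a minimiser has order $2$ or $3$, proving (i).

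For parts (ii) and (iii), note ${\rm Ind}(G) = \min(I_2, I_3)$, and $I_2 \leq I_3$ is equivalent to $4f_3 - 3f_2 \leq 1$. I would verify this for every row of Table \ref{tab:class}, showing equality occurs only for ${\rm L}_2(8){:}3$. If $(G,H)$ has no order-$3$ exception then $f_3 \leq 1/4$ and $4f_3 - 3f_2 \leq 1 - 3f_2 < 1$ (using $f_2 > 0$, which holds since each such action contains a transvection or reflection fixing a point). If $f_3 > 1/4$ then the same $(G,H)$ carries an involution of comparably large fixed point ratio: for the paired rows (e.g. ${\rm PSp}_n(2)$ with $H = {\rm O}_n^{-}(2)$, and the unitary and orthogonal $P_1$/$N_1$ families) one has $f_2 \geq (q+1)^{-1}$ while $f_3 - 1/4$ is of order $q^{-n/2}$, so the inequality is strict. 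The only rows with $f_3 = 1/3$ are the field-automorphism case of ${\rm L}_2(8)$ and the case $(\omega I_2, I_2)$ of ${\rm U}_4(2)$; for ${\rm L}_2(8){:}3$ the sole involutions are transvections fixing one of the nine $1$-spaces, so $f_2 = 1/9$ and $4f_3 - 3f_2 = \frac43 - \frac13 = 1$, yielding the tie $I_2 = I_3 = 4$, while a direct computation gives $f_2 > 1/9$ in the ${\rm U}_4(2)$ rows. This proves (ii) and (iii). I expect the main obstacle to be exactly this borderline analysis: isolating ${\rm L}_2(8){:}3$ as the unique tie requires the \emph{exact} value of $f_2$ (not merely $f_2 \leq 1/3$) in the few cases where $f_3 = 1/3$ but no involution is tabulated, and these small groups must be settled by explicit (Magma) computation.

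For part (iv), (ii) gives ${\rm Ind}(G) = I_2 = \frac{m}{2}(1 - f_2)$, so ${\rm Ind}(G) < m/4$ if and only if $f_2 > 1/2$. As any involution with ${\rm fpr} > 1/2$ is recorded in Table \ref{tab:class}, I would inspect its order-$2$ rows: a short computation shows the tabulated value exceeds $1/2$ precisely for ${\rm U}_4(2){:}2$ with $H = P_2$, for ${\rm Sp}_n(2)$ with $H = {\rm O}_n^{-}(2)$, for ${\rm O}_n^{-}(2)$ with $H = P_1$, and for ${\rm O}_n^{+}(2)$ with $H = N_1$ (the opposite sign choices give $f_2 < 1/2$). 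These are the entries of Table \ref{tab:subc}. Substituting each $f_2$ into $\frac{m}{2}(1 - f_2)$ and checking that ${\rm Ind}(G)/m$ is increasing in $n$, the minimum is attained by ${\rm Sp}_6(2)$ with $H = {\rm O}_6^{-}(2)$, where $f_2 = 4/7$ and ${\rm Ind}(G) = 3m/14$; hence ${\rm Ind}(G) \geq 3m/14$ throughout, completing the proof.
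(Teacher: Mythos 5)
Your reduction to $r\in\{2,3\}$ via ${\rm ind}(x)=m\cdot\frac{r-1}{r}(1-{\rm fpr}(x))$ for $x$ of prime order $r$, and the packaging of the comparison as $I_2\leq I_3\iff 4f_3-3f_2\leq 1$, is sound and is essentially the route the paper takes (the paper phrases it as a row-by-row comparison of $m_2$ and $m_3$ over Table \ref{tab:class}); your treatment of parts (i) and (iii), including the isolation of the two $f_3=1/3$ rows and the tie for ${\rm L}_2(8){:}3$, is correct.

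The one genuine gap is the assertion that ``$f_2$ and $f_3$ are exactly the fixed point ratios tabulated in Table \ref{tab:class} whenever $(G,H)$ has an order-$2$ (resp.\ order-$3$) row there.'' The table records only the socle and the type of $H$, and the extremal element listed need not lie in the particular almost simple group $G$: the transvection $(J_2,J_1^{n-2})$ lies in ${\rm O}_n^{\e}(2)\setminus \Omega_n^{\e}(2)$, the reflection $(\omega,I_{n-1})=(-I_1,I_{n-1})$ need not lie in ${\rm PSL}_n(3)$ when $n$ is even, the reflections $(-I_{n-1},I_1)^{\pm}$ need not lie in $\Omega_n(3)$ or ${\rm P\Omega}_n^{\e}(3)$, and $\tau$ lies only in ${\rm U}_4(2).2$. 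If you read $f_2$ off the table regardless, part (iv) comes out wrong: you would place $G=\Omega_n^{-}(2)$ acting on singular $1$-spaces (and $\Omega_n^{+}(2)$ on nonsingular $1$-spaces) into Table \ref{tab:subc}, whereas for these groups the largest involution fixed point ratio is attained by a class of type $a_2$ or $c_2$ and is at most $1/3$, giving ${\rm Ind}(G)\geq m/3>m/4$. The same issue affects the exact values of ${\rm Ind}(G)$ and the verification of $4f_3-3f_2<1$ in several families --- for instance ${\rm PSL}_n(3)$ with $n$ even and $H=P_1$, where $f_2=(3^{n-2}+7)/(3^n-1)$ comes from $(-I_2,I_{n-2})$ and the inequality holds only by a small margin. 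So the case analysis must also draw on the fixed point ratios of the subdominant involution classes computed in Section \ref{s:class2}, not just on the entries of Table \ref{tab:class}; with that repair, your argument goes through and coincides with the paper's.
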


\renewcommand{\arraystretch}{1.2}
{\small \begin{table}
\[
\begin{array}{lllll} \hline
G & H & m & {\rm Ind}(G) & {\rm Conditions} \\ \hline
{\rm U}_4(2).2 & P_2 & 27 & 6 \\
{\rm Sp}_n(2) & {\rm O}_{n}^{-}(2) & 2^{n/2-1}(2^{n/2}-1) & 2^{n/2-2}(2^{n/2-1}-1) & n \geqs 6 \\
{\rm O}_{n}^{-}(2) & P_1 & (2^{n/2-1}-1)(2^{n/2}+1) & 2^{n/2-2}(2^{n/2-1}-1) & n \geqs 8 \\
{\rm O}_{n}^{+}(2) & N_1 & 2^{n/2-1}(2^{n/2}-1) & 2^{n/2-2}(2^{n/2-1}-1) & n \geqs 8 \\ \hline
\end{array}
\]
\caption{The subspace actions in part (iv) of Proposition \ref{p:mind2}}
\label{tab:subc}
\end{table}}
\renewcommand{\arraystretch}{1}

\begin{proof}
First observe that $|H|$ is even (for example, see \cite[Theorem 2]{LS911}) and thus ${\rm Ind}(G)<m/2$ as noted above. Let $x \in G$ be an element such that ${\rm Ind}(G) = {\rm ind}(x)$, so $x$ has prime order $r$ by Lemma \ref{l:prime}. Seeking a contradiction, suppose $r \geqs 5$. 

If ${\rm fpr}(y) \leqs r^{-1}$ for every element $y \in G$ of order $r$, then 
\[
{\rm ind}(x) \geqs \left(1-\frac{1}{r}\right)^2m>\frac{m}{2} > {\rm Ind}(G)
\]
and we have reached a contradiction. Therefore, we may assume ${\rm fpr}(y)>r^{-1}$ for some $y \in G$ of order $r$. Then Corollary \ref{c:main0} implies that $G_0 = {\rm L}_2(q)$, $H = P_1$ and $r = q-1 \geqs 7$ is a Mersenne prime. Here $m=q+1$ and $|C_{\O}(x)|=2$, so ${\rm ind}(x) = q-2 > m/2$ and once again this is a contradiction. This proves part (i) and so for the remainder we may assume $r \in \{2,3\}$.

For $r \in \{2,3\}$, set 
\[
f_r = \max\{{\rm fpr}(x)\,:\, |x|=r\},\;\; m_r = \min\{ {\rm ind}(x) \,:\, |x|=r\}
\]
and note that ${\rm Ind}(G) = \min\{m_2,m_3\}$. If $f_3 \leqs 1/4$ then ${\rm ind}(x) \geqs m/2>{\rm Ind}(G)$ for every $x \in G$ of order $3$, in which case ${\rm Ind}(G) = m_2$ (so (ii) holds) and $(G,H)$ does not arise as a special case in part (iii). In addition, if $f_2 \leqs 1/3$ then ${\rm ind}(x) \geqs m/3$ for every involution $x$ and thus $(G,H)$ does not appear in part (iv). Therefore, to complete the proof of the proposition we may assume $f_2>1/3$ or $f_3>1/4$, in which case $(G,H,x)$ is one of the special cases in Table \ref{tab:class} with $|x|=2$ or $3$. We now inspect each of these cases in turn, computing $m_2$, and also $m_3$ if $f_3>1/4$. 

\vs

\noindent \emph{Case 1. $G_0 = {\rm L}_n(q)$.}

\vs

Here $H = P_1$, $m = (q^n-1)/(q-1)$ and we may assume $q \in \{2,3,4\}$ or $(n,q) = (2,8)$ since we are only interested in the cases where $f_2>1/3$ or $f_3>1/4$. If $(n,q) = (2,8)$ then $G = {\rm L}_2(8){:}3$, $m=9$ and ${\rm Ind}(G) = 4$, with ${\rm ind}(x)=4$ if and only if $x$ is an involution or a field automorphism of order $3$ (in particular, this is the special case recorded in part (iii)). If $q=2$ then $f_3 \leqs 1/4$ and ${\rm Ind}(G) = m_2 = {\rm ind}(x)$ with $x = (J_2,J_1^{n-2})$. Here ${\rm fpr}(x)$ is recorded in Table \ref{tab:class} and we compute ${\rm Ind}(G) = 2^{n-2}>m/4$. Similarly, if $q=3$ then $m_3 = 2.3^{n-2}$ and 
\[
m_2 = \left\{\begin{array}{ll}
\frac{1}{2}(3^{n-1}-1) & \mbox{if $(-I_{n-1},I_1) \in G$} \\
2(3^{n-2}-1) & \mbox{otherwise,}
\end{array}\right.
\]
whence ${\rm Ind}(G) = m_2 > m/4$. Finally, suppose $q=4$. Here $f_3>1/4$ if and only if $G$ contains an element of order $3$ of the form $x = (\omega,I_{n-1})$ (modulo scalars), in which case $m_3 = {\rm ind}(x) =  2(4^{n-1}-1)/3$. If $x \in G$ is an involution, then ${\rm ind}(x)$ is minimal when $x = (J_2,J_1^{n-2})$ is a transvection and we compute $m_2 = 2^{2n-3}$. We conclude that ${\rm Ind}(G) = m_2>m/4$.

\vs

\noindent \emph{Case 2. $G_0 = {\rm U}_n(q)$, $n \geqs 3$.}

\vs

First assume $H=P_1$, so $n \geqs 5$ is odd, $q=2$ and $m = (2^n+1)(2^{n-1}-1)/3$. Here $f_3>1/4$ if and only if $G$ contains an element $x = (\omega, I_{n-1})$ of order $3$, in which case $m_3 = {\rm ind}(x) = 2^{n-1}(2^{n-1}-1)/3$. If $x \in G$ is an involution, then ${\rm ind}(x)$ is minimal when $x = (J_2,J_1^{n-2})$. By inspecting the proof of Lemma \ref{l:psu_sub1}, we see that $|C_{\O}(x)| =4(2^{n-2}+1)(2^{n-3}-1)/3+1$ and thus $m_2 = {\rm ind}(x) = 2^{2n-4}$. In particular, ${\rm Ind}(G) = m_2 >m/4$.

Now suppose $H = P_2$, so $n=4$ and $q \in \{2,3\}$. Here it is straightforward to check that ${\rm Ind}(G) = m_2<m_3$. More precisely, if $q=2$ then $m=27$ and either $G = G_0$ and ${\rm Ind}(G) = 10>m/4$, or $G = G_0.2$ and ${\rm Ind}(G) = 6<m/4$ (so the latter case is recorded in part (iv)). Similarly, if $q=3$ then $m=112$ and ${\rm Ind}(G) \geqs 36 > m/4$.

Finally suppose $H = N_1$, so $n \geqs 4$ is even, $q=2$ and $m = 2^{n-1}(2^n-1)/3$. If $r=3$ then ${\rm ind}(x)$ is minimal when $x = (\omega,I_{n-1})$ and we compute ${\rm ind}(x) = 2(2^{2n-3}-2^{n-2}-1)/3$. Similarly, if $r=2$ then ${\rm ind}(x)$ is minimal when $x = (J_2,J_1^{n-2})$ and by inspecting the proof of Lemma \ref{l:psu_sub3} we deduce that ${\rm ind}(x) = 2^{2n-4}$. Therefore, ${\rm Ind}(G) = 2^{2n-4}>m/4$ and ${\rm Ind}(G) = {\rm ind}(x)$ if and only if $x = (J_2,J_1^{n-2})$. 

\vs

\noindent \emph{Case 3. $G_0 = {\rm PSp}_n(q)$, $n \geqs 4$.}

\vs

First assume $H = P_1$, so $m = (q^n-1)/(q-1)$ and we may assume $q \in \{2,3\}$. If $q=2$ then ${\rm ind}(x)$ is minimal when $x = (J_2,J_1^{n-2})$ and we compute ${\rm ind}(x) = 2^{n-2}$. Therefore, ${\rm Ind}(G) = 2^{n-2}>m/4$, with ${\rm Ind}(G) = {\rm ind}(x)$ if and only if $x = (J_2,J_1^{n-2})$. Now assume $q=3$. If $r=3$, then ${\rm ind}(x)$ is minimal when $x = (J_2,J_1^{n-2})$ and we calculate ${\rm ind}(x) = 2.3^{n-2}$. For $r=2$, we find that ${\rm ind}(x)$ is minimal when $x = (-I_2,I_{n-2})$. Here the proof of Lemma \ref{l:symp_sub1} gives $|C_{\O}(x)| = (3^{n-2}-1)/2+4$ and we deduce that ${\rm ind}(x) = 2(3^{n-2}-1)$. Therefore, ${\rm Ind}(G) = 2(3^{n-2}-1)>m/4$ and ${\rm Ind}(G) = {\rm ind}(x)$ only if $x$ is an involution.

Next suppose $n \geqs 6$, $q=2$ and $H = {\rm O}_{n}^{\e}(2)$, so $m = 2^{n/2-1}(2^{n/2}+\e)$. Here $f_3>1/4$ if and only if $\e=-$, in which case $m_3 = 2^{n/2-1}(2^{n/2-1}-1)$ (with ${\rm ind}(x)$ minimal if $x = (\L,I_{n-2})$). If $r = 2$ then ${\rm ind}(x)$ is minimal when $x = (J_2,J_1^{n-2})$ and we compute ${\rm ind}(x) = 2^{n/2-2}(2^{n/2-1}+\e)$. Therefore, ${\rm Ind}(G) = 2^{n/2-2}(2^{n/2-1}+\e)$, which is less than $m/4$ if and only if $\e=-$ (note that in this situation we have ${\rm Ind}(G) \geqs 3m/14$, with equality if $n=6$). We also deduce that ${\rm Ind}(G) = {\rm ind}(x)$ only if $x$ is an involution.

\vs

\noindent \emph{Case 4. $G_0 = \O_n(q)$, $n \geqs 7$ odd, $q$ odd.} 

\vs

Here $q=3$, $H = P_1$ or $N_1^{-}$, and $f_3 \leqs 1/4$, so we may assume $r=2$. First assume $H = P_1$, so $m = (3^{n-1}-1)/2$. If $G = {\rm SO}_n(3)$ then ${\rm ind}(x)$ is minimal when $x=(-I_{n-1},I_1)^{+}$, in which case ${\rm ind}(x) = 3^{(n-3)/2}(3^{(n-1)/2}-1)/2$. Similarly, if $G = \O_n(3)$ and $(-I_{n-1},I_1)^{+} \not\in G$, then ${\rm ind}(x)$ is minimal when $x = (-I_{n-1},I_1)^{-}$ and we compute ${\rm ind}(x) = 3^{(n-3)/2}(3^{(n-1)/2}+1)/2$. In conclusion, if $H = P_1$ then 
${\rm Ind}(G) = 3^{(n-3)/2}(3^{(n-1)/2}-\delta)/2$, where $\delta = 1$ if $(-I_{n-1},I_1)^{+} \in G$, otherwise $\delta = -1$. In particular, ${\rm Ind}(G)>m/4$ and ${\rm Ind}(G) = {\rm ind}(x)$ only if $x$ is an involution.

Now assume $H = N_1^{-}$, so $m = 3^{(n-1)/2}(3^{(n-1)/2}-1)$. As in the previous case, we only need to consider involutions of type $(-I_{n-1},I_1)^{\e}$. If $G = {\rm SO}_n(3)$, then ${\rm ind}(x)$ is minimal when $x = (-I_{n-1},I_1)^{-}$ and we get ${\rm ind}(x) = (3^{n-2}-2.3^{(n-3)/2}-1)/2$. And if $G = \O_n(3)$ does not contain $(-I_{n-1},I_1)^{-}$, then ${\rm ind}(x)$ is minimal when $x = (-I_{n-1},I_1)^{+}$ and we calculate that ${\rm fpr}(x) = 1/3$, which yields ${\rm ind}(x) = 3^{(n-3)/2}(3^{(n-1)/2}-1)/2$. Therefore, 
\[
{\rm Ind}(G) = \left\{\begin{array}{ll}
(3^{n-2}-2.3^{(n-3)/2}-1)/2 & \mbox{if $(-I_{n-1},I_1)^{-} \in G$} \\
3^{(n-3)/2}(3^{(n-1)/2}-1)/2 & \mbox{otherwise.}
\end{array}\right.
\]
Once again we conclude that ${\rm Ind}(G)>m/4$ and ${\rm Ind}(G) = {\rm ind}(x)$ only if $x$ is an involution.

\vs

\noindent \emph{Case 5. $G_0 = {\rm P\O}_n^{\e}(q)$, $n \geqs 8$ even.}

\vs

First assume $H = P_1$, so $m = (q^{n/2-1}+\e)(q^{n/2}-\e)/(q-1)$ and $q \in \{2,3\}$. Suppose $q=3$, in which case $\e=-$, $f_3 \leqs 1/4$ and we may assume $G$ contains a reflection $x = (-I_{n-1},I_1)$ (otherwise $f_2 \leqs 1/3$). Then we compute $m_2 = {\rm ind}(x) = 3^{n/2-1}(3^{n/2-1}-1)/2>m/4$. 

Now assume $q=2$. If $r=3$, then we may assume $\e=-$, in which case ${\rm ind}(x)$ is minimal when $x = (\L,I_{n-2})$. Here we compute ${\rm ind}(x) = 2^{n/2-1}(2^{n/2-1}-1)$. Now assume $r=2$. If $G = {\rm O}_n^{\e}(2)$ then ${\rm ind}(x)$ is minimal when $x = (J_2,J_1^{n-2})$ is a $b_1$-type involution and we calculate ${\rm ind}(x) =  2^{n/2-2}(2^{n/2-1}+\e)$. Therefore, if $G = {\rm O}_n^{\e}(2)$ then ${\rm Ind}(G) = 2^{n/2-2}(2^{n/2-1}+\e)$, which is less than $m/4$ if and only if $\e=-$ (note that if $\e=-$ then it is easy to check that ${\rm Ind}(G)>3m/14$). Now suppose $G = \O_n^{\e}(2)$. Here ${\rm ind}(x)$ is minimal when $x=a_2$ or $c_2$. If $x=a_2$ then $|C_{\O}(x)|$ is given in the proof of Lemma \ref{l:orth2_sub1} and we compute ${\rm ind}(x) = 3.2^{n-4}$. Similarly, if $x = c_2$ then ${\rm ind}(x) = 2^{n/2-2}(3.2^{n/2-2}+\e)$. Since $3.2^{n-4} < 2^{n/2-1}(2^{n/2-1}-1)$, we conclude that if $G = \O_n^{\e}(2)$ then 
\[
{\rm Ind}(G) = \left\{\begin{array}{ll}
3.2^{n-4} & \mbox{if $\e=+$} \\
2^{n/2-2}(3.2^{n/2-2}-1) & \mbox{if $\e=-$}
\end{array}\right.
\]
is greater than $m/4$ and ${\rm Ind}(G) = {\rm ind}(x)$ only if $x$ is an involution. 

To complete the analysis of orthogonal groups, we may assume $H = N_1$ and $q \in \{2,3\}$. Note that $m = q^{n/2-1}(q^{n/2}-\e)$, where $d = (2,q-1)$. 

First assume $q=2$. If $r=3$ then we may assume $\e=+$, $x = (\L,I_{n-2})$ and we compute ${\rm ind}(x) = 2^{n/2-1}(2^{n/2-1}-1)$. Now suppose $r=2$. If $G = {\rm O}_n^{\e}(2)$ then ${\rm ind}(x)$ is minimal when $x = b_1$, in which case ${\rm ind}(x) = 2^{n/2-2}(2^{n/2-1}-\e)$. So in this situation, we have ${\rm Ind}(G) = 2^{n/2-2}(2^{n/2-1}-\e)$, which is less than $m/4$ if and only if $\e=+$ (here one checks that ${\rm Ind}(G) \geqs 3m/14$). Now assume $G = \O_n^{\e}(2)$. Here ${\rm ind}(x)$ is minimal when $x = a_2$ or $c_2$. We calculated ${\rm fpr}(x)$ in the proof of Lemma \ref{l:orth2_sub4} and we deduce that ${\rm ind}(x) = 3.2^{n-4}$ if $x=a_2$ and ${\rm ind}(x) = 2^{n/2-2}(3.2^{n/2-2}-\e)$ if $x = c_2$. Now $3.2^{n-4}<2^{n/2-1}(2^{n/2-1}-1)$ and thus
\[
{\rm Ind}(G) = \left\{\begin{array}{ll}
3.2^{n-4} & \mbox{if $\e=-$} \\
2^{n/2-2}(3.2^{n/2-2}-1) & \mbox{if $\e=+$}
\end{array}\right.
\]
and ${\rm Ind}(G) = {\rm ind}(x)$ only if $x$ is an involution. In addition, ${\rm Ind}(G)>m/4$. 

Finally, suppose $q=3$. We may assume $r=2$ since $f_3 \leqs 1/4$. First assume $\e=+$ and note that we may assume $x = (-I_{n-1},I_1)^{\boxtimes} \in G$ (otherwise $f_2 \leqs 1/3$). Then ${\rm Ind}(G) = {\rm ind}(x) = 3^{n/2-1}(3^{n/2-1}-1)/2$. Similarly, if $\e=-$ then we may assume $G$ contains $x = (-I_{n-1},I_1)^{\square}$ and we deduce that ${\rm Ind}(G) = (3^{n-2}-1)/2$. In both cases, ${\rm Ind}(G)>m/4$ and the result follows.
\end{proof}

With these two special cases in hand, we are now ready to prove Theorem \ref{t:mind}.

\begin{proof}[Proof of Theorem \ref{t:mind}]
Assume $|G|$ is even and recall that ${\rm Ind}(G) \leqs m/2$, with equality only if $|H|$ is odd. Let $x \in G$ be an element of order $r$ such that ${\rm Ind}(G) = {\rm ind}(x)$. By Lemma \ref{l:prime}, $r$ is a prime. Seeking a contradiction, suppose $r \geqs 5$. By arguing as in the proof of Proposition \ref{p:mind2}, it follows that ${\rm fpr}(y)>r^{-1}$ for some $y \in G$ of order $r$, so by applying Theorem \ref{t:main} we deduce that 
\begin{itemize}\addtolength{\itemsep}{0.2\baselineskip}
\item[{\rm (a)}] $G$ is almost simple; or
\item[{\rm (b)}] $G \leqs L \wr S_k$ is a product type primitive group with its product action on $\O = \Gamma^k$, where $k \geqs2$ and $L \leqs {\rm Sym}(\Gamma)$ is almost simple.
\end{itemize}

If $G$ is almost simple then the possibilities for $(G,H)$ are described in Corollary \ref{c:main0} and the result follows via Propositions \ref{p:mind1} and \ref{p:mind2}. 

Now assume (b) holds and let $y = (y_1, \ldots, y_k)\pi \in G$ be an element of prime order $r \geqs 5$ with ${\rm fpr}(y)>r^{-1}$. Then Theorem \ref{t:main} (also see Remark \ref{r:main}(c)) implies that $\pi=1$ and either $L=S_n$ or $A_n$ acting on $\ell$-element subsets of $\{1, \ldots, n\}$, or $y = (y_1, 1, \ldots, 1)$ up to conjugacy. In the latter case, we have ${\rm fpr}(y) = {\rm fpr}(y_1,\Gamma)$, 
\[
{\rm ind}(y) = |\Gamma|^{k-1} \cdot {\rm ind}(y_1,\Gamma)
\]
and ${\rm Ind}(L,\Gamma) < {\rm ind}(y_1,\Gamma)$ by the result for almost simple groups handled in case (a). Similarly, if $L = S_n$ or $A_n$ acting on $\ell$-sets, then Proposition \ref{p:mind1} implies that ${\rm Ind}(G) = {\rm ind}(x)$ only if $x$ has order $2$ or $3$. 

We have now proved that ${\rm Ind}(G) = {\rm ind}(x)$ only if $|x| \in \{2,3\}$. To complete the proof of Theorem \ref{t:mind}, let us assume we have equality for some element $x$ of order $3$. If ${\rm fpr}(x) \leqs 1/4$ then ${\rm ind}(x) \geqs m/2$, so we must have ${\rm fpr}(x)=1/4$, ${\rm Ind}(G) = m/2$ and every involution in $G$ acts fixed point freely on $\O$ (otherwise there would be an involution $y \in G$ with ${\rm ind}(y)<m/2$). In particular, this forces $|H|$ to be odd and we have ${\rm Ind}(G)={\rm ind}(y)$ for every involution $y \in G$.

So to complete the argument, we may assume ${\rm fpr}(x)>1/4$, which implies that $(G,H,x)$ is one of the cases arising in Theorem \ref{t:main}. 

If $G$ is almost simple then by applying Propositions \ref{p:mind1} and \ref{p:mind2} we deduce that one of the following holds (up to permutation isomorphism):
\begin{itemize}\addtolength{\itemsep}{0.2\baselineskip}
\item[{\rm (i)}] $G = A_m$ in its natural action of degree $m$ and ${\rm Ind}(G) = {\rm ind}(x) = 2$ with $x$ a $3$-cycle. 
\item[{\rm (ii)}] $G = {\rm L}_2(8){:}3$, $H =P_1$, $m=9$ and ${\rm Ind}(G) = {\rm ind}(x) = 4$ with $x$ a field automorphism of order $3$.
\end{itemize}

Next assume $G = V{:}H$ is an affine group with socle $(C_p)^d$ and point stabilizer $H \leqs {\rm GL}_d(3)$ as in part (ii) of Theorem \ref{t:main}. Here $p=3$, $x$ is conjugate to a transvection in $H$ and ${\rm fpr}(x) = 1/3$, so ${\rm ind}(x) = 4m/9$. Recall that we are assuming $|G|$ is even, so $H$ contains involutions. We claim that $H$ contains an involution $y$ with $\dim C_V(y) \geqs d-2$. In particular, if $H$ contains a reflection $y=(-I_1, I_{d-1})$ then ${\rm fpr}(y) = 1/3$ and ${\rm ind}(y) =m/3<{\rm ind}(x)$, whence ${\rm Ind}(G) = m/3$ and $G$ does not contain an element of order $3$ with ${\rm Ind}(G) = {\rm ind}(x)$. On the other hand, if $H$ does not contain such an element, then 
${\rm ind}(y)$ is minimal when $y \in H$ is an involution with $\dim C_V(y) = d-2$, in which case ${\rm ind}(y) = {\rm Ind}(G) = 4m/9$. This is the case recorded in part (ii)(b) of Theorem \ref{t:mind}.

Therefore, it remains to justify the claim. To do this, we apply a theorem of McLaughlin \cite{McL}. Recall that we are assuming $H$ contains a transvection, so we can consider the normal subgroup $H_0$ generated by the transvections in $H$. Since $H$ acts irreducibly on $V = (\mathbb{F}_3)^d$, it follows that $H_0$ is semisimple, preserving a direct sum decomposition $V = V_1 \oplus \cdots \oplus V_t$. Moreover, $H_0$ acts on $V$ as a direct product $H_1 \times \cdots \times H_t$, where each $H_i \leqs {\rm GL}(V_i)$ is either ${\rm SL}(V_i)$ or ${\rm Sp}(V_i)$. Therefore, $H_0$ contains involutions of the form $(-I_2,I_{d-2})$ and the claim follows.  

Finally, let us turn to the product type groups in part (iii) of Theorem \ref{t:main}. Here  $G \leqs L \wr S_k$ acts on $\O = \Gamma^k$ with its product action, where $k \geqs 2$ and $L \leqs {\rm Sym}(\Gamma)$ is one of the almost simple primitive groups in part (i) of Theorem \ref{t:main}. Recall that we are assuming there exists an element $x \in G$ of order $3$ with ${\rm fpr}(x)>1/4$ and we note that $x$ is of the form $(x_1, \ldots, x_k) \in L^k$ (see Remark \ref{r:main}(c)). Therefore, up to permutation isomorphism, $L$ is either $S_n$ or $A_n$ acting on $\ell$-element subsets of $\{1, \ldots, n\}$, or $L$ is a classical group in a subspace action as in Table \ref{tab:class}. 

First assume $L = S_n$ or $A_n$ acting on $\ell$-sets, where $1 \leqs \ell < n/2$. If $\ell \geqs 2$ then Proposition \ref{p:mind1} implies that ${\rm Ind}(G) = {\rm ind}(x)$ only if $x$ is an involution, so we may assume $\ell=1$. Now, if $x \in G$ has order $3$, then ${\rm ind}(x)$ is minimal when $x = (x_1,1, \ldots,1) \in (A_n)^k$ and $x_1 \in A_n$ is a $3$-cycle, in which case we compute ${\rm ind}(x) = 2m/n$. If $y \in G \cap (S_n)^k$ is of the form $(y_1, \ldots, y_t, 1, \ldots, 1)$ and each $y_i$ is a transposition, then 
\[
{\rm ind}(y) = \frac{m}{2}\left(1 - \left(1-\frac{2}{n}\right)^t\right)
\]
and we deduce that ${\rm ind}(y) < 2m/n$ if and only if $t \in \{1,2\}$, or if $t=3$ and $n=5$. This gives the conclusion recorded in part (d) of Theorem \ref{t:mind}(ii). 

Finally, suppose $L$ is a classical group in a subspace action. By applying part (iii) of Proposition \ref{p:mind2} we deduce that $G = L \wr P$ is the only possibility, where $L = {\rm L}_2(8){:}3$, $\Gamma$ is the set of $1$-dimensional subspaces of the natural module for ${\rm L}_2(8)$ and $P \leqs S_k$ is transitive. Here $m = 9^k$, ${\rm Ind}(G) = 4.9^{k-1}$ and ${\rm Ind}(G) = {\rm ind}(x)$ if and only if $x$ is conjugate to $(x_1,1, \ldots, 1)$, where $x_1 \in L$ is either an involution or a field automorphism of ${\rm L}_2(8)$ of order $3$. 
\end{proof}

Next we prove Theorem \ref{t:mind2}.

\begin{proof}[Proof of Theorem \ref{t:mind2}]
As before, $|G|$ is even and thus ${\rm Ind}(G) \leqs m/2$, with equality only if $|H|$ is odd. By Theorem \ref{t:mind}, there exists an involution $x \in G$ with ${\rm Ind}(G) = {\rm ind}(x)$. If ${\rm fpr}(x) \leqs 1/2$, then ${\rm ind}(x) \geqs m/4$, so we may assume ${\rm fpr}(x)>1/2$ and thus $(G,H,x)$ is one of the special cases appearing in the statement of Theorem \ref{t:main}. If $G$ is almost simple, then by applying Corollary \ref{c:main0} we reduce to the case where $G$ is a classical group in a subspace action and the result follows via Proposition \ref{p:mind2}(iv). 

Finally, we may assume $G \leqs L \wr S_k$ is a product type primitive group with its product action on $\O = \Gamma^k$, where $k \geqs 2$ and $L \leqs {\rm Sym}(\Gamma)$ is an almost simple primitive group with point stabilizer $J$. In addition, we may assume $L$ is not $S_n$ or $A_n$ acting on $\ell$-element subsets of $\{1, \ldots, n\}$ (since this is covered by case (ii) in Theorem \ref{t:mind2}). Therefore, our involution $x$ with ${\rm fpr}(x)>1/2$ must be conjugate to $(x_1, 1, \ldots, 1)$ in $L^k$ and thus 
\[
{\rm Ind}(G) = {\rm ind}(x) = |\Gamma|^{k-1} \cdot {\rm ind}(x_1,\Gamma).
\]
It follows that
\[
{\rm Ind}(L,\Gamma) = {\rm ind}(x_1,\Gamma)<\frac{1}{4}|\Gamma|
\]
and thus $(L,J,|\Gamma|,{\rm Ind}(L,\Gamma))$ is one of the cases in Table \ref{tab:subc}. 

Let $T$ denote the socle of $L$. If $L={\rm Sp}_n(2)$ with $n \geqs 6$ then $L=T$ and thus $G = L \wr P$ for some transitive group $P \leqs S_k$. In the three remaining cases in Table \ref{tab:subc}, we have $|L:T|=2$, but in each case every involution $y \in L$ with ${\rm Ind}(L,\Gamma) = {\rm ind}(y,\Gamma)$ is contained in $L \setminus T$ (for $T = {\rm U}_4(2)$, $y$ is an involutory graph automorphism with $C_{T}(y) = {\rm Sp}_4(2)$, while $y$ is a $b_1$-type involution for the cases with $T = \O_n^{\e}(2)$). Therefore, $G$ must contain $L^k$ and thus $G = L \wr P$ for some transitive group $P \leqs S_k$. 
\end{proof}

Finally, let us consider the minimal index of a primitive group $G \leqs {\rm Sym}(\O)$ of odd order. Here $G$ is solvable by the Feit-Thompson theorem, so $G = V{:}H$ is an affine group with socle $V = (C_p)^d$ and point stabilizer $H \leqs {\rm GL}_d(p)$ for some odd prime $p$ and positive integer $d$.

\begin{thm}\label{t:odd}
Let $G = V{:}H$ be a primitive permutation group of odd order with socle $V = (C_p)^d$ and point stabilizer $H$. Then
\[
\min\left\{ m\left(1-\frac{3}{2r+1}\right), m\left(1-\frac{1}{p}\right)^2 \right\} \leqs {\rm Ind}(G) \leqs m\left(1-\frac{1}{r}\right)
\]
where $r$ is the smallest prime divisor of $|G|$. 
\end{thm}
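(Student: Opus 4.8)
The plan is to reduce everything to an index formula valid for elements of prime order, and then to bound the fixed point ratio. By the Feit--Thompson theorem $G=V{:}H$ is affine as stated, so $m=p^d$. Write $A=m\bigl(1-\tfrac{3}{2r+1}\bigr)$ and $B=m\bigl(1-\tfrac1p\bigr)^2$ for the two quantities in the lower bound. The first step is to record that if $x\in G$ has prime order $s$, then $\langle x^i\rangle=\langle x\rangle$ for $1\leqs i\leqs s-1$, so $x^i$ and $x$ have the same fixed points on $\O$ and hence ${\rm fpr}(x^i)={\rm fpr}(x)$. Feeding this into the definition of ${\rm ind}(x)$ gives
\[
{\rm ind}(x)=m\left(1-\frac1s-\frac{s-1}{s}\,{\rm fpr}(x)\right)=m\left(1-\frac1s\right)\bigl(1-{\rm fpr}(x)\bigr).
\]
For the upper bound I would just apply Cauchy's theorem to produce $y\in G$ of order $r$; by the remark in Section \ref{s:intro} we have ${\rm ind}(y)\leqs m(1-1/r)$, whence ${\rm Ind}(G)\leqs m(1-1/r)$.

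For the lower bound, Lemma \ref{l:prime} provides an element $x$ of prime order $s$ with ${\rm Ind}(G)={\rm ind}(x)$, and $s\geqs r$ since $s\mid|G|$. I may assume $x$ fixes a point, since otherwise ${\rm fpr}(x)=0$ and ${\rm ind}(x)=m(1-1/s)\geqs m(1-1/r)>A$ (using $1/r<3/(2r+1)$, i.e.\ $1<r$); so after conjugation $x\in H\leqs {\rm GL}_d(p)$. Setting $e=\dim C_V(x)$ and $k=d-e\geqs1$, Proposition \ref{p:affine} gives ${\rm fpr}(x)=p^{e-d}=p^{-k}$. If $s=p$ (unipotent case) then $1-p^{-k}\geqs 1-p^{-1}$, so the identity above yields ${\rm ind}(x)\geqs m(1-1/p)^2=B\geqs\min\{A,B\}$, as required. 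This leaves the semisimple case $s\neq p$.

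The crux, and the step I expect to need the most care, is precisely this semisimple estimate: the naive bound ${\rm fpr}(x)\leqs 1/(s+1)$ only gives ${\rm ind}(x)\geqs m\frac{s-1}{s+1}$, which one checks is \emph{strictly smaller} than $A$, so it is not enough. The resolution is a parity argument exploiting that $|G|$ is odd. Since $s\neq p$ divides $|{\rm GL}_k(p)|$, the order $j={\rm ord}_s(p)$ satisfies $j\leqs k$ and $s\mid p^j-1$, so $p^j\equiv1\pmod{s}$. Now $p$ and $s$ are both odd, hence $p^j$ is odd while $s+1$ is even; thus $p^j\neq s+1$, and the smallest odd integer exceeding $1$ that is congruent to $1$ modulo $s$ is $2s+1$, forcing $p^j\geqs 2s+1$. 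Therefore ${\rm fpr}(x)=p^{-k}\leqs p^{-j}\leqs(2s+1)^{-1}$, and the identity gives
\[
{\rm ind}(x)\geqs m\left(1-\frac1s\right)\frac{2s}{2s+1}=m\left(1-\frac{3}{2s+1}\right)\geqs m\left(1-\frac{3}{2r+1}\right)=A,
\]
the last inequality using $s\geqs r$. Combining the three cases shows ${\rm ind}(x)\geqs\min\{A,B\}$ in every case, so ${\rm Ind}(G)\geqs\min\{A,B\}$, which together with the upper bound completes the proof.
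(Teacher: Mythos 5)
Your proposal is correct and follows essentially the same route as the paper: reduce to prime order via Lemma \ref{l:prime}, use the identity ${\rm ind}(x)=m\bigl(1-\tfrac1s\bigr)\bigl(1-{\rm fpr}(x)\bigr)$ with ${\rm fpr}(x)=p^{e-d}$, and split into the unipotent case (giving $m(1-1/p)^2$) and the semisimple case, where the parity argument you spell out is exactly what the paper compresses into ``$s\leqs(p^{d-e}-1)/2$ since $p$ is odd,'' yielding ${\rm fpr}(x)\leqs(2s+1)^{-1}$ and hence the bound $m\bigl(1-\tfrac{3}{2s+1}\bigr)$. Your treatment of $s=p$ is marginally cleaner (no case split on $e$), but the argument is the same.
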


\begin{proof}
As previously noted, if $x \in G$ has order $r$, then ${\rm orb}(x) \geqs m/r$ and thus 
\[
{\rm Ind}(G) \leqs {\rm ind}(x) \leqs m\left(1-\frac{1}{r}\right).
\]
Next let $x \in G$ be an element of prime order $s$. We may assume ${\rm fpr}(x)>0$ (otherwise ${\rm ind}(x) = m(1-1/s) \geqs m(1-1/r)$), so by replacing $x$ by a suitable conjugate we may assume $x \in H$. Then ${\rm fpr}(x) = p^{e-d}$, where $e = \dim C_V(x)$. 

Suppose $s=p$. If $e=d-1$ then $x$ is a transvection, ${\rm fpr}(x) = p^{-1}$ and we compute 
\[
{\rm ind}(x) = m\left(1-\frac{1}{p}\right)^2.
\]
On the other hand, if $e<d-1$ then ${\rm fpr}(x) \leqs p^{-2}$ and thus 
\[
{\rm ind}(x) \geqs m\left(1 - \frac{p^2+p-1}{p^3}\right) > m\left(1-\frac{1}{p}\right)^2.
\]
Now suppose $s \ne p$. Then $s \leqs (p^{d-e}-1)/2$ since $p$ is odd and thus ${\rm fpr}(x) \leqs 1/(2s+1)$. In turn, this implies that
\[
{\rm ind}(x) \geqs m\left(1-\frac{1}{s}\left(1+\frac{s-1}{2s+1}\right)\right) = m\left(1-\frac{3}{2s+1}\right) \geqs m\left(1-\frac{3}{2r+1}\right)
\]
and the result follows.
\end{proof}

\vspace{2mm}

\noindent \textbf{Acknowledgements.} We thank an anonymous referee for their careful reading of the paper and helpful suggestions. Burness thanks the Department of Mathematics at the University of Padua for their generous hospitality during a research visit in autumn 2021. Guralnick was partially supported by the NSF grant DMS-1901595 and a Simons Foundation Fellowship 609771.

\end{document}